\newcommand{\N}{\mathbb{N}}
\newcommand{\Z}{\mathbb{Z}}
\newcommand{\R}{\mathbb{R}}
\renewcommand{\C}{\mathbb{C}}
\renewcommand{\mod}{\mathrm{mod}\,}
\newcommand{\I}{\mathbb{I}}
\newcommand{\D}{\mathbb{D}}
\newcommand{\Sp}{S^2}
\newcommand{\Crit}{C}
\newcommand{\Post}{P}
\newcommand{\Fix}{\mathrm{Fix}}
\newcommand{\Tisch}{\mathrm{Tisch}}
\newcommand{\Charge}{\mathrm{Charge}}
\newcommand{\AdmTrees}{\mathrm{AdmTrees}}
\newcommand{\id}{\mathrm{id}}
\newcommand{\Homeo}{\mathrm{Homeo}}
\newcommand{\PMCG}{\mathrm{PMCG}}
\newcommand{\inter}{\operatorname{int}}
\def\hide #1{}
\long\def\longhide #1 {}
\newcounter{main}
\theoremstyle{plain}
        \newtheorem{theorem}{Theorem}[section]
        \newtheorem*{theorem*}{Theorem}
        \newtheorem*{conj*}{Conjecture}
        \newtheorem{lemma}[theorem]{Lemma}
        \newtheorem{corollary}[theorem]{Corollary}
        \newtheorem{proposition}[theorem]{Proposition}
        \newtheorem{maintheorem}[main]{Main Theorem}      
\theoremstyle{definition}
        \newtheorem{definition}[theorem]{Definition}
        \newtheorem*{definition*}{Definition}
\theoremstyle{remark}
        \newtheorem*{remark}{Remark}
        \newtheorem{rem}[theorem]{Remark}
        \newtheorem{example}[theorem]{Example}
        \newtheorem{question}{Question}
        \newtheorem*{example*}{Example}
        \newtheorem*{examples*}{Examples}        
        \newtheorem*{claim}{Claim}
        \newtheorem*{claim1}{Claim 1}
        \newtheorem*{claim2}{Claim 2}
        \newtheorem*{claim3}{Claim 3}
        \newtheorem*{claim4}{Claim 4}   
        \newtheorem*{case1}{Case 1}     
        \newtheorem*{case2}{Case 2}
\numberwithin{equation}{section}
\begin{document}
\title[Critically fixed Thurston maps: classification, recognition, and twisting]{Critically fixed Thurston maps: \\ classification, recognition, and twisting} 

\emph{ }

\author{Mikhail Hlushchanka}
\address{Korteweg-de Vries Instituut voor Wiskunde, Universiteit van Amsterdam,  1090 GE \newline Amsterdam, The Netherlands}
\address{Mathematisch Instituut,  Universiteit Utrecht,
 %Postbus 80.010, 
3508 TA Utrecht,  The Netherlands}
\email{mikhail.hlushchanka@gmail.com}

\author{Nikolai Prochorov}

\address{Department of Mathematics, The University of Manchester, Manchester M13 9PL, United Kingdom}
\address {Aix--Marseille Universit\'{e}, CNRS, Institut de Math\'{e}matiques de Marseille, %163 Avenue de Luminy, 
13009 Marseille, France}
\email{nikolai.prochorov@manchester.ac.uk, prochorov41@gmail.com}

\date{\today}

\keywords{Thurston maps, critically fixed maps, blow-up operation, decomposition theory, combinatorial classification problem, twisting problem.}
\subjclass[2010]{Primary 37F20, 37F10}

\begin{abstract}
    An orientation-preserving branched covering map $f\colon \Sp \to \Sp$ is called a \emph{critically fixed Thurston map} if $f$ fixes each of its critical points. It was recently shown  that there is an explicit one-to-one correspondence between M\"obius conjugacy classes of critically fixed rational maps and isomorphism classes of planar embedded connected graphs. In this paper, we generalize the result to the whole family of critically fixed Thurston maps. Namely, we show that each critically fixed Thurston map $f$ is obtained by applying the \emph{blow-up operation}, introduced by Kevin Pilgrim and Tan Lei, to a pair $(G,\varphi)$, where $G$ is a planar embedded graph in $\Sp$ without isolated vertices and $\varphi$ is an orientation-preserving homeomorphism of $\Sp$ that fixes each vertex of $G$. This result allows us to provide a classification of combinatorial equivalence classes of critically fixed Thurston maps. 
    We also develop an algorithm that reconstructs (up to isotopy) the pair $(G,\varphi)$ associated with a critically fixed Thurston map $f$. Finally, we solve some special instances of the \emph{Twisting Problem} for the family of critically fixed Thurston maps obtained by blowing up pairs $(G, \id_{\Sp})$. 
\end{abstract}

\maketitle
\setcounter{equation}{0}

\tableofcontents

\section{Introduction}\label{sec: Introduction}

One-dimensional holomorphic dynamics studies the iteration of complex analytic functions on the Riemann sphere $\widehat{\C}$ or on the complex plane $\C$. One of the most influential ideas in the subject, and in the dynamics of complex rational maps in particular, has been to abstract from the rigid underlying complex structure and consider the more general setup of \emph{branched self-coverings} of a topological $2$-sphere $\Sp$ or of a topological plane. Such a branched self-covering is called \emph{postcritically-finite}, or \emph{pcf} for short, if all its critical points are periodic or preperiodic under iteration. Nowadays, orientation-preserving pcf branched covering maps $f\colon\Sp\to\Sp$ of topological degree $\deg(f)\geq 2$ are called \emph{Thurston maps}, in honor of William Thurston, who introduced them in order to better understand the dynamics of pcf rational maps on $\widehat{\C}$. 

At first glance, pcf rational maps may seem like a highly specialized class, as there are only countably many such maps of a given degree up to M\"{o}bius conjugation (except for the well-understood family of \emph{flexible Latt\`{e}s maps}). However, they play a fundamental role in the study of the dynamics of rational maps in general. In particular, the combinatorial structure of the famous \emph{Mandelbrot set} can be described in terms of pcf polynomials \cite{DH_Orsay}. Furthermore, the existence of a strong dynamical similarity between a dense subset of the space of all rational maps (of any fixed degree) and pcf rational maps was conjectured in \cite[Conjecture 1.1]{McM_renorm}; see also the discussion in \cite{McMullen_Aut}.

In this paper, we study the following special subclass of Thurston maps.

\begin{definition}
    A branched covering map $f\colon \Sp\to \Sp$ is called a \emph{critically fixed Thurston map} if $\deg(f) \geq 2$ and each of its critical points is fixed under $f$.
\end{definition}

Critically fixed rational maps, along with their orientation-reversing analogs, have been in the focus of intense research in holomorphic dynamics over the last decade. This interest stems, on the one hand, from their remarkable structural properties, which have enabled progress toward important open problems in the field---such as the Combinatorial Classification Problem \cite{Tischler,Pilgrim_crit_fixed,H_Tischler,LLM_Classification,LMM_Hubbard_trees,ParryPilgrim_Hurwitz}, the Global Curve Attractor Conjecture \cite{H_Tischler,GH_anti_Thurston}, the Crofoot--Sarason Conjecture \cite{Geyer_SharpBounds,LMM_Hubbard_trees}, as well as various questions concerning iterated monodromy groups \cite{H_Thesis}. On the other hand, this research has revealed unexpected connections to other areas, including the theory of Kleinian groups \cite{LLM_Classification, LLM_Deformation,LMMN_welding} and Grothendieck's theory of ``Dessins d'enfants'' \cite{Pakovich}.

The aim of this paper is to develop a comprehensive combinatorial theory for critically fixed Thurston maps and to introduce new perspectives and connections for future exploration. In particular, we
\begin{enumerate}[label=\normalfont(\Alph*)]
\item\label{Obj1} construct canonical combinatorial models for critically fixed Thurston maps;
\item\label{Obj2} provide an algorithm to determine these canonical models;
\item\label{Obj3} analyze the action on these models induced by the post-composition of a given critically fixed rational (or Thurston) map with (special) sphere homeomorphisms;
\item\label{Obj4} propose new questions for further investigation that connect holomorphic dynamics with $3$-manifold theory and topological graph theory.
\end{enumerate}
We begin by providing the context and basic terminology, followed by a brief overview of our main results and their broader connections. Afterwards, we present our findings and methodology in detail.

\subsection{Context}

We denote the set of critical points of a Thurston map $f$ by $\Crit(f)$. The set $\Post(f):=\bigcup_{n=1}^{\infty}f^n(\Crit(f))$ of the forward orbits of the critical points is called the \emph{postcritical set} of~$f$. For critically fixed Thurston maps, we obviously have $\Post(f)=\Crit(f)$. Two Thurston maps are called \emph{combinatorially} (or \emph{Thurston}) \emph{equivalent} (see Definition~\ref{def: Thurston equivalence}) if they are conjugate up to isotopy relative to their postcritical sets; see also Definition~\ref{def: isotopy equivalence} for the \emph{isotopy equivalence}.

One of the key features of Thurston maps is that they often admit a description in purely combinatorial terms. A fundamental question in this context is whether a given Thurston map $f$ can be \emph{realized} by a rational map with the same combinatorics, that is, if $f$ is combinatorially equivalent to a rational map. 
William Thurston answered this question in his celebrated \emph{characterization of rational maps}: 
If a Thurston map $f$ has a \emph{hyperbolic orbifold} (this is always true, except for some well-understood special maps), then $f$ is realized by a rational map $F$ if and only if $f$ has no \emph{Thurston obstruction} \cite{DH_Th_char}. Such an obstruction is given by a finite collection of disjoint Jordan curves in $\Sp\setminus\Post(f)$ with certain invariance properties under the map $f$. Furthermore, the rational map $F$ is unique up to M\"{o}bius conjugation (see
Section \ref{subsec: Thurston's characterization of rational maps} for more discussion). 

Thurston's characterization of rational maps allows one to address the \emph{Combinatorial Classification Problem}, which asks to describe all possible \emph{combinatorial models} that are realized by rational maps within a given family. More precisely, we want to assign some finite combinatorial certificate to each map from the family so that different certificates correspond to different maps and vice versa. This question is well-understood for pcf complex polynomials. Specifically, they can be classified by the so-called \emph{Hubbard trees} \cite{Poirier} or \emph{external angles} \cite{Poirier_Thesis}. Recently, the Combinatorial Classification Problem was also solved for pcf \emph{Newton maps} \cite{Newton, RussellDierk_Class} and for \emph{critically fixed rational maps} \cite{H_Tischler}. In both cases, the classification is given in terms of \emph{planar embedded graphs} on the 2-sphere. Nevertheless, the Combinatorial Classification Problem for the family of \emph{all} pcf rational maps is extremely challenging and remains wide open.

Once we have an answer to the Combinatorial Classification Problem within some family of maps, we can address the \emph{Recognition Problem} of computing the canonical combinatorial model for a given map within the family. For example, this question was recently solved for the class of pcf polynomial maps \cite{LifitingTrees}; see the same paper for a survey of known results.
A related question is the \emph{Conjugacy Problem}, which asks if two given pcf rational maps (or, more generally, two Thurston maps) are combinatorially equivalent. In particular, it is known that the Conjugacy Problem for Thurston maps is decidable \cite{BD_Dec, RSY_Decidability}.

A special instance of the Recognition Problem above is the \emph{Twisting Problem}. Suppose $f \colon \widehat{\C} \to \widehat{\C}$ is a pcf rational map and $\varphi$ is an element of $\Homeo^+(\widehat{\C},\Post(f))$, that is, $\varphi$ is an orientation-preserving homeomorphism of $\widehat{\C}$ that fixes each postcritical point of $f$. The Twisting Problem asks to determine if the \emph{twisted map} $g:=\varphi\circ f$ is realized by a rational map and, if yes, find the canonical combinatorial model for $g$. The Twisting Problem was initially investigated for the case when $f$ is a complex polynomial using algebraic machinery provided by \emph{iterated monodromy groups} \cite{BarNekr_Twist}. An alternative approach in the polynomial case, using \emph{mapping class groups} methods, was recently suggested in \cite{LifitingTrees}. However, in the case of non-polynomial rational maps, the landscape remains largely unexplored. To the best of our knowledge, the only results addressing the Twisting Problem for non-polynomial rational maps---specifically in the low-degree, four-postcritical-point case---are found in \cite{Lodge_Boundary, KelseyLodge}.

\subsection{Overview of results and connections.}
In this paper, we resolve the three problems discussed above---the Combinatorial Classification Problem, the Recognition Problem, and the Twisting Problem---in the setting of critically fixed Thurston maps; see Objectives~\ref{Obj1}, \ref{Obj2}, and \ref{Obj3}.

\subsubsection{} Our first main result, Main Theorem~\ref{thm_intro: classification}, establishes a combinatorial classification of critically fixed Thurston maps. The combinatorial models we use for this classification are given by certain pairs $(G,\varphi)$, where $G\subset \Sp$ is a planar embedded graph and $\varphi\in \Homeo^+(\Sp,V(G))$ is a homeomorphism. The \emph{blow-up operation} \cite{PT} allows one to assign a critically fixed Thurston map $f_{(G,\varphi)}$ to each such model. The main challenge is to clarify the ``injectivity'' of this assignment---which catalog of models yields \emph{all} critically fixed Thurston maps up to combinatorial equivalence (or isotopy). To resolve this issue, we characterize the decompositions of critically fixed Thurston maps with respect to their \emph{canonical Thurston obstructions} \cite{Pilgrim_Canonical}; see Corollary~\ref{cor: can_crit_fix_decomposition}. In fact, we provide a complete description of all possible \emph{completely invariant} multicurves for critically fixed Thurston maps, see Theorem~\ref{thm: crit-fix-decomposition}. To our knowledge, no analogous results exist outside the polynomial setting. Furthermore, these novel structural insights into critically fixed Thurston maps raise a deeper question about their place in the broader landscape of conformal dynamics.

Since Sullivan’s seminal work \cite{Sullivan_QC}, numerous connections between the theory of Kleinian groups and rational dynamics have been uncovered. These analogies between
the two branches of conformal dynamics, now commonly known as \emph{Sullivan’s dictionary}, not only provide a
conceptual framework for understanding these connections but also motivate modern research in both
fields. Recently, Lodge, Luo, and Mukherjee established a strikingly explicit correspondence between dynamics of \emph{Kleinian reflection groups} (groups generated by reflections along the circles of finite circle packings in $\widehat{\C}$) and \emph{critically fixed anti-rational maps} (orientation-reversing analogs of critically fixed rational maps) \cite{LLM_Classification}. Our work, along with its extension to the orientation-reversing setting \cite{GH_anti_Thurston}, holds the potential to formalize the ``decomposition theory'' link between the two sides of Sullivan’s dictionary.

\begin{question}
    Find an explicit correspondence between W.~Thurston's geometric decomposition theory for $3$-manifolds and Pilgrim's decomposition theory for pcf (orientation-reversing) branched self-coverings of $\Sp$.
\end{question}

\subsubsection{} We utilize the developed combinatorial models to tackle the Recognition Problem in the setting of critically fixed Thurston maps. Specifically, we have designed an algorithm that identifies a canonical pair $(G_f, \varphi_f)$ associated with a critically fixed Thurston map $f$. This algorithm is based on the iteration of a \emph{pullback relation} $\xleftarrow{f}$ on (admissible) planar embedded trees induced by the given map $f$, see Section~\ref{sec: Lifting algorithm} for details. This approach is inspired by the \emph{lifting algorithm} for pcf polynomial maps from \cite{LifitingTrees} and the \emph{ivy iteration} for pcf quadratic rational maps from \cite{TimorinTrees}. It is important to note that, in contrast to our algorithm, the lifting algorithm of \cite{LifitingTrees} does not have any complexity estimates, and the ivy iteration of \cite{TimorinTrees} lacks convergence results.

Our second main result, Main Theorem~\ref{thm_intro: contraction}, establishes that the pullback relation $\xleftarrow{f}$ on isotopy classes of (admissible) 
trees rel.\ $\Post(f)=\Crit(f)$ has an explicit \emph{global attractor} $\mathcal{N}_f$, which depends solely on the model graph $G_f$. In particular, the set $\mathcal{N}_f$ has only finitely many elements when $f$ is realized by a rational map. This phenomenon invites a broader question regarding the dynamics of the pullback relation on trees for general pcf rational maps. The following question can be seen as a natural extension of our result and an analog of the \emph{Global Curve Attractor Conjecture} for the pullback relation on Jordan curves; see \cite{Pilgrim_Pullback} and the references therein.

\begin{question}\label{question: global tree attractor conjecture}
    Let $f\colon \widehat{\C} \to \widehat{\C}$ be a pcf rational map with a hyperbolic orbifold, and $\xleftarrow{f}$ be the induced pullback relation on the isotopy classes of admissible planar embedded trees rel.\ $\Post(f)$, see Section~\ref{subsec: The pullback operation}. Does $\xleftarrow{f}$ admit a \emph{finite global attractor}, that is, there is a finite set $\mathcal{N}_f$ such that for every infinite chain
    \[[T_0]\xleftarrow{f}[T_1]\xleftarrow{f}[T_2]\xleftarrow{f}\dots \]
    we have that $[T_n]\in \mathcal{N}_f$ for all sufficiently large $n$?
\end{question}

In a recent preprint, Bartholdi, Dudko, and Pilgrim resolve positively Question~\ref{question: global tree attractor conjecture} (as well as the Global Curve Attractor Conjecture) for rational maps with four postcritical points \cite[Corollary~B]{BartholdiDudkoPilrim_attractor} by establishing a weak form of hyperbolicity for certain naturally associated correspondences between Riemann surfaces. Our Main Theorem~\ref{thm_intro: contraction} provides supporting evidence for such a hyperbolicity statement in the higher-dimensional setting (see \cite[Conjecture~D]{BartholdiDudkoPilrim_attractor}).

We remark that our work reduces the Conjugacy Problem for critically fixed Thurston maps to the following two well-studied computational problems (compare \cite{RSY_Decidability}):
\begin{itemize}
    \item the Isomorphism Problem for planar embedded graphs; 
    \item the Conjugacy Problem for mapping class groups of compact surfaces (more specifically, for the $2$-sphere with finitely many open disks with disjoint closures removed). 
\end{itemize}
Both problems have been extensively studied over the  last decades; see, for instance, \cite{IsomGraphs-Weinberg, IsomGraphNLOGN, IsomGraph-Linear, IsomGrah-Survey, IsomGraphMaps} and \cite{MPG-Anosov, MPG-Masur-Minsky, MPG-Tao, MPG-Bell, MPG-Bell-Webb}. Nowadays, the isomorphism problem for planar embedded connected graphs is known to be solvable in linear time \cite[Theorem~2]{IsomGraphMaps}. Furthermore, Bell--Webb and Margalit--Strenner--Yurtas have announced quadratic time solutions to the Conjugacy Problem in mapping class groups; however, details have not yet appeared.

\subsubsection{} 
Our final main result, Main Theorem~\ref{thm_intro: twisting}, provides an explicit solution to certain cases of the Twisting Problem for critically fixed rational and Thurston maps. Namely, we study compositions of the form $T_\gamma^n\circ f$, where $n \in \Z$, $f$ is a critically fixed Thurston map corresponding to a pair $(G,\id_{\Sp})$, and $T_\gamma\in\Homeo^+(\Sp,\Crit(f))$ is the Dehn twist about an (essential) Jordan curve $\gamma$ intersecting (transversely) each edge of $G$ at most once---we call such curves \emph{simple transversals} with respect to $G$.  We show that a canonical pair $(H,\psi)$ for this twisted map may be explicitly described, and specifically the model graph $H$ is obtained from $G$ by a simple combinatorial operation called the ($-n$)\emph{-rotation about the curve $\gamma$}; see Section~\ref{sec: Twisting problems} for details. This result leads naturally to the following question in topological graph theory.

\begin{question}
 Fix a finite subset $Z \subset \Sp$ with $|Z|\geq 2$, an integer $d \geq 2$, and a vector $\vec{m}:=(m_z)_{z\in Z}$ of positive integers such that $\sum_{z\in Z} m_z = 2d-2$. Let $\mathscr{G}(Z,d,\vec{m})$  denote the set of isotopy classes $[G]$ rel.\ $Z$ of planar embedded graphs $G$ in $\Sp$ with the vertex set $Z$ and vertex degrees $\deg_G(z)=m_z$. Consider a graph $\mathscr{R}(\vec{m})$ whose vertices are the elements of $\mathscr{G}(Z,d,\vec{m})$, and with an edge between $[G]$ and $[G']$ whenever $G'$ can be obtained from $G$ by applying the rotation operation about a simple transversal $\gamma$ with respect to $G$ (or vice versa). Is the graph $\mathscr{R}(\vec{m})$ connected? If so, how big is its diameter?
\end{question}

An affirmative answer would yield a new proof of the fact that the \emph{Hurwitz class} \cite{Koch_Teich} of a critically fixed Thurston map depends only on its branch data; see \cite[Theorem~4]{Pilgrim_crit_fixed}. It will also imply a closely related statement that the \emph{``pure-cycle'' Hurwitz spaces}, parametrizing branched coverings of $\widehat{\C}$ having only one ramified point over each branch point, are irreducible \cite{LiuOsserman}. More broadly, the question above falls into the category of classical transitivity problems in combinatorics that study connectivity of various ``reconfiguration graphs'', such as \emph{realization graphs} of degree sequences \cite{RealizationGraphs}, \emph{recoloring graphs} \cite{RecoloringGraphs1,RecoloringGraphs2}, or \emph{flip graphs} for perfect matchings \cite{FlipGraphs2,FlipGraphs1}.

\subsection{Main results and methodology}\label{subsec: Main results}
Below, we provide a detailed presentation of the main results of this paper.

\subsubsection{Classification and decomposition of critically fixed Thurston maps} 
The Combinatorial Classification Problem for the family of critically fixed rational maps has been studied in several works \cite{Tischler, Pilgrim_crit_fixed, H_Tischler}. The following result provides a complete solution; see \cite[Theorem 2]{H_Tischler}.

\begin{theorem}\label{thm: class_criti_fix_rational}
There is a canonical bijection between the combinatorial equivalence classes (or, equivalently, M\"{o}bius conjugacy classes) of critically fixed rational maps (of degree at least two) and the isomorphism classes of planar embedded connected graphs (with at least one edge).
\end{theorem}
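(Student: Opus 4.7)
The plan is to set up the bijection explicitly in both directions, via the \emph{Tischler graph} construction on one side and the \emph{blow-up operation} on the other, and then invoke Thurston's characterization and rigidity to see that these constructions are mutually inverse.

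For the map from rational maps to graphs, given a critically fixed rational map $f\colon\widehat{\C}\to\widehat{\C}$, I would associate its Tischler graph $T(f)$. Every critical point $c$ is a superattracting fixed point whose immediate basin $U_c$ carries a B\"ottcher coordinate linearizing $f$. In each $U_c$, the angle-$0$ internal ray and its images under the local $d_c$-fold symmetry are fixed by $f$, and they land at points of $\partial U_c$, which must themselves be critical points of $f$ (this uses $\Post(f)=\Crit(f)$ and standard ray-landing results). The vertices of $T(f)$ are $\Crit(f)$ and the edges are the closures of these fixed internal rays, giving a graph embedded in $\Sp$. Connectedness follows because the union of the closures of the immediate basins covers $\Sp$ (the map is postcritically finite and hyperbolic), so the dual skeleton formed by the fixed rays must also be connected. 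The isomorphism class of $T(f)$ is plainly invariant under M\"obius conjugation, and more generally under combinatorial equivalence, since both the critical points and the internal ray structure are canonical.

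For the inverse map, given a planar embedded connected graph $G \subset \Sp$, I would apply the Pilgrim--Tan Lei blow-up to the pair $(G,\id_{\Sp})$, producing a Thurston map $f_G$ whose critical set equals the vertex set $V(G)$, each critical point being fixed with local degree $\deg_G(v)$. To realize $f_G$ by a rational map, I would invoke Thurston's characterization: one checks that $f_G$ has hyperbolic orbifold (immediate from having sufficiently many fixed critical points), and then rules out Thurston obstructions. The obstruction analysis is the crux of the argument and is the main obstacle in the proof. The approach is to suppose a Thurston obstruction $\Gamma = \{\gamma_1,\dots,\gamma_n\}$ exists and to read off, from the explicit combinatorics of the blow-up, how preimages of each $\gamma_i$ sit relative to the graph $G$. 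Using the connectivity of $G$ together with a careful accounting of intersection numbers with the edges of $G$, one shows that the Perron--Frobenius eigenvalue of the Thurston transition matrix is strictly less than $1$, contradicting the assumption. Thurston rigidity then gives uniqueness of the realization up to M\"obius conjugation.

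Finally, I would verify that the two constructions are inverses. Starting from a critically fixed rational map $f$, one computes $T(f)$ and shows that blowing up $(T(f),\id)$ yields a Thurston map combinatorially equivalent to $f$; this is a direct comparison of the critical/fixed data together with Thurston rigidity. In the other direction, starting from a connected graph $G$ and computing the Tischler graph $T(f_G)$ of the realized rational map, one recognizes $G$ by identifying each edge of $G$ with a fixed internal ray in the explicit Fatou structure produced by the blow-up. This closes the bijection and establishes the theorem.
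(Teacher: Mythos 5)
Your overall architecture (a graph construction on one side, blow-up plus Thurston rigidity on the other) matches the proof in \cite{H_Tischler}, but there is a genuine error in the graph you build from $f$, and it breaks the construction.

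You take the edges of $T(f)$ to be the fixed internal rays in the immediate basins $U_c$ and you claim they land at critical points, citing $\Post(f)=\Crit(f)$. This is false. Each critical point $c$ of a critically fixed rational map is a superattracting fixed point, so $c$ lies in the Fatou set, while the landing point of any internal ray in $U_c$ lies on $\partial U_c\subset J(f)$. Thus the landing points of the fixed internal rays are \emph{repelling} fixed points, never critical points; the hypothesis $\Post(f)=\Crit(f)$ says nothing about them. As a result your proposed graph has edges whose endpoints are not in your declared vertex set $\Crit(f)$, so it is not a graph at all. What the paper actually does (Section~\ref{subsec: Rational case}) is form the Tischler graph $\Tisch(f)$ with vertex set $\Fix(f)$ -- a bipartite graph whose two vertex classes are the superattracting and the repelling fixed points -- and then pass to the \emph{charge graph} $\Charge(f)$ by choosing, inside each face $Q$ of $\Tisch(f)$, a new arc joining the two critical points on $\partial Q$ (Definition~\ref{def: rational charge graph}). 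It is $\Charge(f)$, not $\Tisch(f)$, that has vertex set $\Crit(f)$ and that is inverse to the blow-up; this extra reduction step is missing from your argument, and without it the "final verification that the constructions are inverses" cannot go through, since blowing up $\Tisch(f)$ (which has $\deg(f)+1$ vertices and is bipartite) does not reproduce $f$.

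Two further points. First, the local degree of the blow-up at a vertex $v$ is $\deg_G(v)+1$, not $\deg_G(v)$ (Proposition~\ref{prop: Blow-up properties}); this matters when checking that blow-up of $\Charge(f)$ has the same critical data as $f$. Second, the claim that connectivity of $G$ forces $\lambda(f_G,\Gamma)<1$ for every multicurve $\Gamma$ is exactly the content of \cite[Corollary~3]{PT} (quoted here as Proposition~\ref{prop: rational if charge connected}), and it requires the specific Pilgrim--Tan Lei estimate on how preimages of curves interact with blown-up edges; your sketch acknowledges this as "the crux" but does not supply the mechanism, so it is not a self-contained argument.
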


Here and in the following, a planar embedded graph is allowed to have multiple edges but no loops. To associate a critically fixed rational map with a planar embedded connected graph, one uses the so-called ``\emph{blow-up operation}'' introduced by Pilgrim and Tan Lei in \cite{PT}. It is a special surgery on Thurston maps, which we now roughly describe in the context relevant to us (see a detailed discussion in Section \ref{subsec: The blow-up operation}).

Let $G$ be a (finite) planar embedded graph in $\Sp$ with the vertex set $V(G)$ and the edge set $E(G)$, and let $\varphi$ be a homeomorphism in $\Homeo^+(\Sp,V(G))$. 
First, we cut the sphere $\Sp$ open along every edge $e\in E(G)$ and glue in a closed Jordan region $D_e$ in each slit along the boundary. Then we define a branched covering map $\widehat f\colon \widehat\Sp \to \Sp$ on the resulting $2$-sphere $\widehat\Sp$ as follows: $\widehat{f}$ maps the complement of $\bigcup_{e\in E(G)} \inter(D_e)$ onto $\Sp$ in the same way as the homeomorphism $\varphi$, and it maps each $\inter(D_e)$, $e\in E(G)$, to the complement of $\varphi(e)$ in $\Sp$ by a homeomorphism whose extension to $\partial D_e$ matches the map $\varphi|e$. After a natural identification of the domain sphere $\widehat\Sp$ with the target sphere $\Sp$, we get a critically fixed Thurston map $f\colon\Sp\to\Sp$ of degree $d=|E(G)|+1$ and with critical points at the non-isolated vertices of~$G$. We say that the map $f$ is obtained by \emph{blowing up the pair $(G,\varphi)$}.

It is easy to check that a critically fixed Thurston map obtained by blowing up a pair $(G,\id_{\Sp})$ is realized by a rational map whenever $G$ is connected \cite[Theorem 9]{Pilgrim_crit_fixed}. It is shown in \cite[Proposition 7]{H_Tischler} that the converse is also true: up to isotopy, every critically fixed rational map~$F\colon \widehat{\C} \to \widehat{\C}$ (with $\deg(F)\geq 2$) is obtained by blowing up a pair $(G_F,\id_{\widehat{\C}})$ for some planar embedded connected graph $G_F$ with $V(G_F) = \Crit(F)$. The graph $G_F$ is called the \emph{charge graph} of $F$; it is uniquely defined up to isotopy rel.\ $\Crit(F)$. The existence of such a graph is one of the crucial ingredients in the proof of Theorem \ref{thm: class_criti_fix_rational}. 

In this paper, we provide an extension of the classification result above to the class of \emph{all} critically fixed Thurston maps (including the obstructed ones). In particular, we prove that every critically fixed Thurston map $f \colon \Sp \to \Sp$ can be obtained by blowing a pair $(G_f, \varphi_f)$, where $G_f$ is a planar embedded graph in $\Sp$ with the vertex set $V(G_f)=\Crit(f)$ and exactly $\deg(f)-1$ edges, and $\varphi_f$ is a homeomorphism in $\Homeo^+(\Sp,V(G_f))$ satisfying some extra assumptions. Namely, the image $\varphi_f(e)$ of each edge $e\in E(G_f)$ is isotopic to $e$ rel.\ $V(G_f)$. 
Equivalently, this means that, up to isotopy rel.\ $V(G_f)$, the homeomorphism $\varphi_f$ must fix every face of $G_f$; in particular, it is allowed to ``twist'' (only) the multiply connected faces. 

Similar to the rational case, we call the graph $G_f$ the \emph{charge graph} of $f$ and denote it by $\Charge(f)$. We note that the charge graph is \emph{invariant} under $f$. More precisely, $G_f\subset f^{-1}(G_f)$ up to isotopy rel.\ $\Crit(f)$.

We show that the pair $(G_f, \varphi_f)$ as above is uniquely defined (up to isotopy rel.\ $\Crit(f)$) within the isotopy class of $f$; see Proposition \ref{prop: admis_equiv}\ref{item: case isotopy}. In other words, $(G_f, \varphi_f)$ provides a canonical combinatorial model for $f$. This allows us to classify all critically fixed Thurston maps. Before we can provide the precise statement of this classification, we need the following definitions.

\begin{definition}\label{def: admissibility_and_equivalence}
    Let $G$ be a planar embedded graph in $\Sp$ and $\varphi$ be a homeomorphism in $\Homeo^+(\Sp, V(G))$. We say that $(G, \varphi)$ is an \emph{admissible pair} (in $\Sp$) if $G$ does not have isolated vertices and $\varphi(e)$ is isotopic to $e$ rel.\ $V(G)$ for every edge $e\in E(G)$.
    
    Two admissible pairs $(G_1, \varphi_1)$ and $(G_2, \varphi_2)$ in two topological $2$-spheres $\Sp$ and $\widehat{\Sp}$, respectively, are called \emph{equivalent} if there exists an orientation-preserving homeomorphism $\psi \colon \Sp \to \widehat{\Sp}$ such that $\psi(G_1)=G_2$, $\psi(V(G_1))=V(G_2)$, and $\psi \circ \varphi_1 \circ \psi^{-1}$ is isotopic to $\varphi_2$ rel.\ $V(G_2)$. If $\Sp=\widehat{\Sp}$ and $\psi$ is also isotopic to $\id_{\Sp}$ rel.\ $V(G_1)$ we say that the pairs $(G_1,\varphi_1)$ and $(G_2,\varphi_2)$ are \emph{isotopic}.
\end{definition}

The next theorem provides a complete combinatorial classification of critically fixed Thurston maps.

\begin{maintheorem}\label{thm_intro: classification}
    
    There is a canonical bijection between the combinatorial equivalence classes of critically fixed Thurston maps and the equivalence classes of admissible pairs. 
    
    Furthermore, given a finite subset $Z\subset \Sp$ with $|Z|\geq 2$ and an integer $d\geq 2$, there is a canonical bijection between the isotopy classes of critically fixed Thurston maps $f\colon \Sp\to\Sp$ with $\Crit(f)=Z$ and $\deg(f)=d$ and the isotopy classes of admissible pairs $(G,\varphi)$ with $V(G)=Z$ and $|E(G)|=d-1$.
\end{maintheorem}

The proof of Main Theorem \ref{thm_intro: classification} is based on \emph{Pilgrim's decomposition theory} of Thurston maps \cite{Pilgrim_Canonical,Pilgrim_Comb}. In particular, we show that each critically fixed Thurston map $f$ can be canonically decomposed into homeomorphisms and critically fixed Thurston maps that are realized (Corollary \ref{cor: can_crit_fix_decomposition}). The charge graph of $f$ is then defined as the (disjoint) union of the charge graphs of the ``rational pieces'' in this decomposition; see Section \ref{subsec: The charge graph and its applications} for details.

We also point out the following simple criterion for realizability of critically fixed Thurston maps, which also follows from a decomposition result (see Theorem~\ref{thm: crit-fix-decomposition}).  

\begin{proposition}\label{main: obstr-levy-fixed}
    Let $f\colon \Sp \to \Sp$ be a critically fixed Thurston map. Then $f$ is obstructed if and only if $f$ has a Levy fixed curve, that is, there is an essential Jordan curve $\gamma\subset \Sp \setminus \Crit(f)$ and a component $\gamma'$ of $f^{-1}(\gamma)$ such that $\gamma'$ is isotopic to $\gamma$ rel.\ $\Crit(f)$ and $f|\gamma'\colon \gamma'\to \gamma$ is a homeomorphism.
\end{proposition}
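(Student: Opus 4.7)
The plan is to handle the two directions of the equivalence separately; the sufficient direction is elementary, while the necessary direction relies on the canonical decomposition of critically fixed Thurston maps provided by Theorem~\ref{thm: crit-fix-decomposition}.

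For the direction ``Levy fixed curve $\Rightarrow$ obstructed'', suppose $\gamma$ is a Levy fixed curve with preimage component $\gamma'$ isotopic to $\gamma$ rel.\ $\Crit(f)$ and $f|\gamma'$ a homeomorphism. I would observe that $\Gamma:=\{\gamma\}$ is an $f$-stable multicurve whose associated Thurston transition matrix is the $1\times 1$ matrix with entry $1/\deg(f|\gamma')=1$. Its leading eigenvalue is therefore $1$, so $\Gamma$ constitutes a Thurston obstruction and $f$ is obstructed.

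For the converse direction, suppose $f$ is obstructed. I would apply Theorem~\ref{thm: crit-fix-decomposition} to obtain a canonical $f$-invariant multicurve $\Gamma\subset \Sp\setminus\Crit(f)$ along which $f$ decomposes into homeomorphisms and realized critically fixed Thurston maps. Since $f$ itself is not realized but every small-sphere piece is, $\Gamma$ must be nonempty and the failure of realization is carried by $\Gamma$ alone. I then want to extract from $\Gamma$ a single curve $\gamma$ that is a Levy fixed curve.

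The central claim is that in this situation every $\gamma\in\Gamma$ has period one under $f$ and its isotopic preimage $\gamma'$ satisfies $\deg(f|\gamma')=1$. For the periodicity: each complementary component of $\Gamma$ in $\Sp$ carries a nonempty set of critical points of $f$ (these populate the small spheres hosting the nontrivial pieces of the decomposition), and since $f$ fixes every critical point of $f$ pointwise, $f$ cannot permute these complementary components nontrivially; hence the induced permutation of $\Gamma$ is the identity. For the degree: picking a complementary disk $D$ of $\gamma$ and noting that every critical point of $f$ inside $D$ is fixed, Riemann--Hurwitz applied on the decomposition piece associated to $D$, combined with the identification of $f^{-1}(D)$ containing a single component isotopic to $D$, forces $\deg(f|\gamma')=1$. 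The two conclusions together produce a Levy fixed curve.

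The main obstacle is establishing this central claim in full rigour — that is, translating the hypothesis ``every critical point is fixed'' into the conclusion that the canonical invariant multicurve consists of curves of period $1$ with degree-$1$ restriction, rather than some more general Levy cycle or obstruction. This is a combinatorial bookkeeping on the decomposition graph and its first-return data, and it is the step that uses the full strength of the critically fixed hypothesis.
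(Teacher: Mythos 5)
Your overall strategy matches the paper's: invoke the decomposition theorem (Theorem~\ref{thm: crit-fix-decomposition}) for a completely invariant multicurve and extract a Levy fixed curve from it. The easy direction is fine in spirit, though you should not assert that $\{\gamma\}$ itself is an invariant multicurve — other essential non-isotopic preimage components of $\gamma$ may exist. The standard resolution is either the expansion argument for rational maps, or inflating $\{\gamma\}$ to an invariant multicurve by taking iterated preimages, both of which the paper cites.

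The gap is in the ``central claim.'' Your argument for period one rests on the assertion that every complementary component of $\Gamma$ in $\Sp$ contains a critical point. This is false for the canonical obstruction of a critically fixed Thurston map: the decomposition can have small spheres whose first-return map is a homeomorphism and which contain no critical point at all (Corollary~\ref{cor: can_crit_fix_decomposition}(iii); think of a ``central'' piece separating several connected components of the charge graph). Period one is actually established in Theorem~\ref{thm: crit-fix-decomposition}(ii) by a tree argument on $T_\Gamma$ and $T^{\bullet}_{f^{-1}(\Gamma)}$, so you should simply cite it rather than re-derive it by a shortcut that does not hold. More seriously, your degree argument does not work as stated: complete invariance alone does \emph{not} force $\deg(f|\gamma')=1$ for the isotopic preimage $\gamma'$ (a curve can have a unique isotopic preimage covering it with degree $2$ while the multicurve remains completely invariant — it is then just not an obstruction). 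The Riemann--Hurwitz count you sketch is never pinned down, and I do not see how it closes without an additional hypothesis. What actually delivers degree $1$ is the Perron--Frobenius/simplicity mechanism: Theorem~\ref{thm: crit-fix-decomposition}(i) makes the Thurston matrix $M(f,\Gamma)$ \emph{diagonal} with entries $1/\deg(f|\gamma_i')$; since $\Gamma$ is an obstruction its spectral radius is $\geq 1$, forcing at least one diagonal entry to equal $1$, and simplicity of $\Gamma$ then forces all of them to equal $1$. This is exactly the step you identify as ``the main obstacle,'' and the proposed Riemann--Hurwitz route does not replace the eigenvalue argument.
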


Here, a Jordan curve $\gamma\subset \Sp \setminus \Crit(f)$ is \emph{essential} if each connected component of $\Sp\setminus \gamma$ contains at least two critical points of $f$.

\subsubsection{Pullback relation on trees and the Recognition Problem}

Let $f \colon \Sp \to \Sp$ be a critically fixed Thurston map. We say that a planar embedded tree $T$ in $\Sp$ is  \emph{admissible} (for $f$) if $\Crit(f) \subset V(T)$ and every vertex $v\in V(T)\setminus \Crit(f)$ has degree at least $3$ in $T$. The map $f$ induces a natural relation on the set $\AdmTrees(f)$ of all admissible planar embedded trees for $f$. Namely, suppose $T\in \AdmTrees(f)$ is an admissible tree. It is easy to check that the preimage $f^{-1}(T)$ is a planar embedded connected graph with  $V(f^{-1}(T))\supset \Crit(f)$. Hence, we can take a spanning subtree of the critical points in the graph $f^{-1}(T)$. We ``simplify'' the chosen spanning subtree by forgetting all non-critical vertices of degree $2$ (if there any). The resulting tree $T'$ is again an admissible tree for $f$, which we call a \emph{pullback} of the tree $T$ under the map $f$. We denote by $\Pi_f(T)$ the set of all pullbacks of $T$ and use the notation $\xleftarrow{f}$ for the induced \emph{pullback relation} on $\AdmTrees(f)$, i.e., we write $T\xleftarrow{f}T'$ if $T'$ is a pullback of the tree $T$.
It is easy to see that the relation $\xleftarrow{f}$ descends to the quotient of $\AdmTrees(f)$ by isotopies rel.\ $\Crit(f)$. Moreover, the pullback relation may be naturally extended to the case of general Thurston maps by considering planar embedded trees $T$ with $\Post(f)\subset V(T)$ instead of $\Crit(f)\subset V(T)$; see Section~\ref{subsec: The pullback operation}.

We point out that a pullback of $T$ is not uniquely defined, as we may choose different spanning subtrees of $\Crit(f)$ in $f^{-1}(T)$. This contrasts with the lifting operation in the polynomial setting from \cite{LifitingTrees}. Nevertheless, we can still iterate our pullback relation and consider a sequence $\{T_n\}_{n \geq 0}\subset \AdmTrees(f)$ of admissible trees that satisfy $T_0\,\xleftarrow{f}\,T_1\,\xleftarrow{f}\,T_2\,\xleftarrow{f}\cdots$,
that is, $T_{n+1}\in\Pi_f(T_n)$ for all $n\geq0$. We show that this sequence eventually lands in a special set, depending only on the function $f$ and not on the initial tree~$T_0$.

To be more precise, let us consider the following set \[\mathcal{N}_f:=\Bigl\{[T]\colon T\in \AdmTrees(f) \text{ with $T\cap \Charge(f)= \Crit(f)$}\Bigr\},\]
where $[T]$ denotes the equivalence class of a tree $T\in \AdmTrees(f)$ modulo isotopy rel.\ $\Crit(f)$. 
We note that $\mathcal{N}_f$ is finite if and only if $f$ is realized by a rational map. This follows easily from the connectivity of the charge graph in the rational case. Moreover, if $f$ is realized, then every tree $T$ as above 
is invariant under $f$ (up to isotopy rel.\ $\Crit(f)$), since---again up to isotopy rel.\ $\Crit(f)$---the tree $T$ lies in a region of the sphere on which $f$ is isotopic to the identity.

Now the following statement is true.

\begin{maintheorem}\label{thm_intro: contraction}
    Let $f\colon \Sp \to \Sp$ be a critically fixed Thurston map,  and suppose $\{T_n\}_{n \geq 0}\subset \AdmTrees(f)$ is a sequence of admissible trees such that $T_{n + 1} \in \Pi_f(T_n)$ for all $n \geq 0$. Then there exists $N\in\N$, depending only on $f$ and $T_0$, such that $[T_n] \in \mathcal{N}_f$ for every $n \geq N$.
\end{maintheorem}

In other words, up to isotopy, the tree $T_n$ intersects the charge graph of $f$ only in critical points for every sufficiently large $n$. To prove this result, we establish a ``topological contraction'' property for the pullback relation, see Proposition~\ref{prop: Intersections decreasing}.

Main Theorem~\ref{thm_intro: contraction} allows us to develop an algorithm that computes the charge graph for a given critically fixed Thurston map $f$; see Algorithm~\ref{alg: Lifting algorithm}. The key idea here is that once we get a tree $T$ with $[T]\in\mathcal{N}_f$, we can easily reconstruct $\Charge(f)$; see Section~\ref{subsec: Lifting algorithm} for details. We provide an upper bound for the speed of convergence of Algorithm~\ref{alg: Lifting algorithm} in Theorem~\ref{thm: speed of alg}. Knowing $\Charge(f)$, we can decide if $f$ is realized by a rational map and, if not, find its canonical Thurston obstruction (see Theorem \ref{thm: can_obstr}). Moreover, using a simple combinatorial construction, we can retrieve the canonical admissible pair that is associated with $f$, which solves the Recognition Problem; see Section \ref{subsec: recover-homeo}.

\subsubsection{Graph rotations and the Twisting Problem}

The tools we develop may be used to study the Twisting Problem for critically fixed Thurston maps. Let $f\colon \Sp \to \Sp$ be a critically fixed Thurston map and let $\varphi\in \Homeo^+(\Sp, \Crit(f))$ be a homeomorphism. The Twisting Problem asks to determine the combinatorial equivalence class of the twisted map $g := \varphi \circ f$. In our case, the map $g$ is again a critically fixed Thurston map with $\Crit(g) = \Crit(f)$. Thus, we may use Algorithm~\ref{alg: Lifting algorithm} for solving the Twisting Problem; see Example \ref{ex: Twisting by lifting}.  At the same time, in some special cases, we may solve the Twisting Problem explicitly using a simple combinatorial operation applied to the charge graph of the original map $f$.

To be exact, we consider critically fixed Thurston maps $f\colon \Sp \to \Sp$ obtained by blowing up admissible pairs $(G, \id_{\Sp})$. 
Note that up to isotopy this family includes all critically fixed rational maps (i.e., maps for which the graph $G$ is connected). Further, let $\gamma$ be an essential Jordan curve in $\Sp \setminus \Crit(f)$ such that $i_{C(f)}(G,\gamma)=|G\cap \gamma|$ and $|\gamma \cap e| \leq 1$ for each edge $e\in E(G)$. Here, $i_{C(f)}(\cdot, \cdot)$ denotes the (unsigned) \emph{intersection number} rel.\ $\Crit(f)$; see Section~\ref{subsec: Isotopies and intersection numbers} for the precise definition. Finally, we consider the twisted maps of the form $T_\gamma^n \circ f$, where $n\in\mathbb{Z}$, $f$ and $\gamma$ are as described above, and $T_\gamma$ is the \emph{Dehn twist} about the curve $\gamma$. We prove that the combinatorial equivalence class of such a map can be determined by applying a combinatorial operation, which we call the \emph{$(-n)$-rotation about the curve $\gamma$}, to the original graph $G=\Charge(f)$. Roughly speaking, this operation acts as a ``fractional Dehn twist about the curve $\gamma$'' on $G$; see the precise definition in Section~\ref{subsec: Combinatorial operation}.

\begin{maintheorem}\label{thm_intro: twisting}
    Let $f \colon \Sp \to \Sp$ be a critically fixed Thurston map obtained by blowing up an admissible pair $(G, \id_{\Sp})$. Suppose $n\in \Z$ is arbitrary and $\gamma$ is an essential Jordan curve in $\Sp \setminus \Crit(f)$ such that $i_{C(f)}(G,\gamma)=|G\cap \gamma|$ and $|\gamma \cap e| \leq 1$ for each $e\in E(G)$. 
\begin{enumerate}[label=\normalfont(\roman*)]
\item If $|G\cap \gamma|\geq 1$,  then the twisted map $T_\gamma^n \circ f$ is isotopic to a critically fixed Thurston map obtained by blowing up the admissible pair $(H, \id_{\Sp})$, where $H$ is the result of the $(-n)$-rotation about the curve $\gamma$ applied to $G$.
\item If $|G\cap \gamma|= 0$, then the twisted map $T_\gamma^n \circ f$ is isotopic to a critically fixed Thurston map obtained by blowing up the admissible pair $(G, T_\gamma^n )$.
\end{enumerate}
\end{maintheorem}

 This theorem allows to conclude the following statement.

\begin{corollary}\label{cor_intro: periodic} 
  Suppose we are in the setting of Main Theorem \ref{thm_intro: twisting} with $|G\cap \gamma|\geq 1$. Then the sequence of the combinatorial equivalence classes of $\{T_\gamma^n \circ f\}_{n\in\Z}$ is strictly periodic with the period dividing $|G\cap \gamma|$. In other words, if  $n_1 \equiv n_2 \,\,\, \mod\, |G\cap \gamma|$, then the twisted maps $T_\gamma^{n_1} \circ f$ and $T_\gamma^{n_2} \circ f$ are combinatorially equivalent. 
\end{corollary}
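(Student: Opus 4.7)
The strategy is to reduce the corollary to a purely combinatorial statement about the rotation operation on $G$ and then apply the classification theorem. Set $d := i_f(G,\gamma)$. By Main Theorem~\ref{thm_intro: twisting}(i), for every $n \in \Z$ the twisted map $T_\gamma^n \circ f$ is combinatorially equivalent to the blow-up of the admissible pair $(H_n, \id_{\Sp})$, where $H_n$ denotes the planar graph obtained from $G$ by the $(-n)$-rotation about $\gamma$. By Main Theorem~\ref{thm_intro: classification}, it therefore suffices to show that the admissible pairs $(H_{n_1}, \id_{\Sp})$ and $(H_{n_2}, \id_{\Sp})$ are equivalent in the sense of Definition~\ref{def: admissibility_and_equivalence} whenever $n_1 \equiv n_2 \pmod{d}$. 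Since both homeomorphisms in the pairs are the identity, the definition reduces further to producing an orientation-preserving self-homeomorphism of $\Sp$ that sends $H_{n_2}$ onto $H_{n_1}$.

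The heart of the proof is then the following ``full rotation equals Dehn twist'' identity, which I would extract directly from the definition of the $n$-rotation in Section~\ref{subsec: Combinatorial operation}: for every $n \in \Z$,
\[
    H_{n + d} \;=\; T_\gamma^{-1}(H_n) \quad \text{up to isotopy rel.\ $\Crit(f)$.}
\]
Intuitively, the $(-n)$-rotation reshuffles the $d$ intersection points of $G$ with $\gamma$ by cyclically shifting them along $\gamma$ by $n$ slots, so advancing $n$ by a full period of $d$ slots returns each strand of $G \setminus \gamma$ to its original endpoint on $\gamma$ but wraps it once extra around $\gamma$---which is exactly the action of $T_\gamma^{\pm 1}$ on $G$. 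Granted this identity, for $n_1 - n_2 = kd$ the map $\psi := T_\gamma^{-k}$ is an orientation-preserving self-homeomorphism of $\Sp$ supported in an annular neighbourhood of $\gamma$ disjoint from $\Crit(f) = V(G)$, so $\psi(H_{n_2}) = H_{n_1}$ up to isotopy rel.\ $\Crit(f)$, while $\psi \circ \id_{\Sp} \circ \psi^{-1} = \id_{\Sp}$ tautologically. This exhibits the required equivalence of admissible pairs.

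The main obstacle will be verifying the rotation/Dehn-twist identity displayed above: it amounts to comparing a purely combinatorial surgery on the planar graph $G$---cutting along $G \cap \gamma$ and reglueing the resulting arcs with a cyclic shift---with the topological effect of a Dehn twist on the ambient sphere. The hypotheses $i_f(G,\gamma) = |G \cap \gamma|$ and $|e \cap \gamma| \leq 1$ for each edge $e \in E(G)$ guarantee that $G$ sits in ``minimal position'' with respect to $\gamma$, which should keep the bookkeeping tractable: one localises the comparison to the annulus in which $T_\gamma$ is supported, and the cyclic shift inherent in the rotation operation then matches term-by-term the twisting of each strand. Once this identity is in place, the remainder of the argument is a routine application of the classification and twisting theorems already cited.
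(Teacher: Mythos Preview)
Your proposal is correct and follows essentially the same route as the paper's proof: both reduce to the observation that a full $d$-step rotation of $G$ about $\gamma$ coincides (up to isotopy rel.\ $\Crit(f)$) with the action of a power of the Dehn twist $T_\gamma$, so that the admissible pairs $(H_{n_1},\id_{\Sp})$ and $(H_{n_2},\id_{\Sp})$ are equivalent via $\psi = T_\gamma^{\pm k}$ whenever $n_1-n_2 = kd$, and then invoke the classification theorem. The identity $H_{n+d}\sim T_\gamma^{-1}(H_n)$ that you single out as the main obstacle is exactly what the paper extracts ``by definition of the graph rotation'' (since the $d/d$-fractional twist $T_{d/d,\phi}$ is the honest Dehn twist), so no further work is needed there.
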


\subsection{Organization of the paper}\label{subsec: Organization of the paper}

Our paper is organized as follows. In Section \ref{sec: Background}, we review some general background. In Section \ref{subsec: Notation and basic concepts}, we fix the notation and state some basic definitions. We discuss planar embedded graphs and intersection numbers in Sections \ref{subsec: Planar embedded graphs} and \ref{subsec: Isotopies and intersection numbers}, respectively. In Sections \ref{subsec: Thurston maps} and \ref{subsec: Thurston's characterization of rational maps}, we provide the necessary background on Thurston maps and formulate Thurston's characterization of rational maps. In Section \ref{subsec: Decomposition theory}, we discuss the setup and results from Pilgrim's decomposition theory.  We also formulate some auxiliary results about branched coverings and planar embedded graphs in Section~\ref{subsec: covers and graphs}.

In Section \ref{sec: Critically fixed Thurston maps}, we construct canonical combinatorial models for critically fixed Thurston maps. In Sections \ref{subsec: The blow-up operation} and \ref{subsec: admissble pairs}, we introduce the blow-up operation and discuss its properties. We review the classification of critically fixed rational maps in Section \ref{subsec: Rational case}. 
In Section \ref{subsec: Decomposition}, we study invariant multicurves and decompositions of critically fixed Thurston maps. We discuss the properties of the canonical Thurston obstruction and canonical decomposition for critically fixed Thurston maps in Section \ref{subsec: crit-fix-can-obstr}. In Section \ref{subsec: The charge graph and its applications}, we prove Main Theorem \ref{thm_intro: classification}, i.e., provide a complete classification of critically fixed Thurston maps.

Further, in Section~\ref{sec: Lifting algorithm}, we work on the algorithmic recognition of the combinatorial equivalence classes for critically fixed Thurston maps. In Section \ref{subsec: The pullback operation}, we introduce the pullback relation on (admissible) planar embedded trees. Next, we explore the contraction properties of the pullback relation and prove Main Theorem \ref{thm_intro: contraction} in Section \ref{subsec: Topological contraction of the pullback operation}. In Sections~\ref{subsec: Lifting algorithm} and~\ref{subsec: recover-homeo}, we discuss how to retrieve the canonical combinatorial model for a given critically fixed Thurston map. In particular, we present the  Lifting Algorithm (Algorithm~\ref{alg: Lifting algorithm}) that recovers the charge graph and discuss its complexity.

In Section \ref{sec: Twisting problems}, we study the Twisting Problem for the family of critically fixed Thurston maps. We start  by briefly reviewing the previous work on the problem and looking at a specific example. In Section \ref{subsec: special curves}, we introduce the concept of \emph{simple transversals} with respect to a planar embedded graph. We define the $n$-rotation of a planar embedded graph about its simple transversal in Section \ref{subsec: Combinatorial operation}. Finally, we prove Main Theorem \ref{thm_intro: twisting} and discuss its implications in Section \ref{subsec: Main theorem C}.

\vspace{1cm}

\textbf{Acknowledgments.} This material is based upon work supported by the National Science Foundation under Grant No.\ 1440140 for the first author and Grant No.\ DMS-1928930 for the second author, while the authors were in residence at the Mathematical Sciences Research Institute in Berkeley, California, during the Spring semester of 2022. The authors were also partially supported by the ERC advanced grant ``HOLOGRAM''. We would like to thank Laurent Bartholdi, Mario Bonk, Kostya Drach, Dima Dudko, Lukas Geyer, Russell Lodge, Yusheng Luo, Daniel Meyer, Sabyasachi Mukherjee, Kevin Pilgrim, Palina Salanevich, Dierk Schleicher, Vladlen Timorin, and Rebecca Winarski for valuable comments, remarks, and discussions. We are also grateful to Laurent Bartholdi for his help with the GAP-package~IMG. Finally, we thank the anonymous referees for their thoughtful comments and suggestions on the manuscript.

\section{Background}\label{sec: Background}

\subsection{Notation and basic concepts}\label{subsec: Notation and basic concepts} The sets of positive integers, integers, real numbers, and complex numbers are denoted by $\N$, $\Z$, $\R$, and $\C$, respectively. We use the notation $i$ for the imaginary unit in $\C$, $\I:=[0,1]$ for the closed unit interval on the real line, $\D :=\{z \in \C\colon |z|<1\}$ for the
open unit disk in $\C$, and $\widehat{\C}:=\C\cup\{\infty\}$ for the Riemann sphere. 

The cardinality of a set $X$ is denoted by $|X|$ and the identity map on $X$ by $\id_X$. If $f\colon X\to X$ is a map and $n\in \N$, we denote the $n$-th iterate of $f$ by $f^n$. If $f\colon X\to Y$ is a map and $U\subset X$, then $f|U$ stands for the restriction of $f$ to $U$.

If $X$ is a topological space and $U\subset X$, then $\overline U$ denotes the closure, $\inter(U)$ the
interior, and $\partial U$ the boundary of $U$ in $X$.

Let $S$ be a connected and oriented topological $2$-manifold. We denote its \emph{Euler characteristic} by $\chi(S)$. We use the notation $\Sp$ for an (oriented) \emph{topological $2$-sphere}, that is, a $2$-manifold homeomorphic to the Riemann sphere $\widehat{\C}$. 
Let $V \subset \Sp$ be an open and connected subset of $\Sp$. Then $\chi(V)$ is given by $2 - k_V$, where $k_V$ is the number of complementary components of $V$.

A \emph{Jordan curve} in $\Sp$ is the image of an injective continuous map from $\partial \D$ into the sphere~$\Sp$. A \emph{Jordan arc} $e$ in $\Sp$ is the image $e=\iota(\I)$ of an injective continuous map $\iota\colon \I \to \Sp$. We will use the notation $\partial e := \{\iota(0),\iota(1)\}$ for the \emph{endpoints} and $\inter(e) := e \setminus \partial e$ for the \emph{interior} of the Jordan arc $e$ (these, of course, differ from the boundary and interior of $e$ as a subset of $\Sp$).

A subset $U\subset \Sp$ is called an \emph{open} or \emph{closed Jordan region} (in $\Sp$) if there exists an injective continuous map $\eta\colon \overline{\D} \to \Sp$ such that $U = \eta(\D)$ or $U=\eta(\overline{\D})$, respectively. In both cases, $\partial U = \eta(\partial \D)$ is a Jordan curve in $\Sp$. A \emph{crosscut} in an open or closed Jordan region~$U$ is a Jordan arc $e$ such that $\inter(e)\subset \inter(U)$ and $\partial e \subset \partial U$. 

A \emph{closed annulus} in $\Sp$ is the image $A=\phi( \partial \D \times \I)$ of an injective continuous map $\phi\colon \partial \D \times \I \to \Sp$. A \emph{core curve} of $A$ is a Jordan curve $\gamma \subset \inter(A)$ such that the two boundary curves of $A$ are in distinct components of $\Sp\setminus \gamma$.

Usually, we work with a \emph{finitely marked sphere}, that is, a pair $(\Sp, Z)$, where $Z$ is a finite set of \emph{marked points} in $\Sp$. We say that $e\subset \Sp$ is a Jordan arc in a marked sphere $(\Sp, Z)$ if $e$ is a Jordan arc in $\Sp$ with $\partial e \subset Z$ and $\inter(e) \subset \Sp \setminus Z$. A Jordan curve in $(\Sp,Z)$ is a Jordan curve $\gamma$ in $\Sp$ with $\gamma\subset \Sp\setminus Z$. Such a Jordan curve $\gamma$ is called \emph{essential} if each of the two connected components of $\Sp\setminus \gamma$ contains at least two points from $Z$; otherwise, we say that $\gamma$ is \emph{non-essential}. A non-essential Jordan curve $\gamma$ in $(\Sp,Z)$ is called \emph{null-homotopic} if one of the components of $\Sp\setminus \gamma$ contains no points from $Z$; otherwise, we say that $\gamma$ is \emph{peripheral}.

Let $X$ and $Y$ be topological spaces. A continuous map $H\colon X\times \I \to Y$ is called a \emph{homotopy} from $X$ to $Y$. 
The homotopy $H$ is called an \emph{isotopy} if the \emph{time-$t$ map} $H_t:=H(t,\cdot)\colon X\to Y$ is a homeomorphism for each $t\in \I$. 

Suppose $Z\subset X$. A homotopy $H\colon X\times \I \to Y$ is said to be a homotopy \emph{relative to $Z$} (abbreviated ``$H$ is a homotopy rel.\ $Z$'') if $H_t(p) = H_0(p)$ for all $p\in Z$ and $t\in \I$. Similarly, we define an isotopy rel.\ $Z$.

Two homeomorphisms $h_0, h_1\colon X\to Y$ are called \emph{isotopic} (\emph{rel.\ $Z\subset X$}) if there exists an
isotopy $H\colon X\times \I \to Y$ (rel.\ $Z$) with $H_0=h_0$ and $H_1 = h_1$. 

Given $M, N \subset X$, we say that \emph{$M$ is homotopic to $N$} (\emph{rel.\ $Z\subset X$}) if there exists a homotopy $H\colon X\times \I \to X$  (rel.\ $Z$) with $H_0 = \id_X$ and $H_1(M) = N$. If $H$ is an isotopy rel.~$Z$ we say that \emph{$M$ is isotopic to $N$ rel. $Z$} (or  \emph{$M$ can be isotoped into $N$ rel.\ $Z$}) and denote this by $M\sim N$ rel.\ $Z$.

Let $(\Sp, Z)$ be a finitely marked sphere. We say that two Jordan curves $\gamma_0$ and $\gamma_1$ in $(\Sp, Z)$ are \emph{non-ambient isotopic rel.\ $Z$} if $\gamma_0$ can be continuously deformed through Jordan curves in $(\Sp, Z)$ to $\gamma_1$. More formally, $\gamma_0$ and $\gamma_1$ are non-ambient isotopic rel.\ $Z$ if there exists a continuous map $\gamma\colon \partial \D \times \I \to \Sp$ such that $\gamma(\partial\D\times\{t\})$ is a Jordan curve in  $(\Sp, Z)$ for all $t\in \I$ with  $\gamma(\partial\D\times\{0\})=\gamma_0$ and $\gamma(\partial\D\times\{1\})=\gamma_1$. We define a non-ambient isotopy for Jordan arcs in  $(\Sp, Z)$ in a similar way. Clearly, if two Jordan curves (or arcs) in $(\Sp, Z)$ are isotopic rel.~$Z$, then they are non-ambient isotopic rel.\ $Z$. It is a standard fact that the converse is also true \cite[Theorems A.3 and A.5]{BuserGeometry}; see also \cite[Sections 1.2.5--1.2.7]{FarbMargalit}.

We denote by $\Homeo^+(\Sp, Z)$ the group of all orientation-preserving homeomorphisms of $\Sp$ that fix the set $Z$ element-wise. We will use the notation $\Homeo_0^+(\Sp, Z)$ for the subgroup of $\Homeo^+(\Sp, Z)$ consisting of homeomorphisms isotopic to the identity rel.~$Z$. The \emph{pure mapping class group} of the marked sphere $(\Sp, Z)$ is then defined as the quotient
     $$
        \PMCG(\Sp, Z) := \faktor{\Homeo^+(\Sp, Z)}{ \Homeo_0^+(\Sp, Z)}.
     $$

Let $\gamma$ be a Jordan curve in $(\Sp,Z)$. We will use the notation $T_\gamma$ to denote a \emph{Dehn twist} about $\gamma$ in $(\Sp,Z)$. To define it, first consider the (left) \emph{twist map} $T\colon \partial \D \times \I \to \partial \D \times \I$ given by the formula
$T(e^{2\pi i \theta}, t) = \left(e^{2\pi i(\theta + t)}, t\right)$. We assume that the cylinder $\partial \D \times \I$ is oriented so that its embedding into the complex plane $\C$ via the map $(e^{2\pi i \theta}, t)\mapsto e^{2\pi i \theta}(t+1)$ is orientation-preserving. Now let $A\subset \Sp \setminus Z$ be a closed annulus in $\Sp$ with core curve $\gamma$ and $\phi\colon \partial \D \times \I \to A$ be an orientation-preserving homeomorphism. Then a (left) Dehn twist $T_\gamma$ about the curve $\gamma$ is defined by
$$
    T_\gamma(p)=\begin{cases}
        (\phi \circ T \circ \phi^{-1})(p) & \text{ if } p \in A\\
        p & \text{ if } p \in \Sp \setminus A.
    \end{cases}
$$
By construction, the map $T_\gamma$ is in $\Homeo^+(\Sp, Z)$. It is uniquely defined up to isotopy rel.~$Z$ independently of the choice of $A$ and $\phi$. Furthermore, the isotopy class of $T_\gamma$ does not depend on the choice of the Jordan curve $\gamma$ within its isotopy class rel.\ $Z$; see \cite[Appendix~A2]{HubbardBook1} and \cite[Section~3.1.1]{FarbMargalit}.

\subsection{Planar embedded graphs} \label{subsec: Planar embedded graphs}
We refer the reader to \cite{DiestelGraph} for general background in graph theory. Below we conduct a discussion in the setting of planar embedded graphs, though many of the concepts are also relevant for abstract graphs.

A \emph{planar embedded graph} in a sphere $\Sp$ is a pair ${G=(V, E)}$, where $V$ is a finite set of points in $\Sp$ and $E$ is a finite set of Jordan arcs in $(\Sp, V)$ with pairwise disjoint interiors. The sets $V$ and $E$ are called the \emph{vertex} and \emph{edge sets} of $G$, respectively. Note that our notion of a planar embedded graph allows \emph{multiple edges}, that is, distinct edges  that connect the same pair of vertices. 
However, it does not allow \emph{loops}, that is, edges that connect a vertex to itself.

In the following, suppose $G = (V, E)$ is a planar embedded graph in $\Sp$. The \emph{degree} of a vertex $v$ in $G$, denoted by $\deg_G(v)$, is the number of edges in $G$ incident to $v$. If $\deg_G(v)=0$, we say that the vertex $v$ is \emph{isolated}, and if  $\deg_G(v)=1$, we say that the vertex $v$ is a \emph{leaf}.

The subset $\mathcal{G}:=V \cup \bigcup_{e \in E} e$ of $\Sp$ is called the \emph{realization} of $G$. A \emph{face} of the graph $G$ is a connected component of $\Sp\setminus \mathcal{G}$. Given a planar embedded graph $G$, we denote by $V(G)$, $E(G)$, and $F(G)$ the sets of vertices, edges, and faces of $G$, respectively. 

It will be convenient to conflate a planar embedded graph $G$ with its realization $\mathcal{G}$. In this case, we will specify a finite set $V(\mathcal{G}) \subset \mathcal{G}$ of distinguished points that serve as the vertices of the graph. Then the edge set $E(\mathcal{G})$ consists of the closures of the components of $\mathcal{G}\setminus V(\mathcal{G})$.

A \emph{walk $P$ of length $n$ between vertices $v$ and $v'$} in $G$ is a sequence $(v_0 = v, e_0, v_1, e_1, \dots, \\ v_{n-1}, e_{n - 1}, v_n = v')$, where $e_j$ is an edge in $G$ incident to the vertices $v_j$ and $v_{j + 1}$ for each $j = 0, \dots, n - 1$. If it does not create ambiguity, we may describe the walk $P$ by the sequence $(v_0,v_1,\dots, v_n)$ of its consecutive vertices. 
The walk $P$ is called a \emph{path} if all its edges $e_0,e_1,\dots,e_{n-1}$ are distinct, and it is called a \emph{simple path} if all its vertices $v_0,v_1,\dots,v_n$ are distinct.

A path $(v_0, e_0, v_1, e_1, \dots, v_{n-1}, e_{n - 1}, v_n)$ in $G$ with $v_0=v_n$ and $n\geq 2$ is called a \emph{cycle of length~$n$}.   
Such a cycle is called \emph{simple} if all vertices $v_j, j = 0,\dots, n-1$, are distinct.

The graph $G$ is called \emph{connected} if there is a path in $G$ between every two vertices $v, v' \in V$. In other words, $G$ is connected if its realization is a connected subset of $\Sp$. It follows that the graph $G$ is connected if and only if each face of $G$ is simply connected. We say that the graph $G$ is a \emph{tree} if $G$ is connected and there are no cycles in $G$.

A \emph{subgraph} of $G$ is a planar embedded graph $G' = (V', E')$ such that $V' \subset V$ and $E' \subset E$.  A \emph{connected component of $G$} is a maximal connected subgraph of $G$. The number $k_G$ of connected components of $G$ is given by the \emph{Euler formula}
\[ k_G=|F(G)|-|E(G)|+|V(G)|-1.\]

Let $A$ be a non-empty subset of $V$. A \emph{spanning subtree of $A$ in $G$} is a minimal subtree $T$ of $G$ with $A\subset V(T)$. Such a subtree $T$ exists if and only if all vertices in $A$ belong to the same connected component of $G$. Note that each leaf of $T$ must be in $A$ due to minimality.

Suppose $\Sp$ and $\widehat{\Sp}$ are two topological $2$-spheres. Let $G = (V, E)$ and 
$\widehat{G}=(\widehat{V}, \widehat{E})$ be two planar embedded graphs in $\Sp$ and $\widehat{\Sp}$, respectively. We say that $G$ is \emph{isomorphic} to $\widehat{G}$ if there exists an orientation-preserving homeomorphism $\psi\colon \Sp \to \widehat{\Sp}$ that bijectively maps the vertices and edges of $G$ to the vertices and edges of $\widehat{G}$, that is, $\widehat{V}=\psi(V)$ and $\widehat{E}=\{\psi(e): e\in E\}$.  In this case, we call $\psi$ an \emph{isomorphism} between $G$ and $\widehat{G}$. Clearly, isomorphisms induce an equivalence relation on the set of all planar embedded graphs. An equivalence class of this relation is called an \emph{isomorphism class} of planar embedded graphs.

\subsection{Isotopies and intersection numbers} \label{subsec: Isotopies and intersection numbers} In various constructions of isotopies throughout this paper, we use the following fact without explicit reference (the proof is immediate from \cite[Theorem A.6(ii)]{BuserGeometry}).

\begin{lemma}\label{lem: Buser_isotopy_arcs}
Let $W$ be an open Jordan region in $\Sp$. Suppose $\alpha$ and $\beta$ are Jordan arcs in~$\Sp$ with $\inter(\alpha),\inter(\beta)\subset W$ and $\partial \alpha =\partial \beta$. Then $\alpha$ and $\beta$ are isotopic rel.\ $\partial \alpha \cup (\Sp\setminus W)$.
\end{lemma}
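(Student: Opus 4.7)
The plan is to reduce the statement to a standard fact about arcs in a closed disk and then extend by the identity. First, I observe that $\overline{W}$ is a closed Jordan region, so by the Schoenflies theorem there is an orientation-preserving homeomorphism $\Phi\colon \overline{W} \to \overline{\D}$. Since $\inter(\alpha), \inter(\beta)\subset W$, continuity forces the common endpoints $\partial\alpha = \partial\beta$ to lie in $\overline{W}$, and both arcs are therefore contained in $\overline{W}$. Transporting $\alpha$ and $\beta$ via $\Phi$, I obtain two Jordan arcs in $\overline{\D}$ with the same endpoints and interiors in $\D$.

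Next, I would invoke the cited result \cite[Theorem A.6(ii)]{BuserGeometry}, which produces an ambient isotopy of $\overline{\D}$ taking one such arc to the other while fixing pointwise both the common endpoints and the entire boundary circle $\partial\D$. Pulling this isotopy back by $\Phi^{-1}$, I obtain an isotopy $H\colon \overline{W}\times \I \to \overline{W}$ with $H_0 = \id_{\overline{W}}$, $H_1(\alpha) = \beta$, and $H_t$ fixing $\partial W \cup \partial\alpha$ pointwise for every $t \in \I$.

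Finally, I extend $H$ to $\widetilde H\colon \Sp\times \I \to \Sp$ by declaring $\widetilde H_t$ to be the identity on $\Sp \setminus W$ for all $t$. Because $H_t|\partial W = \id_{\partial W}$ for every $t$, the extension is continuous across $\partial W$, and each $\widetilde H_t$ is a homeomorphism of $\Sp$ that fixes $\partial\alpha \cup (\Sp\setminus W)$ pointwise. This is precisely the required isotopy rel.\ $\partial\alpha \cup (\Sp\setminus W)$.

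The only mild obstacle is ensuring the compatibility of the isotopy with the identity on $\Sp\setminus W$; this is guaranteed by the fact that Buser's theorem delivers an isotopy fixing $\partial \D$ \emph{pointwise}, not merely setwise. No further difficulty is anticipated, and the lemma is essentially a repackaging of a classical disk statement into the form we need for subsequent sections.
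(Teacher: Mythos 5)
Your proof is correct and takes the same route the paper intends: the paper offers no written argument, only the citation to Buser's Theorem A.6(ii), and your proposal is precisely a careful unpacking of that citation — identify $\overline W$ with $\overline{\D}$ (this is in fact built into the paper's definition of a Jordan region, so Schoenflies is available but not even necessary), invoke the disk-case isotopy theorem, and extend by the identity on $\Sp\setminus W$. You also correctly isolate the one point that actually needs checking, namely that the isotopy supplied by Buser fixes $\partial \D$ pointwise rather than just setwise, since otherwise the extension by the identity would fail to be continuous.
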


We will frequently consider planar embedded graphs up to isotopy rel.\ finite number of marked points in $\Sp$.

\begin{definition}
    Let $G$ and $G'$ be two planar embedded graphs in $\Sp$ and $Z\subset \Sp$ be a finite set of points. We say that $G$ and $G'$ are \emph{isotopic rel.\ $Z$}, denoted by $G\sim G'$\ rel. $Z$, if there exists an isotopy $H\colon \Sp\times \I \to \Sp$\ rel. $Z$ such that the following conditions are satisfied: 
    \begin{enumerate}[label=(\roman*)]
        \item $H_0=\id_{\Sp}$; \label{item: isotopy_i}
        \item $H_1(V(G)) = V(G')$; \label{item: isotopy_ii}
        \item $H_1(G) = G'$. \label{item: isotopy_iii}
    \end{enumerate}
\end{definition}

Note that \ref{item: isotopy_ii} and \ref{item: isotopy_iii} imply that $H_1$ provides a one-to-one   correspondence between the edges of $G$ and $G'$.

The next statement guarantees that we do not run into topological difficulties while
studying planar embedded graphs (the proof follows from \cite[Lemma A.8]{BuserGeometry}).

\begin{proposition}\label{prop: Graph isotopic to piecewise geodesic}
Let $G$ be a planar embedded graph in $\Sp$ and $Z\subset \Sp$ be a finite set of points. Then there
exists a planar embedded graph $G'$ such that $G\sim G'$ rel.\ $V(G)\cup Z$ and such that each edge of $G'$ is a piecewise geodesic arc in $\Sp$ (with respect to some fixed Riemannian metric on $\Sp$).
\end{proposition}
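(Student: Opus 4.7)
The plan is to reduce to a local, edge-by-edge construction and then apply Lemma \ref{lem: Buser_isotopy_arcs} to glue the resulting deformations into a single isotopy rel.\ $V(G)\cup Z$. Fix a Riemannian metric $\rho$ on $\Sp$. For each edge $e\in E(G)$ with endpoints $u,v$, the interior $\inter(e)$ is disjoint from $V(G)$, from $Z$, and from the other edges of $G$. Hence I can pick, for each $e$, an open Jordan region $W_e\subset \Sp$ containing $\inter(e)$ and with $u,v\in \partial W_e$, such that the closures $\overline{W_e}$ meet only at shared endpoints, and such that $\overline{W_e}\cap Z\subset\{u,v\}$ and $\overline{W_e}\cap e'=\emptyset$ for every other edge $e'$ of $G$ not sharing an endpoint with $e$ (and meets the closures of edges sharing an endpoint only at that endpoint). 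This is possible because $V(G)\cup Z$ is finite and the interiors of the edges are pairwise disjoint.

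Next, for each $e$ I produce a piecewise geodesic Jordan arc $\beta_e\subset \overline{W_e}$ with $\partial\beta_e=\{u,v\}$ and $\inter(\beta_e)\subset W_e$. To do this, use compactness of $e$ to choose a finite sequence of points $u=p_0,p_1,\dots,p_n=v$ along $e$ that is so fine that, for each $i$, there is a convex geodesic ball $B_i\subset W_e$ (with respect to $\rho$) containing both $p_i$ and $p_{i+1}$; inside $B_i$, connect $p_i$ to $p_{i+1}$ by the unique minimizing geodesic segment $\sigma_i$. A standard general position perturbation of the intermediate $p_i$ ensures that the concatenation $\beta_e:=\sigma_0\cdot \sigma_1\cdots \sigma_{n-1}$ is a (non-self-intersecting) Jordan arc. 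By Lemma \ref{lem: Buser_isotopy_arcs} applied in $W_e$ to $e$ and $\beta_e$, there exists an isotopy $H^e\colon \Sp\times \I\to\Sp$ rel.\ $\partial e\cup(\Sp\setminus W_e)$ with $H^e_0=\id_{\Sp}$ and $H^e_1(e)=\beta_e$.

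Since the supports of the individual isotopies $H^e$ lie in the pairwise essentially disjoint regions $\overline{W_e}$ and each $H^e$ fixes $V(G)$ and $Z$ pointwise (the latter because $\overline{W_e}\cap Z\subset\{u,v\}$), their composition can be combined into a single ambient isotopy $H\colon\Sp\times\I\to\Sp$ rel.\ $V(G)\cup Z$ with $H_0=\id_{\Sp}$ and with $H_1$ mapping each edge $e$ to the piecewise geodesic arc $\beta_e$. Setting $G'$ to be the planar embedded graph with $V(G')=V(G)$ and $E(G')=\{\beta_e : e\in E(G)\}$, the isotopy $H$ witnesses $G\sim G'$ rel.\ $V(G)\cup Z$, and every edge of $G'$ is piecewise geodesic by construction.

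The main obstacle is ensuring that the piecewise geodesic approximations $\beta_e$ remain embedded and stay inside the prescribed collars $W_e$; this is where a careful choice of subdivision points $p_i$ (using local convexity radii of $\rho$) and a small generic perturbation are essential. Once the $W_e$ and $\beta_e$ are in place, the gluing of isotopies is routine because their supports are arranged to be disjoint away from the (fixed) vertex set.
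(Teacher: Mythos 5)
Your construction is sound and gives a direct, self-contained proof, whereas the paper simply cites \cite[Lemma A.8]{BuserGeometry} for this fact. The structure is the right one: thin pairwise-disjoint collars $W_e$ around each $\inter(e)$ with $\overline{W_e}\cap (V(G)\cup Z)=\partial e$ and $\overline{W_e}$ disjoint from the other edges off $\partial e$; a piecewise geodesic replacement $\beta_e$ for each edge inside $W_e$; Lemma~\ref{lem: Buser_isotopy_arcs} to move $e$ to $\beta_e$ by an isotopy supported in $W_e$; and commutativity of the finitely many isotopies (disjoint supports) to assemble an ambient isotopy rel.\ $V(G)\cup Z$. The resulting $\beta_e$ have pairwise disjoint interiors because $\inter(\beta_e)\subset W_e$ and the $W_e$ are pairwise disjoint, so $G'$ is indeed a planar embedded graph.

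Two points deserve a bit more care than ``a standard general position perturbation.'' First, the existence of collars $W_e$ with the stated disjointness properties for arbitrary (merely continuous) Jordan arcs is not automatic; it comes from the Schoenflies theorem, which provides a tubular (product) neighborhood of each Jordan arc. Invoking this also simplifies the second point: embeddedness of $\beta_e$. A fine subdivision does make each short chord close in $C^0$ to the corresponding subarc of $e$, so in product coordinates $W_e\cong[0,1]\times(-1,1)$ each chord is confined to a narrow band $(t_i-\varepsilon,t_{i+1}+\varepsilon)\times(-1,1)$; non-adjacent chords are then disjoint, and adjacent chords meet only at the shared $p_{i+1}$ once the triple $p_i,p_{i+1},p_{i+2}$ lies in a strongly convex ball. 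With that spelled out, the perturbation step can be dispensed with. With these clarifications the proof is correct and, pedagogically, is more informative than the paper's bare citation.
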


Using \cite[Theorem A.5]{BuserGeometry} and Lemma \ref{lem: Buser_isotopy_arcs}, we obtain the following criterion for two planar embedded graphs to be isotopic. 

\begin{proposition}\label{prop: Graph isotopic criterion}
    Let $G$ and $G'$ be two planar embedded graphs with a common vertex set~$V$. Then $G$ and $G'$ are isotopic rel.\ $V$ if and only if $|E(G)| = |E(G')|$ and for each edge $e\in E(G)$ there is an edge $e'\in E(G')$ such that $e$ and $e'$ are isotopic rel.\ $V$ and $m_G(e)=m_{G'}(e')$.
\end{proposition}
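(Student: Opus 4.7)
The forward implication is immediate from the definition. If $H\colon\Sp\times\I\to\Sp$ is an isotopy rel.\ $V$ with $H_0=\id_\Sp$ and $H_1(G)=G'$, then $e\mapsto H_1(e)$ is a bijection $E(G)\to E(G')$; restricting $H$ to $e\times\I$ exhibits $e$ as isotopic rel.\ $V$ to $H_1(e)$. Since the bijection is induced by a single homeomorphism and isotopy rel.\ $V$ is an equivalence relation on edges, the multiplicities $m_G(e)$ and $m_{G'}(H_1(e))$ must agree.

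For the converse, the plan is to promote the given edgewise isotopies into a single ambient isotopy of $\Sp$ rel.\ $V$. First, by Proposition~\ref{prop: Graph isotopic to piecewise geodesic}, I would replace $G$ and $G'$ by piecewise geodesic representatives; this reduction preserves the hypothesis. The multiplicity-matching condition then produces an honest bijection $\phi\colon E(G)\to E(G')$ such that $e\sim\phi(e)$ rel.\ $V$ for every edge. I would then proceed by induction on $|E(G)|$. For a single edge $e$ with target $e'=\phi(e)$, an open Jordan region $W$ chosen as a small neighborhood of $e\cup e'$ that avoids $V\setminus\partial e$ allows Lemma~\ref{lem: Buser_isotopy_arcs} to produce an ambient isotopy rel.\ $V$ taking $e$ to $e'$. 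For the inductive step, having already matched the first $k$ edges by an ambient isotopy rel.\ $V$, I would extend it by an ambient isotopy that is the identity on the already-matched subgraph $\mathcal{G}_k$: the complement $\Sp\setminus(\mathcal{G}_k\cup V)$ decomposes into open Jordan regions in which Lemma~\ref{lem: Buser_isotopy_arcs} (or its surface-with-boundary counterpart, Theorem~A.5 of \cite{BuserGeometry}) applies to the $(k+1)$-st edge.

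The main obstacle is the presence of multiple edges. Two edges of $G$ joining the same pair of vertices are typically not isotopic rel.\ $V$: their isotopy class is distinguished by how they separate the other points of $V$, and the same phenomenon occurs in $G'$. Consequently the naive edgewise isotopies of the hypothesis need not combine into an ambient isotopy without additional bookkeeping. The multiplicity condition $m_G(e)=m_{G'}(\phi(e))$ is tailored precisely to this situation: it guarantees that within each isotopy class of edges, the two graphs have the same number of parallel representatives, so $\phi$ can be chosen to pair them up consistently with the cyclic order at each shared vertex. I expect that respecting this cyclic order in the inductive step, and choosing the Jordan regions $W$ at each stage so as to avoid the edges already matched, is the technical heart of the argument; once these choices are arranged, the global ambient isotopy is assembled by concatenation of the local ones supplied by Lemma~\ref{lem: Buser_isotopy_arcs}.
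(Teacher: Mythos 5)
Your forward implication is fine. For the converse, your plan --- piecewise-geodesic reduction, then an edge-by-edge induction in which the ambient isotopy is extended while fixing the already-matched subgraph $\mathcal{G}_k$ --- is a reasonable route, and since the paper derives the proposition from Lemma~\ref{lem: Buser_isotopy_arcs} and Theorem~A.5 of \cite{BuserGeometry} without writing out an argument, there is no authors' proof to compare against directly. However, the inductive step as you describe it contains a genuine gap, which you flag but do not close. Knowing $e_{k+1}\sim\phi(e_{k+1})$ rel.\ $V$ does \emph{not} by itself give an isotopy rel.\ $V\cup\mathcal{G}_k$: an isotopy rel.\ $V$ may sweep the arc across edges of $\mathcal{G}_k$, and if $e_{k+1}$ and $\phi(e_{k+1})$ end up in different complementary faces of $\mathcal{G}_k$ there is no single open Jordan region to which Lemma~\ref{lem: Buser_isotopy_arcs} applies. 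You must establish at each stage that the two arcs are actually isotopic rel.\ $V\cup\mathcal{G}_k$, and this is false for an arbitrary choice of $\phi$ and processing order.

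When the isotopy class of $e_{k+1}$ is not yet represented among the edges of $\mathcal{G}_k$, the gap can be closed by an innermost-bigon argument: a bigon cut off by subarcs of $e_{k+1}$ and $\phi(e_{k+1})$ has its corners at interior intersection points and contains no vertex of $V$ in its closure, so its closure meets no edge of $\mathcal{G}_k$ and the bigon may be removed rel.\ $V\cup\mathcal{G}_k$; once the two arcs are interior-disjoint, the bigon they bound also avoids $\mathcal{G}_k$, because any edge it contained would have to join $\partial e_{k+1}$ with interior in the bigon, hence be parallel to $e_{k+1}$, contrary to assumption. The truly delicate case is exactly the one you single out but leave unresolved: when $\mathcal{G}_k$ already contains edges parallel to $e_{k+1}$. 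The parallel edges in one isotopy class of a planar graph are linearly ordered by the nesting of the bigons that consecutive pairs bound, and $e_{k+1}$ and $\phi(e_{k+1})$ may be separated by already-matched edges in this stack, in which case they sit in different faces of $\mathcal{G}_k$ and cannot be matched while fixing $\mathcal{G}_k$. One must therefore choose $\phi$ and the processing order to respect this linear stack order within each class; the "cyclic order at each shared vertex" you invoke is not quite the right invariant, and you offer no argument that such a consistent $\phi$ exists (indeed that is essentially what the proposition asserts). A cleaner way to finish your plan: first match one representative per isotopy class using the bigon argument (these are pairwise non-parallel, so the induction runs unobstructed), then thicken each representative to a closed Jordan region disjoint from the rest of the matched subgraph and large enough to contain both stacks, and match the stacks in order inside that region via Lemma~\ref{lem: Buser_isotopy_arcs}. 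This last step is the substance of the cited Theorem~A.5, which your sketch currently treats as free.
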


Here, $m_G(e)$ denotes the \emph{multiplicity} of an edge $e$ in a planar embedded graph $G$, that is, the total number of edges of $G$ that are isotopic to $e$ rel.\ $V(G)$. 

\medskip

In the following, let $(\Sp, Z)$ be a finitely marked sphere. The (unsigned) \emph{intersection number} between two Jordan arcs or curves $\alpha$ and $\beta$ in $(\Sp,Z)$ is defined as
$$
    i_Z(\alpha, \beta) := \inf_{\substack{\text{$\alpha'\sim \alpha$ rel.\ $Z$}, \\ \text{$\beta'\sim \beta$ rel.\ $Z$}}} \left|(\alpha' \cap \beta') \setminus Z\right|,
$$
where the infimum is taken over all Jordan arcs or curves $\alpha'$ and $\beta'$ in $(\Sp,Z)$ that are isotopic to $\alpha$ and $\beta$ rel.\ $Z$, respectively. 
Note that the intersection number is finite, because we can always reduce to the case when $\alpha$ and $\beta$ are piecewise geodesics with respect to some
Riemannian metric on $\Sp$. 
We say that $\alpha$ and $\beta$ are in \emph{minimal position} rel.\ $Z$ if $|(\alpha \cap \beta) \setminus Z| = i_Z(\alpha, \beta)$.

Let $\alpha$ and $\beta$ be two Jordan arcs or curves in $(\Sp, Z)$. We say that subarcs $\alpha' \subset \alpha$ and $\beta' \subset \beta$ form a \emph{bigon} $U$ in $(\Sp, Z)$  
if $\partial\alpha'=\partial\beta'$, $\inter(\alpha')\cap\inter(\beta')=\emptyset$, and $U$ is a connected component of $\Sp \setminus (\alpha' \cup \beta')$ with $U \cap Z = \emptyset$; see the left part of Figure \ref{fig: Removing bigons} for an illustration. It is easy to see that in this situation $\alpha$ and $\beta$ are not in minimal position rel.\ $Z$. Indeed, one of the curves, say $\alpha$, may be isotoped into a new curve $\widetilde{\alpha}$ rel.\ $Z$ with $|(\widetilde{\alpha} \cap \beta) \setminus Z| < |(\alpha \cap \beta) \setminus Z|$; see the right part of Figure \ref{fig: Removing bigons}. 
We call this procedure ``\emph{removing a bigon}'' between $\alpha$ and~$\beta$. In fact, the converse is also true. If two Jordan arcs or curves $\alpha$ and $\beta$ in $(\Sp, Z)$ with transverse intersections are not in minimal position, then there are subarcs $\alpha' \subset \alpha$ and $\beta' \subset \beta$ forming a bigon (see \cite[Proposition~1.7 and Section 1.2.7]{FarbMargalit}).

\begin{figure}[t]
    \centering   
     \begin{overpic}[width=14cm]{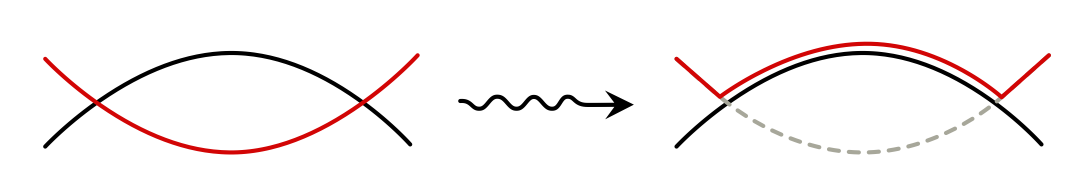}
%        \put(44.5,14){isotopy}
%        \put(42,7){rel.\ $\Crit(f)$}    
        \put(17.5,13){$\beta'\subset \beta$}
        \put(17.5,-0.5){\textcolor{red}{$\alpha'\subset \alpha$}}    
%        \put(1,3){$\beta$}
%        \put(1,10){\textcolor{red}{$\alpha$}}    
        \put(98.5,3){$\beta$}
        \put(98.5,10){\textcolor{red}{$\widetilde{\alpha}$}}           
        \put(20.5,6){$U$}
    \end{overpic}
    
    \caption{Removing a bigon between two Jordan arcs or curves $\alpha$ and $\beta$.}
        
    \label{fig: Removing bigons}
        
\end{figure}

     Let $G$ be a planar embedded graph in $\Sp$ 
     and $\alpha$ be a Jordan arc (or curve) in $(\Sp, Z)$. The \emph{intersection number} $i_Z(G, \alpha)$ between $G$ and $\alpha$ rel.\ $Z$ is defined as
     $$
     i_Z(G, \alpha) := \inf_{\substack{\text{$G'\sim G$\ rel. $Z$},\\\text{$\alpha'\sim \alpha$\ rel. $Z$}}}\,\,{
     |(G'\cap \alpha')\setminus Z|},
     $$
     where the infimum is taken over all planar embedded graphs $G'$ in $\Sp$ and Jordan arcs (curves) $\alpha'$ in $(\Sp, Z)$  that are isotopic to $G$ and $\alpha$ rel.\ $Z$, respectively. We say that $G$ and~$\alpha$ are in \emph{minimal position} rel.\ $Z$ if they satisfy 
     $$i_Z(G, \alpha)= 
    |(G\cap \alpha)\setminus Z|.$$

 The following lemma follows easily from the definitions and Proposition \ref{prop: Graph isotopic to piecewise geodesic} above. 

\begin{lemma}\label{lem: intersection_graph_arc}
    Let $G$ be a planar embedded graph in $\Sp$ and $\alpha$ be a Jordan arc (or curve) in $(\Sp, Z)$, where $Z \subset \Sp$ is a finite set of points. Then the following statements are true:
    \begin{enumerate}[label=\normalfont{(\roman*)}]
        \item The intersection number $i_Z(G, \alpha)$ is finite.
        
        \item\label{item: graph_arc}  There exists a Jordan arc (curve) $\alpha'$ in $(\Sp, Z)$ that is isotopic to $\alpha$ rel.\ $Z$ such that $G$ and $\alpha'$ are in minimal position rel.\ $Z$.
        
        \item There exists a planar embedded graph $G'$ that is isotopic to $G$ rel.\ $Z$ such that $G'$ and $\alpha$ are in minimal position rel.\ $Z$.
        
    \end{enumerate}
\end{lemma}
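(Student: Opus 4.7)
Proof proposal. The plan is to deduce all three statements from Proposition \ref{prop: Graph isotopic to piecewise geodesic} combined with the observation that the infimum in the definition of $i_Z(G,\alpha)$ may equivalently be taken by moving only $\alpha$ (with $G$ fixed), or only $G$ (with $\alpha$ fixed). This rests on the fact that, by our conventions, every isotopy rel.\ $Z$ between planar embedded graphs or between Jordan arcs and curves is realized by an ambient isotopy of $\Sp$ rel.\ $Z$.

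For (i), I would apply Proposition \ref{prop: Graph isotopic to piecewise geodesic} to isotope $G$ rel.\ $V(G)\cup Z$ to a piecewise geodesic graph $G_0$ with respect to some fixed Riemannian metric on $\Sp$, and similarly use Lemma \ref{lem: Buser_isotopy_arcs} to isotope $\alpha$ rel.\ $Z$ to a piecewise geodesic arc or curve $\alpha_0$. After a small generic perturbation (arranged while preserving the isotopy classes), $G_0$ and $\alpha_0$ intersect transversely in finitely many points, giving $i_Z(G,\alpha) \leq |(G_0\cap \alpha_0)\setminus Z| < \infty$.

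For (ii), since $i_Z(G,\alpha)$ is an infimum of non-negative integers, it is attained: there exist $G_0 \sim G$ and $\alpha_0 \sim \alpha$ rel.\ $Z$ with $n := |(G_0\cap \alpha_0)\setminus Z| = i_Z(G,\alpha)$. The isotopy witnessing $G \sim G_0$ rel.\ $Z$ is, by definition, an ambient isotopy $H\colon \Sp\times \I \to \Sp$ rel.\ $Z$ with $H_0 = \id_\Sp$ and $H_1(G) = G_0$. Setting $\alpha' := H_1^{-1}(\alpha_0)$, the identity
\[
G \cap \alpha' = H_1^{-1}\bigl(H_1(G)\cap \alpha_0\bigr) = H_1^{-1}(G_0\cap \alpha_0),
\]
combined with the fact that $H_1^{-1}$ is a bijection fixing $Z$, gives $|(G\cap \alpha')\setminus Z| = n$. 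That $\alpha'$ is isotopic to $\alpha$ rel.\ $Z$ follows by concatenating the ambient isotopy $s\mapsto H_s^{-1}$ (which is rel.\ $Z$ and starts at $\id_\Sp$) with any ambient isotopy from $\alpha$ to $\alpha_0$. Part (iii) is proved by the symmetric argument, swapping the roles of $G$ and $\alpha$: pulling $G_0$ back via the inverse of an ambient isotopy realizing $\alpha \sim \alpha_0$ rel.\ $Z$ produces the desired $G'$, with vertex set the image of $V(G_0)$ and edge set the image of $E(G_0)$ under that inverse.

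The only point requiring any care is the verification that the inverse of an ambient isotopy rel.\ $Z$ is again an ambient isotopy rel.\ $Z$, which is immediate from the pointwise invariance of $Z$. Everything else — piecewise geodesic approximation, finiteness of transverse intersections, and attainment of integer-valued infima — is routine, so I do not expect any substantive obstacle.
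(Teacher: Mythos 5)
Your argument is correct and follows the paper's (implicit) approach: finiteness via piecewise-geodesic approximation, and then exploiting the fact that isotopy rel.\ $Z$ of graphs and arcs is, by the paper's conventions, \emph{ambient} isotopy, so that an attained minimizing pair $(G_0,\alpha_0)$ can be converted to a one-sided move by applying $H_1^{-1}$. One minor slip: Lemma~\ref{lem: Buser_isotopy_arcs} does not by itself produce a piecewise geodesic representative of $\alpha$; the right tool there is again Proposition~\ref{prop: Graph isotopic to piecewise geodesic}, applied to $\alpha$ viewed as a small planar embedded graph (a single edge between its two endpoints in $Z$ if $\alpha$ is an arc, or a two-vertex two-edge cycle if $\alpha$ is a Jordan curve).
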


Lemma \ref{lem: intersection_graph_arc}\ref{item: graph_arc} implies that if $i_Z(G,\alpha)=0$, then we may isotope $\alpha$ so that $G\cap \alpha \subset Z$. We record the following extension of this fact, where we replace a single arc $\alpha$ in $(\Sp, Z)$ by a planar embedded graph $H$ with vertices in $Z$.

\begin{proposition}\label{prop: isotop_to_0_intersections}
Let $(\Sp, Z)$ be a finitely marked sphere and $G$ be a planar embedded graph in $\Sp$. 
Suppose $H$ is a planar embedded graph in $\Sp$ with $V(H) \subset Z$ and such that $i_Z(G, e) = 0$ for each $e \in E(H)$. Then there exists a planar embedded graph $H'$ isotopic to $H$ rel.\ $Z$ such that $H' \cap G \subset Z$.    
\end{proposition}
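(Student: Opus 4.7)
The plan is to induct on $N := |(H \cap G) \setminus Z|$ and reduce $N$ to zero by repeated bigon removal. As preparation, I apply Proposition \ref{prop: Graph isotopic to piecewise geodesic} to both $G$ and $H$ so they are piecewise geodesic and, after a small ambient perturbation rel.\ $Z$, intersect only transversely; this makes $N$ finite. I also note that the conclusion is stable under isotoping \emph{both} $G$ and $H$ rel.\ $Z$: if I find $G^\flat \sim G$ and $H^\flat \sim H$ rel.\ $Z$ with $H^\flat \cap G^\flat \subset Z$, then the ambient isotopy rel.\ $Z$ carrying $G^\flat$ back to $G$ transports $H^\flat$ to the required $H'$. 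This freedom will be crucial in the innermost-bigon step.

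If $N = 0$ take $H' := H$. Assume $N \geq 1$ and choose edges $e \in E(H)$, $g \in E(G)$ with $(\inter(e) \cap \inter(g)) \setminus Z \neq \emptyset$. Since $g \subset G$ and any isotopy of $G$ rel.\ $Z$ restricts to an isotopy of $g$ rel.\ $Z$, I obtain $i_Z(g, e) \leq i_Z(G, e) = 0$, so $e$ and $g$ are not in minimal position rel.\ $Z$. The bigon criterion for arcs with transverse intersections, recalled in Section \ref{subsec: Isotopies and intersection numbers}, then produces subarcs $e' \subset e$ and $g' \subset g$ bounding a bigon $U$ with $U \cap Z = \emptyset$.

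The crux is to upgrade $U$ to an \emph{innermost} bigon, that is, one with $\inter(U) \cap (G \cup H) = \emptyset$. Among all triples $(e, g, U)$ as above, pick one minimizing $|\inter(U) \cap (G \cup H)|$. If some $\tilde e \in E(H) \setminus \{e\}$ meets $\inter(U)$, then, since the interiors of the edges of $H$ are pairwise disjoint, $\tilde e$ cannot cross $\inter(e')$; hence $\tilde e$ enters and exits $\inter(U)$ through $\inter(g')$, and a sub-arc of $\tilde e$ together with a sub-arc of $g'$ bounds a strictly smaller bigon, contradicting minimality. A symmetric argument excludes any $\tilde g \in E(G) \setminus \{g\}$ that exits $\inter(U)$ through $\inter(e')$. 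The remaining possibility is that some edge of $G$ enters $\inter(U)$ through $\inter(e')$ and terminates at a vertex of $V(G)$ trapped inside $\inter(U)$; this is the main obstacle. I handle it using the freedom to isotope $G$ rel.\ $Z$ from the first paragraph: the component $C$ of $G \cap \overline U$ containing such a trapped vertex meets $\partial U$ only along $\inter(e')$, and since $\overline U \setminus \{p_0\}$ is a topological closed disk for any fixed $p_0 \in \inter(g')$, the set $C$ can be dragged entirely out of $\overline U$ across $\inter(e')$ by an ambient isotopy rel.\ $Z$ supported away from $p_0$. Iterating removes all trapped $G$-components and reduces us to the previous cases.

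Given an innermost bigon $U_0$ bounded by $e_0' \subset e_0 \in E(H)$ and $g_0' \subset g_0 \in E(G)$, I finish by an ambient isotopy of $\Sp$ rel.\ $Z$, supported in a small neighborhood of $\overline{U_0}$ disjoint from $(G \cup H) \setminus \overline{U_0}$, that slides $e_0'$ across $U_0$ just past $g_0'$. This destroys the two intersection points $\partial e_0' = \partial g_0' \subset (H \cap G) \setminus Z$ and, because the support misses all other arcs and all of $Z$, creates no new intersections. Hence $N$ decreases by at least $2$, and induction completes the proof. The principal technical difficulty, as highlighted, is the extraction of trapped components of $G$ from a minimizing bigon; otherwise the argument is the standard innermost-bigon reduction.
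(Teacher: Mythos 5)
Your approach differs from the paper's: you induct on $N = |(H \cap G) \setminus Z|$ and perform simultaneous innermost-bigon surgery on the pair $(G,H)$, whereas the paper inducts on $|E(H)|$ and treats one edge of $H$ at a time. But your very first reduction step has a genuine gap. You invoke the bigon criterion between $e \in E(H)$ and $g \in E(G)$ after observing $i_Z(g,e) \le i_Z(G,e) = 0$. However, the bigon criterion recalled in Section~\ref{subsec: Isotopies and intersection numbers} applies to Jordan arcs \emph{in} $(\Sp,Z)$, i.e.\ arcs with both endpoints in $Z$. The proposition only assumes $V(H) \subset Z$; it places no constraint on $V(G)$, so $\partial g$ need not lie in $Z$, and then the criterion simply does not apply. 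Indeed, when $g$ has a free endpoint (not in $Z$), $e$ and $g$ can cross transversely exactly once while still having $i_Z(G,e) = 0$ (the single crossing is removed by swinging the free endpoint of $g$ past an endpoint of $e$), and there is no bigon at all. Concretely: take $Z = \{A,B\}$, $H$ a single edge $e$ from $A$ to $B$, and $G$ a single edge $g$ with both endpoints off $Z$ crossing $e$ exactly once; then $N=1$, the hypothesis $i_Z(G,e)=0$ holds, and the conclusion holds (push $e$ around an endpoint of $g$), yet your induction has no bigon to work with.

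The paper avoids this by construction: its bigon removal is only ever run between two arcs with endpoints in $Z$ (the arc $\alpha'$ being re-inserted and an edge $e'$ of $H'_\alpha$), while the interaction with $G$ is controlled by working inside the family $\mathcal{A}$ of arcs already disjoint from $G$ away from $Z$ — so the boundary of any bigon that arises is automatically disjoint from $G$, and whatever of $G$ lies inside is a full component that can be slid around. Two further, smaller issues in your write-up: the quantity $|\inter(U) \cap (G \cup H)|$ you minimize is the cardinality of a set that is either empty or uncountable, so it cannot serve as a complexity measure (you want something like the number of edges of $G \cup H$ meeting $\inter(U)$ plus the number of vertices of $G$ in $\inter(U)$); and the drag-out step is asserted but not shown to leave $N$ non-increasing, which requires keeping the collar on the far side of $\inter(e')$ clear of the rest of $G$ and $H$.
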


\begin{proof}
    We only give an outline of the proof, leaving some straightforward details to the reader.
    
    First, it is sufficient to consider the case when $V(H)=Z$. We prove the statement by induction on $|E(H)|$. If $|E(H)|=0$, then there is nothing to prove. Also, if $|E(H)| = 1$, then the statement follows from part \ref{item: graph_arc} of Lemma \ref{lem: intersection_graph_arc}.
    
    Assume the statement is true if $|E(H)| \leq n$, where $n\in \N$. Suppose now that $|E(H)|=n+1$, and consider the graph $H_{\alpha} := (V(H), E(H) \setminus \{\alpha\})$, which is obtained from $H$ by removing some edge $\alpha \in E(H)$. By the induction hypothesis, there is a planar embedded graph $H'_\alpha$ isotopic to $H_\alpha$ rel.\ $Z$ such that 
    $H'_\alpha\cap G \subset Z$.

\begin{claim}
There exists a Jordan arc $\alpha' \sim \alpha$ rel.\ $Z$ such that
$\alpha' \cap (G\cup H_\alpha') \subset Z$. 
\end{claim}

    Let $\mathcal{A}$ be the set of all Jordan arcs $\alpha'$ in $(\Sp,Z)$ that are isotopic to $\alpha$ rel.\ $Z$ and 
    satisfy $\alpha'\cap G \subset Z$. 
    Since $i_Z(G, \alpha)=0$, the set $\mathcal{A}$ is non-empty by Lemma \ref{lem: intersection_graph_arc}\ref{item: graph_arc}.
    Furthermore, $i_Z(e',\alpha')=0$ for every $e'\in E(H'_\alpha)$ and $\alpha'\in \mathcal{A}$. Now consider the following intersection number 
    \begin{equation}\label{eq: zero-intersection}
    N:= \inf_{\alpha'\in \mathcal{A}} {|(H'_\alpha \cap \alpha') \setminus Z|}.
    \end{equation}
    Proposition \ref{prop: Graph isotopic to piecewise geodesic} implies that $N$ is finite and there exists a Jordan arc $\alpha'\in \mathcal{A}$ that realizes the infimum in \eqref{eq: zero-intersection}. 
    We claim that $N=0$. For otherwise, there is an edge $e'\in E(H_\alpha')$ such that $|(e'\cap \alpha') \setminus Z|>0$, which means that $e'$ and $\alpha'$ are not in minimal position rel.\ $Z$. But then some subarcs of $e'$ and $\alpha'$ must form a bigon in $(\Sp, Z)$ or have a non-transverse intersection. 
   We may now remove this bigon or  non-transverse intersection between $e'$ and $\alpha'$ and get a Jordan arc $\widetilde \alpha \in \mathcal{A}$ that satisfies
\[|(H'_\alpha \cap \widetilde{\alpha}) \setminus Z| < |(H'_\alpha \cap \alpha') \setminus Z|.\]
    But this contradicts the choice of $\alpha'$. Thus, $N=0$ and the claim follows.
    
    \medskip
    
    Let $\alpha'$ be a Jordan arc as in the claim. Then $H':=H'_\alpha \cup \alpha'$ is a planar embedded graph with $V(H')=V(H)=Z$ and 
    $H'\cap G\subset Z$.
    By construction, $H'$ and $H$ satisfy the conditions of Proposition~\ref{prop: Graph isotopic criterion}, and thus they are isotopic rel.\ $Z$. This finishes the proof.
\end{proof}

\subsection{Thurston maps}\label{subsec: Thurston maps}
A continuous surjective map $f\colon \Sp \to \Sp$ is called an (orientation-preserving) \emph{branched covering map} if it locally acts as the power map $z \mapsto z^d$ for some $d\in\N$ in orientation-preserving coordinate charts in domain and target. More precisely, for each $p \in \Sp$ we require that there are two open Jordan regions $U$ and $V$ containing $p$ and $f(p)$, respectively, two orientation-preserving homeomorphisms $\varphi\colon \D \to U$ and $\psi\colon\D \to V$, and a number $d \in \N$ such that
\begin{enumerate}[label = (\roman*)]
    \item $\varphi(0) = p$ and $\psi(0)=f(p)$; \label{item:branched-cover-i}
    \item $(\psi^{-1} \circ f \circ \varphi)(z) = z^d$ for all $z \in \D$. \label{item:branched-cover-ii}
\end{enumerate}
The integer $d$ as in \ref{item:branched-cover-ii} is uniquely determined by $f$ and $p$. It is called the \emph{local degree} of the map $f$ at the point $p$ and denoted by $\deg(f,p)$. We also denote the topological degree of $f$ by $\deg(f)$, so that $\sum_{p\in f^{-1}(q)} \deg(f,p) = \deg(f)$ for all $q\in \Sp$.

In the following, let $f\colon \Sp \to \Sp$ be a branched covering map. A point $p\in \Sp$ is called a \emph{critical point} of $f$ if $\deg(f,p)>1$, that is, if $f$ is not locally injective at $p$. We denote the set of all critical points of $f$ by $\Crit(f)$.

Suppose $V\subset \Sp$ is an open and connected set, and  $U$ is a connected component of $f^{-1}(V)$. Then $f(U) = V$ and each point $q \in V$ has the same number $d\in \N$ of preimages under $f|U$ counting multiplicities (given by the local degrees of $f$ at the preimage points). This number $d$ is called the \emph{degree of $f$ on $U$} and denoted by $\deg(f|U)$. If the Euler characteristic $\chi(V )$ is finite, then $\chi(U)$ is also finite and we have the \emph{Riemann-Hurwitz formula}
$$
    \chi(U) + \sum\limits_{c \in U \cap \Crit(f)} (\deg(f, c) - 1) = \deg(f|U) \cdot \chi(V);
$$
see the discussion in \cite[Section 13.2]{THEBook}.

The set
$$
    \Post(f) := \bigcup\limits_{n \in \N} f^{n}(\Crit(f))
$$
is called the \emph{postcritical set} of the branched covering map $f\colon \Sp \to \Sp$. We say that the map $f$ is \emph{postcritically-finite} if $\Post(f)$ is finite.

\begin{definition} 
A \emph{Thurston map} is a postcritically-finite branched covering map $f\colon \Sp\to \Sp$ with $\deg(f)\geq 2$.  
\end{definition}

In other words, a branched covering map $f$ on $\Sp$ is called a Thurston map if it is not a homeomorphism and each critical point of $f$ has a finite orbit under iteration. Natural examples of Thurston maps are provided by \emph{rational Thurston maps}, that is, postcritically-finite rational maps on the Riemann sphere $\widehat{\mathbb{C}}$.

\begin{definition}\label{def: Thurston equivalence} Suppose $\Sp$ and $\widehat{\Sp}$ are two topological $2$-spheres. Two Thurston maps $f\colon \Sp\to \Sp$ and $g\colon \widehat{\Sp} \to \widehat{\Sp}$ 
are called \emph{combinatorially} (or \emph{Thurston}) \emph{equivalent} if there are orientation-preserving homeomorphisms $\psi_0, \psi_1\colon (\Sp, \Post(f)) \to (\widehat{\Sp}, \Post(g))$ that are isotopic rel.\ $\Post(f)$ and satisfy ${\psi_0 \circ f = g \circ \psi_1}$.
\end{definition}

We say that a Thurston map is \emph{realized} (by a rational map) if it is combinatorially equivalent to a rational map. Otherwise, we say that it is \emph{obstructed}.

Thurston maps have the following isotopy lifting property (see, for example, \cite[Proposition 11.3]{THEBook} and the remark after).

\begin{proposition}\label{prop: isotopy-lifting}
Suppose $f\colon \Sp\to \Sp$ and $g\colon \widehat{\Sp} \to \widehat{\Sp}$ are two Thurston maps, and $h_0,\widetilde h_0\colon \Sp \to \widehat{\Sp}$
are homeomorphisms such that $h_0|\Post(f) = \widetilde h_0|\Post(f)$  and $h_0 \circ f=g\circ \widetilde h_0$. Let $H\colon \Sp\times \I \to \widehat{\Sp}$ be an isotopy rel.\ $Q\supset\Post(f)$ with $H_0 = h_0$.

Then the isotopy $H$ uniquely lifts to an isotopy $\widetilde{H}\colon \Sp\times \I \to \widehat{\Sp}$ rel.\ $f^{-1}(Q)\supset \Post(f)$ such that $\widetilde H_0 = \widetilde h_0$ and $g \circ \widetilde H_t = H_t \circ f$ for all $t\in \I$. 

\end{proposition}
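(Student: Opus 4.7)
The plan is to reduce the problem to the standard homotopy lifting property for unbranched covering maps, by first restricting everything to the complements of the postcritical and precritical sets and then extending the resulting lift continuously over the finitely many removed points. Set $P := \Post(f)$ and $Q := f^{-1}(P)$; since $f(\Crit(f)) \subset P$, we have $\Crit(f) \subset Q$, so $f$ restricts to an unbranched covering map $\Sp \setminus Q \to \Sp \setminus P$. A routine verification using the semi-conjugacy $h_0 \circ f = g \circ \widetilde h_0$ together with the equality $h_0|P = \widetilde h_0|P$ shows that $h_0(P) = \Post(g) =: \widehat P$ and $\widetilde h_0(Q) = g^{-1}(\widehat P) =: \widehat Q$, so analogously $g$ restricts to an unbranched covering $\widehat\Sp \setminus \widehat Q \to \widehat\Sp \setminus \widehat P$.

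Consider the map $K\colon \Sp \times \I \to \widehat\Sp$ defined by $K(p, t) := H_t(f(p))$. Since $H$ is an isotopy rel.\ $P$, the homeomorphism $H_t$ sends $\Sp \setminus P$ onto $\widehat\Sp \setminus \widehat P$ for every $t$, and hence $K$ restricts to a map $(\Sp \setminus Q) \times \I \to \widehat\Sp \setminus \widehat P$. Because $K_0 = g \circ \widetilde h_0$ already lifts through $g$ to $\widetilde h_0|(\Sp \setminus Q)$, the homotopy lifting property for the covering $\widehat\Sp \setminus \widehat Q \to \widehat\Sp \setminus \widehat P$ produces a unique continuous lift $\widetilde K\colon (\Sp \setminus Q) \times \I \to \widehat\Sp \setminus \widehat Q$ with $\widetilde K_0 = \widetilde h_0|(\Sp \setminus Q)$ and $g \circ \widetilde K_t = H_t \circ f$ for all $t$. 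I then define $\widetilde H\colon \Sp \times \I \to \widehat\Sp$ by setting $\widetilde H := \widetilde K$ on $(\Sp \setminus Q) \times \I$ and $\widetilde H(q, t) := \widetilde h_0(q)$ for $q \in Q$.

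The main obstacle is verifying continuity of $\widetilde H$ at the points of $Q \times \I$. Fix $q \in Q$ and choose small Jordan neighborhoods $U \ni q$, $V \ni f(q)$, $\widehat V \ni h_0(f(q))$, and the component $\widehat U \ni \widetilde h_0(q)$ of $g^{-1}(\widehat V)$ so that $f$ maps $U$ onto $V$ and $g$ maps $\widehat U$ onto $\widehat V$ as branched covers of the common degree $d = \deg(f, q) = \deg(g, \widetilde h_0(q))$, with $U \cap Q = \{q\}$, $\widehat U \cap \widehat Q = \{\widetilde h_0(q)\}$, $\widetilde h_0(U) \subset \widehat U$, and $H_t(V) \subset \widehat V$ for all $t \in \I$; the last inclusion is possible by compactness of $\I$ together with the fact that $H_t(f(q)) \equiv h_0(f(q))$. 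For each $p \in U \setminus \{q\}$, the path $t \mapsto \widetilde K_t(p)$ is a continuous curve in the disjoint union $g^{-1}(\widehat V)$ that starts at $\widetilde h_0(p) \in \widehat U$, so by connectedness of $\I$ it stays inside $\widehat U \setminus \{\widetilde h_0(q)\}$; properness of $g|\widehat U$ then forces $\widetilde K_t(p) \to \widetilde h_0(q)$ uniformly in $t$ as $p \to q$, which yields continuity of $\widetilde H$.

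It remains to check that each slice $\widetilde H_t$ is a homeomorphism and that the lift is unique. Away from $Q$, $\widetilde H_t$ is a local homeomorphism since $g \circ \widetilde H_t = H_t \circ f$ is locally a branched cover of the same local degree as $g$. At a point $q \in Q$, multiplicativity of local degree together with $\deg(g, \widetilde h_0(q)) = \deg(f, q) = \deg(H_t \circ f, q)$ forces $\deg(\widetilde H_t, q) = 1$, so $\widetilde H_t$ is a local homeomorphism everywhere; being a continuous local homeomorphism from the closed surface $\Sp$ onto the simply connected surface $\widehat\Sp$, it is a homeomorphism. Thus $\widetilde H$ is an isotopy, and it is rel.\ $Q$ by construction. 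Uniqueness follows because any other lift $\widetilde H'$ must agree with $\widetilde h_0$ on $Q$ (by the rel.\ $Q$ condition) and with $\widetilde K$ on $\Sp \setminus Q$ (by uniqueness of covering-space lifts).
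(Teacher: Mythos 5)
Your approach mirrors the proof in the cited reference \cite[Proposition 11.3]{THEBook} (the paper itself gives no argument): restrict to the unbranched coverings $f|\Sp\setminus Q$ and $g|\widehat\Sp\setminus\widehat Q$, invoke the homotopy lifting property for covering spaces, and extend the resulting lift continuously over the finitely many removed points. The reduction, the verification that $h_0(P)=\widehat P$ and $\widetilde h_0(Q)=\widehat Q$, the application of the HLP, the continuity check via properness of $g|\widehat U$, and the uniqueness argument are all sound.

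The one step that needs strengthening is the claim that each slice $\widetilde H_t$ is a local homeomorphism at a point $q\in Q$. Invoking ``multiplicativity of local degree'' for $\widetilde H_t$ presupposes that $\widetilde H_t$ already has a well-defined local degree at $q$ (i.e., that it is open and discrete there), which is not given a priori; and even granting $\deg(\widetilde H_t,q)=1$, the implication ``local degree one implies local homeomorphism'' is a Stoilow-type factorization result that should be justified. A cleaner route that bypasses these issues entirely: $\widetilde K_t$ satisfies $g\circ\widetilde K_t=K_t$, so $\widetilde K_t$ is a morphism between the coverings $K_t\colon\Sp\setminus Q\to\widehat\Sp\setminus\widehat P$ and $g\colon\widehat\Sp\setminus\widehat Q\to\widehat\Sp\setminus\widehat P$; since both have the same finite degree $\deg(f)=\deg(g)$, the standard fact that a morphism of coverings over a connected, locally path-connected base is itself a covering map forces $\widetilde K_t$ to have degree one, hence to be a homeomorphism of $\Sp\setminus Q$ onto $\widehat\Sp\setminus\widehat Q$. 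Combined with the bijection $\widetilde h_0|Q\colon Q\to\widehat Q$, the extension $\widetilde H_t$ is then a continuous bijection between the compact Hausdorff spaces $\Sp$ and $\widehat\Sp$, and hence a homeomorphism.
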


We will frequently work with Thurston maps defined by combinatorial constructions, and the following definition appears to be useful.

\begin{definition}\label{def: isotopy equivalence}
     Two Thurston maps $f\colon \Sp \to \Sp$ and $g\colon \Sp \to \Sp$ are called \emph{isotopic} (or \emph{isotopy equivalent}) if $\Post(f) = \Post(g)$ and there exist $\psi_0, \psi_1\in \Homeo_0^+(\Sp, \Post(f))$ such that $\psi_0 \circ f = g \circ \psi_1$.
\end{definition}

Note that Proposition \ref{prop: isotopy-lifting} implies that two Thurston maps $f,g\colon \Sp \to \Sp$ are isotopic if and only if $f=g\circ \psi$ for some $\psi \in \Homeo_0^+(\Sp, \Post(f))$.

The \emph{ramification function} of a Thurston map $f\colon\Sp\to\Sp$ is a function $\nu_f\colon \Sp\to\N\cup \{\infty\}$ defined as follows: $\nu_f(p)$ equals the lowest common multiple of all local degrees $\deg(f^n, q)$, where $q\in f^{-n}(p)$ and $n\in\N$ are arbitrary. It easily follows that $\nu_f(p)\geq 2$ if and only if $p\in \Post(f)$. 

The pair $\mathcal{O}_f:=(\Sp,\nu_f)$ is called the \emph{orbifold} associated with $f$. The \emph{Euler characteristic} of $\mathcal{O}_f$ is given by
$$ \chi(\mathcal{O}_f):= 2- \sum_{p\in \Post(f)} \left(1-\frac{1}{\nu_f(p)}\right).$$
One can check that $\chi(\mathcal{O}_f) \leq 0$ for every Thurston map $f$; see \cite[Proposition 2.12]{THEBook}. We say that $\mathcal{O}_f$ is \emph{hyperbolic} if $\chi(\mathcal{O}_f) < 0$, and \emph{parabolic} if $\chi(\mathcal{O}_f) = 0$. Thurston maps with a parabolic orbifold are rather special and may be completely classified; see, for example, \cite[Chapters~3 and 7]{THEBook}. We note that if $\Post(f) \geq 5$, then $\mathcal{O}_f$ is always hyperbolic. 
Moreover, when $f$ is a critically fixed Thurston map, it has a parabolic orbifold if and only if $|\Crit(f)|=2$ (in which case $f$ is combinatorially equivalent to the power map $z\mapsto z^{\deg(f)}$).

\subsection{Thurston's characterization of rational maps}\label{subsec: Thurston's characterization of rational maps}
In the following, let $f\colon\Sp\to\Sp$ be a Thurston map. A natural question to ask is when $f$ is combinatorially equivalent to a rational map. William Thurston provided a topological criterion that answers this question in his celebrated \emph{characterization of rational maps} \cite[Theorem 1]{DH_Th_char}. To formulate this result we need to introduce several concepts.    

A \emph{multicurve} is a finite collection $\Gamma$ of essential Jordan curves in $(\Sp,\Post(f))$ that are pairwise disjoint and pairwise
non-isotopic rel.\ $\Post(f)$. We say that a multicurve $\Gamma$ is \emph{invariant} if, for every curve $\gamma \in \Gamma$, each essential component of $f^{-1}(\gamma)$ is isotopic rel.\ $\Post(f)$ to a curve in $\Gamma$.

Let $\Gamma=\{\gamma_1,\dots,\gamma_n\}$, $n\in \N$, be an invariant multicurve for $f$. We can now associate an $(n\times n)$-matrix $M(f,\Gamma)=(m_{ij})$ with $\Gamma$ as follows. Fix $i,j\in\{1,\dots,n\}$, and let $\delta_1,\dots, \delta_K$, where $K=K(i,j)\geq 0$, be all the components of $f^{-1}(\gamma_j)$ that are isotopic to $\gamma_i$ rel.\ $\Post(f)$. We denote by $\deg(f|\delta_k)$ the (unsigned) mapping degree of the covering map $f|\delta_k\colon \delta_k \to \gamma_j$. Then the $(i,j)$-entry $m_{ij}$ of the matrix $M(f,\Gamma)$ is given by $$m_{ij}:=\sum_{k=1}^{K(i,j)}\frac{1}{\deg(f|\delta_k)}.$$
If $K(i,j) = 0$, then the sum is empty and $m_{ij}=0$. 

Note that $M(f,\Gamma)$ depends only on the isotopy classes of curves in $\Gamma$ (this easily follows from Proposition \ref{prop: isotopy-lifting}). The Perron-Frobenius theorem implies that the
spectral radius of $M(f,\Gamma)$ is given by the largest non-negative (real) eigenvalue $\lambda(f,\Gamma)$ of this matrix. The invariant multicurve $\Gamma$ is called a \emph{(Thurston) obstruction} for $f$ if $\lambda(f,\Gamma)\geq 1$.

With these definitions, we are finally in a position to state Thurston's theorem; the proof can be found in \cite{DH_Th_char}, see also \cite[Theorem 10.1.14]{HubbardBook2}.

\begin{theorem}\label{thm: Thurston_theorem}
    A Thurston map $f\colon \Sp\to\Sp$ with a hyperbolic orbifold is combinatorially equivalent to a rational map $F\colon\widehat{\C}\to\widehat{\C}$ if and only if $f$ does not have a Thurston obstruction. Moreover, the rational map $F$ is unique up to conjugation by a M\"{o}bius transformation.
\end{theorem}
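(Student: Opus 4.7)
The plan is to follow the Douady--Hubbard approach via Teichm\"uller theory. The key object is the Teichm\"uller space $\mathcal{T}_f := \mathcal{T}(\Sp, \Post(f))$, which parametrizes complex structures on $\Sp$ modulo isotopy rel.\ $\Post(f)$, together with Thurston's pullback map $\sigma_f \colon \mathcal{T}_f \to \mathcal{T}_f$ sending the class of a complex structure $\tau$ to the class of $f^{*}\tau$ (well-defined because $f^{-1}(\Post(f)) \supseteq \Post(f)$, so the pulled-back structure may be projected back to the original marked set). The pivotal observation is that $\sigma_f$ has a fixed point if and only if $f$ is combinatorially equivalent to a rational map: a fixed point supplies a complex structure under which $f$ becomes holomorphic, and conversely any rational realization furnishes such a fixed point via Proposition~\ref{prop: isotopy-lifting}.

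Next I would establish that $\sigma_f$ is weakly contracting in the Teichm\"uller metric, using the standard co-norm estimate for pullbacks of integrable holomorphic quadratic differentials along the branched cover $f$. Under the hyperbolic orbifold assumption this contraction is \emph{strict} on all of $\mathcal{T}_f$ away from compact-trapped regions, so orbits of $\sigma_f$ either converge to a fixed point or escape every compact subset of $\mathcal{T}_f$. For the ``only if'' direction I would assume $f$ is obstructed, with invariant multicurve $\Gamma$ satisfying $\lambda(f,\Gamma) \geq 1$, and iterate the length-shrinking estimate: the hyperbolic lengths of curves in $\Gamma$ on the Riemann surfaces $(\Sp \setminus \Post(f), \tau_n)$, where $\tau_n := \sigma_f^n(\tau_0)$, are controlled from above by the Perron--Frobenius eigenvector of $M(f,\Gamma)$ multiplied by $\lambda(f,\Gamma)^{-n}$, so these lengths cannot stay bounded below. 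Hence orbits escape, and no fixed point exists.

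For the harder converse direction I would argue contrapositively: suppose $\sigma_f$ has no fixed point. Then by the strict contraction property some orbit must leave every compact subset of $\mathcal{T}_f$, so by Mumford's compactness criterion some collection of essential simple closed curves have hyperbolic lengths on $(\Sp \setminus \Post(f), \tau_n)$ tending to $0$. Grouping these short curves and passing to a subsequence, I would extract a finite family $\Gamma_0$ of pairwise disjoint, pairwise non-isotopic essential curves whose lengths contract at comparable rates. Using the collar lemma and the covering relation $f \circ \widetilde{H}_t = H_t \circ f$ supplied by Proposition~\ref{prop: isotopy-lifting}, I would then saturate $\Gamma_0$ under taking essential preimages (up to isotopy) to obtain an invariant multicurve $\Gamma$, and read off from the length-contraction rate of the shortest curves that the spectral radius $\lambda(f,\Gamma)$ of the associated matrix $M(f,\Gamma)$ must satisfy $\lambda(f,\Gamma) \geq 1$, producing a Thurston obstruction.

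Uniqueness of $F$ up to M\"obius conjugation would follow from the strict contraction property applied to the hyperbolic locus: two rational realizations give two fixed points of $\sigma_f$, but strict contraction forces them to coincide in $\mathcal{T}_f$, and standard M\"obius normalization then lifts the identification of complex structures to an actual conjugacy. The main obstacle is the extraction of an \emph{invariant} multicurve from a divergent orbit: one must show that the family of short curves is genuinely closed (up to isotopy) under taking essential preimages, rather than just a loose collection of geometrically short curves with no dynamical coherence. This step requires a careful thick-thin decomposition analysis together with the Riemann--Hurwitz bookkeeping, and is where most of the technical work in Douady--Hubbard is concentrated.
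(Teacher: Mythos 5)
The paper does not prove Theorem~\ref{thm: Thurston_theorem}; it is stated as a foundational result and cited directly to Douady--Hubbard \cite{DH_Th_char} and Hubbard's book \cite{HubbardBook2}. Your proposal is a reasonable high-level sketch of precisely that standard Teichm\"uller-theoretic proof (pullback map $\sigma_f$, fixed point $\Leftrightarrow$ realizability, weak contraction via the coderivative on quadratic differentials, Mumford compactness and thick--thin analysis to extract an invariant multicurve from a divergent orbit), so there is no divergence in \emph{approach} to report.

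One detail to tighten if you ever flesh this out: in the ``obstruction $\Rightarrow$ not realizable'' direction, the quantitative estimate runs through \emph{moduli of annular neighborhoods} of the curves in $\Gamma$ growing under $\sigma_f$ (equivalently, hyperbolic lengths shrinking), not lengths being ``controlled from above by $\lambda^{-n}$.'' With $\lambda(f,\Gamma)\geq 1$ the bound you wrote does not by itself force lengths to zero --- in the borderline case $\lambda=1$ it only gives boundedness. The actual argument either (a) iterates the Gr\"otzsch-type inequality on moduli together with the Perron--Frobenius eigenvector to rule out convergence of $(\tau_n)$, or (b) in the ``easy'' direction simply observes that a pcf rational map is expanding for its orbifold metric and computes $\lambda(F,\Gamma)<1$ directly, bypassing the Teichm\"uller iteration altogether. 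You should also be careful with the claim that strict contraction alone dichotomizes orbits into ``converge or escape'': $\sigma_f$ is only $1$-Lipschitz globally, so one needs a compactness argument (if the orbit returns to a fixed compact set infinitely often, the infimum of successive displacements is attained and must vanish) to extract a fixed point.
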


The easiest examples of obstructions are provided by \emph{Levy fixed curves}.

\begin{definition}
    Let $f\colon\Sp\to\Sp$ be a Thurston map and $\gamma$ be an essential Jordan curve in $(\Sp, \Post(f))$. We call $\gamma$ a \emph{Levy fixed curve} if there is a connected component $\gamma'$ of $f^{-1}(\gamma)$ such that $\gamma$ and $\gamma'$ are isotopic rel.\ $\Post(f)$ and $f|\gamma'\colon \gamma' \to \gamma$ is a homeomorphism.
\end{definition}

Note that if a Thurston map $f$ has a Levy fixed curve $\gamma$, then it must be obstructed (since postcritically-finite rational maps are expanding with respect to the orbifold metric; see \cite[Theorem 19.6]{Milnor_Book}). A priori the multicurve $\{\gamma\}$ does not need to be invariant, but we can always find an invariant multicurve $\Gamma \supset \{\gamma\}$ by taking iterative preimages of $\gamma$; see \cite[Lemma 2.2]{TanLeiMatings} for details. Then this multicurve $\Gamma$ is a Thurston obstruction for $f$.

A Thurston obstruction may contain curves that are ``extraneous'' in some natural sense. The simplest instance of this is when one combines two invariant multicurves $\Gamma_1$ and $\Gamma_2$ having pairwise disjoint and pairwise non-isotopic curves, where $\Gamma_1$ is an obstruction and $\Gamma_2$ is not. Then $\Gamma_1\cup \Gamma_2$ is a Thurston obstruction as well, even though the curves from $\Gamma_2$ are obviously redundant in there. These considerations motivate the following definition.

\begin{definition}
Let $f$ be a Thurston map with an obstruction $\Gamma$. We say that $\Gamma$ is \emph{simple} if there is no permutation of the curves in $\Gamma$ that puts the matrix $M(f,\Gamma)$ in the block form
\[
M(f,\Gamma)=\begin{bmatrix} M_{11} & 0 \\ M_{21} & M_{22} \end{bmatrix},
\]
where the spectral radius of the square matrix $M_{11}$ is less than $1$. 
\end{definition}

One can easily check from the definition that every Thurston obstruction contains a simple one.

\begin{remark}
    We note that Thurston's characterization theorem remains valid in the more general setting of marked Thurston maps. A \emph{marked Thurston map} on $\Sp$ is a pair $(f, Q)$ where $f\colon\Sp\to\Sp$ is a Thurston map and $Q\subset \Sp$ is a finite set of marked points that satisfies $\Post(f)\subset Q$ and $f(Q)\subset Q$. The notions of combinatorial equivalence, isotopy, and Thurston obstructions naturally extend to this setting by considering isotopies rel.\ $Q$ and (multi)curves in $(\Sp,Q)$. Then Theorem \ref{thm: Thurston_theorem} holds in the same form for marked Thurston maps; see, for example, \cite[Theorem 2.1]{Buff_etal}.
\end{remark}

\subsection{Decomposition theory} \label{subsec: Decomposition theory}

We outline a procedure due to Pilgrim that allows one to naturally decompose a Thurston map into ``simpler'' pieces. We refer the reader to \cite{Pilgrim_Comb} for details.

Let $(\Sp,Z)$ be a marked sphere and $\Gamma$ be a finite collection of pairwise disjoint Jordan curves in $(\Sp,Z)$. We denote by $\mathscr{S}_\Gamma$ the set of all components of $\Sp\setminus \bigcup_{\gamma\in\Gamma} \gamma$.  Each such component~$S$ may be viewed as a punctured sphere by collapsing every component of $\partial S$ into a puncture. We call the corresponding closure, denoted by~$\widehat{S}$, a \emph{small sphere} with respect to $\Gamma$. The points in $Z$ and the curves in $\Gamma$ naturally induce a marking on small spheres. 
Namely, the small sphere $\widehat S$ is marked by the set $Q(\widehat S)$ corresponding to the points in $S\cap Z$ and the components of $\partial S$. 
The set of all (marked) small spheres with respect to $\Gamma$ is denoted by~$\widehat{\mathscr{S}}_\Gamma$; see Figure~\ref{fig: decomposition} (bottom) and Figure \ref{fig: small_maps} for an illustration.

We now strengthen the definition of an invariant multicurve for a Thurston map $f$. 
We say that a multicurve $\Gamma$ is \emph{completely invariant} if the following two conditions are satisfied:
\begin{enumerate}[label=(\roman*)]
  \item each essential component of $f^{-1}(\bigcup_{\gamma\in \Gamma} \gamma)$ is isotopic rel.\ $\Post(f)$ to a curve in $\Gamma$;
  
  \item each curve in $\Gamma$ is isotopic rel.\ $\Post(f)$ to a component of $f^{-1}(\bigcup_{\gamma\in \Gamma} \gamma)$.
\end{enumerate}
We note that every simple Thurston obstruction must be completely invariant.

    \begin{figure}[t]
        \centering
        \begin{overpic}[width=14cm]{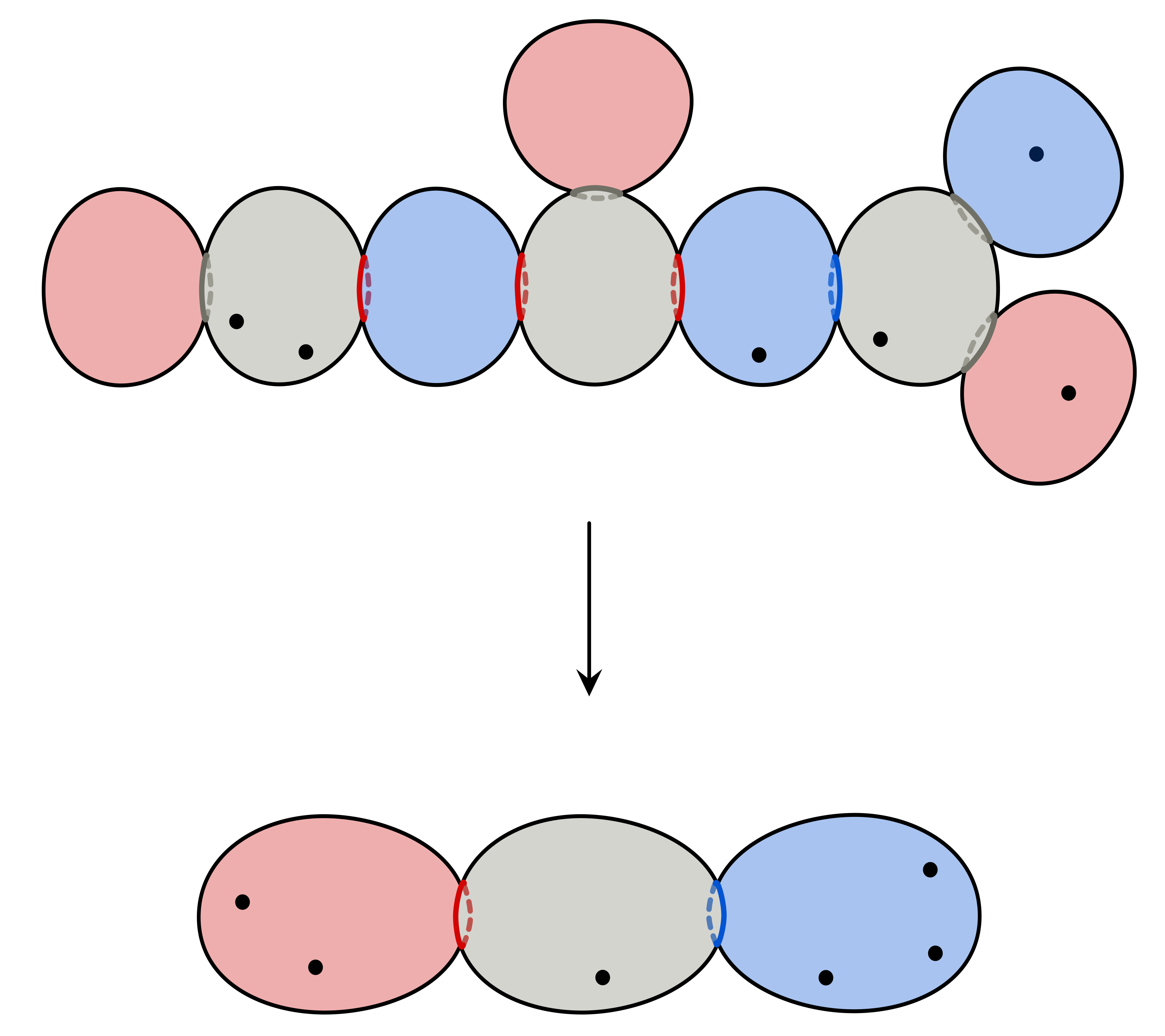}
        \put(52,36){$f$}
        \put(38.5,3.5){$\alpha$}
        \put(60.5,3.5){$\beta$}
        \put(26.5,11.5){$S_1$}   
        \put(49,11.5){$S_2$}   
        \put(70.5,11.5){$S_3$}   
        \put(23,62){$S'_1$}   
        \put(63,62){$S'_2$}   
        \put(77,62){$S'_3$} 
       \put(42.5,48){$(\Sp, \Post(f))$} 
       \put(42.5,21){$(\Sp, \Post(f))$}        
        \end{overpic}
        \caption{
        Decomposing a Thurston map $f\colon \Sp\to \Sp$ along a completely invariant multicurve $\Gamma$. The bottom indicates the multicurve $\Gamma = \{\alpha, \beta\}$ and $\mathscr{S}_\Gamma=\{S_1,S_2,S_3\}$. The top illustrates $f^{-1}(\Gamma)$ and $\mathscr{S}_{f^{-1}(\Gamma)}\supset \{S'_1, S'_2, S'_3\}$. The black dots correspond to the postcritical points of $f$. The map $f$ sends each component in $\mathscr{S}_{f^{-1}(\Gamma)}$ onto the component in $\mathscr{S}_\Gamma$ of the same color. At the top, the red curves are isotopic to $\alpha$, the blue curves are isotopic to $\beta$, and the gray curves are non-essential in $(\Sp, \Post(f))$.}
        \label{fig: decomposition}
    \end{figure}

In the following, suppose that $f\colon\Sp\to\Sp$ is a Thurston map and $\Gamma$ is a completely invariant multicurve. For convenience, we denote by $f^{-1}(\Gamma)$ the set of all components of the set $f^{-1}(\bigcup_{\gamma\in \Gamma} \gamma)\subset \Sp$.

Let $\widehat{\mathscr{S}}_\Gamma=\{\widehat{S_j}\}_{j\in J}$ be the set of all small spheres with respect to $\Gamma$. Since $\Gamma$ is completely invariant, we may identify each small sphere $\widehat{S_j}$, $j\in J$, with a unique small sphere $\widehat{S'_j}$ 
 with respect to $f^{-1}(\Gamma)$ as follows. Let $S_j\in \mathscr{S}_\Gamma$ be the component corresponding to the small sphere $\widehat{S_j}$. Then there exists a unique component $S'_j\in \mathscr{S}_{f^{-1}(\Gamma)}$ such that $S'_j\setminus\Post(f)$ is homotopic to $S_j\setminus\Post(f)$ in $\Sp\setminus\Post(f)$; see the top of Figure \ref{fig: decomposition} for an illustration. Furthermore, each component $U$ of the complement $\Sp\setminus \bigcup_{j\in J} S'_j$ is either
\begin{enumerate}[label=(\alph*)]
\item\label{item:disc} a closed Jordan region with $|U \cap \Post(f)| \leq 1$, so that $\partial U$ is a non-essential Jordan curve; 
\item\label{item:annulus} a closed annulus whose boundary components are isotopic rel.\ $\Post(f)$ to a curve $\gamma_U\in \Gamma$;
\item\label{item:curve} or a Jordan curve from $f^{-1}(\Gamma)$ that is isotopic rel.\ $\Post(f)$ to a curve $\gamma_U\in \Gamma$.
\end{enumerate}

We now pick a homotopy that sends each component $S'_j$ onto $S_j$ and collapses each component $U$ of $\Sp\setminus \bigcup_{j\in J} S'_j$ to a point in case \ref{item:disc} or to the curve $\gamma_U\in \Gamma$ in cases \ref{item:annulus} and \ref{item:curve}. More precisely, we choose a homotopy $H\colon \Sp\times \I \to \Sp$ rel.\ $\Post(f)$ with the following properties:
 \begin{enumerate}[label=(\Alph*)]
 \item $H_t:= H(\cdot, t)$ is a homeomorphism for every $t\in[0,1)$;
 \item $H_0=\id_{\Sp}$;
 \item $H_1 (\overline{S_j'}) = \overline{S_j}$ for all $j\in J$;
 \item $H_1|S'_j$ is a homeomorphism of $S_j'$ onto the image $H_1(S'_j)\subset S_j$ for all $j\in J$;
\item Suppose $\gamma'$ is a component of $\partial S_j'$ for some $j\in J$.
\begin{itemize}
    \item If $\gamma'$ is essential, then $H_1$ sends $\gamma'$ homeomorphically onto the component $\gamma$ of $\partial S_j$ that is isotopic to $\gamma'$.
    \item If $\gamma'$ is non-essential, then $H_1$ collapses $\gamma'$ to a single point.
\end{itemize}
 \end{enumerate} 
Then for every $j\in J$ the inverse of $H_1|S_j'\colon S_j'\to S_j$ defines an identification $i^*\colon \big(\widehat{S_j}, Q(\widehat{S_j})\big) \to  \big(\widehat{S'_j},Q(\widehat{S'_j})\big)$ between the small spheres (sending marked points to marked points, but not necessarily bijectively).

By construction, for each $j\in J$ the image $f(S'_j)$ is a component in $\mathscr{S}_\Gamma$, which we denote by $S_{f(j)}$. Then, by filling in the punctures, we get a branched covering map $f_*\colon \big(\widehat{S'_j}, Q(\widehat{S'_j})\big) \to  \big(\widehat{S}_{f(j)}, Q(\widehat{S}_{f(j)})\big)$ between the corresponding small spheres (and respecting the marked points).

    \begin{figure}[t]
        \centering
        \begin{overpic}[width=12cm]{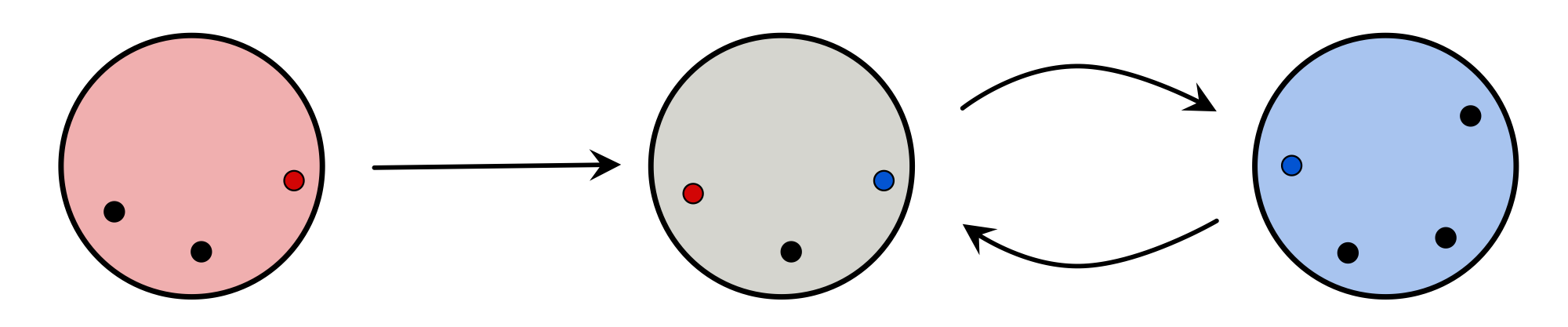}
        \put(10,12){$\widehat{S}_1$}   
        \put(48,12){$\widehat{S}_2$}   
        \put(87,12){$\widehat{S}_3$}   
        \end{overpic}
        \caption{
        The dynamics of $\widehat f \colon \widehat{\mathscr{S}}_\Gamma \to \widehat{\mathscr{S}}_\Gamma$ on small spheres for the example from Figure \ref{fig: decomposition}. The black, red, and blue dots correspond to the postcritical points of $f$, the curve $\alpha\in\Gamma$, and the curve $\beta\in\Gamma$, respectively.
        }
        \label{fig: small_maps}
    \end{figure}

The composition
\[\big(\widehat{S_j}, Q(\widehat{S_j})\big)\xrightarrow{\makebox[1cm]{$i^*$}} \big(\widehat{S'_j}, Q(\widehat{S'_j})\big)\xrightarrow{\makebox[1cm]{$f_*$}} \big(\widehat{S}_{f(j)}, Q(\widehat{S}_{f(j)})\big)\]
defines a branched covering map $\widehat{f}:=f_*\circ i^*\colon \big(\widehat{S_j}, Q(\widehat{S_j})\big) \to  \big(\widehat{S}_{f(j)}, Q(\widehat{S}_{f(j)})\big)$, which we call a \emph{small sphere map}. It is uniquely defined up to isotopy rel.\ $Q(\widehat{S_j})$ for every $j\in J$.

The considerations above imply that $f$ induces a map $$\widehat{f}\colon \bigsqcup_{j\in J} \big(\widehat{S}_j, Q(\widehat{S}_j)\big) \to \bigsqcup_{j\in J} \big(\widehat{S}_j, Q(\widehat{S}_j)\big)$$ on (the disjoint union of) the marked small spheres with respect to $\Gamma$; see Figure \ref{fig: small_maps}. With a slight abuse of notation, we will simply denote this map by $\widehat{f}\colon \widehat{\mathscr{S}}_\Gamma\to\widehat{\mathscr{S}}_\Gamma$. Since $\widehat{\mathscr{S}}_\Gamma$ consists of only finitely many spheres, each small sphere is eventually periodic under $\widehat{f}$. Suppose $\widehat{S}_j\in \widehat{\mathscr{S}}_\Gamma$ is a periodic small sphere. Then the first return map $\widehat{f}^{k(j)}\colon \big(\widehat{S}_j, Q(\widehat{S}_j)\big) \to \big(\widehat{S}_j, Q(\widehat{S}_j)\big)$ is a postcritically-finite branched covering map. Hence this first return map is either a (marked) Thurston map or a homeomorphism.

To summarize the discussion above, a completely invariant multicurve $\Gamma$ allows us to \emph{decompose} the dynamics of a Thurston map $f$ on $\Sp$ into the dynamics of the induced map~$\widehat f$ on the (periodic) small spheres with respect to $\Gamma$.

In \cite{Pilgrim_Canonical}, Pilgrim introduced the notion of a \emph{canonical Thurston obstruction} for a Thurston map $f$.  It is a special multicurve $\Gamma_{\operatorname{Th}}$, defined up to isotopy rel.\ $\Post(f)$, that has the following property: in the case $f$ has a hyperbolic orbifold, the map $f$ is realized by a rational map if and only if $\Gamma_{\operatorname{Th}}$ is empty. If $\Gamma_{\operatorname{Th}}\neq \emptyset$, the multicurve $\Gamma_{\operatorname{Th}}$ is a simple Thurston obstruction and provides the \emph{canonical decomposition} of the given Thurston map~$f$. Selinger gave the following topological characterization of the canonical Thurston obstruction in terms of the pieces of this decomposition; see \cite[Theorem 5.6]{Selinger_Top_Obstr} for a precise statement.

\begin{theorem}\label{thm: can_obstr}%\cite{Pilgrim_Canonical, Selinger_Top_Obstr} 
Let $f\colon\Sp\to\Sp$ be a Thurston map. Then the canonical Thurston obstruction of $f$ is a unique (up to isotopy rel.\ $\Post(f)$) minimal (with respect to inclusion) completely invariant Thurston obstruction $\Gamma$ such that for each periodic small sphere $\widehat S \in \widehat{\mathscr{S}}_\Gamma$ the first return map $\widehat{f}^k\colon \big(\widehat{S}, Q(\widehat S)\big)\to\big(\widehat{S}, Q(\widehat S)\big)$ is either
\begin{enumerate}[label=\normalfont{(\roman*)}]
\item a homeomorphism;
\item a marked Thurston map with a parabolic orbifold and $|\Post(\widehat{f}^k)|=4$ of special type (see \cite[Theorem~5.6]{Selinger_Top_Obstr} for details);
\item or a marked Thurston map that is realized by a marked rational map.
\end{enumerate}
\end{theorem}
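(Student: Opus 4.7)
The plan is to derive the theorem from Pilgrim's original construction of $\Gamma_{\operatorname{Th}}$ combined with Selinger's extension of the Thurston pullback map to the augmented Teichmüller space. Let $\mathcal{T} := \mathcal{T}(\Sp,\Post(f))$ be the Teichmüller space of the marked sphere, and let $\sigma_f \colon \mathcal{T}\to \mathcal{T}$ denote the Thurston pullback operator. Thurston's characterization (Theorem \ref{thm: Thurston_theorem}) says that, in the hyperbolic orbifold case, $f$ is realized precisely when $\sigma_f$ has a fixed point. Pilgrim defined $\Gamma_{\operatorname{Th}}$ as the multicurve consisting of the isotopy classes of essential curves in $(\Sp,\Post(f))$ whose hyperbolic lengths converge to $0$ along a divergent $\sigma_f$-orbit, and proved that this set is well-defined (independent of the orbit) and is a completely invariant multicurve, and in fact a simple Thurston obstruction when non-empty.

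Next I would invoke Selinger's extension theorem: $\sigma_f$ extends continuously to a map $\overline{\sigma}_f$ on the augmented Teichmüller space $\overline{\mathcal{T}}$, and every $\overline{\sigma}_f$-orbit converges to a fixed point. The natural stratification of $\overline{\mathcal{T}}$ by multicurves $\Gamma$ produces a stratum $\mathcal{T}_\Gamma$ which, up to a finite identification, factors as a product $\prod_{\widehat{S} \in \widehat{\mathscr{S}}_\Gamma}\mathcal{T}(\widehat{S})$ over the small spheres. By complete invariance of $\Gamma_{\operatorname{Th}}$, the extended map $\overline{\sigma}_f$ preserves $\mathcal{T}_{\Gamma_{\operatorname{Th}}}$, and on each periodic factor it agrees (up to a permutation of factors induced by $\widehat{f}$) with the Thurston pullback of the first-return map $\widehat{f}^k \colon \widehat{S}\to \widehat{S}$. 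Since $\overline{\sigma}_f$ has a fixed point in this stratum, each periodic first-return pullback operator must have a fixed point of its own, or degenerate in a controlled parabolic manner. Applying Theorem \ref{thm: Thurston_theorem} to each small-sphere first-return map, and separately analysing the degenerate limiting behaviour (as carried out in \cite{Selinger_Top_Obstr}) to isolate the special parabolic cases that can arise, yields the three possible conclusions (i)--(iii) for $\Gamma = \Gamma_{\operatorname{Th}}$.

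For uniqueness and minimality, suppose $\Gamma$ is any completely invariant Thurston obstruction for which every periodic small-sphere first-return map falls into cases (i)--(iii). Then the fixed points provided by the realized or special-parabolic small-sphere pieces assemble into a fixed point of $\overline{\sigma}_f$ in $\mathcal{T}_{\Gamma}$, and the convergence properties of $\overline{\sigma}_f$-orbits force $\Gamma_{\operatorname{Th}} \subseteq \Gamma$ up to isotopy. Since $\Gamma_{\operatorname{Th}}$ itself satisfies (i)--(iii) by the previous paragraph, it is the unique minimal (with respect to inclusion) multicurve with this property.

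The main obstacle is the second step: establishing Selinger's continuous extension of $\sigma_f$ to $\overline{\mathcal{T}}$ together with the product decomposition of strata compatible with the small-sphere decomposition. This requires fine quasiconformal estimates controlling the moduli of pinched annular neighborhoods of curves in $\Gamma_{\operatorname{Th}}$ under $\sigma_f$, and a careful identification of the limit map on boundary strata with the Thurston pullback of the small-sphere first-return maps. This is the substantive technical content of \cite{Selinger_Top_Obstr}; once it is in hand, the classification of periodic pieces reduces to a relatively direct application of Thurston's theorem on each small sphere and an extraction of the special parabolic cases.
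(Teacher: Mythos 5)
The paper does not prove this theorem; it records it as Theorem~5.6 of Selinger \cite{Selinger_Top_Obstr}, so there is no in-paper argument to compare yours against. Your sketch accurately reconstructs the architecture of Selinger's proof---Pilgrim's pinching definition of $\Gamma_{\operatorname{Th}}$, the continuous extension of $\sigma_f$ to augmented Teichm\"uller space $\overline{\mathcal{T}}$, the product stratification of boundary strata over small spheres, and the application of Thurston's criterion to the first-return maps on periodic small spheres---and you are right that the analytic heart (continuity of $\overline{\sigma}_f$, compatibility of the stratum decomposition with the dynamics, convergence of orbits to a fixed point) is exactly what Selinger supplies. In that sense your writeup is essentially an annotated citation of \cite{Selinger_Top_Obstr}, which is also what the paper does.

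The place where your account is genuinely gappy, rather than merely deferring, is the uniqueness and minimality paragraph. First, assembling small-sphere fixed points into a fixed point of $\overline{\sigma}_f$ in $\mathcal{T}_\Gamma$ requires the nontrivial identification of the restriction of $\overline{\sigma}_f$ to the boundary stratum $\mathcal{T}_\Gamma$ with the product of first-return pullbacks, and a separate treatment of the strictly pre-periodic small spheres and the homeomorphism pieces; a naive ``choose fixed points on each factor'' argument does not handle the non-periodic factors or the degree-one factors, and the special parabolic case (ii) does not directly produce a fixed point in $\mathcal{T}(\widehat{S})$ at all. Second, the inference ``convergence properties of $\overline{\sigma}_f$-orbits force $\Gamma_{\operatorname{Th}}\subseteq\Gamma$'' tacitly invokes uniqueness of the limit fixed point of $\overline{\sigma}_f$ in $\overline{\mathcal{T}}$, which is itself a theorem of Selinger and not a formal consequence of the existence of some fixed point in $\mathcal{T}_\Gamma$. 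Both steps should be named as imports from Selinger rather than asserted.
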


\subsection{Branched coverings and graphs}\label{subsec: covers and graphs} Let $f\colon\Sp\to \Sp$ be a branched covering map and $\alpha$ be a Jordan arc in $\Sp$. We say that a Jordan arc $\widetilde{\alpha}\subset \Sp$ is a \emph{lift} of $\alpha$ under $f$ if $f|\widetilde{\alpha}$ is a homeomorphism of $\widetilde{\alpha}$ onto $\alpha$. It easily follows from the existence and uniqueness statements for lifts of paths under covering maps (see, for example, \cite[Lemma A.6]{THEBook}) that if $\alpha$ is a Jordan arc in $\Sp$ with $\inter(\alpha)\subset \Sp \setminus f(\Crit(f))$, $p\in \inter(\alpha)$, and $q\in f^{-1}(p)$, then there exists a unique lift $\widetilde\alpha$ of $\alpha$ under $f$ with $q\in\inter(\widetilde\alpha)$. 

Now suppose that $f\colon \Sp\to\Sp$ is a Thurston map and $G$ is a planar embedded graph in~$\Sp$ with $V(G)\supset \Post(f)$. Then the preimage $f^{-1}(G)$ may be viewed as a planar embedded graph with the vertex set $V(f^{-1}(G)) := f^{-1}(V(G))$ and the edge set $E(f^{-1}(G))$ consisting of all lifts of the edges of $G$ under $f$. The graph $f^{-1}(G)$ is then called the \emph{complete preimage} of $G$ under the map $f$. We note that $$V(f^{-1}(G)) = f^{-1}(V(G)) \supset f^{-1}(\Post(f)) \supset \Post(f) \cup \Crit(f).$$
Furthermore, each face $\widetilde W$ of $f^{-1}(G)$ is a component of $f^{-1}(W)$ for some face $W$ of $G$ and $f|\widetilde W \colon \widetilde W \to W$ is a covering map.

\begin{lemma}\label{lem:preimage-connected}
    Let $f$ be a Thurston map and $G$ be a planar embedded connected graph in $\Sp$ with $V(G) \supset \Post(f)$. Then the complete preimage $f^{-1}(G)$ is a planar embedded connected graph with $\Post(f) \subset V(f^{-1}(G))$.
\end{lemma}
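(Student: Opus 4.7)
The strategy is to apply the Euler formula $k_H=|F(H)|-|E(H)|+|V(H)|-1$ from Section \ref{subsec: Planar embedded graphs} to $H=f^{-1}(G)$ and show the right-hand side equals $1$. The inclusion $\Post(f)\subset V(f^{-1}(G))$ is immediate, since $f(\Post(f))\subset\Post(f)\subset V(G)$ forces $\Post(f)\subset f^{-1}(V(G))=V(f^{-1}(G))$. First I will check that $f^{-1}(G)$ really is a planar embedded graph in the sense of Section \ref{subsec: Planar embedded graphs} (no loops, edge interiors pairwise disjoint): both properties follow from $\inter(e)\cap f(\Crit(f))=\emptyset$ for every $e\in E(G)$, which holds because $f(\Crit(f))\subset\Post(f)\subset V(G)$, so $f$ is a local homeomorphism on $f^{-1}(\inter(e))$ and every lift of $e$ is a Jordan arc.

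Set $d:=\deg(f)$. Counting edges is easy: every $e\in E(G)$ has exactly $d$ distinct lifts under $f$ (since its interior avoids the branch values), giving $|E(f^{-1}(G))|=d\cdot|E(G)|$. For the vertex count I will sum $\sum_{q\in f^{-1}(v)}\deg(f,q)=d$ over $v\in V(G)$ and use the Riemann--Hurwitz identity $\sum_{c\in\Crit(f)}(\deg(f,c)-1)=2d-2$ together with the inclusion $\Crit(f)\subset f^{-1}(\Post(f))\subset f^{-1}(V(G))$; this yields
\[
|V(f^{-1}(G))|=d\cdot|V(G)|-(2d-2).
\]

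The crucial step, and the place where connectedness of $G$ enters in an essential way, is the face count. Since $G$ is connected, every face $W\in F(G)$ is simply connected (as recorded in Section \ref{subsec: Planar embedded graphs}). By hypothesis $W\cap V(G)=\emptyset$, and since $V(G)\supset f(\Crit(f))$, the restriction $f\colon f^{-1}(W)\to W$ is an unbranched covering map; simple connectivity of $W$ then forces this cover to be trivial, so $f^{-1}(W)$ splits into exactly $d$ components, each mapped homeomorphically onto $W$. Every face of $f^{-1}(G)$ lies above some face of $G$ in this way, so $|F(f^{-1}(G))|=d\cdot|F(G)|$. I expect this face-lifting step to be the only real point requiring care; the rest is bookkeeping.

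Plugging these counts into the Euler formula and using $k_G=1$ gives
\[
k_{f^{-1}(G)}=d|F(G)|-d|E(G)|+d|V(G)|-(2d-2)-1=d\bigl(k_G+1\bigr)-2d+1=1,
\]
which shows that $f^{-1}(G)$ is connected and completes the proof.
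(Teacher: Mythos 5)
Your proof is correct, but it takes a genuinely different route from the paper's. The paper's argument is qualitative and short: each face $W$ of $G$ is simply connected (since $G$ is connected) and contains no branch values of $f$ (since $\Post(f)\subset V(G)$), so Riemann--Hurwitz applied to any component $U$ of $f^{-1}(W)$ gives $\chi(U)=\deg(f|U)\cdot\chi(W)\geq 1$, forcing $\chi(U)=1$; hence every face of $f^{-1}(G)$ is simply connected, and connectedness of $f^{-1}(G)$ then follows from the equivalence recorded in Section~\ref{subsec: Planar embedded graphs} between connectedness of a planar embedded graph and simple connectivity of all its faces. Your proof instead verifies $k_{f^{-1}(G)}=1$ directly from Euler's formula by computing all three counts $|F|$, $|E|$, $|V|$. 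The key geometric input is the same in both arguments (faces of $G$ are simply connected and disjoint from $f(\Crit(f))$, so their preimages split into $d$ trivially covered sheets), but you additionally invoke Riemann--Hurwitz a second time, in the form $\sum_{c\in\Crit(f)}(\deg(f,c)-1)=2d-2$, to get the vertex count, whereas the paper needs it only once. Your version is longer and involves more bookkeeping, but it has the virtue of being fully self-contained: it does not rely on the ``connected iff all faces simply connected'' characterization, and the arithmetic $d(k_G+1)-2d+1=1$ makes the role of the hypothesis $k_G=1$ completely transparent. One small presentational point: in the vertex count it is worth stating explicitly that each critical point $c$ is counted exactly once in $\sum_{v\in V(G)}\sum_{q\in f^{-1}(v)\cap\Crit(f)}(\deg(f,q)-1)$ precisely because $f(\Crit(f))\subset V(G)$; you do say this, but it is the step where the hypothesis $\Post(f)\subset V(G)$ is doing real work and deserves emphasis.
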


\begin{proof} 
   Indeed, since $G$ is connected and $\Post(f) \subset V(G)$, each face $W$ of $G$ is simply connected and $W \cap \Post(f) = \emptyset$. Thus, by the Riemann-Hurwitz formula, each component of $f^{-1}(W)$ is simply connected as well. Hence, $f^{-1}(G)$ is connected and the statement follows.  
\end{proof}

Finally, we discuss extensions of maps between planar embedded graphs to maps between the underlying spheres. 

Suppose $\Sp$ and $\widehat{\Sp}$ are two topological $2$-spheres. Let $G=(V,E)$ and $\widehat{G}=(\widehat{V}, \widehat{E})$ be two planar embedded graphs in $\Sp$ and $\widehat{\Sp}$, respectively. A continuous map $f\colon G \to \widehat{G}$ is called a \emph{graph map} if forward and inverse images of vertices are vertices (i.e., $f(V)\subset \widehat{V}$ and $f^{-1}(\widehat{V})\subset V$), and $f$ is injective on each edge of $G$. An (orientation-preserving) branched covering map $\overline{f}\colon \Sp \to \widehat{\Sp}$ is called a \emph{regular extension} of a graph map $f\colon G \to \widehat{G}$ if $\overline{f}|G = f$ and $\overline{f}$ is injective on each face of $G$.

A criterion for the existence of regular extensions is provided in \cite[Proposition 6.4]{BFH_Class}. Here, we only record the following uniqueness result from the same paper, which we use in the sequel; see \cite[Corollary 6.3]{BFH_Class}.

\begin{proposition}\label{prop: Graph rigidity}
    Let $G$ and $\widehat{G}$ be two planar embedded connected graphs in $\Sp$ and $\widehat{\Sp}$, respectively. Suppose that $f, g\colon G \to \widehat{G}$ are two graph maps such that $f(v)=g(v)$ and $f(e)=g(e)$ for each $v\in V(G)$ and $e\in E(G)$. If $f$ and $g$ have regular extensions $\overline{f}$ and $\overline{g}$, respectively, then there exists $\psi\in \Homeo^+_0(\Sp, V(G))$ such that $\overline{f}=\overline{g}\circ \psi$.
\end{proposition}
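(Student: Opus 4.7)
My plan is to construct $\psi$ in two stages — first as a reparametrization of the graph $G$, then by extension across faces using the regular extensions — and finally verify that the resulting homeomorphism lies in $\Homeo_0^+(\Sp, V(G))$. On each edge $e\in E(G)$, the hypotheses $f|V(G)=g|V(G)$ and $f(e)=g(e)$ (as sets) mean that $f|e$ and $g|e$ are both homeomorphisms of $e$ onto the same edge of $\widehat{G}$ that agree on $\partial e$, so $\phi_e:=(g|e)^{-1}\circ(f|e)$ is a self-homeomorphism of $e$ fixing $\partial e$. Gluing these across all edges produces a homeomorphism $\psi_0\colon G\to G$ with $\psi_0|V(G)=\id$ and $f=g\circ\psi_0$.

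The next step is to extend $\psi_0$ to $\Sp$. The key point is that $\overline{f}(F)=\overline{g}(F)$ for every face $F$ of $G$: since $f|e$ and $g|e$ are endpoint-preserving homeomorphisms of an arc onto the same arc they induce the same orientation on $f(e)$, and since $\overline{f}$ and $\overline{g}$ are orientation-preserving branched covers, they must send each face adjacent to $e$ in $G$ to the same face adjacent to $f(e)=g(e)$ in $\widehat{G}$. Because regular extensions are injective on faces, both $\overline{f}|\overline{F}$ and $\overline{g}|\overline{F}$ are homeomorphisms onto the same closed face of $\widehat{G}$, so
\[
\psi|\overline{F}:=(\overline{g}|\overline{F})^{-1}\circ(\overline{f}|\overline{F})
\]
is well-defined. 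On $\partial F\subset G$ this formula reduces to $\psi_0|\partial F$, so the pieces glue together into an orientation-preserving homeomorphism $\psi\colon\Sp\to\Sp$ with $\psi|V(G)=\id$ and $\overline{f}=\overline{g}\circ\psi$ by construction.

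What remains — and this is the main obstacle — is to show that $\psi$ is isotopic to $\id_\Sp$ rel.\ $V(G)$. On each edge $e$, the map $\psi|e=\phi_e$ fixes $\partial e$ and so is isotopic to $\id_e$ rel.\ $\partial e$. Combining these edge-isotopies gives an isotopy of $G$ rel.\ $V(G)$ from $\id_G$ to $\psi_0$, which I would extend by the ambient isotopy theorem (applicable in the topological category for surfaces) to an ambient isotopy $\widetilde{H}\colon\Sp\times\I\to\Sp$ rel.\ $V(G)$ with $\widetilde{H}_0=\id_\Sp$ and $\widetilde{H}_1|G=\psi_0=\psi|G$. Then $\widetilde{H}_1^{-1}\circ\psi$ is an orientation-preserving homeomorphism of $\Sp$ fixing $G$ pointwise. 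Since $G$ is connected, every face of $G$ is simply connected, so by parametrizing each closed face by $\overline{\D}$ and applying the Alexander trick (using that $\widetilde{H}_1^{-1}\circ\psi$ is the identity on the boundary circuit of the face) one obtains an isotopy rel.\ $G$ from $\widetilde{H}_1^{-1}\circ\psi$ to $\id_\Sp$. Concatenating with the reverse of $\widetilde{H}$ yields the desired isotopy from $\psi$ to $\id_\Sp$ rel.\ $V(G)$. The delicate point is that when $G$ has, for instance, a leaf edge, the boundary circuit of a face may traverse some edge twice, so ``the closed face is a closed disk'' holds only after a parametrization $\overline{\D}\to\overline{F}$; the Alexander trick must then be carried out on the disk model and transferred back, with the simultaneous isotopies on all faces matching identically on $G$.
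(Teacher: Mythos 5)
The paper does not prove this proposition; it is cited verbatim as Corollary 6.3 of Bielefeld--Fisher--Hubbard, so there is no in-paper argument to compare against. Your construction --- glue $\phi_e:=(g|e)^{-1}\circ(f|e)$ over the edges of $G$ to get $\psi_0$, match faces across each edge via orientation, define $\psi$ face-by-face, and then isotope $\psi$ to the identity by reparametrizing $G$ and applying Alexander's trick on the simply connected faces --- is the natural strategy and is, in outline, what BFH do. Two places deserve more care than you give them. First, the expression $(\overline g|\overline F)^{-1}\circ(\overline f|\overline F)$ is only literally meaningful on the open face $F$, since $\overline g$ need not be injective on $\overline F$; what one really does is define $\psi$ on $F$ by this formula, define $\psi=\psi_0$ on $G$, and then verify continuity of the glued map. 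Continuity at interior points of edges is clear because $\overline f$ and $\overline g$ are one-sided local homeomorphisms there, but continuity at a vertex $v$ additionally uses that $\overline f$ and $\overline g$ have equal local degree at $v$ and pair the cyclically ordered sectors around $v$ with those around $f(v)=g(v)$ in the same way; this is implicit in your face-matching claim but should be stated as the reason the glued map is continuous at the vertices, not only along edge interiors. Second, the appeal to a general ``ambient isotopy theorem'' is heavier than needed and delicate in the topological category; a cleaner route is to choose pairwise disjoint closed disk neighborhoods $N_e$ of the edges with $e$ a crosscut of $N_e$, extend each reparametrization $\phi_e$ to a homeomorphism of $N_e$ fixing $\partial N_e$ (e.g.\ by coning in a rectangle model), and take this as $\widetilde H_1$ directly --- it is supported in $\bigcup_e N_e$ and isotopic to the identity rel $\Sp\setminus\bigcup_e\operatorname{int}(N_e)\supset V(G)$ by Alexander on each $N_e$. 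Your treatment of the final step --- lifting $\widetilde H_1^{-1}\circ\psi$ to the disk model $\overline{\mathbb D}\to\overline F$ and observing that the Alexander isotopy is rel $\partial\mathbb D$ and therefore descends through the boundary identifications --- handles the leaf-edge issue correctly. With those two points tightened, the argument is complete.
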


The following corollary follows easily from the proposition above and Proposition \ref{prop: Graph isotopic criterion}.

\begin{corollary}\label{cor: Homeo rigidity}
    Let $G$ be a planar embedded connected graph in $\Sp$. Suppose $\varphi_1,\varphi_2\in \Homeo^+(\Sp)$ satisfy $\varphi_1(e)\sim \varphi_2(e)$ rel.\ $V(G)$ for all $e\in E(G)$. Then $\varphi_1,\varphi_2$ are isotopic rel.~$V(G)$. 
\end{corollary}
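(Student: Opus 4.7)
The plan is to deduce the corollary from Propositions \ref{prop: Graph isotopic criterion} and \ref{prop: Graph rigidity}. Note first that $\varphi_1(e) \sim \varphi_2(e)$ rel.\ $V(G)$ implicitly requires $\partial \varphi_1(e) = \partial \varphi_2(e)$ for every edge, so (as seems intended) we treat $\varphi_1, \varphi_2$ as elements of $\Homeo^+(\Sp, V(G))$; in particular both graphs $\varphi_1(G)$ and $\varphi_2(G)$ share the vertex set $V(G)$.

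First I would apply Proposition \ref{prop: Graph isotopic criterion} to $\varphi_1(G)$ and $\varphi_2(G)$. The pairing $\varphi_1(e) \leftrightarrow \varphi_2(e)$ supplied by the hypothesis verifies both the existence of isotopic partners and the compatibility of multiplicities; indeed, transitivity of $\sim$\ rel.\ $V(G)$ together with the hypothesis gives that $\varphi_1(e_1) \sim \varphi_1(e_2)$\ rel.\ $V(G)$ if and only if $\varphi_2(e_1) \sim \varphi_2(e_2)$\ rel.\ $V(G)$. Inspecting the edge-by-edge construction of the resulting isotopy, which uses Lemma \ref{lem: Buser_isotopy_arcs} to move each arc $\varphi_1(e)$ onto its partner $\varphi_2(e)$ (they already share endpoints in $V(G)$), one can arrange the output to be an ambient isotopy $(\eta_t)_{t \in \I}$ in $\Homeo^+(\Sp, V(G))$ with $\eta_0 = \id$ and $\eta_1(\varphi_1(e)) = \varphi_2(e)$ for every edge $e \in E(G)$.

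With this in hand, I would apply Proposition \ref{prop: Graph rigidity}. The composition $\eta_1 \circ \varphi_1$ and the homeomorphism $\varphi_2$ are both orientation-preserving branched covering maps of degree one, and by construction they are regular extensions of the same graph map $G \to \varphi_2(G)$, namely the one acting as the identity on $V(G)$ and sending $e \mapsto \varphi_2(e)$. Proposition \ref{prop: Graph rigidity} then yields $\psi \in \Homeo_0^+(\Sp, V(G))$ with $\eta_1 \circ \varphi_1 = \varphi_2 \circ \psi$, so $\varphi_1 = \eta_1^{-1} \circ \varphi_2 \circ \psi$. Concatenating the isotopy $\eta_t^{-1}$ from $\id$ to $\eta_1^{-1}$ rel.\ $V(G)$ with an isotopy $\psi \sim \id$ rel.\ $V(G)$ (and post-composing with $\varphi_2$) produces the desired isotopy from $\varphi_1$ to $\varphi_2$ rel.\ $V(G)$.

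The only subtle step is the edge-matching upgrade in the second paragraph: Proposition \ref{prop: Graph isotopic criterion} a priori produces an isotopy identifying the edges of $\varphi_1(G)$ with those of $\varphi_2(G)$ only up to a permutation within each isotopy class of parallel edges. Hence one must argue that this permutation can be absorbed into a further homeomorphism isotopic to the identity rel.\ $V(G)$. I expect this to follow from the observation that $\varphi_1$ and $\varphi_2$, being orientation-preserving and fixing $V(G)$ pointwise, induce \emph{the same} cyclic order on the edges incident to each vertex; together with the constraint that the permutation preserves isotopy classes of edges, this reduces to a standard fact about planar embedded graphs, so the ambient isotopy can be modified to realize the pairing $\varphi_1(e) \mapsto \varphi_2(e)$ precisely.
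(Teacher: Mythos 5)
Your argument follows the exact route the paper's one-line justification indicates: Proposition \ref{prop: Graph isotopic criterion} to carry $\varphi_1(G)$ onto $\varphi_2(G)$ by an ambient isotopy $\eta$ rel.\ $V(G)$, and then Proposition \ref{prop: Graph rigidity} applied to $\eta_1\circ\varphi_1$ and $\varphi_2$ as regular extensions of a common graph map $G\to\varphi_2(G)$. You have also correctly spotted the one point the paper's ``follows easily'' leaves implicit --- that the isotopy produced by Proposition \ref{prop: Graph isotopic criterion} matches edges of $\varphi_1(G)$ to edges of $\varphi_2(G)$ only up to a permutation of parallel edges --- and your cyclic-order observation is the right resolution (the permutation must respect the rotation system of $\varphi_2(G)$ since both $\eta_1\circ\varphi_1$ and $\varphi_2$ are orientation-preserving and agree on $V(G)$, so within each bundle of parallel edges it is a cyclic shift, which is realized by a homeomorphism isotopic to the identity rel.\ $V(G)$); you state this as an expectation rather than carrying it out, but the sketch is sound and matches the intended proof.
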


\section{Critically fixed Thurston maps}\label{sec: Critically fixed Thurston maps}

The main goal of this section is to provide a classification of \emph{critically fixed Thurston maps}, that is,  Thurston maps that fix (pointwise) each of their critical points. 

First, we define the ``\emph{blow-up operation}'', introduced by Pilgrim and Tan Lei in \cite[Section~2.5]{PT}, which provides a surgery for constructing and modifying Thurston maps and plays a crucial role in our classification result.   
We do not define this operation in complete generality, but in some rather particular setting. Namely, we will be blowing up only pairs 
$(G,\varphi)$, where $G$ is a planar embedded graph in $\Sp$ and $\varphi\in\Homeo^+(\Sp,V(G))$. The result of this operation is a critically fixed Thurston map.  (Recall that $G$ may have multiple edges but no loops and that $\Homeo^+(\Sp,Z)$ denotes the group of all orientation-preserving homeomorphisms of $\Sp$ that fix each point in a finite set $Z\subset \Sp$.)

Conversely, to every critically fixed Thurston map $f\colon \Sp\to\Sp$ we associate a pair $(G_f,\varphi_f)$, where $G_f$ is a planar embedded graph with vertices in $\Crit(f)$ and $\varphi_f$ is a homeomorphism in $\Homeo^+(\Sp, \Crit(f))$, so that $f$ is isotopic to the map obtained by blowing up the pair $(G_f,\varphi_f)$.

\subsection{The blow-up operation}\label{subsec: The blow-up operation}
In the following, let $G$ be a planar embedded graph in~$\Sp$ and $\varphi$ be an element of $\Homeo^+(\Sp,V(G))$.  We will now describe a construction that associates a critically fixed Thurston map $f\colon \Sp \to \Sp$ to every such pair~$(G, \varphi)$ by ``\emph{blowing up}'' each edge of~$G$ (see Figure~\ref{fig: blowup} for an illustration). We will follow the exposition from \cite[Section 4.1]{BHI_Obstructions}, but we will slightly simplify the definition based on the specific features of the considered case.

First, for each edge $e \in E(G)$ we choose an open Jordan region $W_e \subset \Sp$ such that the following conditions hold:
    
    \begin{enumerate}[label=(A\arabic*)]

        \item\label{A1} Every $e\in E(G)$ is a crosscut in $W_e$, that is, $\inter(e) \subset W_e$ and $\partial e \subset \partial W_e$;
        
        \item\label{A2} 
        $\overline{W_e} \cap V(G) = \partial e$ for each $e\in E(G)$;
        
        \item\label{A3} For distinct edges $e_1, e_2 \in E(G)$, we have $\overline{W_{e_1}} \cap \overline{W_{e_2}} = \partial e_1 \cap \partial e_2$. In particular, the open Jordan regions $W_e$ are pairwise disjoint.
        
     \end{enumerate}
 
 \begin{figure}[t]
        \centering
        \begin{overpic}[width=12cm]{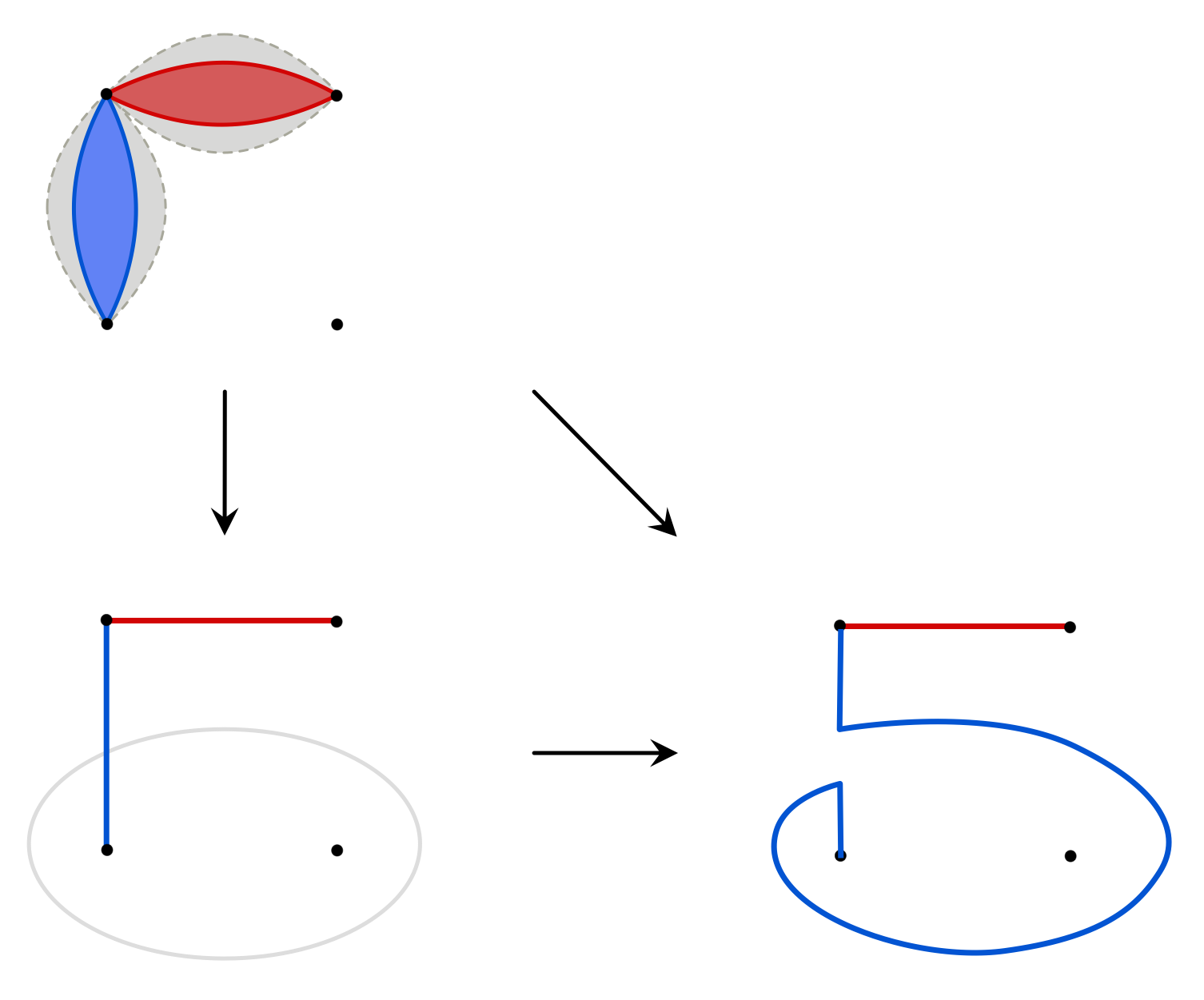}
        \put(48.7,22){$\varphi$}
        \put(51.2,45){$f$}
        \put(15,44){$h_1$}

        \put(29,3){$\gamma$}
        \put(5.5,23){$e_1$}
        \put(17,33){$e_2$}

        \put(16,74){$D_{e_2}$}
        \put(6,64.5){$D_{e_1}$}
        \put(25,69){$W_{e_2}$}
        \put(0,58){$W_{e_1}$}

        \put(87.5,23){$\varphi(e_1)$}
        \put(76,33){$\varphi(e_2)$}
        \end{overpic}
        \caption{
        Illustration of the blow-up operation applied to the pair $(G,\varphi)$, where $G\subset \Sp$ is the planar embedded graph on the bottom left with four vertices (in black) and two edges (in blue and red), and $\varphi=T_\gamma$ is the Dehn twist about the curve $\gamma$ (in gray). The graph on the bottom right depicts the image $\varphi(G)$. The picture on the top left shows the closed Jordan regions $D_{e_1}\supset e_1$ (in blue) and $D_{e_2}\supset e_2$ (in red), as well as the corresponding open Jordan regions $W_{e_1}\supset D_{e_1}$ and $W_{e_2}\supset D_{e_2}$ with dashed boundaries (in gray). The map $f$ denotes a critically fixed Thurston map on $\Sp$ obtained by blowing up the pair $(G,\varphi)$.} 
        \label{fig: blowup}
    \end{figure}    
    
       Next, we choose closed Jordan regions $D_e$, $e\in E(G)$, so that $e$ is a crosscut in $D_e$ and $D_e \setminus \partial e \subset W_e$. 
        The two endpoints of $e$ partition $\partial D_e$ into two Jordan arcs, which we denote by $\partial D_e^+$ and $\partial D_e^-$. One can think of $D_e$ as the resulting region if we cut the sphere $\Sp$ along the edge $e$ and ``open up'' the slit. 
    
    Now we define a map that collapses each $D_e$ back to $e$. More precisely, we choose a homotopy $h\colon \Sp \times \I \to \Sp$ with the following properties:
    \begin{enumerate}[label=(B\arabic*)]
        \item\label{B1} 
        $h_t:= h(\cdot, t)$ is a homeomorphism on $\Sp$ for all $t \in [0, 1)$;
        
        \item\label{B2} $h_0=\id_{\Sp}$;
        
        \item\label{B3} $h_t$ is the identity map on $\Sp \setminus \bigcup_{e \in E(G)} W_e$ for all $t \in \I$;
        
        \item\label{B4} $h_1$ is a homeomorphism of $\Sp \setminus \bigcup_{e \in E(G)} D_e$ onto $\Sp \setminus \bigcup_{e \in E(G)} e$;
        
        \item\label{B5} $h_1$ maps $\partial D_e^+$ and $\partial D_e^-$ homeomorphically onto $e$ for every $e \in E(G)$.
    \end{enumerate}
    It easily follows that $h_1(D_e) = e$ for all $e\in E(G)$. So the homotopy $h$ collapses each closed Jordan region $D_e$ onto $e$ at time $1$, while keeping each point in $\Sp \setminus \bigcup_{e \in E(G)} W_e$ fixed at all times.

    Finally, for every $e \in E(G)$ we choose a continuous map $f_e\colon D_e \to \Sp$ with the following properties:
    
    \begin{enumerate}[label=(C\arabic*)]
        \item\label{C1} $f_e|{\inter(D_e)}\colon \inter(D_e) \to \Sp \setminus \varphi(e)$ is an orientation-preserving homeomorphism;
        
        \item\label{C2} $f_e|\partial{D_e^+} = \varphi \circ h_1|{\partial D_e^+}$ and $f_e|\partial{D_e^-} = \varphi \circ h_1|{\partial D_e^-}$.
    \end{enumerate}
    
    Now we may define a map $f\colon \Sp \to \Sp$ as follows:
    \begin{equation}\label{eq: blow-up_def}
        f(p) = \begin{cases}
        (\varphi \circ h_1)(p) & \text{if } p \in \Sp \setminus \bigcup_{e \in E(G)} D_e\\
        f_e(p) &  \text{if }  p \in D_e.
        \end{cases}
    \end{equation}

    \begin{definition}\label{def: Blow up}
         We say that the map $f\colon \Sp \to \Sp$ as described above is obtained by \emph{blowing up} the pair $(G, \varphi)$. The procedure of constructing this map is called the \emph{blow-up operation}.
    \end{definition}

    One can check that the map $f\colon \Sp\to \Sp$ we just constructed is in fact a critically fixed Thurston map with the properties summarized in the following proposition. (For the proof, see \cite[Lemma 4.3]{BHI_Obstructions}.)

    \begin{proposition}\label{prop: Blow-up properties}
        Suppose a map $f\colon \Sp\to\Sp$ is obtained by blowing up a pair $(G, \varphi)$, where $G$ is a planar embedded graph in $\Sp$ and $\varphi\in\Homeo^+(\Sp,V(G))$. Then the following
statements are true:
        
        \begin{enumerate}[label=\normalfont{(\roman*)}]
            \item \label{item: degree-of-blow-up} $f$ is a Thurston map with $\deg(f) = |E(G)| + 1$;
            
            \item $\Crit(f) = \{v \in V(G): \deg_G(v) > 0\}$;

            \item $\deg(f, v) = \deg_G(v) + 1$ for every $v \in V(G)$;
            
            \item each $v \in V(G)$ is fixed under $f$.
        \end{enumerate}
        In particular, $f$ is a critically fixed Thurston map.
    \end{proposition}

\begin{example}\label{ex: Topological square map}

 \begin{figure}[t]
        \centering
        \begin{overpic}[width=12cm]{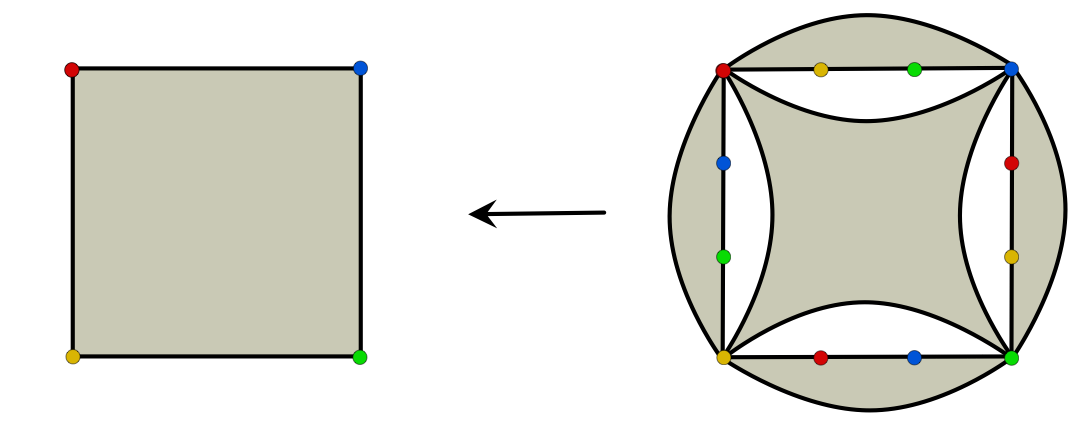}
        \put(48,22){$f_\square$}
        \end{overpic}
        \caption{
        A critically fixed Thurston map $f_\square$ obtained by blowing up the pair $(G_\square, \id_{\Sp})$, where $G_\square$ is the planar embedded graph on the left.
        }
        \label{fig: square_map}
    \end{figure}

    Consider the ``square graph'' $G_\square$ shown on the left in Figure \ref{fig: square_map}. 
    Here and in the following, all the graphs in figures are assumed to be embedded in an underlying $2$-sphere. In particular, the graph $G_\square$ has two simply connected faces, which we denote by $U_g$ and $U_w$. The face $U_g$ corresponds to the interior of the square, which is colored gray, and the face~$U_w$ corresponds to the exterior of the square, which is colored white. Figure \ref{fig: square_map} illustrates a critically fixed Thurston map $f_\square$ obtained by blowing up the pair $(G_{\square}, \id_{\Sp})$.  
        Namely, the closure of each gray region $U$ on the right is mapped by $f_\square$ homeomorphically onto the closure of the gray face $U_g$ on the left so that the marked vertices on $\partial U$ are sent to the vertices of the same color on $\partial U_g$. An analogous statement is true for each white region on the right.  
  Note that $\deg(f_{\square}) = 5$, and each vertex of $G_\square$ is a fixed critical point of $f_\square$ with the local degree $3$. We will use the map $f_\square$ as a prototypical example in our paper. 
\end{example}

    It follows from \cite[Proposition~2]{PT} (and the remark after it) that a critically fixed Thurston map $f$ obtained by \emph{blowing up} the pair $(G, \varphi)$ is uniquely defined up to isotopy (rel.\ $V(G)$) independently of the choices in the construction above. Moreover, up to isotopy $f$ depends only on the isotopy classes of $G$ and $\varphi$.

    \begin{proposition}\label{prop: Isotopy and blow-up}
   For $j=1,2$, suppose $G_j$ is a planar embedded graph in $\Sp$ and $\varphi_j$ is a homeomorphism in $\Homeo^+(\Sp, V(G_j))$. Let $f_1$ and $f_2$ be critically fixed Thurston maps obtained by blowing up the pairs $(G_1, \varphi_1)$ and $(G_2, \varphi_2)$, respectively. If the pairs $(G_1,\varphi_1)$ and $(G_2,\varphi_2)$ are isotopic, then the marked Thurston maps $(f_1, V(G_1))$ and $(f_2, V(G_2))$ are isotopic as well.
    \end{proposition}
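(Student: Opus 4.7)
The plan is to reduce the proposition to two independent special cases by inserting an intermediate blow-up of the pair $(G_2, \varphi_1)$. Note first that since $G_1 \sim G_2$ rel.\ $V(G_1)$ implies $H_1(V(G_1)) = V(G_1)$ for the isotopy endpoint $H_1$, while also $H_1(V(G_1)) = V(G_2)$ by definition of graph isotopy, we have $V(G_1) = V(G_2)$ as subsets of $\Sp$, so the isotopy hypothesis on the $\varphi_j$'s makes sense for both graphs. I will prove: (I) if $\varphi_1 \sim \varphi_2$ rel.\ $V(G)$ for a fixed graph $G$, then blow-ups of $(G, \varphi_1)$ and $(G, \varphi_2)$ are isotopic as marked Thurston maps; and (II) if $G_1 \sim G_2$ rel.\ $V(G_1)$ and $\varphi$ fixes $V(G_1) = V(G_2)$ pointwise, then blow-ups of $(G_1, \varphi)$ and $(G_2, \varphi)$ are isotopic. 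Combined with the uniqueness of the blow-up for a fixed pair (stated before Proposition \ref{prop: Isotopy and blow-up}), these steps yield the conclusion via the chain from $f_1$ through a blow-up of $(G_2, \varphi_1)$ to $f_2$.

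For step (I), I would set $H := \varphi_2 \circ \varphi_1^{-1}$, which lies in $\Homeo_0^+(\Sp, V(G))$ because an isotopy $\Phi_t$ from $\varphi_1$ to $\varphi_2$ rel.\ $V(G)$ yields the isotopy $\Phi_t \circ \varphi_1^{-1}$ from $\id_\Sp$ to $H$ rel.\ $V(G)$. Fixing a blow-up $f_1$ of $(G, \varphi_1)$ produced from data $(W_e, D_e, h, f_e^1)$, I would verify directly from formula \eqref{eq: blow-up_def} that $H \circ f_1$ is a blow-up of $(G, \varphi_2)$ produced from the data $(W_e, D_e, h, H \circ f_e^1)$. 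The two checks are straightforward: on $\Sp \setminus \bigcup_e D_e$ one has $H \circ \varphi_1 \circ h_1 = \varphi_2 \circ h_1$, and on each $D_e$ the map $H \circ f_e^1$ satisfies conditions \ref{C1}--\ref{C2} for $\varphi_2$ because $H(\varphi_1(e)) = \varphi_2(e)$ as subsets of $\Sp$ and $H \circ \varphi_1 = \varphi_2$. Since $H \in \Homeo_0^+(\Sp, V(G))$, the marked Thurston maps $f_1$ and $H \circ f_1$ are isotopic with $\psi_0 = H$ and $\psi_1 = \id_\Sp$, and $H \circ f_1$ is isotopic to $f_2$ by the uniqueness of the blow-up.

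For step (II), I would pick an isotopy $H_t$ rel.\ $V(G_1)$ with $H_0 = \id_\Sp$ and $H_1(G_1) = G_2$, set $\widetilde{\varphi} := H_1 \circ \varphi \circ H_1^{-1}$, and transport the blow-up data for $(G_1, \varphi)$ via $H_1$ by defining $W'_{H_1(e)} := H_1(W_e)$, $D'_{H_1(e)} := H_1(D_e)$, $h'_t := H_1 \circ h_t \circ H_1^{-1}$, and $f'_{H_1(e)} := H_1 \circ f_e \circ H_1^{-1}$. All conditions \ref{A1}--\ref{C2} for the new data with the pair $(G_2, \widetilde{\varphi})$ follow immediately from conjugation by the homeomorphism $H_1$, and a direct computation from \eqref{eq: blow-up_def} shows that the resulting blow-up map is exactly $\widetilde{f} := H_1 \circ f_1 \circ H_1^{-1}$. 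Since $H_1 \in \Homeo_0^+(\Sp, V(G_1))$, we obtain $\widetilde{f}$ isotopic to $f_1$ as marked Thurston maps via $\psi_0 = \psi_1 = H_1$. The isotopy $H_t \circ \varphi \circ H_t^{-1}$ shows $\widetilde{\varphi} \sim \varphi$ rel.\ $V(G_1)$, since every map in this family fixes $V(G_1)$ pointwise, so step (I) identifies a blow-up of $(G_2, \varphi)$ with $\widetilde{f}$, and hence with $f_1$, up to isotopy as marked Thurston maps.

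Chaining step (II) applied to $\varphi_1$ with step (I) applied on the graph $G_2$ to the pair $\varphi_1 \sim \varphi_2$ rel.\ $V(G_2)$ completes the proof. The main obstacle I foresee is the bookkeeping in step (II): one must simultaneously transport Jordan regions, the collapsing homotopy $h$, and the local branched-cover data by $H_1$, and verify that each property in the long list \ref{A1}--\ref{C2} is preserved, together with a direct substitution check that the transported blow-up produces the conjugate map $H_1 \circ f_1 \circ H_1^{-1}$. This is conceptually routine but technically tedious.
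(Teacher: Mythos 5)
The paper does not actually supply a proof of this proposition; it is asserted to follow from \cite[Proposition~2]{PT} and the remark after it, so there is no internal argument to compare against. Your proposal gives a complete, self-contained proof and is correct. The reduction to two independent steps (fix the graph and vary $\varphi$; fix $\varphi$ and vary the graph) via an intermediate blow-up of $(G_2,\varphi_1)$ is a clean structure, and your opening observation that $V(G_1)=V(G_2)$ as subsets of $\Sp$ is the small but necessary point that makes the second hypothesis on the $\varphi_j$'s meaningful for both graphs. In step~(I) the computation that $H\circ f_1$ with data $(W_e,D_e,h,H\circ f_e^1)$ satisfies \ref{C1}--\ref{C2} for $\varphi_2$ checks out, and the final identification with $f_2$ correctly invokes the uniqueness of the blow-up for a fixed pair (which, like the proposition itself, rests on the citation to \cite{PT}, so there is no circularity with what you are proving). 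In step~(II) the conjugation transport $H_1(\cdot)H_1^{-1}$ of the entire data package is straightforward to verify and yields $\widetilde f = H_1\circ f_1\circ H_1^{-1}$ as claimed, and passing from $\widetilde\varphi=H_1\varphi H_1^{-1}$ back to $\varphi$ via $H_t\varphi H_t^{-1}$ (an isotopy rel.\ $V(G_1)$ since each time-slice fixes $V(G_1)$ pointwise) correctly hands control off to step~(I). The chain $f_1\sim$ blow-up of $(G_2,\varphi_1)\sim$ blow-up of $(G_2,\varphi_2)\sim f_2$, together with transitivity of isotopy of marked Thurston maps, finishes it. The bookkeeping in step~(II) that you flag as the main obstacle is indeed tedious but entirely routine, exactly as you suspect, and I see no gap.
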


 Here and in below, we say that the pairs $(G_1,\varphi_1)$ and $(G_2,\varphi_2)$ are \emph{isotopic}, if $G_1$ and $\varphi_1$ are isotopic to $G_2$ and $\varphi_2$ rel.\ $V(G_1)$, respectively. The proposition above easily implies the following fact.
 
 \begin{proposition}\label{prop: from_id_to_any_homeo}
    Suppose $f\colon \Sp\to\Sp$ and $g\colon \Sp\to \Sp$ are critically fixed Thurston maps obtained by blowing up pairs $(G, \varphi)$ and $(G, \id_{\Sp})$, respectively, where $G$ is a planar embedded graph in $\Sp$ and $\varphi\in\Homeo^+(\Sp,V(G))$. Then the marked Thurston maps $(f, V(G))$ and $(\varphi \circ g, V(G))$ are isotopic.
 \end{proposition}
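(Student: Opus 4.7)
The strategy is to show that $\varphi \circ g$ is itself (up to isotopy) a map obtained by blowing up the pair $(G, \varphi)$; once this is established, Proposition \ref{prop: Isotopy and blow-up} immediately yields the desired isotopy with $f$.

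I would start by unfolding the blow-up construction for $g$. Pick any specific data $\{W_e\}$, $\{D_e\}$, a homotopy $h$, and collapse maps $g_e \colon D_e \to \Sp$ satisfying the conditions \ref{A1}--\ref{A3}, \ref{B1}--\ref{B5}, and \ref{C1}--\ref{C2} used to produce a representative $g_0$ of the blow-up of $(G, \id_{\Sp})$. Note that since $\varphi = \id_{\Sp}$ here, condition \ref{C2} for $g_0$ says $g_e|\partial D_e^{\pm} = h_1|\partial D_e^{\pm}$, and condition \ref{C1} says $g_e|\inter(D_e)$ is an orientation-preserving homeomorphism onto $\Sp \setminus e$.

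Next I would compose with $\varphi$: on $\Sp \setminus \bigcup_{e} D_e$ we have $\varphi \circ g_0 = \varphi \circ h_1$, and on each $D_e$ we set $\widetilde{f}_e := \varphi \circ g_e$. Since $\varphi$ is an orientation-preserving homeomorphism fixing $V(G)$, the map $\widetilde{f}_e|\inter(D_e)$ is an orientation-preserving homeomorphism onto $\Sp \setminus \varphi(e)$, which is precisely \ref{C1} for the pair $(G, \varphi)$; and $\widetilde{f}_e|\partial D_e^{\pm} = \varphi \circ h_1|\partial D_e^{\pm}$ is exactly \ref{C2} for $(G, \varphi)$. Thus $\varphi \circ g_0$ fits formula \eqref{eq: blow-up_def} verbatim using the data $(\{W_e\}, \{D_e\}, h, \{\widetilde{f}_e\})$, so $\varphi \circ g_0$ is a map obtained by blowing up $(G, \varphi)$.

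Finally, by the remark after Definition \ref{def: Blow up} (independence of choices), the given map $g$ is isotopic to $g_0$ as marked Thurston maps; equivalently, $g = g_0 \circ \psi$ for some $\psi \in \Homeo_0^+(\Sp, V(G))$. Composing with $\varphi$ gives $\varphi \circ g = (\varphi \circ g_0) \circ \psi$, so $\varphi \circ g$ and $\varphi \circ g_0$ are isotopic as marked Thurston maps on $V(G)$. Since both $f$ and $\varphi \circ g_0$ are obtained by blowing up the same pair $(G, \varphi)$, Proposition \ref{prop: Isotopy and blow-up} (applied with $G_1 = G_2 = G$ and $\varphi_1 = \varphi_2 = \varphi$) shows they are isotopic, and the conclusion follows by transitivity.

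There is no real obstacle here; the only point that must be handled with mild care is the bookkeeping of the choices. One should verify that the auxiliary data used to construct $g_0$ (the regions $W_e$, $D_e$ and the homotopy $h$) can be re-used unchanged to produce $\varphi \circ g_0$ as a blow-up of $(G, \varphi)$ — this works precisely because $h$ is defined independently of the homeomorphism being blown up (see \ref{B1}--\ref{B5}).
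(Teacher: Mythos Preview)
Your proof is correct and is essentially the argument the paper has in mind: the paper states that Proposition~\ref{prop: from_id_to_any_homeo} ``easily follows'' from Proposition~\ref{prop: Isotopy and blow-up} without giving details, and you have spelled out precisely why --- namely that $\varphi \circ g_0$ literally satisfies the blow-up formula~\eqref{eq: blow-up_def} for the pair $(G,\varphi)$ once one reuses the data $(W_e, D_e, h)$ and replaces each $g_e$ by $\varphi \circ g_e$.
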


Since we frequently work with Thurston maps defined up to isotopy (or modulo combinatorial equivalence), the next statement is useful for us.

\begin{proposition}\label{prop: comb_equiv_blowups}
Let $f\colon \Sp \to \Sp$ be a critically fixed Thurston map obtained by blowing up a pair $(G,\varphi)$, where $G$ is a planar embedded graph in $\Sp$ and $\varphi\in\Homeo^+(\Sp,V(G))$. 
Suppose that a marked Thurston map $(g, Q)$ on a topological $2$-sphere $\widehat{\Sp}$ is combinatorially equivalent to $(f, V(G))$.  Then $g$ is obtained by blowing a pair $(\widehat G,\widehat \varphi)$, where $\widehat G$ is a planar embedded graph in $\widehat \Sp$ and $\widehat \varphi\in\Homeo^+(\widehat \Sp,V(\widehat G))$.
\end{proposition}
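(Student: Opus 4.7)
The plan is to transport the blow-up data from $\Sp$ to $\widehat{\Sp}$ through the homeomorphisms implementing the combinatorial equivalence. By definition of combinatorial equivalence of marked Thurston maps, there exist orientation-preserving homeomorphisms $\psi_0, \psi_1 \colon \Sp \to \widehat{\Sp}$ with $\psi_0 \circ f = \widehat{f} \circ \psi_1$, with $\psi_0$ isotopic to $\psi_1$ rel.\ $V(G)$ (so in particular $\psi_0|V(G) = \psi_1|V(G)$), and with $\psi_0(V(G)) = Q_{\widehat{f}}$.

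First, I will set $\widehat{G} := \psi_1(G)$, which is a planar embedded graph in $\widehat{\Sp}$ with $V(\widehat{G}) = \psi_1(V(G)) = Q_{\widehat{f}}$, and $\widehat{\varphi} := \psi_1 \circ \varphi \circ \psi_1^{-1}$. Since $\varphi$ fixes $V(G)$ pointwise, the homeomorphism $\widehat{\varphi}$ fixes $V(\widehat{G})$ pointwise, so $\widehat{\varphi} \in \Homeo^{+}(\widehat{\Sp}, V(\widehat{G}))$. Next, I will observe that the conjugate $g := \psi_1 \circ f \circ \psi_1^{-1}$ is itself obtained by blowing up $(\widehat{G}, \widehat{\varphi})$: taking the $\psi_1$-images of all of the data $W_e, D_e, h, f_e$ used in the blow-up producing $f$ yields data on $\widehat{\Sp}$ that satisfy the corresponding axioms \ref{A1}--\ref{C2} for the pair $(\widehat{G}, \widehat{\varphi})$, and formula \eqref{eq: blow-up_def} applied to this transported data produces exactly $g$. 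This step is essentially tautological: the blow-up construction is equivariant under conjugation by a sphere homeomorphism.

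Finally, I will relate $g$ to $\widehat{f}$. From $\psi_0 \circ f = \widehat{f} \circ \psi_1$ we get $\widehat{f} = (\psi_0 \circ \psi_1^{-1}) \circ g$, and since $\psi_0$ is isotopic to $\psi_1$ rel.\ $V(G)$, the homeomorphism $\psi_0 \circ \psi_1^{-1}$ lies in $\Homeo_{0}^{+}(\widehat{\Sp}, V(\widehat{G}))$. Hence $\widehat{f}$ is isotopic to $g$ as a marked Thurston map with marking $V(\widehat{G}) = Q_{\widehat{f}}$. Because the blow-up operation is well-defined only up to isotopy (as recorded after Definition \ref{def: Blow up}, and made precise by Proposition \ref{prop: Isotopy and blow-up}), this shows that $\widehat{f}$ is itself obtained by blowing up $(\widehat{G}, \widehat{\varphi})$.

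I do not expect any serious obstacle here; the argument is a book-keeping exercise combining the definition of combinatorial equivalence of marked Thurston maps, the equivariance of the blow-up construction under conjugation by a sphere homeomorphism, and the isotopy-invariance of the blow-up operation. The one point worth tracking carefully is the identification of the two candidate markings on $\widehat{\Sp}$, namely that $\psi_1(V(G)) = Q_{\widehat{f}}$, which is ensured by the marked combinatorial equivalence hypothesis.
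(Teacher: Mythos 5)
Your overall strategy is close to the paper's, and the first two steps are correct: the pure conjugate $g := \psi_1 \circ f \circ \psi_1^{-1}$ is indeed obtained by blowing up $(\psi_1(G), \psi_1 \circ \varphi \circ \psi_1^{-1})$, and $\widehat{f} = (\psi_0 \circ \psi_1^{-1}) \circ g$ with $\psi_0\circ\psi_1^{-1} \in \Homeo_0^+(\widehat{\Sp}, V(\widehat{G}))$. However, the final deduction has a real gap. You conclude that since $\widehat{f}$ is isotopic to $g$ and the blow-up is ``well-defined up to isotopy,'' $\widehat{f}$ is itself obtained by blowing up $(\widehat{G},\widehat{\varphi})$. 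But Proposition \ref{prop: Isotopy and blow-up} only goes one direction: isotopic input pairs yield isotopic output maps. It does \emph{not} say that a Thurston map isotopic to a blow-up is itself a blow-up, and the proposition as stated asks for literal equality (``$\widehat{f}$ is obtained by blowing up''), not equality up to isotopy---compare the careful phrasing in Theorem \ref{thm: charge-graph}\ref{item: charge_defined}, which says ``isotopic to a map obtained by blowing up.'' Moreover, your specific candidate $\widehat{\varphi} = \psi_1\circ\varphi\circ\psi_1^{-1}$ is in general not the one that works: writing $\widehat{f} = \widehat{\varphi}\circ \widehat{h}_1$ off the discs and $\widehat{h}_1 = \id$ outside the $\widehat{W}_{\widehat e}$ (condition \ref{B3}) forces $\widehat{\varphi}|_{\widehat{\Sp}\setminus\bigcup\widehat{W}_{\widehat e}} = \widehat{f}|_{\widehat{\Sp}\setminus\bigcup\widehat{W}_{\widehat e}} = (\psi_0\circ f\circ\psi_1^{-1})|_{\widehat{\Sp}\setminus\bigcup\widehat{W}_{\widehat e}} = (\psi_0\circ\varphi\circ\psi_1^{-1})|_{\widehat{\Sp}\setminus\bigcup\widehat{W}_{\widehat e}}$, which differs from $\psi_1\circ\varphi\circ\psi_1^{-1}$ unless $\psi_0 = \psi_1$.

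The fix is short and is precisely what the paper does: instead of conjugating purely by $\psi_1$, define $\widehat{\varphi} := \psi_0 \circ \varphi \circ \psi_1^{-1}$ (keeping $\widehat{G} := \psi_1(G)$, $\widehat{W}_{\widehat e} := \psi_1(W_e)$, $\widehat{D}_{\widehat e}:=\psi_1(D_e)$, $\widehat{h}_t := \psi_1 \circ h_t \circ \psi_1^{-1}$, and $\widehat{f}_{\widehat e} := \psi_0 \circ f_e \circ \psi_1^{-1}$). This asymmetric transport of the data absorbs the discrepancy $\psi_0\circ\psi_1^{-1}$ directly into $\widehat\varphi$ and $\widehat{f}_{\widehat e}$, and a direct check of \eqref{eq: blow-up_def} then shows $\widehat f$ \emph{equals} the constructed blow-up. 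Alternatively, you could keep your pure conjugation and add the missing lemma: if $g$ is obtained by blowing up $(\widehat G, \widehat\varphi_0)$ and $\eta\in\Homeo^+(\widehat\Sp, V(\widehat G))$, then $\eta\circ g$ is obtained by blowing up $(\widehat G, \eta\circ\widehat\varphi_0)$ (replace each $\widehat f_{\widehat e}$ by $\eta\circ\widehat f_{\widehat e}$ and keep the rest of the data); applied with $\eta = \psi_0\circ\psi_1^{-1}$ this recovers the same $\widehat\varphi$.
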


\begin{proof} Suppose $f\colon \Sp \to \Sp$ is a critically fixed Thurston map obtained by blowing up a pair $(G,\varphi)$ as in the statement, that is, we fix a choice of $W_e$, $D_e$, $f_e$, and $h$ as in the construction above.  Moreover, we assume that $\psi_0, \psi_1\colon (\Sp, V(G)) \to 
(\widehat{\Sp}, Q)$ are orientation-preserving homeomorphisms that are isotopic rel.\ $V(G)$ and satisfy $\psi_0 \circ f = 
g\circ\psi_1$.

\begin{center}    
         \begin{tikzcd}[row sep=tiny]
         \Sp\setminus\bigcup_{e\in E(G)} D_e  \arrow[dd, "f"'] \arrow[dr, "h_1"'] \arrow[rrr, "\psi_1"] &&& \widehat\Sp\setminus\bigcup_{\widehat{e}\in E(\widehat{G})} \widehat{D}_{\widehat{e}} \arrow[dd, "%\widehat{f} 
         g"] \arrow[dl, "\widehat h_1" ] \\
       & \Sp\setminus G  \arrow[dl, "\varphi" ] \arrow[r, "\psi_1"]&\widehat\Sp \setminus \widehat{G} \arrow[dr, "\widehat \varphi"']  \\
        \Sp\setminus \varphi(G) \arrow[rrr, "\psi_0"'] &&& \widehat\Sp \setminus \widehat{\varphi}(\widehat{G})
    \end{tikzcd}
\end{center}    

Let $\widehat G$ be the planar embedded graph $\psi_1(G)$ in $\widehat{\Sp}$ with the vertex set $V(\widehat{G})=\psi_1(V(G))=
Q$. Then the edges of $\widehat{G}$ are given by the images $\widehat{e}:=\psi_1(e)$, $e\in E(G)$. Furthermore, let $\widehat \varphi:= \psi_0\circ\varphi\circ\psi_1^{-1}$. Then $\widehat \varphi\in \Homeo^+(\widehat{\Sp}, V(\widehat{G}))$. 
We claim that the map 
$g$ is obtained by blowing up the pair $(\widehat{G},\widehat{\varphi})$ with the following choices:
\begin{enumerate}[label=\normalfont{(\roman*)}]
    \item for each edge $\widehat{e}=\psi_1(e)$, $e\in E(G)$, we set
\[   \,\, 
\widehat{W}_{\widehat{e}}:=\psi_1(W_e), \,\, \widehat{D}_{\widehat{e}}:=\psi_1(D_e), \, \text{ and } \, 
g_{\widehat{e}}:=\psi_0\circ f_e\circ\psi_1^{-1};\]
    \item for each $t\in \I$ we set $\widehat{h}_t:=\psi_1\circ h_t\circ  \psi_1^{-1}$. 
\end{enumerate}
Indeed, this can be easily verified from the identity $
g_{\widehat{e}}=
g|\widehat{D}_{\widehat{e}}$   and the commutative diagram above, and we leave the straightforward details to the reader.
\end{proof}

\begin{rem}\label{rem: comb_equiv_and_adm_pairs} 
Suppose that we are in the setting of Proposition~\ref{prop: comb_equiv_blowups}, and assume that $\psi_0 \circ f = g \circ \psi_1$, where $\psi_0, \psi_1 \colon (S^2, V(G)) \to (\widehat{S^2}, Q)$ are orientation-preserving homeomorphisms that are isotopic rel.\ $V(G)$. Then the proof of Proposition~\ref{prop: comb_equiv_blowups} shows that $g$ is obtained by blowing up the pair $(\widehat{G}, \widehat{\varphi})$ with  $\widehat{G} := \psi_1(G)$ and $\widehat{\varphi} := \psi_0 \circ \varphi \circ \psi_1^{-1}$. The following facts are direct consequences:
\begin{enumerate}[label=\normalfont{(\roman*)}]
\item\label{item: comb_equiv_and_adm_pairs_0} If $\varphi\in \Homeo^+_0(\Sp, V(G))$, then $\widehat \varphi\in \Homeo^+_0(\widehat{\Sp}, V(\widehat{G}))$.
\item\label{item: comb_equiv_and_adm_pairs_1} If the pair $(G,\varphi)$ is admissible in $\Sp$, then the pair $(\widehat{G},\widehat{\varphi})$ is admissible in $\widehat{\Sp}$. Furthermore, the pairs $(G,\varphi)$ and $(\widehat{G},\widehat{\varphi})$ are equivalent (see Definition~\ref{def: admissibility_and_equivalence}).
\item\label{item: comb_equiv_and_adm_pairs_2} 
If $\Sp=\widehat{\Sp}$, $V(G)=Q$, and $\psi_1,\psi_2$ are isotopic to $\id_{\Sp}$ rel.\ $V(G)$, so that the marked Thurston maps $(f, V(G))$ and $(g, Q)$ are isotopic, then the pairs $(G,\varphi)$ and $(\widehat{G}, \widehat{\varphi})$ are isotopic as well.
\end{enumerate}    
\end{rem}

\subsection{Admissible pairs}\label{subsec: admissble pairs}
Let $G$ be a planar embedded graph in~$\Sp$ and $\varphi\in \Homeo^+(\Sp,V(G))$ be a homeomorphism. Recall from the introduction that the pair $(G,
\varphi)$ is called \emph{admissible} (in $\Sp$) if $G$ has no isolated vertices and $\varphi(e)$ is isotopic to~$e$ rel.\ $V(G)$ for each $e\in E(G)$. In this subsection, we establish some basic properties of critically fixed Thurston maps obtained by blowing up admissible pairs.

\subsubsection{A blow-up criterion}
\label{subsubsec: A blow-up criterion}
Our first goal is to provide a criterion for checking if a given critically fixed Thurston map arises (up to isotopy) by blowing up some admissible pair $(G, \varphi)$ for a given planar embedded graph $G$. First, we summarize the mapping properties of such maps.

\begin{proposition}\label{prop: blow-up-triples-properties}
    Let $f\colon \Sp \to \Sp$ be a critically fixed Thurston map obtained by blowing up an admissible pair $(G,\varphi)$ in $\Sp$. Suppose that $K$ is a planar embedded graph in $\Sp$ that is isotopic to $G$ rel.\ $V(G)$. Then for each $\alpha\in E(K)$ there is a triple $(\alpha^+, \alpha^-, U_\alpha)$ satisfying the following conditions:
    \begin{enumerate}[label=\normalfont{(D\arabic*)}]
        \item\label{item: blow-up-tripple-i} $\alpha^+$ and $\alpha^-$ are distinct lifts of $\alpha$ under $f$ that are isotopic to $\alpha$ rel.\ $V(K)$;
        
        \item\label{item: blow-up-tripple-ii} $U_\alpha$ is a connected component of $\Sp \setminus (\alpha^+ \cup \alpha^-)$ with $U_\alpha \cap V(K) = \emptyset$;
        
        \item\label{item: blow-up-tripple-iii} $\overline{U_{\alpha_1}} \cap \overline{U_{\alpha_2}} = \alpha_1\cap\alpha_2$ for distinct $\alpha_1, \alpha_2 \in E(K)$.   
    \end{enumerate}    

    Furthermore, let us consider the planar embedded graph $K^{\pm}:=\bigcup_{\alpha \in E(K)}(\alpha^+ \cup \alpha^-)$ with the vertex set $V(K^{\pm})=V(K)=V(G)$. Then $f$ sends each face $\widetilde{W}$ of $K^{\pm}$ homeomorphically onto its image. More precisely, the following statements are true:
    \begin{enumerate}[label=\normalfont{(E\arabic*)}]
        \item\label{item: blow-up-mapping-i} If $\widetilde{W}=U_\alpha$ for some $\alpha\in E(K)$, then $f$ sends $\widetilde{W}$ homeomorphically onto $\Sp \setminus \alpha$.
        \item\label{item: blow-up-mapping-ii} If $\widetilde{W}\neq U_\alpha$ for every $\alpha\in E(K)$, then $f(\widetilde{W})$ is a face of $K$ with $\partial f(\widetilde{W}) = f(\partial \widetilde{W})$ and  $f|\widetilde{W} \colon \widetilde{W} \to f(\widetilde{W})$ is a homeomorphism.
        \item\label{item: blow-up-mapping-iii} $f$ sends $\Sp \setminus \bigcup_{\alpha\in E(K)} \overline{U_\alpha}$ homeomorphically onto $\Sp \setminus K$. 
        \item\label{item: blow-up-mapping-iv} Let $\beta$ be a Jordan arc in $(\Sp, V(K))$ with $\inter(\beta) \subset \Sp \setminus \bigcup_{\alpha\in E(K)} \overline{U_\alpha}$ or a Jordan curve in $(\Sp, V(K))$ with $\beta \subset \Sp \setminus \bigcup_{\alpha\in E(K)} \overline{U_\alpha}$. Then $f(\beta)\sim \varphi(\beta)$ rel.\ $V(K)$.
    \end{enumerate}
\end{proposition}

The graph $K^\pm$ as above is called the \emph{blow-up} of $K$ under $f$.

\begin{rem}\label{rem: blow-up-triples-unique}
    Using the discussion in Section~\ref{subsubsec: Lifting arcs}, one can show that, if $|\Crit(f)| > 2$, then the  triples $(\alpha^+, \alpha^-, U_\alpha)$, $\alpha\in E(K)$, are uniquely determined by the map $f$ and the graph~$K$. The case $|\Crit(f)| = 2$ is exceptional: In this case, $f$ is combinatorially equivalent to the power map $z\mapsto z^d$ with $d:= \deg(f)\geq 2$, and $E(G)$ consists of exactly $d-1$ (multiple) edges joining the two points in $V(G)=\Crit(f)$.  It is then easy to see that for each planar embedded graph $K$ that is isotopic to $G$ rel.\ $V(G)$ there are (essentially) $d$ distinct choices of the desired triples $(\alpha^+, \alpha^-, U_\alpha)$, $\alpha\in E(K)$. More precisely, for each fixed $\alpha_0\in E(K)$ there are precisely $d$ connected components of $\Sp\setminus f^{-1}(\alpha_0)$, and each such component $U$ induces a collection of triples $(\alpha^+, \alpha^-, U_\alpha)$, $\alpha\in E(K)$, that satisfy conditions \ref{item: blow-up-tripple-i}-\ref{item: blow-up-tripple-iii} with $U_{\alpha_0}=U$.
\end{rem}

\begin{proof}[Proof of Proposition \ref{prop: blow-up-triples-properties}]
    
    First, suppose that $K$ is the planar embedded graph $\varphi(G)$ with the vertex set $V(K)=V(G)$. Note that since $(G,\varphi)$ is admissible, Proposition \ref{prop: Graph isotopic criterion} implies that $K$ is isotopic to $G$ rel.\ $V(G)$. Let $\alpha$ be an edge of $K=\varphi(G)$. Then $\alpha=\varphi(e)$ for some $e\in E(G)$. We may now set $\alpha^+ := \partial D_e^+$, $\alpha^- := \partial D_e^-$,  and $U_\alpha := \inter(D_e)$, where $D_e$, $\partial D_e^+$, and $\partial D_e^-$ are as chosen in the construction of $f$ by blowing up the pair $(G,\varphi)$; see Section~\ref{subsec: The blow-up operation}. One can easily check that all the conditions \ref{item: blow-up-tripple-i}-\ref{item: blow-up-tripple-iii} and \ref{item: blow-up-mapping-i}-\ref{item: blow-up-mapping-iv} are satisfied. In particular, condition \ref{item: blow-up-mapping-iv} follows from \eqref{eq: blow-up_def} and \ref{B1}-\ref{B5}, because $(\varphi\circ h_t)| \beta$, $t\in \I$, provides a non-ambient isotopy rel.\ $V(K)$ between $\varphi(\beta)=(\varphi\circ h_0)(\beta)$ and $f(\beta)=(\varphi\circ h_1)(\beta)$. The general case, when $K$ is an arbitrary planar embedded graph isotopic to $G$, then follows from this by the isotopy lifting property for Thurston maps (see Proposition~\ref{prop: isotopy-lifting}). We leave it to the reader to fill in the details.
\end{proof}

\begin{rem}\label{rem: blow-up-triples-properties}
    An analog of Proposition \ref{prop: blow-up-triples-properties} remains true in the case when a critically fixed Thurston map $f \colon \Sp \to \Sp$ is obtained by blowing up a (not necessarily admissible) pair $(G, \varphi)$. 
    Namely, the statement still holds if we take $K$ to be a planar embedded graph in $\Sp$ that is isotopic to $\varphi(G)$ rel.\ $V(G)$ and replace condition \ref{item: blow-up-tripple-i} with the following one:
     \begin{enumerate}[label=\normalfont{(D\arabic*')}]
        \item \label{item: new-blow-up-tripple-i}$
        \alpha^+$ and $\alpha^-$ are distinct lifts of $\alpha$ with $\partial \alpha^+=\partial \alpha^-$.
     \end{enumerate}
\end{rem}

We are now ready to state the following ``blow-up criterion'' for critically fixed Thurston maps. 

\begin{proposition}\label{prop: Blow-up triples}
    Let $f\colon\Sp\to\Sp$ be a critically fixed Thurston map and $K$ be a planar embedded graph in $\Sp$ with $V(K) = \Crit(f)$ and  $|E(K)| = \deg(f) - 1$. Suppose that for each $\alpha \in E(K)$ there is a triple $(\alpha^+, \alpha^-, U_\alpha)$ satisfying conditions \ref{item: blow-up-tripple-i}-\ref{item: blow-up-tripple-iii}.

    Then $f$ is obtained by blowing up an admissible pair $(G, \varphi)$, where $G$ is a planar embedded graph isotopic to $K$ rel.\ $\Crit(f)$. Furthermore, $f$ is isotopic to a critically fixed Thurston map obtained by blowing up the (admissible) pair $(K,\varphi)$.

\end{proposition}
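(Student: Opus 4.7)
The plan is to construct a homeomorphism $\varphi \in \Homeo^+(\Sp, V(K))$ making $(K, \varphi)$ admissible and exhibiting $f$ as the blow-up of $(K, \varphi)$. The ``furthermore'' statement then follows immediately, and the main statement by taking $G := K$ after a preliminary isotopy of $K$ rel.\ $V(K)$ arranging that each $\alpha \in E(K)$ lies as a crosscut inside its disk $\overline{U_\alpha}$ (the triples transfer under this isotopy via Proposition \ref{prop: isotopy-lifting}).

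The main technical ingredient is to pin down how $f$ acts on the pieces cut by $K^\pm := \bigcup_\alpha (\alpha^+ \cup \alpha^-)$, viewed as a planar embedded graph with vertex set $V(K)$. By (D3), each $U_\alpha$ is an open topological disk bounded by the Jordan curve $\alpha^+ \cup \alpha^-$ and disjoint from $\Crit(f) = V(K)$, so $f|U_\alpha$ is a local homeomorphism on a simply connected domain. Combined with (D1) --- which makes $f|\alpha^\pm \colon \alpha^\pm \to \alpha$ a homeomorphism and hence forces $f(\partial U_\alpha) = \alpha$ --- an openness-plus-compactness argument gives $f(\overline{U_\alpha}) = \Sp$ and $f(U_\alpha) = \Sp \setminus \alpha$, whence $f|U_\alpha$ is a homeomorphism onto $\Sp \setminus \alpha$. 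Set $W := \Sp \setminus \bigcup_\alpha \overline{U_\alpha}$. Euler's formula for $K$ and $K^\pm$ (using that doubling each edge preserves the number of connected components) gives a bijection between the non-$U_\alpha$ faces of $K^\pm$ and the faces of $K$; a degree count (a generic point of $\Sp \setminus K$ has $d := \deg(f)$ preimages, $d - 1$ of them in the $U_\alpha$'s) then forces $f|W \colon W \to \Sp \setminus K$ to be a bijective local homeomorphism, hence a homeomorphism. As a byproduct, no vertex of $K$ is isolated: any such vertex would be a critical point lying in the interior of a face of $K^\pm$ on which $f$ is a homeomorphism, a contradiction.

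Next, I build $\varphi$ via a collapsing homotopy. Choose pairwise disjoint open neighborhoods $W_\alpha \supset \overline{U_\alpha}$ meeting analogs of (A1)--(A3), and construct $h \colon \Sp \times \I \to \Sp$ satisfying (B1)--(B5) with $D_\alpha := \overline{U_\alpha}$ and $\partial D_\alpha^\pm := \alpha^\pm$, and with the explicit boundary choice $h_1|\alpha^\pm := f|\alpha^\pm$ (homeomorphisms onto $\alpha$ fixing $\partial \alpha$, compatible with (B5)). Define $\varphi \colon \Sp \to \Sp$ by $\varphi := f \circ (h_1|W)^{-1}$ on $\Sp \setminus K$, extended by the identity on $K$. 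The choice $h_1|\alpha^\pm = f|\alpha^\pm$ forces both one-sided limits of $\varphi$ along any $\alpha$ to equal the identity on $\alpha$ (since $\varphi(p) = f((f|\alpha^\pm)^{-1}(p)) = p$ for $p \in \inter(\alpha)$), so $\varphi$ is continuous; on $\Sp \setminus K$ it is a composition of orientation-preserving homeomorphisms, and it fixes $V(K)$. Thus $\varphi \in \Homeo^+(\Sp, V(K))$, and since $\varphi|K = \id_K$, the pair $(K, \varphi)$ is trivially admissible.

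Finally, a direct check shows that, with the data $(W_\alpha, D_\alpha, \partial D_\alpha^\pm, h, f_\alpha := f|\overline{U_\alpha})$, the blow-up of $(K, \varphi)$ defined by \eqref{eq: blow-up_def} equals $f$ everywhere: on $W$ it is $\varphi \circ h_1 = f \circ (h_1|W)^{-1} \circ h_1|W = f$, and on each $\overline{U_\alpha}$ it is $f_\alpha = f|\overline{U_\alpha}$ by choice. Conditions (C1)--(C2) are exactly the mapping properties from the second paragraph together with the boundary choice of $h_1$. The chief technical obstacle I anticipate is the construction of $h$: it must simultaneously collapse each $\overline{U_\alpha}$ onto $\alpha$, realize $h_1|\alpha^\pm = f|\alpha^\pm$, remain the identity outside $\bigcup_\alpha W_\alpha$, and respect the preliminary isotopy placing $\alpha \subset \overline{U_\alpha}$; the pairwise disjointness (A3) localizes this to standard surgery near each $\overline{U_\alpha}$.
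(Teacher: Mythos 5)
Your first two paragraphs match the paper's Claims~1--3: establishing that $f|U_\alpha\colon U_\alpha\to\Sp\setminus\alpha$ and $f|W\colon W\to\Sp\setminus K$ are homeomorphisms and that $K$ has no isolated vertices. Your bookkeeping (Euler formula, degree count) is a little different from the paper's explicit counting in Claims~1--3, but it is the same content. The gap is in the construction of the collapsing homotopy $h$ and the homeomorphism $\varphi$.

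The boundary choice $h_1|\alpha^\pm := f|\alpha^\pm$ is not, in general, compatible with (B5). Condition (B5) forces $h_1$ to carry $\partial D_\alpha^\pm=\alpha^\pm$ onto the crosscut of $D_\alpha=\overline{U_\alpha}$, while $f|\alpha^\pm$ carries $\alpha^\pm$ onto $\alpha$. The two agree only if $\alpha$ is itself the crosscut, i.e.\ $\inter(\alpha)\subset U_\alpha$. Hypotheses (D1)--(D3) do not guarantee this: $\alpha$ is merely isotopic to $\alpha^\pm$ rel.\ $\Crit(f)$, and may lie partly or entirely outside $\overline{U_\alpha}$ (or meet $\alpha^+\cup\alpha^-$ in interior points). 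Your preliminary isotopy of $K$ does not repair this, and in fact is circular. If you keep $(\alpha^\pm, U_\alpha)$ fixed and move each $\alpha$ to an arc $\alpha'\subset U_\alpha$, then $\alpha^\pm$ are lifts of the old $\alpha$, not of $\alpha'$; so $f|\alpha^\pm$ no longer lands on the crosscut, and (C1) would require $f|U_\alpha$ to be onto $\Sp\setminus\varphi(\alpha')=\Sp\setminus\alpha'$ rather than $\Sp\setminus\alpha$, which is false. If instead you transfer the triples by lifting the isotopy (Proposition~\ref{prop: isotopy-lifting}), then $U_\alpha$ moves to some $\widetilde U_\alpha$ and there is no reason $\alpha'\subset\overline{\widetilde U_\alpha}$ — the very thing you wanted to arrange. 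You cannot simultaneously have $\alpha$ be the crosscut in $D_\alpha=\overline{U_\alpha}$ and have $\alpha^\pm$ be lifts of that same $\alpha$.

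As a consequence, $\varphi$ cannot be taken to be the identity on the blown-up graph, and the step ``$\varphi|K=\id_K$, so $(K,\varphi)$ is trivially admissible'' is false. The paper's construction avoids this: it chooses fresh crosscuts $e_\alpha\subset U_\alpha$ with $\partial e_\alpha=\partial\alpha$, forms the graph $G$ from these, and imposes only the weaker condition $(*)$ on $h$, namely $h_1(x)=h_1(y)\iff f(x)=f(y)$ for $x\in\alpha^+$, $y\in\alpha^-$. This is enough to make $\varphi:=f\circ h_1^{-1}$ extend to a homeomorphism (Claim~4 of the paper), and the resulting $\varphi$ satisfies $\varphi(e_\alpha)=\alpha$, not $\varphi(e_\alpha)=e_\alpha$. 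Admissibility of $(G,\varphi)$ then holds because $\alpha\sim e_\alpha$ rel.\ $\Crit(f)$, but this is a genuine step, not a triviality. To repair your argument, replace the forced equality $h_1|\alpha^\pm=f|\alpha^\pm$ by condition $(*)$ and drop the claim that $\varphi$ restricts to the identity on the graph.
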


\begin{proof} Suppose $f\colon\Sp\to\Sp$ is a critically fixed Thurston map and $K$ is a planar embedded graph in $\Sp$ satisfying the conditions in the statement. We will view the set $K^\pm:= \bigcup_{\alpha\in E(K)}(\alpha^+\cup \alpha^-)$ as a planar embedded graph with $V(K^\pm)=V(K)=\Crit(f)$. Then $K^\pm$ is a subgraph of the complete preimage~$f^{-1}(K)$. (Note that, unless $\deg(f) = 2$, $K^{\pm}$ is a proper subgraph of $f^{-1}(K)$.)

\begin{claim1}
$f(U_\alpha) \supset \Sp \setminus \alpha$ for each $\alpha\in E(K)$.
\end{claim1}

Indeed, fix a point $q\in U_\alpha\setminus f^{-1}(\alpha)$ and suppose $p\in \Sp \setminus \alpha$ is arbitrary. We may connect $p$ and $f(q)$ by a Jordan arc $\beta$ inside $\Sp \setminus \alpha$. Then there is a lift $\widetilde \beta$ of $\beta$ under $f$ connecting the point $q$ and a point $\widetilde p\in f^{-1}(p)$ (see, for example, \cite[Lemma~A.18]{THEBook}). Note that $\widetilde \beta$ must stay inside $U_\alpha$, and thus $\widetilde p \in U_\alpha$. Claim 1 follows.

\medskip

Note that Claim 1 implies that $\deg(f,q)\geq \deg_K(q)+1$ for all $q\in V(K)=\Crit(f)$.

\begin{claim2}
Suppose $H$ is a component of $f^{-1}(K)\setminus K^\pm$. Then $H\subset U_\alpha$ for some $\alpha\in E(K)$.
\end{claim2}

This is an easy counting argument. Let $p\in H$ be arbitrary, and $q:=f(p)\in K$. Then either $q\in \inter(\alpha_q)$ for some $\alpha_q\in E(K)$ or $q\in V(K)$. In the first case, Claim 1 implies that $q$ has at least one preimage in each $U_\alpha$ for $\alpha \in E(K)\setminus \{\alpha_q\}$. At the same time, $q$ has two preimages in $\partial U_{\alpha_q}=\alpha_q^+\cup \alpha_q^-\subset K^\pm$. Since $|E(K)|=\deg(f)-1$, the point $p$ must be in one of the regions $U_\alpha$ with $\alpha \neq \alpha_q$. 
In the latter case, $q$ is a fixed critical point with $\deg(f,q)\geq \deg_K(q)+1$. At the same time, Claim 1 implies that $q$ has at least one preimage in every $U_\alpha$ for which $\alpha\in E(K)$ is not incident to~$q$. Again, since $|E(K)|=\deg(f)-1$, the point $p$ must be in one of these regions $U_\alpha$. It follows that $p\in U_{\alpha_0}$ for some $\alpha_0\in E(K)$ in both cases. Since $H\ni p$ is connected and $\partial U_{\alpha_0}\subset K^\pm$, we conclude that $H\subset U_{\alpha_0}$.

\medskip

The proof of Claim 2 implies that $\deg(f,q) = \deg_K(q)+1$ for each $q\in \Crit(f)$. In particular, the graph $K$ has no isolated vertices. Furthermore, for every $\alpha\in E(K)$ we have $f(U_\alpha)\subset \Sp\setminus \alpha$, and thus $f(U_\alpha)=\Sp\setminus \alpha$ by Claim 1.

\begin{claim3} The map $f$ sends each face $\widetilde W$ of $K^\pm$ homeomorphically onto its image. More precisely, $f$ satisfies conditions \ref{item: blow-up-mapping-i}-\ref{item: blow-up-mapping-iii}. 
\end{claim3}

The proof is again based on a counting argument. First suppose that $\widetilde W$ is a face of $K^\pm$ distinct from each $U_\alpha$, $\alpha\in E(K)$. It easily follows from the Euler formula that there are exactly $|F(K)|$ such faces. 
Claim 2 implies that $\widetilde W$ is also a face of $f^{-1}(K)$. Thus, $f(\widetilde W)$ is a face of $K$ (with $\partial f(\widetilde{W})=f(\partial\widetilde{W})$) and $f|\widetilde W\colon \widetilde W \to f(\widetilde W)$ is a covering map. Since $|E(K)|=\deg(f)-1$, Claim~1 implies that $f$ maps $\Sp \setminus \bigcup_{\alpha\in E(K)} \overline{U_\alpha}$ injectively into $\Sp \setminus K$. 
So $f$ has to satisfy \ref{item: blow-up-mapping-ii} and \ref{item: blow-up-mapping-iii}.

Now suppose that $\widetilde W = U_\alpha$ for some $\alpha\in E(K)$. Since $f(U_\alpha)=\Sp\setminus \alpha$ and $f(\partial U_\alpha)=\alpha$, we conclude that $U_\alpha$ is a component of $f^{-1}(\Sp\setminus \alpha)$. Note that $U_\alpha\cap \Crit(f) = \emptyset$. Thus, by the Riemann-Hurwitz formula, $\deg(f|U_\alpha)=1$, and so $f$ satisfies \ref{item: blow-up-mapping-i}. 
Claim 3 follows.

\medskip

For every $\alpha\in E(K)$, let us now choose a Jordan arc $e_\alpha$ with $\inter(e_\alpha)\subset U_\alpha$ and $\partial e_\alpha = \partial \alpha$. Let $G$ be the planar embedded graph with the vertex set $V(G)=\Crit(f)$ and the edge set $E(G)=\{e_\alpha\colon \alpha\in E(K)\}$. Lemma \ref{lem: Buser_isotopy_arcs} and Proposition \ref{prop: Graph isotopic criterion} imply that $G$ is isotopic to $K$ rel.\ $\Crit(f)$. We will prove that $f$ is obtained by blowing up a pair $(G,\varphi)$, where $\varphi\in \Homeo^+(\Sp,\Crit(f))$.

For each edge $e=e_\alpha \in E(G)$, we set $D_e:=\overline{U_\alpha}$, $\partial D_e^+ := \alpha^+$, $\partial D_e^- := \alpha^-$, and $f_e:= f|\overline{U_\alpha}$. We also choose open Jordan regions $W_e \supset D_e \setminus \partial e$ satisfying conditions~\ref{A1}-\ref{A3}. Next, we consider a homotopy $h\colon \Sp\times \I \to \Sp$ that satisfies properties \ref{B1}-\ref{B5} together with the following extra condition:
\begin{enumerate}[label=($*$)]
    \item\label{B6} for every $e\in E(G)$ and arbitrary $x\in \partial D_e^+$ and $y\in \partial D_e^-$, $h_1(x)=h_1(y)$ if and only if $f(x)=f(y)$. 
\end{enumerate}

Let $\varphi\colon \Sp\setminus G \to \Sp\setminus K$ be the map defined by $\varphi(p):=f\circ h_1^{-1}(p)$ for each $p \in \Sp \setminus G$. Note that condition \ref{B4} and Claim 3 imply that $\varphi$ is a homeomorphism. 

\begin{claim4}
The map $\varphi\colon \Sp\setminus G \to \Sp\setminus K$ extends to a homeomorphism $\varphi\in\Homeo^+(\Sp,\Crit(f))$ so that $\varphi(e_\alpha) = \alpha$ for all $\alpha\in E(K)$.
\end{claim4}

We only give an outline of the argument and leave some details to the reader. 
First, we claim that the inverse map $\varphi^{-1}\colon \Sp \setminus K \to \Sp \setminus G$ may be extended to a continuous bijection $\varphi^{-1}\colon \Sp \to \Sp$. Indeed, let $p\in K$ be arbitrary. Then $p\in \alpha$ for some $\alpha\in E(K)$. We set $\varphi^{-1}(p):=h_1(q)$ for $q\in f^{-1}(p)\cap  (\alpha^+\cup \alpha^-)$. Condition \ref{B6} ensures that $\varphi^{-1}(p)$ is well-defined, and condition \ref{B5} ensures that $\varphi^{-1}|\alpha\colon \alpha \to e_\alpha$ is a homeomorphism for every $\alpha\in E(K)$. Thus $\varphi^{-1}\colon \Sp \to \Sp$ is a bijection. The continuity of $\varphi^{-1}$ can now be easily deduced from the mapping properties of $h_1$ and $f$. This implies that $\varphi^{-1}\colon \Sp\to\Sp$ is a homeomorphism. The rest follows from the construction of $\varphi$. 

\medskip

It is now straightforward to check that $f$ is obtained by blowing up the pair $(G, \varphi)$ with the choices above. It also follows from Proposition \ref{prop: Isotopy and blow-up} that $f$ is isotopic to a critically fixed Thurston map obtained by blowing up the pair $(K, \varphi)$. Finally, since $\alpha\sim e_\alpha$ rel.\ $\Crit(f)$ for each $\alpha\in E(K)$, we have $\varphi(\alpha) \sim \varphi(e_\alpha)=\alpha$ rel.\ $\Crit(f)$. Hence the pairs $(G,\varphi)$ and $(K,\varphi)$ are admissible. This completes the proof of Proposition \ref{prop: Blow-up triples}.
\end{proof}

Following the proof of Proposition \ref{prop: Blow-up triples}, one can also show the next statement. 
\begin{proposition}\label{prop: Blow-up triples-general}
    Let $f\colon\Sp\to\Sp$ be a critically fixed Thurston map, and let $K$ be a planar embedded graph in $\Sp$ with $|E(K)| = \deg(f) - 1$ and $f(v)=v$ for all $v\in V(K)$. Suppose that for each $\alpha \in E(K)$ there is a triple $(\alpha^+, \alpha^-, U_\alpha)$ satisfying conditions \ref{item: new-blow-up-tripple-i}, \ref{item: blow-up-tripple-ii}, and \ref{item: blow-up-tripple-iii}.  
    
    Let $K^+$ be the planar embedded graph with the vertex set $V(K^+)=V(K)$ and the edge set $E(K^+)=\{\alpha^+\colon \alpha\in E(K)\}$. Then $f$ is obtained by blowing a (not necessarily admissible) pair $(G, \varphi)$, where $G$ is a planar embedded graph isotopic to $K^+$ rel.\ $V(K)$. Furthermore, if $g$ is a critically fixed Thurston map obtained by blowing up the pair $(K^+,\varphi)$, then the marked Thurston maps $(f, V(G))$ and $(g, V(K^+))$ are isotopic.
\end{proposition}

\subsubsection{Arc lifting}\label{subsubsec: Lifting arcs}

In the following, $f\colon \Sp \to \Sp$ is a critically fixed Thurston map obtained by blowing up an admissible pair $(G, \varphi)$ in $\Sp$. We also suppose that the triples $(e^+,e^-,U_e)$, $e\in E(G)$, 
are as provided by Proposition \ref{prop: blow-up-triples-properties} for $K=G$. 
Our goal here is to prove several facts about lifts of Jordan arcs in $(\Sp, \Crit(f))$ under the map~$f$. First, we introduce the following notion.

\begin{definition}
     Let $f \colon \Sp \to \Sp$ be a critically fixed Thurston map and $\alpha$ be a Jordan arc in $(\Sp, \Crit(f))$. Suppose $\alpha$ has exactly $k$ distinct lifts under $f$ that are isotopic to $\alpha$ rel.\ $\Crit(f)$. Then $k$ is called the \emph{blow-up degree} of $\alpha$ under $f$ 
     and denoted by $\deg(f, \alpha)$. 
If $\deg(f, \alpha) \geq 2$, then we say that the Jordan arc~$\alpha$ \emph{blows up} under $f$.
\end{definition}

One can easily see that if two Jordan arcs $\alpha$ and $\alpha'$ in $(\Sp, \Crit(f))$ are isotopic rel.\ $\Crit(f)$, then their blow-up degrees under $f$ coincide, i.e., $\deg(f, \alpha) = \deg(f, \alpha')$.

\begin{lemma}\label{lem: Fixed and blow-up arcs}
    Let $f\colon \Sp \to \Sp$ be a critically fixed Thurston map obtained by blowing up an admissible pair $(G, \varphi)$. Suppose $\alpha$ is a Jordan arc in $(\Sp, \Crit(f))$ such that $\deg(f, \alpha) > 0$. Then $i_{C(f)}(\alpha, e) = 0$ for each edge $e \in E(G)$.
\end{lemma}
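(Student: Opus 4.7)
Let $N := i_f(\alpha,e) \geq 0$; the goal is to show $N = 0$ via a direct counting argument that exploits the two distinguished lifts $e^+, e^-$ of $e$ provided by the admissibility of $(G,\varphi)$. By hypothesis there is a lift $\widetilde\alpha$ of $\alpha$ with $\widetilde\alpha \sim \alpha$ rel.\ $\Crit(f)$. Using Lemma \ref{lem: intersection_graph_arc}\ref{item: graph_arc} (applied to the one-edge graph $\{e\}$) we may isotope $\alpha$ rel.\ $\Crit(f)$ into minimal position with $e$; extending this to an ambient isotopy and lifting it by Proposition \ref{prop: isotopy-lifting} (applied with $f = g$ and $h_0 = \widetilde h_0 = \id$) produces a new lift that is still isotopic to the new representative of $\alpha$ rel.\ $\Crit(f)$. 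After relabeling, we may assume $|\alpha \cap \inter(e)| = N$ while $\widetilde\alpha$ is a lift of $\alpha$ with $\widetilde\alpha \sim \alpha$ rel.\ $\Crit(f)$.

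The structural input is Proposition \ref{prop: blow-up-triples-properties} applied to $f$ with $K = G$ (admissibility allows this). For each $e' \in E(G)$ we obtain a triple $(e'^+, e'^-, U_{e'})$, where $e'^\pm$ are two distinct lifts of $e'$, each isotopic to $e'$ rel.\ $\Crit(f)$, and having pairwise disjoint interiors. In particular, $\inter(e^+)$ and $\inter(e^-)$ are two disjoint subsets of $f^{-1}(\inter(e))$.

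Since $f|\widetilde\alpha \colon \widetilde\alpha \to \alpha$ is a homeomorphism, it restricts to a bijection $\widetilde\alpha \cap f^{-1}(\inter(e)) \to \alpha \cap \inter(e)$; hence $|\widetilde\alpha \cap f^{-1}(\inter(e))| = N$. The disjointness above then gives the upper bound
\[|\widetilde\alpha \cap \inter(e^+)| + |\widetilde\alpha \cap \inter(e^-)| \leq N.\]
For the matching lower bound, $\widetilde\alpha \sim \alpha$ and $e^\pm \sim e$ rel.\ $\Crit(f)$ together with isotopy invariance of intersection numbers yield $i_{\Crit(f)}(\widetilde\alpha, e^\pm) = i_{\Crit(f)}(\alpha, e) = N$; and since $\partial\widetilde\alpha \subset \Crit(f)$ while $\inter(e^\pm) \cap \Crit(f) = \emptyset$, we have $|\widetilde\alpha \cap \inter(e^\pm)| = |\widetilde\alpha \cap e^\pm \setminus \Crit(f)| \geq N$ for each sign. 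Combining these bounds, $2N \leq N$, forcing $N = 0$. The only real subtlety is pairing the two isotopic lifts $e^+$ and $e^-$ to double the lower bound, while the bijection $f|\widetilde\alpha$ keeps the upper bound at $N$; the admissibility hypothesis is used precisely to secure the pair of isotopic lifts.
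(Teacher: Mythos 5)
Your proof is correct and follows essentially the same argument as the paper: put $\alpha$ and $e$ in minimal position, take a lift $\widetilde\alpha$ isotopic to $\alpha$ rel.\ $\Crit(f)$, and compare $|\widetilde\alpha\cap f^{-1}(\inter(e))|=N$ with the two disjoint contributions from $\inter(e^+)$ and $\inter(e^-)$, each of which is at least $N$ by isotopy invariance of the intersection number, to force $2N\le N$. The only difference is that you spell out the WLOG step (that one may simultaneously put $\alpha$ in minimal position with $e$ and retain a lift isotopic to it) via Proposition \ref{prop: isotopy-lifting}, which the paper leaves implicit.
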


\begin{proof}
    Without loss of generality, we may assume that $\alpha$ and $e\in E(G)$ are in minimal position rel.\ $\Crit(f)$. Suppose that $\widetilde{\alpha}$ is a lift of $\alpha$ under $f$ that is isotopic to $\alpha$ rel.\ $\Crit(f)$. 
    Then by~\ref{item: blow-up-tripple-i} we have
    \begin{align*} 
    i_{C(f)}(\alpha, e) &= |\alpha \cap \inter(e)| = |\widetilde{\alpha} \cap f^{-1}(\inter(e))| \\
    & \geq  |\widetilde{\alpha}\cap \inter(e^+)| +  |\widetilde{\alpha}\cap \inter(e^-)|  \\
    &\geq i_{C(f)}(\widetilde{\alpha}, e^+) + i_{C(f)}(\widetilde{\alpha}, e^-) = 2\, i_{C(f)}(\alpha, e). 
    \end{align*}
    It follows that $i_{C(f)}(\alpha, e) = 0$ for each edge $e$ of $G$, as desired.
\end{proof}

\begin{lemma}\label{lem: Blow-up arcs}
    Let $f\colon \Sp \to \Sp$ be a critically fixed Thurston map obtained by blowing up an admissible pair $(G, \varphi)$. Then a Jordan arc $\alpha$ in $(\Sp,\Crit(f))$ blows up under $f$ if and only if $\alpha$ is isotopic to an edge of $G$ rel.\ $\Crit(f)$.
\end{lemma}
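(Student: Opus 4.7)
For the easy direction, if $\alpha\sim e$ rel.\ $\Crit(f)$ for some $e\in E(G)$, then condition \ref{item: blow-up-tripple-i} of Proposition \ref{prop: blow-up-triples-properties} supplies two distinct lifts $e^+,e^-$ of $e$, both isotopic to $e$ rel.\ $\Crit(f)$, so $\deg(f,\alpha)=\deg(f,e)\geq 2$ since the blow-up degree depends only on the isotopy class of the arc. For the converse, the plan is as follows. Assume $\alpha$ blows up. Lemma \ref{lem: Fixed and blow-up arcs} gives $i_f(\alpha,e)=0$ for every $e\in E(G)$, and applying Proposition \ref{prop: isotop_to_0_intersections} to the one-edge graph whose edge is $\alpha$, I will isotope $\alpha$ rel.\ $\Crit(f)$ so that $\alpha\cap G\subset V(G)=\Crit(f)$; the blow-up degree being isotopy invariant, this may be assumed from the outset. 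Writing $\partial\alpha=\{u,v\}$, the interior of $\alpha$ then lies in a single face of $G$.

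The key step will be to enumerate the lifts of $\alpha$ whose endpoints lie in $\Crit(f)$, since any lift isotopic to $\alpha$ rel.\ $\Crit(f)$ must have endpoints $\{u,v\}$. By properties \ref{item: blow-up-mapping-i}--\ref{item: blow-up-mapping-iii} of Proposition \ref{prop: blow-up-triples-properties}, $f$ sends $\Omega:=\Sp\setminus\bigcup_{e\in E(G)}\overline{U_e}$ homeomorphically onto $\Sp\setminus G$ and each $U_e$ homeomorphically onto $\Sp\setminus e$. Thus $\alpha$ will have exactly one ``outer'' lift $\widetilde\alpha_0$ whose interior lies in $\Omega$ (with endpoints $u,v$), and for each $e\in E(G)$ exactly one ``inner'' lift $\widetilde\alpha_e$ whose interior lies in $U_e$. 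Using $\overline{U_e}\cap V(G)=\partial e$, which follows from \ref{A2} and $D_e\subset\overline{W_e}$, the $u$-endpoint of $\widetilde\alpha_e$ will lie in $\Crit(f)$ precisely when $u\in\partial e$, and analogously for $v$. Hence $\widetilde\alpha_e$ has both endpoints in $\Crit(f)$ iff $\partial e=\{u,v\}$. Since $\deg(f,\alpha)\geq 2$ and the outer lift is unique, at least one inner lift $\widetilde\alpha_{e_0}$ (with $\partial e_0=\{u,v\}$) must be isotopic to $\alpha$ rel.\ $\Crit(f)$.

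Finally, it suffices to show $\widetilde\alpha_{e_0}\sim e_0$ rel.\ $\Crit(f)$, for then $\alpha\sim\widetilde\alpha_{e_0}\sim e_0^+\sim e_0\in E(G)$, where $e_0^+\sim e_0$ rel.\ $\Crit(f)$ comes from \ref{item: blow-up-tripple-i} of Proposition \ref{prop: blow-up-triples-properties}. Both $\widetilde\alpha_{e_0}$ and $e_0^+$ are Jordan arcs from $u$ to $v$ inside the closed Jordan region $\overline{U_{e_0}}$, and $\overline{U_{e_0}}\cap\Crit(f)=\{u,v\}$. I will enlarge $\overline{U_{e_0}}$ slightly to an open Jordan region $W$ with $W\cap\Crit(f)=\{u,v\}$ (possible since $\Crit(f)$ is finite) and invoke Lemma \ref{lem: Buser_isotopy_arcs} to obtain $\widetilde\alpha_{e_0}\sim e_0^+$ rel.\ $\{u,v\}\cup(\Sp\setminus W)$, hence rel.\ $\Crit(f)$. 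The most delicate step of the proof will be the endpoint analysis for the inner lifts $\widetilde\alpha_e$—verifying that when $u\in\partial e$ the lift terminates precisely at the critical point $u$, whereas when $u\notin\partial e$ it terminates at a non-critical preimage of $u$ inside $U_e$—which rests on the fold behavior of $f|\partial U_e$ along $e^+\cup e^-$ together with $\overline{U_e}\cap V(G)=\partial e$.
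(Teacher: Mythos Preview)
Your proof is correct and follows essentially the same route as the paper: use Lemma~\ref{lem: Fixed and blow-up arcs} to isotope $\alpha$ off the interior of $G$, observe that each $\overline{U_e}$ carries exactly one lift of $\alpha$, and then note that an inner lift $\widetilde\alpha_e$ can be isotopic to $\alpha$ rel.\ $\Crit(f)$ only when $\partial e=\partial\alpha$, whence Lemma~\ref{lem: Buser_isotopy_arcs} gives $\alpha\sim\widetilde\alpha_e\sim e^+\sim e$. You simply make explicit the outer lift and the endpoint analysis that the paper leaves implicit in its counting, and you route the ``isotope $\alpha$ off $G$'' step through Proposition~\ref{prop: isotop_to_0_intersections} (with the roles of the two graphs swapped) rather than appealing to it tacitly.
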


\begin{proof} 
    Each edge $e$ of $G$ blows up under $f$, because $e^+$ and $e^-$ are isotopic to $e$ rel.\ $\Crit(f)$. Thus, if $\alpha$ is a Jordan arc isotopic to $e$ rel.\ $\Crit(f)$, then it blows up as well. 
    
    Now suppose that $\alpha$ is a Jordan arc in $(\Sp, \Crit(f))$ that blows up under $f$. It follows from Lemma~\ref{lem: Fixed and blow-up arcs} that we may assume that $\alpha$ intersects the graph $G$ only in its vertices. Then by \ref{item: blow-up-mapping-i} each of the $\deg(f)-1$ regions $\overline{U_e}$, $e\in E(G)$, contains exactly one lift $\widetilde \alpha_e$ of $\alpha$ under~$f$. Since $\alpha$ blows up, 
    one of these lifts $\widetilde \alpha_e$ has to be isotopic to $\alpha$ rel.\ $\Crit(f)$. This is possible only if $\partial \widetilde \alpha_e = \partial e$, which implies that $e^+ \sim \widetilde \alpha_e$ rel.\ $\Crit(f)$ by Lemma \ref{lem: Buser_isotopy_arcs}. Hence $e \sim e^+\sim \widetilde \alpha_e \sim \alpha$ rel.\ $\Crit(f)$. This finishes the proof of the lemma.  
   \end{proof}

\begin{lemma}\label{lem: Absence of intersections}
    Let $f\colon \Sp \to \Sp$ be a critically fixed Thurston map obtained by blowing up an admissible pair $(G, \varphi)$ and $e$ be an edge of $G$. Suppose $\beta $ is a Jordan arc in $\Sp$ with $\partial \beta = \partial e$ (and possibly with critical points of $f$ in its interior) that satisfies $i_{C(f)}(\beta, e) = 0$. Then $\beta$ has a lift $\widetilde \beta$ under $f$ that is isotopic to $e$ rel.\ $\Crit(f)$.
\end{lemma}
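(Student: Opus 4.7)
The plan is to use the region $U_e$ supplied by Proposition \ref{prop: blow-up-triples-properties} as a ``canvas'' on which to lift $\beta$, and then to show that the lift lies in the closed disk $\overline{U_e}$ together with $e^+$, so that Lemma \ref{lem: Buser_isotopy_arcs} immediately produces the desired isotopy. First, since the interior of $e$ is disjoint from $\Crit(f)$, the hypothesis $i_f(\beta, e) = 0$ together with Lemma \ref{lem: intersection_graph_arc}\ref{item: graph_arc} lets me replace $\beta$ by an arc isotopic to it rel.\ $\Crit(f)$ that meets $e$ only in $\Crit(f)\cap e = \partial e$. Any critical points of $f$ that happen to lie in $\inter(\beta)$ are fixed by this isotopy, and since they are distinct from $v_0,v_1 := \partial e$, I conclude $\beta \cap e = \partial\beta = \partial e$ and $\inter(\beta) \subset \Sp \setminus e$.

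Next, I construct the lift. By \ref{item: blow-up-mapping-i} of Proposition \ref{prop: blow-up-triples-properties}, the restriction $f|U_e \colon U_e \to \Sp\setminus e$ is an orientation-preserving homeomorphism. Parametrizing $\beta\colon \I \to \Sp$ with $\beta(0)=v_0$, $\beta(1)=v_1$, I set $\widetilde\beta(t):=(f|U_e)^{-1}(\beta(t))$ for $t\in(0,1)$ and $\widetilde\beta(0):=v_0$, $\widetilde\beta(1):=v_1$. Continuity at the endpoints follows from local analysis: near the fixed critical point $v_i$ the map $f$ is conjugate to $z\mapsto z^{\deg(f,v_i)}$, and $U_e$ corresponds to one of the sectors between consecutive preimages of $e$, which maps homeomorphically onto the slit disk $(\text{nbhd of } v_i)\setminus e$. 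So as $\beta(t)\to v_i$ through $\Sp\setminus e$, its lift approaches $v_i$ through the sector. Injectivity on $(0,1)$ is clear from the homeomorphism property, and the endpoints are distinct from each other and from $\inter(\widetilde\beta)\subset U_e$, so $\widetilde\beta$ is a Jordan arc and $f\circ\widetilde\beta=\beta$.

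Finally, both $\widetilde\beta$ and $e^+$ are Jordan arcs in $\overline{U_e}$ with interiors in the open Jordan region $U_e$ and with common endpoints $\{v_0,v_1\}$. Lemma \ref{lem: Buser_isotopy_arcs}, applied with $W=U_e$, yields $\widetilde\beta \sim e^+$ rel.\ $\partial\widetilde\beta \cup (\Sp\setminus U_e)$. Because $\overline{U_e}\cap\Crit(f)=\partial e=\{v_0,v_1\}$ in the blow-up construction, we have $\Crit(f)\subset \{v_0,v_1\}\cup(\Sp\setminus U_e)$, so this isotopy is in particular rel.\ $\Crit(f)$. Combining with $e^+\sim e$ rel.\ $\Crit(f)$ from \ref{item: blow-up-tripple-i} gives $\widetilde\beta \sim e$ rel.\ $\Crit(f)$, as required.

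The only subtle point, and the main obstacle to be careful about, is the possibility that $\beta$ has critical points in its interior (which prevents us from treating $\beta$ as a Jordan arc in the marked sphere $(\Sp,\Crit(f))$ in the strict sense). These points do lift, but only to non-critical preimages inside $U_e$; the argument above handles them uniformly because $(f|U_e)^{-1}$ is globally defined on $\Sp\setminus e$ and the concluding isotopy is rel.\ all of $\Crit(f)$, regardless of which critical points happen to lie on $\inter(\beta)$.
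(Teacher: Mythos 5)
Your proof is correct and takes essentially the same route as the paper's: reduce to $\beta \cap e = \partial e$ using $i_f(\beta,e)=0$, lift $\beta$ into $\overline{U_e}$ via property \ref{item: blow-up-mapping-i}, and conclude $\widetilde\beta \sim e^+ \sim e$ rel.\ $\Crit(f)$ by Lemma \ref{lem: Buser_isotopy_arcs}. You have merely unpacked the paper's one-line construction of the lift (checking continuity at the endpoints via the local model of $f$ near the fixed critical points and properness of $f|\overline{U_e}$), and correctly flagged that critical points in $\inter(\beta)$ pose no obstacle since $(f|U_e)^{-1}$ is defined on all of $\Sp\setminus e$.
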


\begin{proof}
    Without loss of generality, we may assume that  $\beta \cap e = \partial e$. Property \ref{item: blow-up-mapping-i} implies that there is a lift $\widetilde \beta$ of $\beta$ under $f$ with $\inter(\widetilde \beta) \subset U_e$ and $\partial \widetilde \beta = \partial e$. Again we have $\widetilde \beta \sim e^+ \sim e$ rel.\ $\Crit(f)$. This completes the proof.
\end{proof}

The following lemma provides a quantitative version of Lemma \ref{lem: Blow-up arcs}. (Recall that $m_G(\alpha)$ denotes the total number of edges of $G$ that are isotopic to $\alpha\in E(G)$ rel.\ $V(G)$.)

\begin{lemma}\label{lem: Same blow-up degrees}
    Let $f\colon \Sp \to \Sp$ be a critically fixed Thurston map obtained by blowing up an admissible pair $(G, \varphi)$. Then for each $\alpha \in E(G)$ we have $\deg(f, \alpha) = m_G(\alpha) + 1$. 
    
    More generally, for every Jordan arc $\beta$ in $(\Sp, \Crit(f))$ there are exactly $\max(0, \deg(f, \beta) - 1)$ edges in $G$ that are isotopic to $\beta$ rel.\ $\Crit(f)$.
\end{lemma}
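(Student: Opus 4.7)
The plan is to first reduce the second, general statement to the first via Lemma \ref{lem: Blow-up arcs}. If $\deg(f,\alpha)\geq 2$, then $\alpha$ blows up under $f$, so $\alpha$ is isotopic rel.\ $\Crit(f)$ to some edge $\alpha_0\in E(G)$. Because both $\deg(f,\cdot)$ and the number of edges of $G$ isotopic (rel.\ $\Crit(f)=V(G)$) to a given arc depend only on the isotopy class rel.\ $\Crit(f)$, once we prove the first identity we may apply it to $\alpha_0$ to get $\deg(f,\alpha)=m_G(\alpha_0)+1$, while the number of edges of $G$ isotopic to $\alpha$ equals $m_G(\alpha_0)=\deg(f,\alpha)-1$. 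On the other hand, if $\deg(f,\alpha)\leq 1$, Lemma \ref{lem: Blow-up arcs} implies $\alpha$ is not isotopic to any edge of $G$, so both sides of the desired formula vanish. Thus the crux is the case $\alpha \in E(G)$.

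For $\alpha \in E(G)$, the plan is to enumerate \emph{all} lifts of $\alpha$ under $f$ by localizing them inside the blown-up graph $G^\pm$ provided by Proposition \ref{prop: blow-up-triples-properties} applied with $K=G$. Property \ref{item: blow-up-mapping-iii} gives $f^{-1}(G)\subset \bigcup_{e\in E(G)}\overline{U_e}$, so every lift of $\alpha$ lies inside some $\overline{U_e}$. Inside $\overline{U_\alpha}$, the two arcs $\alpha^+$ and $\alpha^-$ are lifts of $\alpha$ and, by \ref{item: blow-up-tripple-i}, both are isotopic to $\alpha$ rel.\ $\Crit(f)$. For $e\neq \alpha$, property \ref{item: blow-up-mapping-i} provides a homeomorphism $f|U_e\colon U_e\to\Sp\setminus e$; since $\inter(\alpha)\subset\Sp\setminus e$ (edges of $G$ have pairwise disjoint interiors), this gives exactly one lift $\widetilde\alpha_e\subset\overline{U_e}$. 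The total count $2+(|E(G)|-1)=|E(G)|+1=\deg(f)$ matches the degree, so all preimages are accounted for.

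It remains to decide which of the lifts $\widetilde\alpha_e$, $e\neq\alpha$, are isotopic to $\alpha$ rel.\ $\Crit(f)$. The endpoints of $\widetilde\alpha_e$ are the unique preimages of $\partial\alpha$ in $\overline{U_e}$; since $\inter(U_e)\cap V(G)=\emptyset$ and $\overline{U_e}\cap V(G)=\partial e$, these endpoints lie in $\Crit(f)$ only when $\partial\alpha\subset\partial e$, equivalently $\partial\alpha=\partial e$. Assuming this, $\widetilde\alpha_e$ is a Jordan arc in the closed Jordan region $\overline{U_e}$ joining the two points of $\partial e$; enlarging $\overline{U_e}$ slightly to an open Jordan region $W$ disjoint from $\Crit(f)\setminus\partial e$ and applying Lemma \ref{lem: Buser_isotopy_arcs} yields $\widetilde\alpha_e\sim e^+$ rel.\ $\partial e\cup(\Sp\setminus W)$, and hence rel.\ $\Crit(f)$. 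Combined with \ref{item: blow-up-tripple-i}, this gives $\widetilde\alpha_e\sim e$ rel.\ $\Crit(f)$. Therefore $\widetilde\alpha_e\sim\alpha$ rel.\ $\Crit(f)$ if and only if $e\sim\alpha$ rel.\ $V(G)$, and the number of such $e\neq\alpha$ is exactly $m_G(\alpha)-1$. Summing, $\deg(f,\alpha)=2+(m_G(\alpha)-1)=m_G(\alpha)+1$. The subtle step will be the enlarged-neighborhood use of Lemma \ref{lem: Buser_isotopy_arcs}, which is what promotes an isotopy inside the disk $\overline{U_e}$ rel.\ $\partial e$ to an isotopy in $\Sp$ rel.\ the full critical set $\Crit(f)$.
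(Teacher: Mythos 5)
Your proof is correct and follows essentially the same strategy as the paper: enumerate the $\deg(f)$ lifts of $\alpha$ as $\alpha^+,\alpha^-$, and one lift $\widetilde\alpha_e$ in each $\overline{U_e}$ for $e\neq\alpha$, then observe that $\widetilde\alpha_e\sim\alpha$ rel.\ $\Crit(f)$ precisely when $\partial\alpha=\partial e$, which forces $\widetilde\alpha_e\sim e^+\sim e$. Your write-up is somewhat more explicit than the paper's (spelling out the endpoint analysis, the application of Lemma \ref{lem: Buser_isotopy_arcs}, and the reduction of the general statement to the edge case via Lemma \ref{lem: Blow-up arcs}), but it is the same argument.
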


\begin{proof}
Properties~\ref{item: blow-up-tripple-i}, \ref{item: blow-up-mapping-i}, and \ref{item: blow-up-mapping-iii} imply that each $\alpha \in E(G)$ has the following lifts under $f$: $\alpha^+$, $\alpha^-$, and a unique lift $\widetilde \alpha_e$ with $\inter(\widetilde \alpha_e) \subset U_e$ for every $e\in E(G)\setminus\{\alpha\}$. Note that $\partial \widetilde\alpha_e = \partial e$ if and only if $\partial \alpha=\partial e$. It follows that $\widetilde \alpha_e$ is isotopic to $\alpha$ rel.\ $\Crit(f)$ if and only if $e$ is isotopic to $\alpha$ rel.\ $\Crit(f)$, and therefore $\deg(\alpha, f) = m_G(\alpha) + 1$. 

To show the second statement, suppose $\beta$ is an arbitrary Jordan arc in $(\Sp, \Crit(f))$. If $\beta$ is isotopic to an edge $\alpha\in E(G)$ rel.\ $\Crit(f)$, then $\deg(f, \beta)=\deg(f, \alpha)=m_G(\alpha) + 1\geq 2$ by the discussion above, and the statement follows. Otherwise, $\beta$ does not blow up under $f$ by Lemma~\ref{lem: Blow-up arcs}, and thus $\deg(f,\beta)\in \{0,1\}$,  which also implies the desired statement.
\end{proof}

\subsubsection{Combinatorial and isotopy equivalence}\label{subsubsec: adm_equivalence} 
It is straightforward to check that the notion of equivalence for admissible pairs from the introduction (see Definition \ref{def: admissibility_and_equivalence}) is equivalent to the following one.

\begin{definition}\label{def: admissibility_new}
          Let $(G, \varphi)$ and $(G', \varphi')$ be two admissible pairs in topological $2$-spheres $\Sp$ and $\widehat{\Sp}$, respectively. We say that $(G, \varphi)$ and $(G', \varphi')$ are \emph{equivalent} if there exists an orientation-preserving homeomorphism $\psi\colon \Sp \to \widehat{\Sp}$ such that the pairs $(G', \varphi')$ and $\big(\psi(G), \psi \circ \varphi \circ \psi^{-1}\big)$ are isotopic. 
\end{definition}

The next proposition shows that the notions of combinatorial equivalence and isotopy for critically fixed Thurston maps obtained by blowing up admissible pairs agree with the notions of equivalence and isotopy for admissible pairs.

\begin{proposition}\label{prop: admis_equiv}
    Let $f\colon \Sp \to \Sp$ and $f'\colon \widehat{\Sp} \to \widehat{\Sp}$ be critically fixed Thurston maps obtained by blowing up admissible pairs $(G, \varphi)$ and $(G', \varphi')$ in $\Sp$ and $\widehat{\Sp}$, respectively.
    Then the following statements are true:
    \begin{enumerate}[label=\normalfont{(\roman*)}]
        \item \label{item: case comb equiv} $f$ is combinatorially equivalent to $f'$ if and only if $(G, \varphi)$ is equivalent to $(G', \varphi')$;
        \item \label{item: case isotopy} Suppose $\widehat {\Sp} = \Sp$. Then the maps $f$ and $f'$ are isotopic if and only if the pairs $(G, \varphi)$ and $(G', \varphi')$ are isotopic.
    \end{enumerate}    
\end{proposition}

\begin{proof} Suppose $f\colon \Sp\to \Sp$ and $f'\colon \widehat{\Sp} \to \widehat{\Sp}$ are as in the statement.

\ref{item: case comb equiv} First note that $f$ is combinatorially equivalent to $f'$ if and only if there exists an orientation-preserving homeomorphism $\psi \colon \Sp \to \widehat{\Sp}$ such that $g := \psi \circ f \circ \psi^{-1}$ and $f'$ are isotopic. The proof of Proposition \ref{prop: comb_equiv_blowups} implies that the map $g$ is obtained by blowing up the pair $(\widehat G, \widehat \varphi)$, where $\widehat G$ is the planar embedded graph $\psi(G)$ in $\widehat{\Sp}$ with the the vertex set $V(\widehat G) = \psi(V(G)) = V(G')$ and $\widehat \varphi = \psi \circ \varphi \circ \psi^{-1}$. Moreover, the pair $(\widehat G, \widehat \varphi)$ is admissible in $\widehat{\Sp}$ and equivalent to $(G,\varphi)$ (see Remark \ref{rem: comb_equiv_and_adm_pairs}\ref{item: comb_equiv_and_adm_pairs_1}).  These observations allow us to reduce \ref{item: case comb equiv} to \ref{item: case isotopy}.

\ref{item: case isotopy}   Now suppose that $\widehat{\Sp} = \Sp$. Proposition \ref{prop: Isotopy and blow-up} immediately gives the ``if''-direction. 

For the converse direction, assume that the maps $f$ and $f'$ are isotopic. By Proposition~\ref{prop: isotopy-lifting} every edge $\alpha \in E(G)$ blows up under $f'$. Lemma \ref{lem: Same blow-up degrees} then implies that $\alpha$ is isotopic to an edge $\alpha'$ of $G'$ and $m_{G}(\alpha)=m_{G'}(\alpha')$. 
Since $|E(G)| = \deg(f) - 1 = \deg(f') - 1 = |E(G')|$ by Proposition \ref{prop: Blow-up properties}\ref{item: degree-of-blow-up}, it follows from Proposition~\ref{prop: Graph isotopic criterion} that $G$ and $G'$ are isotopic rel.\ $V(G)$.

It is now left to prove that $\varphi$ and $\varphi'$ are isotopic rel.\ $V(G)$. Let $g$ be a critically fixed Thurston map obtained by blowing up the pair $(G, \varphi')$.  Since $f$ and $f'$ are isotopic, Propositions \ref{prop: Isotopy and blow-up} implies that $f$ and $g$ are isotopic as well. Therefore, we may write $f = g \circ \psi$ for some homeomorphism $\psi\in \Homeo_0^+(\Sp, V(G))$. Let $(\alpha^+,\alpha^-,U_\alpha)$, $\alpha\in E(G)$, 
be the triples provided by Proposition~\ref{prop: blow-up-triples-properties} for the map $f$ and the graph $G$. Then $(\psi(\alpha^+),\psi(\alpha^-),\psi(U_\alpha))$, $\alpha\in E(G)$, are the respective triples for the map $g$ and the graph $G$. In particular, $G^\pm_g = \psi(G^\pm_{f})$, where $G^\pm_f = \bigcup_{\alpha\in E(G)} (\alpha^+\cup \alpha^-)$ and $G^\pm_{g}$ are the blow-ups of $G$ under $f$ and~$g$, respectively. Choose a connected graph $H \subset \Sp \setminus \bigcup_{\alpha\in E(G)} (\overline U_\alpha \setminus \partial \alpha)$ with $V(H) = V(G)$. Then, \ref{item: blow-up-mapping-iv} implies that $(g\circ \psi)(e)= f(e)\sim \varphi(e)$ rel.\ $V(G)$ for every $e\in E(H)$. At the same time, since  $\inter(\psi(e))\subset \Sp \setminus \bigcup_{\alpha\in E(G)} \psi(\overline U_\alpha)$, we get $(g\circ \psi)(e)= g(\psi(e))\sim \varphi'(\psi(e)) \sim \varphi'(e)$ rel.\ $V(G)$. Corollary \ref{cor: Homeo rigidity} now implies that the homeomorphisms $\varphi$ and $\varphi'$ are isotopic rel.\ $V(G)$, which finishes the proof of \ref{item: case isotopy}. 
\end{proof}

\begin{rem}\label{rem: adm-pair-for-connected}
Suppose we are in the setting of Proposition \ref{prop: admis_equiv}, and the graph $G$ is connected. By admissibility, $\varphi\in \Homeo^+_0(\Sp, V(G))$ (see Corollary~\ref{cor: Homeo rigidity}). It follows that in this case the maps $f$ and $f'$ are combinatorially equivalent if and only if the graphs $G$ and $G'$ are isomorphic. Similarly, when $\widehat{\Sp} = \Sp$, the maps $f$ and $f'$ are isotopic if and only if the graphs $G$ and $G'$ are isotopic rel.\ $V(G)$.
\end{rem}

\subsection{Rational case}\label{subsec: Rational case}

Let $f\colon \widehat{\C}\to \widehat{\C}$ be a critically fixed rational map and $c \in \Crit(f)$ be a critical point of $f$ (in the following, we always assume that $\deg(f) \geq 2$). Note that $c$ is a superattracting fixed point of $f$, and thus all points in a neighbourhood of $c$ converge to $c$ under iteration. 
The \emph{basin of attraction} of~$c$ is defined to be the set $$B_c := \{z\in \widehat{\C}\colon \lim_{n\to\infty} f^n(z) =c\}.$$
The connected component of $B_c$ 
containing the point $c$ is called the \emph{immediate basin} of $c$ and denoted by $\Omega_c$. 
It follows from \cite[Theorem 9.3]{Milnor_Book} that $\Omega_c$ is a simply connected open set and there exists a biholomorphic map $\tau_c \colon \D \to \Omega_c$ such that
$$
    (\tau_c^{-1} \circ f \circ \tau_c)(z) = z^{d_c},
$$
where $d_c := \deg(f, c)$. Furthermore, the map $\tau_c$ extends to a continuous and surjective map $\tau_c\colon \overline{\D} \to \overline{\Omega_c}$, which provides a semi-conjugacy between the power map $z \mapsto z^{d_c}$ on $\overline{\D}$ and the map $f$ on $\overline{\Omega_c}$.

The \emph{internal ray of angle $\theta \in \R/\Z$} in the immediate basin $\Omega_c$ is the image of the radial arc $r(\theta) := \{t e^{2\pi i\theta}\colon t \in \I\}$ under the map $\tau_c$. The point $\tau_c(e^{2\pi i\theta}) \in \partial \Omega_c$ is called the \emph{landing point} of this ray. 
Note that the internal ray of angle $\theta$ is fixed under $f$ (i.e., $f(r(\theta))=r(\theta)$) if and only if $\theta \equiv \frac{j}{d_c - 1}\,\,\, \mod\, \Z$ for some $j \in \{0, \dots, d_c - 2\}$. The landing points of such rays are repelling fixed points of the map $f$.

The \emph{Tischler graph} of a critically fixed rational map $f$ is the planar embedded graph $\Tisch(f)$
whose edge set consists of the fixed internal rays taken in the immediate basins $\Omega_c$ for all $c \in \Crit(f)$ and whose vertex set consists of the endpoints of all these rays (which are the landing points of the rays together with the critical points of $f$). That is, as a subset of~$\widehat{\C}$, $\Tisch(f)$ is the union of all fixed internal rays described in the previous paragraph.

Let us denote by $\Fix(f)$ the set of all fixed points of a (critically fixed) rational map~$f$. Recall that the holomorphic fixed point formula implies that $|\Fix(f)|=\deg(f)+1$, if counted with multiplicity. 

 The Tischler graph has the following properties, see \cite[Theorem 1 and Corollary 6]{H_Tischler} and \cite[Lemma 3]{Pilgrim_crit_fixed}.

\begin{proposition}
    Let $f$ be a critically fixed rational map. 
    Then the following statements are true:
    \begin{enumerate}[label=\normalfont{(\roman*)}]
        \item The vertex set of $\Tisch(f)$ consists of all fixed points of $f$. In particular, \newline $|V(\Tisch(f))|= |\Fix(f)| = \deg(f)+1$.
        
        \item $\Tisch(f)$ is a bipartite graph: each edge of $\Tisch(f)$ connects a superattracting fixed point and a repelling fixed point of $f$. 
        
        \item $\Tisch(f)$ is connected.
        
        \item The boundary $\partial W$ of each face $W$ of $\Tisch(f)$ is either a quadrilateral or a bigon with a sticker inside (see Figure \ref{fig: Faces of Tischler graph}).

        \item $\Tisch(f)$ has exactly $\deg(f)-1$ faces.
    \end{enumerate}
\end{proposition}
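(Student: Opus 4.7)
My plan is to establish the four claims in the order (ii), (i), (iv), (iii), combining the holomorphic fixed point formula, the Riemann--Hurwitz formula, Böttcher linearization in each immediate basin, and the Euler formula for planar graphs.

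Statement (ii) is almost immediate from the construction. Each edge of $\Tisch(f)$ is a fixed internal ray in some immediate basin $\Omega_c$, and by Böttcher's theorem such a ray connects the superattracting fixed point $c \in \Crit(f)$ to a landing point on $\partial \Omega_c \subset \J(f)$. A fixed point lying in the Julia set of a hyperbolic pcf map must be repelling, which proves bipartiteness.

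For (i), I would first apply the holomorphic fixed point formula to get $|\Fix(f)| = \deg(f)+1$, then argue every fixed point is simple (the critical ones have multiplier $0$; the remaining ones are repelling). Each $c \in \Crit(f)$ is a vertex by construction. Since $f$ is critically fixed (hence subhyperbolic with connected Julia set), each $\Omega_c$ is a Jordan domain, so the extended Böttcher coordinate $\tau_c\colon \overline{\D}\to\overline{\Omega_c}$ is a homeomorphism and the $d_c - 1$ fixed internal rays of $\Omega_c$ land at $d_c - 1$ distinct repelling fixed points on $\partial \Omega_c$. To exhaust $\Fix(f)$ I then need every repelling fixed point $p$ to appear as a landing point of some fixed internal ray. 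In the present hyperbolic setting, $\J(f) = \bigcup_c \partial \Omega_c$, so $p$ lies on some $\partial\Omega_c$; a standard application of the Snail Lemma together with the rotation-number information from Böttcher coordinates then forces at least one fixed internal ray in $\Omega_c$ to land at $p$.

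For (iv) I would analyze faces locally. In Böttcher coordinates, the $d_c - 1$ fixed internal rays in $\Omega_c$ partition a neighborhood of $c$ into congruent sectors. Tracing a face $Q$ from its critical-point corner outward, $\partial Q$ first runs along two consecutive fixed rays in $\Omega_c$ and reaches their landing repelling fixed points $p_1, p_2$ on $\partial \Omega_c$; from there $Q$ either closes via two symmetric fixed rays in a single adjacent basin $\Omega_{c'}$ landing again at $p_1$ and $p_2$ (yielding a quadrilateral), or the two boundary arcs pinch at a common repelling fixed point and a residual fixed ray anchored there cuts into $Q$ as a sticker. Ruling out more complicated boundary patterns is the main technical point and requires using the global covering structure of $f$ on complementary Fatou components, not only local Böttcher geometry.

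Statement (iii) will follow by a Euler-characteristic count. Riemann--Hurwitz gives $|E(\Tisch(f))| = \sum_c (d_c - 1) = 2\deg(f) - 2$, while (i) gives $|V(\Tisch(f))| = \deg(f) + 1$. Every face described in (iv) contributes four edge-sides counted with multiplicity, so $2|E| = 4|F|$ yields $|F| = \deg(f) - 1$. The Euler formula $|V| - |E| + |F| = 1 + k_{\Tisch(f)}$ then forces $k_{\Tisch(f)} = 1$, i.e.\ connectedness. The main obstacle I foresee is (iv): one must carefully exclude degenerate face combinatorics and handle the sticker case, since this is what controls the entire edge/face count used in (iii).
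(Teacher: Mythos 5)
The paper does not prove this proposition; it quotes it verbatim from \cite[Theorem 1, Corollary 6]{H_Tischler} and \cite[Lemma 3]{Pilgrim_crit_fixed}, so there is no internal argument to compare against. Your skeleton (holomorphic fixed point formula, bipartiteness via B\"ottcher coordinates, Euler formula to deduce (iii) from (i) and (iv)) has the right shape, but the core of your argument for (i) rests on two incorrect claims that also undercut your sketch of (iv). The identity $\J(f) = \bigcup_c \partial\Omega_c$ is false for critically fixed rational maps in general: the Julia set also carries the boundaries of all strictly preperiodic Fatou components, so $\bigcup_c \partial\Omega_c$ is a proper, non--backward-invariant subset (when $\J(f)$ is a Sierpi\'nski carpet, $\bigcup_c \partial\Omega_c$ is a finite union of circles inside an uncountable union), and this step does not show that a repelling fixed point lies on some $\partial\Omega_c$. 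More damagingly, $\Omega_c$ need not be a Jordan domain and the $d_c-1$ fixed internal rays of $\Omega_c$ need not land at distinct points. Take $p(z) = -\tfrac{1}{2}z^3 + \tfrac{3}{2}z$: then $\Crit(p) = \{1, -1, \infty\}$ with each critical point fixed, $\Fix(p) = \{0, 1, -1, \infty\}$, and $0$ (with $p'(0)=3/2$) is the unique repelling fixed point. Every fixed internal ray lands at a fixed point lying in the Julia set, hence at $0$; in particular the two fixed rays of $\Omega_\infty$ land at the same point, $\tau_\infty$ is not injective on $\partial\D$, and $\partial\Omega_\infty = \J(p)$ is not a Jordan curve. (Abstractly: were $\J(p)$ a Jordan curve, $\Omega_1$, $\Omega_{-1}$, $\Omega_\infty$ would be three pairwise disjoint simply connected domains with a common Jordan-curve boundary, which is impossible.) Your tracing argument for (iv) tacitly assumes the two bounding rays of a sector land at distinct points $p_1 \neq p_2$; the same example contradicts this and is exactly the mechanism producing the bigon-with-sticker faces.

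The Snail Lemma is also misapplied where you invoke it: it excludes certain indifferent landing behaviors of periodic rays (already moot in the pcf setting and relevant to (ii) at best); it does not produce a fixed ray landing at a prescribed repelling fixed point. The genuinely nontrivial content is showing $\Fix(f) \subset V(\Tisch(f))$ together with controlling the landing combinatorics of the fixed rays, and in the cited references these are handled together via the global covering structure of $f$ rather than by your route of first settling (i) and then (iv). Your final deduction of (iii) from (i) and (iv) is correct as stated: $|E| = 2\deg(f)-2$ by Riemann--Hurwitz, $|V| = \deg(f)+1$ by (i), each face contributes exactly four edge-sides even in the bigon-with-sticker case (the sticker edge is counted twice), so $|F| = \deg(f)-1$, and the Euler formula then forces $k_{\Tisch(f)} = 1$.
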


The proposition above justifies the next definition. 

\begin{definition}\label{def: rational charge graph}
     Let $f$ be a critically fixed rational map. 
     For each face $W$ of the Tischler graph $\Tisch(f)$, choose a Jordan arc $e(W)$ joining the (only) two critical points of $f$ on $\partial W$ so that $\inter(e(W)) \subset W$ (see Figure \ref{fig: Faces of Tischler graph} for an illustration). Let $G$ be the planar embedded graph with the vertex set $\Crit(f)$ and the edge set $\{e(W)\colon W \in F(\Tisch(f))\}$. Then any planar embedded graph isotopic to $G$ rel.\ $\Crit(f)$ is called the \emph{charge graph} of $f$ and denoted by $\Charge(f)$.
\end{definition}

\begin{figure}[t]
    \centering           \includegraphics[width = 10cm]{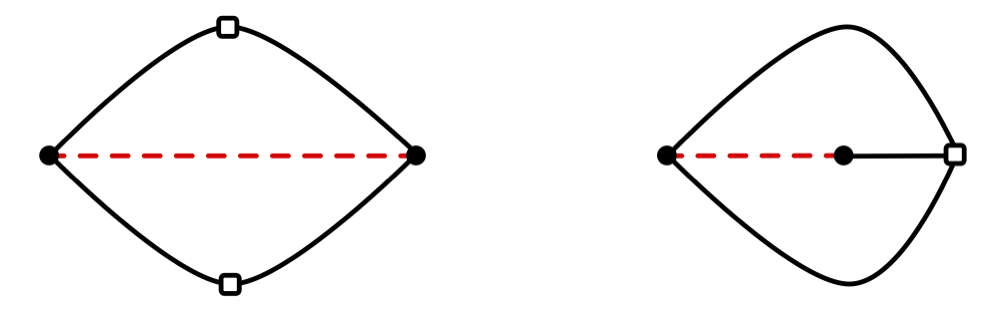}

    \caption{Constructing an edge $e(W)$ of $\Charge(f)$ inside a face $W$ of $\Tisch(f)$ if $\partial W$ is a quadrilateral (left) and if $\partial W$ a bigon with a sticker inside (right). The boundary of $W$ consists of black edges; the critical points of $f$ on $\partial W$ are represented by black dots; the repelling fixed points of $f$ on $\partial W$ are represented by white squares; and the edge $e(W)$ is represented by a dashed red line.}
    
    \label{fig: Faces of Tischler graph}
\end{figure}

We point out that the Tischler graph $\Tisch(f)$ of a critically fixed rational map $f$ 
is uniquely defined, while the charge graph $\Charge(f)$ is defined only up to isotopy rel.\ $\Crit(f)$. We also note that $\Charge(f)$ has exactly $\deg(f)-1$ edges and it is always connected (see the remark after \cite[Lemma 8]{H_Tischler}).

The next statement relates critically fixed rational maps and their charge graphs via the blow-up operation. (It immediately follows from Proposition~\ref{prop: Isotopy and blow-up} and \cite[Proposition~7]{H_Tischler}.)

\begin{proposition}\label{prop: charge graph rational case}%[{\cite[Proposition 7]{H_Tisch}}]
    Let $f$ be a critically fixed rational map 
    and $g$ be a critically fixed Thurston map obtained by blowing up the pair $(\Charge(f), \id_{\widehat{\C}})$. Then the maps $f$ and $g$ are isotopic.
\end{proposition}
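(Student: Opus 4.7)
The statement is explicitly flagged as a direct consequence of Proposition \ref{prop: Isotopy and blow-up} together with \cite[Proposition 7]{H_Tischler}, so the proof plan is essentially a bookkeeping exercise to align the definitions of the two papers and then invoke the cited results.

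The plan is as follows. First, I would apply \cite[Proposition 7]{H_Tischler}, which asserts that every critically fixed rational map $f\colon\widehat{\C}\to\widehat{\C}$ with $\deg(f)\geq 2$ is isotopic (as a marked Thurston map with marked set $\Crit(f)$) to a branched cover obtained by blowing up a pair $(G_f, \id_{\widehat{\C}})$, where $G_f$ is a concrete planar embedded graph constructed from the Tischler graph of $f$. Concretely, $G_f$ has vertex set $\Crit(f)$ and exactly one edge per face of $\Tisch(f)$, joining the two critical points on the boundary of that face through an arc in its interior. This is precisely the recipe used in Definition \ref{def: rational charge graph} to build a representative of $\Charge(f)$, so $G_f$ is itself a representative of $\Charge(f)$.

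Second, I would argue that the choice of representative within the isotopy class of $\Charge(f)$ rel.\ $\Crit(f)$ is immaterial. Let $G'$ be any planar embedded graph isotopic to $G_f$ rel.\ $\Crit(f)$; by Definition \ref{def: rational charge graph}, every other realization of $\Charge(f)$ has this form. Applying Proposition \ref{prop: Isotopy and blow-up} with $(G_1,\varphi_1)=(G_f,\id_{\widehat{\C}})$ and $(G_2,\varphi_2)=(G',\id_{\widehat{\C}})$ — noting that the two homeomorphisms are trivially isotopic rel.\ $\Crit(f)$ — yields that the blow-ups of these two pairs are isotopic as marked Thurston maps on $(\widehat{\C}, \Crit(f))$. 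Chaining this with the isotopy from step one gives the desired conclusion that $f$ and $g$ are isotopic.

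The only real content is hidden inside the quoted \cite[Proposition 7]{H_Tischler}; everything else is a matter of matching Definition \ref{def: rational charge graph} against the construction performed there. In this paper the potential subtlety is simply to confirm that ``isotopic'' (as marked Thurston maps on $\widehat{\C}$ rel.\ $\Crit(f)$) is the correct output, which it is because Proposition \ref{prop: Isotopy and blow-up} delivers isotopy rather than mere combinatorial equivalence. No obstruction analysis or Thurston-characterization machinery is required: since $f$ is rational it is automatically realized, and the charge graph representation was manufactured specifically to encode that realization.
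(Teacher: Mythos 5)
Your proof is correct and follows exactly the route the paper itself indicates: the paper states the result "immediately follows from Proposition~\ref{prop: Isotopy and blow-up} and \cite[Proposition~7]{H_Tischler}" and gives no further argument. Your unpacking — identify the graph constructed in \cite[Proposition~7]{H_Tischler} with the representative of $\Charge(f)$ built in Definition~\ref{def: rational charge graph}, then use Proposition~\ref{prop: Isotopy and blow-up} to remove the dependence on the representative — is precisely the bookkeeping the authors leave implicit, and it is sound.
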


\begin{rem}\label{rem: isotopy_rel_fixed_points}
In fact, Proposition~\ref{prop: Isotopy and blow-up} and \cite[Proposition~7]{H_Tischler} imply a slightly stronger result. Suppose $f$ is a critically fixed rational map and $G$ is the planar embedded graph in $\widehat{\C}$ constructed in Definition~\ref{def: rational charge graph} from the Tischler graph of $f$. Consider the planar embedded graph $G':=G \cup \Fix(f)$ with the vertex set $\Fix(f)$. We note that each face of $G$ contains exactly one point from $\Fix(f)\setminus\Crit(f)$; see \cite[Lemma 8]{H_Tischler}. Let $g'$ be a critically fixed Thurston map obtained by blowing up the pair $(G', \id_{\widehat{\C}}).$ Then the marked Thurston maps $(f, \Fix(f))$ and $(g', \Fix(f))$ are isotopic.
\end{rem}

The following converse to Proposition \ref{prop: charge graph rational case} easily follows from \cite[Corollary~3]{PT}.

\begin{proposition}[{\cite[Theorem 9]{Pilgrim_crit_fixed}}]\label{prop: rational if charge connected}
    Let $f$ be a critically fixed Thurston map obtained by blowing up a pair $(G, \id_{\Sp})$, where $G$ is a planar embedded graph in $\Sp$ without isolated vertices. Then $f$ is combinatorially equivalent to a rational map if and only if $G$ is connected. 
\end{proposition}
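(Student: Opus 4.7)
The proof has two directions. For the easy ``if'' direction, the plan is to invoke the existing classification of critically fixed rational maps (Theorem~\ref{thm: class_criti_fix_rational}) together with Proposition~\ref{prop: charge graph rational case}. For the ``only if'' direction, I will build a Levy fixed curve directly from a disconnection of $G$, which gives a Thurston obstruction.

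Suppose first that $G$ is connected. By Theorem~\ref{thm: class_criti_fix_rational}, there exists a critically fixed rational map $F\colon\widehat{\C}\to\widehat{\C}$ whose charge graph $\Charge(F)$ is isomorphic to $G$ as a planar embedded graph. That is, there is an orientation-preserving homeomorphism $\psi\colon \Sp\to\widehat{\C}$ carrying $G$ to $\Charge(F)$. The conjugate $\widetilde{F}:=\psi^{-1}\circ F\circ \psi$ is a critically fixed branched cover of $\Sp$ combinatorially equivalent to $F$, with charge graph equal to $G$ (up to isotopy rel.\ $\Crit(\widetilde F)$). Proposition~\ref{prop: charge graph rational case} then says that $\widetilde F$ is isotopic to the map obtained by blowing up $(G,\id_{\Sp})$, which is $f$. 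Thus $f$ and $F$ are combinatorially equivalent.

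For the converse, I will argue the contrapositive: if $G$ is disconnected, then $f$ admits a Levy fixed curve and is therefore obstructed (hence not equivalent to a rational map). Since $G$ has no isolated vertices, each connected component of $G$ contains at least one edge and so at least two vertices, and by Proposition~\ref{prop: Blow-up properties} these vertices are critical points of $f$. Partition the components of $G$ into two non-empty collections; this yields a Jordan curve $\gamma\subset \Sp\setminus V(G)$ that separates them, so each complementary component of $\gamma$ contains at least two critical points of $f$. In particular, $\gamma$ is essential in $(\Sp,\Crit(f))$. Moreover, by choosing $\gamma$ inside the ``gap'' between the components (and isotoping it if necessary), we may assume $\gamma\subset \Sp\setminus \bigcup_{e\in E(G)}\overline{W_e}$, where the $W_e$ are the open Jordan regions chosen in the blow-up construction (Section~\ref{subsec: The blow-up operation}).

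On $\Sp\setminus\bigcup_{e\in E(G)}\overline{W_e}$, by conditions \ref{B3} and $\varphi=\id_{\Sp}$, the map $f=\varphi\circ h_1=h_1$ coincides with the identity. Consequently $f(\gamma)=\gamma$ and $f|\gamma=\id_\gamma$, so $\gamma$ itself is a connected component of $f^{-1}(\gamma)$ that is trivially isotopic to $\gamma$ rel.\ $\Crit(f)$ and on which $f$ restricts to a homeomorphism. Thus $\gamma$ is a Levy fixed curve, and as noted after Definition~\ref{def: Thurston equivalence} of a Levy fixed curve, this produces a Thurston obstruction, contradicting realizability.

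The only mild technical point is ensuring the separating curve $\gamma$ can be chosen to miss every $\overline{W_e}$. This is unproblematic because the $W_e$ are small neighborhoods of individual edges and therefore stay close to their respective components of $G$; a generic separating curve in the complement of $G$ (or a small isotopy of one) will avoid all of them. The remaining ingredients are direct consequences of the definitions and of the already-cited Proposition~\ref{prop: charge graph rational case} and Theorem~\ref{thm: class_criti_fix_rational}, so no further difficulties arise.
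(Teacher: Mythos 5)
Your proof is correct, but note that the paper does not actually prove this proposition at all---it simply cites it as \cite[Theorem 9]{Pilgrim_crit_fixed} and remarks that it ``easily follows from \cite[Corollary~3]{PT}.'' So your argument is necessarily a different route, and it is worth separating the two directions.

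Your ``only if'' direction is a clean, self-contained argument: a disconnection of $G$ yields an essential Jordan curve $\gamma$ separating components, and because $f$ agrees with the identity on $\Sp\setminus\bigcup_e W_e$ (conditions \ref{B3}--\ref{B4} together with $\varphi=\id_\Sp$), $\gamma$ is its own preimage component and hence a Levy fixed curve. The one point you flagged---that $\gamma$ can be taken to avoid $\bigcup_e\overline{W_e}$---is best justified not by saying the $W_e$ are ``small'' (conditions \ref{A1}--\ref{A3} do not force that), but by observing that $f$ is well-defined up to isotopy independently of the choice of $W_e$ (the remark preceding Proposition~\ref{prop: Isotopy and blow-up}), so one may pick $\gamma$ first and then choose each $W_e$ disjoint from $\gamma$. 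With that adjustment your construction is airtight. This is a nicer and more elementary route than the paper's citation, and it also anticipates the later Corollary~\ref{cor: crit_fix_obstructions}, where Levy fixed curves are shown to characterize obstructions for \emph{all} critically fixed Thurston maps via decomposition theory; your argument recovers the special case needed here without that machinery.

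Your ``if'' direction is logically valid \emph{within the paper's framework}, where Theorem~\ref{thm: class_criti_fix_rational} and Proposition~\ref{prop: charge graph rational case} are treated as established external results. However, you should be aware that it is not an independent proof: the surjectivity in Theorem~\ref{thm: class_criti_fix_rational} (that every connected planar embedded graph is the charge graph of some critically fixed rational map) is established in \cite{H_Tischler} using precisely the content of this proposition (via \cite{Pilgrim_crit_fixed} and \cite{PT}). So deriving the ``if'' direction from Theorem~\ref{thm: class_criti_fix_rational} is circular when the citations are unwound. If you want a genuinely independent argument for realizability of the blown-up map when $G$ is connected, you would need to verify the absence of Thurston obstructions directly (e.g., via \cite[Corollary~3]{PT}, as the paper indicates, or by combining your Levy-curve analysis with Corollary~\ref{cor: crit_fix_obstructions}).
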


\begin{remark} 
The following stronger statement can be easily derived from the discussion in Remark \ref{rem: isotopy_rel_fixed_points}. Let $f$ be a critically fixed Thurston map obtained by blowing up a pair $(G, \id_{\Sp})$. Then the marked Thurston map $(f,V(G))$ is realized (by a marked rational map) if and only $G$ has exactly one non-trivial connected component $H$ and each face of $H$ contains at most one isolated vertex of~$G$. Here, a connected graph $H$ is called \emph{non-trivial} if $H$ has at least one edge.
\end{remark}

The family of critically fixed rational maps may be completely classified using their charge graphs. Namely, Theorem \ref{thm: class_criti_fix_rational} is an immediate corollary of the following result.

\begin{proposition}[{\cite[Section 5]{H_Tischler}}, compare Remark \ref{rem: adm-pair-for-connected}]\label{prop: Classification of critically fixed rational maps}
    Two critically fixed rational maps $f$ and $g$ are combinatorially equivalent if and only if their charge graphs $\Charge(f)$ and $\Charge(g)$ are isomorphic.
\end{proposition}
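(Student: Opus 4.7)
The plan is to dispatch the two directions separately, bridging them via Proposition~\ref{prop: charge graph rational case}: each critically fixed rational map is isotopic, as a marked Thurston map on $(\widehat{\C}, \Crit)$, to the blow-up of its charge graph with $\id_{\widehat{\C}}$. Let $\tilde f$ and $\tilde g$ denote critically fixed Thurston maps obtained by blowing up the (admissible) pairs $(\Charge(f), \id_{\widehat{\C}})$ and $(\Charge(g), \id_{\widehat{\C}})$, respectively, so that $f$ is combinatorially equivalent to $\tilde f$ and $g$ is combinatorially equivalent to $\tilde g$ throughout the argument.

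For the $(\Rightarrow)$ direction, I would invoke Thurston's rigidity (Theorem~\ref{thm: Thurston_theorem}) to obtain a M\"obius transformation $\mu$ conjugating $f$ to $g$. When $|\Crit(f)| \geq 3$---so that $\mathcal{O}_f$ is hyperbolic, in view of $\nu_f \equiv \infty$ on $\Crit(f)$ giving $\chi(\mathcal{O}_f) = 2 - |\Crit(f)|$---the holomorphic nature of $\mu$ forces it to send each immediate basin $\Omega_c$ of $f$ onto $\Omega_{\mu(c)}$, to intertwine the B\"ottcher coordinates $\tau_c$ and $\tau_{\mu(c)}$, and hence to carry each fixed internal ray of $f$ to a fixed internal ray of $g$. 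Therefore $\mu(\Tisch(f)) = \Tisch(g)$ as planar embedded graphs, and because $\Charge$ is constructed canonically from $\Tisch$ (one edge per face, joining the two critical points on its boundary, as in Definition~\ref{def: rational charge graph}), the same homeomorphism $\mu$ also furnishes an isomorphism of charge graphs. The remaining parabolic case $|\Crit(f)| = 2$ is immediate: Riemann--Hurwitz forces both critical points to have full local degree, so $f$ and $g$ are each conjugate to $z^{\deg(f)}$, and $\Charge(f) \cong \Charge(g)$ then reduces to $\deg(f) = \deg(g)$.

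For the $(\Leftarrow)$ direction, suppose $\psi \in \Homeo^+(\widehat{\C})$ realizes an isomorphism $\Charge(f) \to \Charge(g)$ of planar embedded graphs. My plan is to exploit the equivariance of the blow-up construction under conjugation. Concretely, if $\tilde f$ is produced from data $\{W_e, D_e, f_e, h_t\}$ verifying axioms \ref{A1}--\ref{C2} for the pair $(\Charge(f), \id_{\widehat{\C}})$, then the transported data $\{\psi(W_e), \psi(D_e), \psi \circ f_e \circ \psi^{-1}, \psi \circ h_t \circ \psi^{-1}\}$ satisfies those same axioms for the transported pair $(\psi(\Charge(f)), \psi \circ \id_{\widehat{\C}} \circ \psi^{-1}) = (\Charge(g), \id_{\widehat{\C}})$, and the resulting blow-up is precisely $\psi \circ \tilde f \circ \psi^{-1}$. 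Hence $\psi$ itself conjugates $\tilde f$ to $\tilde g$, witnessing a combinatorial equivalence; chaining with the isotopies $f \sim \tilde f$ and $g \sim \tilde g$ completes the argument.

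I expect the main obstacle to lie in the forward direction---specifically, in upgrading the M\"obius conjugacy from a mere topological conjugacy to one that preserves the Tischler graph as a \emph{labeled} planar graph rather than only up to isotopy. This upgrade rests squarely on the holomorphicity of $\mu$ and the naturality of B\"ottcher coordinates under holomorphic conjugation. By contrast, the reverse direction is essentially formal once the symmetry of the blow-up construction under conjugation is spelled out.
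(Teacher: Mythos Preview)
The paper does not prove this proposition; it is quoted from \cite{H_Tischler}. Your argument is correct and is essentially the expected one. The $(\Leftarrow)$ direction---transporting the blow-up data through the graph isomorphism $\psi$---is exactly the mechanism the present paper later uses in the proof of Proposition~\ref{prop: comb_equiv_blowups} and Theorem~\ref{thm: Classification}. One small imprecision: after transporting the data you obtain that $\psi\circ\tilde f\circ\psi^{-1}$ is \emph{a} blow-up of $(\Charge(g),\id_{\widehat{\C}})$, not literally $\tilde g$; you must then invoke Proposition~\ref{prop: Isotopy and blow-up} (uniqueness of the blow-up up to isotopy) to conclude that $\psi\circ\tilde f\circ\psi^{-1}$ and $\tilde g$ are isotopic, and hence that $\psi$ together with that isotopy realizes the combinatorial equivalence.

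For the $(\Rightarrow)$ direction, your route through Thurston rigidity is the natural one in the rational setting: the M\"obius conjugacy $\mu$ carries immediate basins to immediate basins, and $\tau_{\mu(c)}^{-1}\circ\mu\circ\tau_c$ is a rotation by a $(d_c-1)$-th root of unity (being a biholomorphism of $\D$ fixing $0$ and commuting with $z\mapsto z^{d_c}$), so it permutes the fixed radial rays. Hence $\mu(\Tisch(f))=\Tisch(g)$, and since $\mu$ respects the face structure and the critical/repelling dichotomy on $\partial Q$, it induces an isomorphism of charge graphs. Your treatment of the parabolic case $|\Crit(f)|=2$ is also correct; note that $|\Crit(f)|\geq 2$ always holds by Riemann--Hurwitz, so no further cases remain.
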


\begin{example}\label{ex: Rational square map}

    Let us consider the following rational map
    $$
        F_{\square}(z) = \frac{3z^5 - 20z}{5z^4 - 12}.
    $$
    One can easily check that $\Crit(F_{\square})=\{1 + i, -1 + i, -1 - i, 1 - i\}$ and $\Fix(F_{\square})=\Crit(F_{\square})\sqcup \{0,\infty\}$, that is, $F_{\square}$ is a critically fixed rational map. Furthermore, we have $\deg(F_{\square}) = 5$ and $\deg(F_{\square},c) = 3$ for every $c\in \Crit(F_{\square})$.
    
    The attracting basins and the Tischler graph of $F_{\square}$ are shown on the left in  Figure~\ref{fig: Rational square map}. Namely, the basins of attraction of the critical points $1 + i$, $-1 + i$, $-1 - i$, and $1 - i $ (indicated by the black dotes in the picture) are shown in blue, red, orange, and green colors, respectively. The white square corresponds to the repelling fixed point at $0$. The black arcs represent the edges of the Tischler graph of $F_\square$; note that $\Tisch(F_\square)$ has a vertex at $\infty$ connected to all critical points. The right part of Figure \ref{fig: Rational square map} illustrates the construction of the charge graph of $F_\square$. Here, the edges of $\Charge(F_\square)$ and $\Tisch(F_\square)$ are shown in solid and dashed black lines, respectively.

    \begin{figure}[t]
        \centering           
        
        \includegraphics[width=5cm]{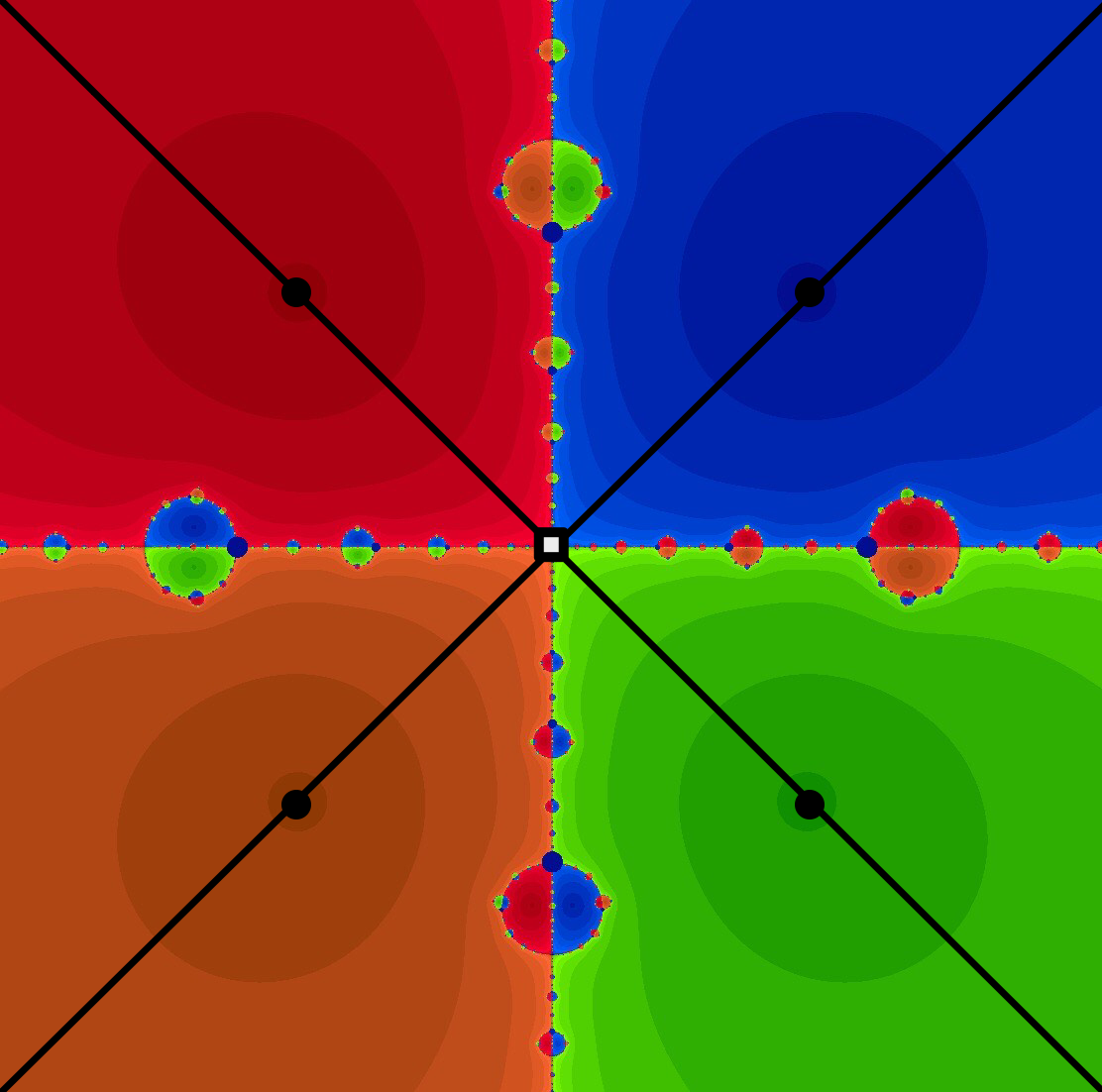} \hspace{4cm}
        \includegraphics[width=5cm]{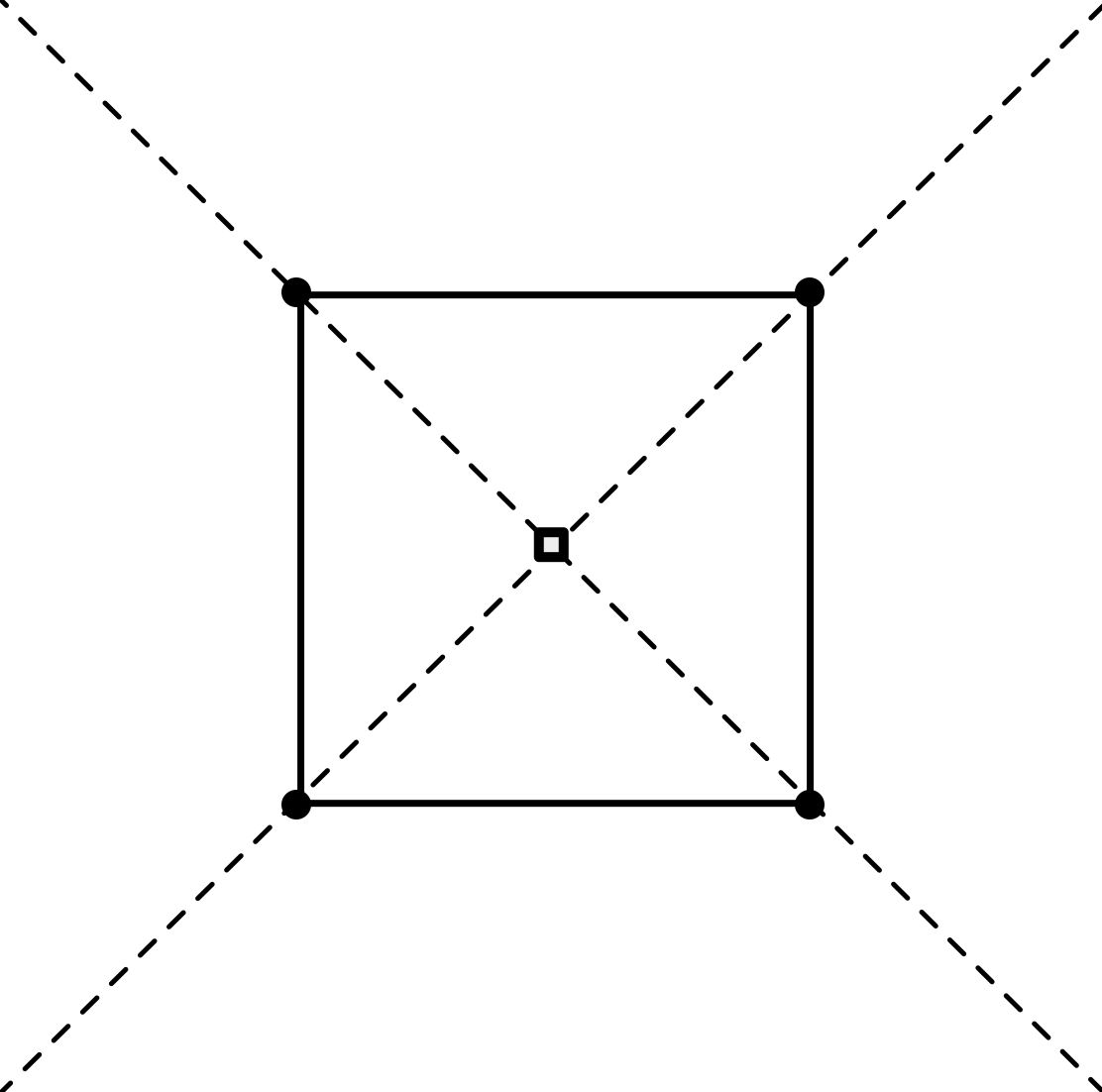}

        \caption{The Tischler graph of $F_{\square}$ (left) and the construction of the charge graph of $F_{\square}$ from $\Tisch(F_{\square})$ (right).}
        
        \label{fig: Rational square map}
    \end{figure}

    Note that $\Charge(F_{\square})$ is isomorphic to the square graph $G_\square$. Propositions  \ref{prop: admis_equiv}\ref{item: case comb equiv}  and \ref{prop: charge graph rational case} imply that the critically fixed Thurston map $f_{\square}$ introduced in Example \ref{ex: Topological square map} is combinatorially equivalent to the rational map $F_\square$. 
    We will use the map $f_{\square}$ in our further examples, but the same observations will hold for any Thurston map combinatorially equivalent to $f_{\square}$, in particular, for the rational map $F_{\square}$.
\end{example}

\subsection{Decomposition} \label{subsec: Decomposition}  Throughout this subsection we follow the notation and terminology introduced in Section~\ref{subsec: Decomposition theory}. Our goal is to prove the following result, which shows that completely invariant multicurves (and the induced decompositions) for critically fixed Thurston maps satisfy very restrictive conditions.

\begin{theorem}\label{thm: crit-fix-decomposition} Let $f$ be a critically fixed Thurston map and $\Gamma$ be a non-empty completely invariant multicurve. Suppose $\widehat{f}\colon \widehat{\mathscr{S}}_\Gamma\to\widehat{\mathscr{S}}_\Gamma$ is the corresponding map on the small spheres with respect to $\Gamma$. Then the following statements are true:
\begin{enumerate}[label=\normalfont{(\roman*)}]
    \item\label{item: decomp_thm_1} For each curve $\gamma \in \Gamma$ there is exactly one component $\gamma'$ of $f^{-1}(\gamma)$ that is isotopic to $\gamma$ rel.\ $\Crit(f)$. All other components $\delta'$ of $f^{-1}(\gamma)$ are null-homotopic with $\deg(f|\delta')=1$.
    \item\label{item: deccomp_thm_2} Every small sphere $\widehat S \in \widehat{\mathscr{S}}_\Gamma$ is fixed under $\widehat{f}$. Moreover, every point in $Q(\widehat S)$ is fixed under $\widehat f$.
\end{enumerate}
\end{theorem}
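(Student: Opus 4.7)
The plan is to analyze the preimage structure $f^{-1}(\gamma)$ for each $\gamma \in \Gamma$ via Riemann–Hurwitz on the dual region tree, deduce~(i), and then derive~(ii) from~(i). First, I would fix $\gamma \in \Gamma$, write $U, V$ for the components of $\Sp \setminus \gamma$, and set $A := U \cap \Crit(f)$ and $B := V \cap \Crit(f)$ (both of size $\geq 2$). Since $\Crit(f)$ consists of fixed points of $f$ and $\gamma \cap \Crit(f) = \emptyset$, we have $f^{-1}(\gamma) \cap \Crit(f) = \emptyset$. Form the bipartite region tree $T$: vertices are the connected components of $\Sp \setminus f^{-1}(\gamma)$ and edges are the components of $f^{-1}(\gamma)$; label each vertex $W$ by $U$ or $V$ according to $f(W)$, so labels alternate along edges. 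Because $c = f(c) \in f(W)$ for each fixed $c \in W \cap \Crit(f)$, $U$-labeled regions contain only critical points from $A$, and $V$-labeled regions only from $B$. Riemann–Hurwitz yields $\deg(f|W) = 2 - \deg_T(W) + \alpha_W$ with $\alpha_W := \sum_{c \in W \cap \Crit(f)}(\deg(f,c)-1) \geq 0$; combined with $\deg(f|W) \geq 1$, this forces every empty region ($\alpha_W = 0$) to be a leaf of $T$ with $\deg(f|W) = 1$, whose single boundary curve is a null-homotopic component of $f^{-1}(\gamma)$ of degree~$1$.

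Next, I would show that the essential part of $T$ consists of a single edge. Let $S_A := \{W : W\cap A\neq\emptyset\}$ and $S_B$ analogously, and let $T_A, T_B$ be the minimal subtrees of $T$ spanning them; by the observation above, every vertex of $T$ outside $T_A \cup T_B$ is a pendant empty leaf. The central claim is $|S_A| = |S_B| = 1$, with the unique $W_A, W_B$ adjacent in $T$ via an edge $\gamma'$; since $\gamma'$ then separates $A$ from $B$, it is isotopic to $\gamma$. The hard part will be ruling out $|S_A| \geq 2$ (and symmetrically $|S_B| \geq 2$): if this occurs, the path in $T$ between two $A$-containing vertices must traverse intermediate $V$-vertices, which are non-leaves and thus lie in $S_B$; the essential edges along such a path induce partitions of $\Crit(f)$ distinct from $(A, B)$, which by complete invariance of $\Gamma$ must be isotopic to other curves $\gamma^* \in \Gamma$ nested inside $U$ or $V$. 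The plan is to combine the bipartite tree structure, the pairwise disjointness of $f^{-1}(\gamma_j)$ for distinct $\gamma_j \in \Gamma$, and an iterated degree count $\sum_i \deg(f|\delta_i) = d$ applied to each nested $\gamma^*$ to reach a contradiction.

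Once this structure is in place, let $k_A$ (resp.\ $k_B$) denote the number of empty $V$-leaves at $W_A$ (resp.\ empty $U$-leaves at $W_B$). Riemann–Hurwitz gives $\deg(f|W_A) = d - k_B$, and balancing the boundary degrees (the pendant leaves contribute degree $1$ each) yields $\deg(f|\gamma') = d - k_A - k_B$. Using that each $a \in A$ contributes $\deg(f,a)$ preimages at itself inside $W_A$ with exactly one further regular preimage in each empty $U$-leaf, together with $\deg(f, a) \geq 2$ and complete invariance of $\Gamma$, I would deduce $k_A + k_B = d - 1$, hence $\deg(f|\gamma') = 1$; this completes part~(i). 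For part~(ii), by~(i) the essential locus of $f^{-1}(\Gamma)$ is isotopic to $\Gamma$, so for each $S \in \mathscr{S}_\Gamma$ the associated $S' \in \mathscr{S}_{f^{-1}(\Gamma)}$ shares its isotopy class. If $S$ contains a critical point $c$, then $c$ lies in both $S'$ and $f(S')$ by fixedness, forcing $f(S') = S$. If $S$ is a buffer region, each boundary curve on $\partial S'$ is the essential preimage of the corresponding $\gamma_j \in \partial S$ from~(i) and maps homeomorphically onto $\gamma_j$, so $f(S')$ is bounded by exactly the curves in $\partial S$ and thus equals $S$. In either case $\widehat{f}(\widehat{S}) = \widehat{S}$.
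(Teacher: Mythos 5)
Your approach is genuinely different from the paper's: you analyze one curve $\gamma\in\Gamma$ at a time via the bipartite region tree of $\Sp\setminus f^{-1}(\gamma)$, whereas the paper works globally with the tree $T_\Gamma$ built from \emph{all} complementary components of the multicurve and studies maximal paths in $T^{\bullet}_{f^{-1}(\Gamma)}$ whose images in $T_\Gamma$ remain simple. Your setup (labeling by image side, Riemann--Hurwitz showing empty regions are degree-$1$ leaves) is sound, and your derivation of (ii) from (i) is essentially correct.

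However, there are two genuine problems. First, your ``central claim'' $|S_A|=|S_B|=1$ is exactly the content of part (i), and you explicitly leave it as a ``plan'' rather than a proof. The sketch you give (essential path edges induce new partitions, hence must be isotopic to other curves $\gamma^*\in\Gamma$ nested inside $U$ or $V$, then ``iterated degree counting'') is where the real work lies, and it is far from routine: some path edges may be \emph{peripheral} (one side has a single critical point), and those are \emph{not} constrained by complete invariance since $\Gamma$ contains only essential curves; moreover your per-curve viewpoint discards the interaction between the curves $\gamma^*$ and $\gamma$ that a global argument (like the paper's comparison of path lengths in $T^{\bullet}_{f^{-1}(\Gamma)}$ versus $T_\Gamma$, using that $f$ fixes the critical vertices) exploits. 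As written, the central claim is a gap, not a proof.

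Second, your concluding assertion $\deg(f|\gamma')=1$ is actually \emph{false} in general, which signals that the degree-count sketch contains an error. For instance, if $f$ is obtained by blowing up $(G,\id_{\Sp})$ and $\gamma$ is an essential simple transversal with respect to $G$ crossing $m\geq 1$ edges, then $\{\gamma\}$ is a completely invariant multicurve with $\deg(f|\gamma')=m+1>1$ (see Lemma~\ref{prop: Preimage lemma}). The theorem itself makes no claim about $\deg(f|\gamma')$ -- it only asserts that the \emph{other} preimage components are null-homotopic of degree~$1$, which you already obtained from the Riemann--Hurwitz leaf analysis -- so this entire paragraph should be dropped. The specific error in your sketch is the implicit assumption that a critical point $a\in A$ has no regular preimages inside $W_A$ other than the preimages accounted for in the empty $U$-leaves; nothing forces $\deg(f|W_A)=\deg(f,a)$, and indeed this would force all points of $A$ to have the same local degree.
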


Before we proceed with the proof of this theorem, we first provide some auxiliary constructions and lemmas.

Let us assume that $f\colon\Sp\to\Sp$ is an arbitrary Thurston map and $\Gamma$ is a completely invariant multicurve. Consider an (abstract) graph $T_\Gamma$ with the vertex set $\mathscr{S}_\Gamma$ and the edge set $\Gamma$, where we connect two distinct components $S_1, S_2 \in \mathscr{S}_\Gamma$ by an edge $\gamma\in \Gamma$ if and only if $\gamma$ is a boundary curve in each of them. 
It easily follows that $T_\Gamma$ is connected. In fact, $T_\Gamma$ must be a tree. Indeed, the removal of any edge disconnects $T_\Gamma$, because each curve $\gamma\in \Gamma$ disconnects the sphere $\Sp$. We will denote by $T_{f^{-1}(\Gamma)}$ the corresponding tree for $f^{-1}(\Gamma)$.

If $S$ is a component in $\mathscr{S}_\Gamma$, we denote by $\widehat{S}$ the corresponding small sphere in $\widehat{\mathscr{S}}_\Gamma$ and by $Q(\widehat{S})$ the corresponding marked set, and similarly for the components in  $\mathscr{S}_{f^{-1}(\Gamma)}$. Recall that $f$ maps each component $S'\in \mathscr{S}_{f^{-1}(\Gamma)}$ onto a component $f(S')\in \mathscr{S}_\Gamma$, which induces a branched covering map $f_*\colon \big(\widehat{S'},Q(\widehat{S'})\big)  \to \big(\widehat{f(S')},Q(\widehat{f(S')})\big)$ between the associated small spheres (that respects the marked points). Note that $f$ sends adjacent vertices in $T_{f^{-1}(\Gamma)}$ to adjacent vertices in $T_\Gamma$.  
Recall also that, since $\Gamma$ is completely invariant, for every component $S \in \mathscr{S}_\Gamma$ there is a unique component $i(S)\in \mathscr{S}_{f^{-1}(\Gamma)}$ such that $i(S)\setminus \Post(f)$ is homotopic to $S\setminus\Post(f)$ in $\Sp \setminus \Post(f)$. This allows us to identify the corresponding small spheres $\widehat{S}\in \widehat{\mathscr{S}}_\Gamma$ and $\widehat{i(S)}\in \widehat{\mathscr{S}}_{f^{-1}(\Gamma)}$ via a homeomorphism $i^*\colon \big(\widehat{S}, Q(\widehat{S})\big) \to \big(\widehat{i(S)}, Q(\widehat{i(S)})\big)$. 
Then the map $\widehat{f}\colon \widehat{\mathscr{S}}_\Gamma \to \widehat{\mathscr{S}}_\Gamma$ is defined as the composition $f_*\circ i^*$.

We will now introduce two special subtrees of $T_{f^{-1}(\Gamma)}$. The first one, which we denote by $T^{ess}_{f^{-1}(\Gamma)}$, is the (unique) spanning subtree of the vertex set $\{i(S)\colon S\in \mathscr{S}_{\Gamma}\}$ in $T_{f^{-1}(\Gamma)}$. It is easy to see that the edges of $T^{ess}_{f^{-1}(\Gamma)}$ are exactly all the essential curves in $f^{-1}(\Gamma)$. In fact, the following claim is true (the proof is straightforward from the definitions; see also Figure~\ref{fig: trees}).

\begin{lemma}\label{lem: ess_tree_structure}
The tree $T^{ess}_{f^{-1}(\Gamma)}$ is obtained from $T_{\Gamma}$ by edge subdivision: if two components $S_1,S_2 \in \mathscr{S}_\Gamma$ are connected in $T_\Gamma$ by an edge $\gamma\in \Gamma$ then the components $i(S_1), i(S_2)$ are connected in $T^{ess}_{f^{-1}(\Gamma)}$ by a simple path consisting of all edges $\delta' \in f^{-1}(\Gamma)$ that are isotopic to $\gamma$ rel.\ $\Post(f)$.
\end{lemma}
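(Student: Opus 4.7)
The plan is to fix an edge $\gamma \in \Gamma$ of $T_\Gamma$ joining $S_1, S_2 \in \mathscr{S}_\Gamma$ and exhibit a simple path in $T^{ess}_{f^{-1}(\Gamma)}$ from $i(S_1)$ to $i(S_2)$ whose edge set is exactly $\mathcal{D}_\gamma := \{\delta' \in f^{-1}(\Gamma) \colon \delta' \sim \gamma \text{ rel.\ } \Post(f)\}$. As a preparation I would record the observation (already flagged as ``easy'' in the excerpt) that the edges of $T^{ess}_{f^{-1}(\Gamma)}$ are precisely the essential components of $f^{-1}(\Gamma)$: a null-homotopic component bounds a disk disjoint from $\Post(f)$ and is therefore a leaf of $T_{f^{-1}(\Gamma)}$ that does not separate any pair of $i(S)$-vertices, whereas every essential component is, by complete invariance of $\Gamma$, isotopic to some curve in $\Gamma$ and hence lies on a path between two of the $i(S)$'s.

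The central step is to enumerate $\mathcal{D}_\gamma = \{\delta'_1, \ldots, \delta'_k\}$ exploiting the fact that pairwise disjoint and pairwise isotopic (rel.\ $\Post(f)$) Jordan curves in $\Sp \setminus \Post(f)$ admit a canonical linear order: consecutive curves $\delta'_j, \delta'_{j+1}$ cobound an annular region $A'_j \subset \Sp \setminus \Post(f)$ disjoint from every other $\delta'_\ell$, and I would index the list so that $\delta'_1$ is the unique boundary curve of $i(S_1)$ isotopic to $\gamma$ and $\delta'_k$ is the corresponding one for $i(S_2)$ (uniqueness comes from the fact that $i(S_j) \sim S_j$ in $\Sp \setminus \Post(f)$ and $\gamma$ appears once on $\partial S_j$). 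The local claim is then that the unique component $V_j \in \mathscr{S}_{f^{-1}(\Gamma)}$ incident simultaneously to $\delta'_j$ and $\delta'_{j+1}$ is an annulus (possibly with some peripheral null-homotopic boundary curves) whose essential boundary is $\delta'_j \cup \delta'_{j+1}$. This rests on the following dichotomy for any curve in $f^{-1}(\Gamma) \cap A'_j$: since $A'_j$ is an annulus disjoint from $\Post(f)$, such a curve is either essential in $A'_j$, in which case it is isotopic to the core $\gamma$ and hence belongs to $\mathcal{D}_\gamma$ (contradicting the linear ordering), or it is null-homotopic in $A'_j$ and bounds a disk there that stays away from $\Post(f)$.

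Stringing the pieces together yields the sequence $i(S_1) - \delta'_1 - V_1 - \delta'_2 - \cdots - V_{k-1} - \delta'_k - i(S_2)$, which is a simple path in $T_{f^{-1}(\Gamma)}$ whose edges are all essential, so its intermediate vertices lie in $T^{ess}_{f^{-1}(\Gamma)}$; by uniqueness of paths in a tree, this is the $i(S_1)$--$i(S_2)$ path in $T^{ess}_{f^{-1}(\Gamma)}$, and its edge set is exactly $\mathcal{D}_\gamma$ by construction. To conclude the edge-subdivision picture, I would check that for two distinct edges $\gamma_1 \neq \gamma_2$ of $T_\Gamma$ the corresponding paths can share no intermediate vertex $V_j$: such a $V_j$ has essential boundary curves of a single $\Gamma$-isotopy class, so an equality $V_j^{(\gamma_1)} = V_\ell^{(\gamma_2)}$ would force $\gamma_1 \sim \gamma_2$ rel.\ $\Post(f)$, violating the multicurve condition on $\Gamma$; the only admissible overlap is at common endpoints $i(S)$, which is precisely what edge subdivision asks. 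The main obstacle is the local analysis of $A'_j$ in the second paragraph—ensuring that no essential preimage of some other $\gamma' \in \Gamma$ intrudes between $\delta'_j$ and $\delta'_{j+1}$—and it is resolved by the disjointness $A'_j \cap \Post(f) = \emptyset$ forcing the above dichotomy.
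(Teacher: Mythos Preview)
Your argument is correct and considerably more detailed than the paper's own treatment, which simply states that the lemma is ``straightforward from the definitions'' and gives no proof. The structure you propose --- linearly ordering the curves in $\mathcal{D}_\gamma$, analyzing the annuli $A'_j$ between consecutive ones, and stringing the resulting path together --- is exactly the natural unpacking of that claim.

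One small point: in your preparatory paragraph you handle only the null-homotopic non-essential components of $f^{-1}(\Gamma)$, not the peripheral ones. A peripheral curve $\delta'$ bounds a disk $D$ with $|D\cap\Post(f)|=1$; you should note that no $i(S)$ can lie inside $D$ (since every $S\in\mathscr{S}_\Gamma$ has at least two marked points on its closure coming from essential boundary curves or postcritical points), so removing the edge $\delta'$ still leaves all $i(S)$-vertices on one side, and $\delta'$ is excluded from the spanning subtree for the same reason. This is a two-line patch and does not affect the rest of your argument. Also, calling a null-homotopic curve ``a leaf of $T_{f^{-1}(\Gamma)}$'' is imprecise --- it is an edge, and what you mean is that one of its endpoints lies in a subtree containing no $i(S)$.
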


To define the second subtree of $T_{f^{-1}(\Gamma)}$,  let us consider the set
\[\mathscr{S}^{\bullet}_{f^{-1}(\Gamma)}:=\{S'\in \mathscr{S}_{f^{-1}(\Gamma)}\colon S'\cap \Post(f) \neq \emptyset\},\]
which represents the vertices of $T_{f^{-1}(\Gamma)}$ that contain the postcritical points of $f$.  
We denote by $T^{\bullet}_{f^{-1}(\Gamma)}$ the (unique) spanning subtree of $\mathscr{S}^{\bullet}_{f^{-1}(\Gamma)}$ in $T_{f^{-1}(\Gamma)}$. The next lemma describes the structure of $T^{\bullet}_{f^{-1}(\Gamma)}$.  Again, the proof follows easily from the definitions and is left to the reader (see also Figure~\ref{fig: trees}).

\begin{lemma}\label{lem: post_tree_structure} 
The following statements are true:
\begin{enumerate}[label=\normalfont{(\roman*)}]

\item\label{item: post_tree_1} The edges of $T^{\bullet}_{f^{-1}(\Gamma)}$ are all the curves in $f^{-1}(\Gamma)$ that are not null-homotopic.  In particular,  $T^{ess}_{f^{-1}(\Gamma)}$ is a subtree of $T^{\bullet}_{f^{-1}(\Gamma)}$.

\item\label{item: post_tree_2} Let $S'\in \mathscr{S}^{\bullet}_{f^{-1}(\Gamma)}$.  Suppose $p$ is a postcritical point in $S'$ and $S\in \mathscr{S}_{\Gamma}$ is the component containing $p$.  Then either $i(S)=S'$ and $S'$ is a vertex of $T^{ess}_{f^{-1}(\Gamma)}$,  or $i(S)\neq S'$ and $S'\in V(T^{\bullet}_{f^{-1}(\Gamma)}) \setminus V(T^{ess}_{f^{-1}(\Gamma)})$.  In the latter case,  $S'$ is a leaf of $T^{\bullet}_{f^{-1}(\Gamma)}$ with $S'\cap \Post(f) = \{p\}$.  Furthermore,  $S'$ and $i(S)$ are connected in  $T^{\bullet}_{f^{-1}(\Gamma)}$ by a simple path consisting of all $p$-peripheral curves $\delta' \in f^{-1}(\Gamma)$.
\end{enumerate}
\end{lemma}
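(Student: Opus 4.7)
The plan is to identify the edge structure of $T^{\bullet}_{f^{-1}(\Gamma)}$ via the general principle that, for any tree $T$ and vertex set $A \subseteq V(T)$, an edge $e \in E(T)$ belongs to the spanning subtree of $A$ if and only if removing $e$ leaves two components each meeting $A$. For part~(i), I would apply this with $A = \mathscr{S}^{\bullet}_{f^{-1}(\Gamma)}$: the two components of $T_{f^{-1}(\Gamma)} \setminus \{\delta'\}$ meet $A$ precisely when both sides of the curve $\delta'$ in $\Sp$ contain a postcritical point, i.e.\ $\delta'$ is not null-homotopic in $(\Sp, \Post(f))$. Together with Lemma~\ref{lem: ess_tree_structure}, which identifies $E(T^{ess}_{f^{-1}(\Gamma)})$ with the essential components of $f^{-1}(\Gamma)$ (all of which are \emph{a fortiori} non-null-homotopic), this immediately yields both assertions of~(i).

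For part~(ii), fix $S' \in \mathscr{S}^{\bullet}_{f^{-1}(\Gamma)}$, $p \in S' \cap \Post(f)$, and $S \in \mathscr{S}_\Gamma$ with $p \in S$. The case $i(S) = S'$ is immediate from the definition of $T^{ess}_{f^{-1}(\Gamma)}$. When $i(S) \neq S'$, the first step is to verify that $S'$ is not $i(S_1)$ for \emph{any} $S_1 \in \mathscr{S}_\Gamma$: the collapsing homotopy $H_1$ in the construction of $i$ is relative to $\Post(f)$, so its restriction to $S' = i(S_1)$ is a homeomorphism onto its image in $S_1$ fixing postcritical points pointwise; hence $p \in i(S_1)$ would force $p \in S_1$, giving $S_1 = S$ and contradicting $i(S) \neq S'$. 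Therefore $S'$ is not a main component, and by the classification of components of $\Sp \setminus \bigcup_{j \in J} S_j'$ recalled in Section~\ref{subsec: Decomposition theory}, it must be a closed Jordan region with exactly one postcritical point; thus $S' \cap \Post(f) = \{p\}$ and $\partial S'$ is a single $p$-peripheral curve $\delta_1 \in f^{-1}(\Gamma)$. Applying part~(i) then gives that $S'$ is a leaf of $T^{\bullet}_{f^{-1}(\Gamma)}$, and, since $\delta_1$ is not essential, that $S' \notin V(T^{ess}_{f^{-1}(\Gamma)})$.

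The final and most delicate step is to identify the edges of the unique simple path $P$ from $S'$ to $i(S)$ in $T^{\bullet}_{f^{-1}(\Gamma)}$ with exactly the $p$-peripheral curves in $f^{-1}(\Gamma)$. The key observation is that invariance of $\Gamma$ forces every essential component of $f^{-1}(\Gamma)$ lying inside $S$ to be isotopic rel.\ $\Post(f)$ to a boundary curve of $S$; such essentials appear only as parallel copies of $\partial S$-curves and cannot separate $S'$ from $i(S)$ within $S$. Consequently $P$ is confined to the ``inside-$S$'' part of $T_{f^{-1}(\Gamma)}$ and uses only non-essential edges. Applying the structural dichotomy established above (every component in $\mathscr{S}^{\bullet}_{f^{-1}(\Gamma)}$ is either a main component $i(S_1)$ or a peripheral-disk leaf of $T^{\bullet}_{f^{-1}(\Gamma)}$) to any interior vertex of $P$ rules out its being in $\mathscr{S}^{\bullet}_{f^{-1}(\Gamma)}$: a main component inside $S$ must be $i(S)$ itself, which is the endpoint rather than interior, while a non-main one would be a leaf of degree one. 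Hence each edge of $P$ has only $p$ on its $S'$-side and is therefore $p$-peripheral. The converse inclusion follows because every $p$-peripheral curve in $f^{-1}(\Gamma)$ separates $p$ from $\Post(f) \setminus \{p\}$, hence $S'$ from $i(S)$, and so must lie on $P$ by uniqueness of simple paths in the tree.
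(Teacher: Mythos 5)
Your overall strategy — the spanning-subtree criterion for part~(i), and the reduction of part~(ii) to the classification (a)/(b)/(c) of complementary components from Section~\ref{subsec: Decomposition theory} — is the right one, and your treatment of part~(i) and the non-main-ness of $S'$ is correct. However, there is a conceptual slip that propagates through the rest of your argument: $S'$ is a component of $\Sp\setminus\bigcup_{\delta'\in f^{-1}(\Gamma)}\delta'$, \emph{not} a component of $\Sp\setminus\bigcup_{j\in J}S'_j$, so the classification (a)/(b)/(c) does not say that $S'$ \emph{is} a closed Jordan region — it says that $S'$ is \emph{contained} in one, call it $U$, with $U\cap\Post(f)=\{p\}$ and $\partial U\in f^{-1}(\Gamma)$ $p$-peripheral. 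In particular the claim ``$\partial S'$ is a single $p$-peripheral curve'' is false in general: $\partial S'$ may have several boundary components, of which exactly one (the one whose far side meets $\Sp\setminus U$) is $p$-peripheral, while all the others lie in $U$, enclose no postcritical points, and are therefore null-homotopic. Since $T^{\bullet}_{f^{-1}(\Gamma)}$ has only non-null-homotopic edges by part~(i), the conclusion that $S'$ is a leaf of $T^{\bullet}_{f^{-1}(\Gamma)}$ survives, but your stated justification does not.

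The final step — identifying the path $P$ from $S'$ to $i(S)$ with the $p$-peripheral curves — also leaves a gap. You assert that every $p$-peripheral $\delta'\in f^{-1}(\Gamma)$ ``separates $p$ from $\Post(f)\setminus\{p\}$, hence $S'$ from $i(S)$''; that ``hence'' needs an argument, since in principle the main component $i(S)$ could fail to contain any postcritical point at all (if $S\cap\Post(f)=\{p\}$ and $p\notin i(S)$). The cleanest repair is again to route everything through $U$: first show that $i(S)$ is precisely the main component adjacent to $U$ across $\partial U$ (using that $H_1$ collapses $U$ to $p\in\overline{S}$, and $p\notin\Gamma$, so the adjacent $S_{j_0}$ must be $S$); then observe that $P$ must cross $\partial U$ and hence is confined to $\overline{U}$, so every edge of $P$ is a non-null-homotopic curve in $\overline{U}$ and therefore $p$-peripheral; and finally check that every $p$-peripheral curve of $f^{-1}(\Gamma)$ lies in $\overline{U}$ (it cannot sit in a case~(b) or~(c) component, nor in a case~(a) disk marked by a point other than $p$, using $|\Post(f)|\ge 4$), so it separates $S'\subset U$ from $i(S)\subset\Sp\setminus U$ and lies on $P$. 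Your appeal to essential edges being ``parallel copies of $\partial S$-curves'' and the informal notion of lying ``inside $S$'' does not substitute for this, since $S$ and the curves of $f^{-1}(\Gamma)$ need not be disjoint, making ``inside $S$'' ill-defined.
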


Here a curve $\delta'\in f^{-1}(\Gamma)$ is called \emph{$p$-peripheral} if for a component $U$ of $\Sp\setminus \delta'$ we have $U\cap \Post(f) = \{p\}$.  Note that any two $p$-peripheral curves are isotopic rel.\ $\Post(f)$.

\begin{figure}[t]
        \centering
        \begin{overpic}[width=\textwidth]{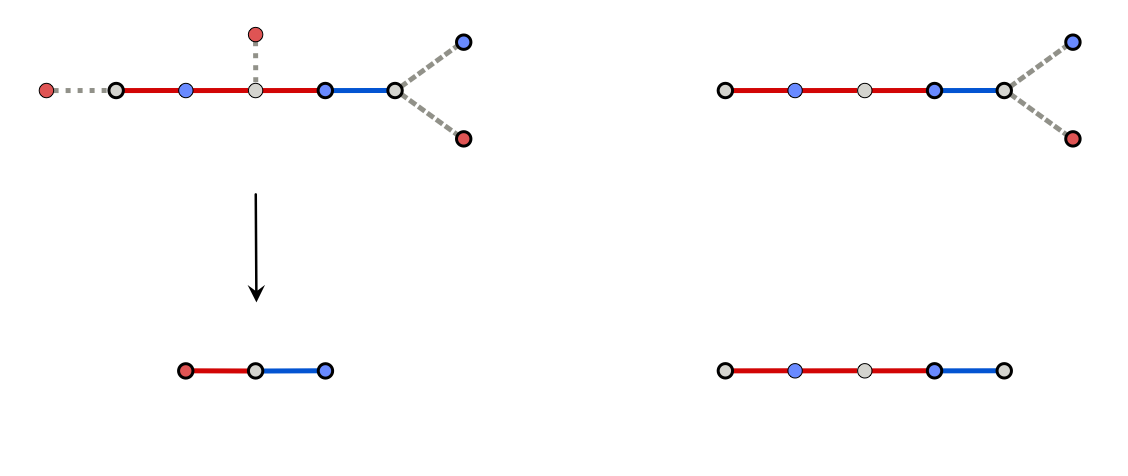}
        \put(14,11){$S_1$}   
        \put(21.5,11){$S_2$}   
        \put(28.5,11){$S_3$}  
        \put(19,7){$\alpha$}   
        \put(25,7){$\beta$}         
        \put(7,36){$i(S_1)$} 
        \put(25,36){$i(S_2)$}  
        \put(32,36){$i(S_3)$}
        \put(60,36){$i(S_1)$} 
        \put(79,36){$i(S_2)$}  
        \put(86,36){$i(S_3)$}
        \put(60,11){$i(S_1)$} 
        \put(80,11){$i(S_2)$}  
        \put(89,11){$i(S_3)$}
        \put(21.5,3){$T_\Gamma$}  
        \put(20,20){$f$}
        \put(20,28){$T_{f^{-1}(\Gamma)}$}
        \put(74,28){$T^{\bullet}_{f^{-1}(\Gamma)}$}
        \put(74,3){$T^{ess}_{f^{-1}(\Gamma)}$}
        \end{overpic}
        \caption{
        Left: The trees $T_\Gamma$ and $T_{f^{-1}(\Gamma)}$ for the Thurston map $f\colon \Sp\to \Sp$ and the completely invariant multicurve $\Gamma=\{\alpha,\beta\}$ from Figure~\ref{fig: decomposition}. Right: The corresponding subtrees $T^{ess}_{f^{-1}(\Gamma)}$ and $T^{\bullet}_{f^{-1}(\Gamma)}$ of $T_{f^{-1}(\Gamma)}$.}
        \label{fig: trees}
\end{figure}

\begin{example*}
    Let us consider the Thurston map $f\colon \Sp\to \Sp$ and the completely invariant multicurve $\Gamma=\{\alpha,\beta\}$ from Figure~\ref{fig: decomposition} with $\mathscr{S}_\Gamma=\{S_1,S_2,S_3\}$. We illustrate the respective trees $T_\Gamma$ and $T_{f^{-1}(\Gamma)}$ on the left in Figure~\ref{fig: trees}; the corresponding subtrees $T^{ess}_{f^{-1}(\Gamma)}$ and $T^{\bullet}_{f^{-1}(\Gamma)}$ of $T_{f^{-1}(\Gamma)}$ are shown on the right in the same figure. Here, each vertex of the tree $T_{f^{-1}(\Gamma)}$ is mapped by $f$ to the vertex of $T_\Gamma$ of the same color. The solid, densely dashed, and sparsely dashed edges represent the essential, peripheral, and null-homotopic curves in $f^{-1}(\Gamma)$, respectively. In particular, the three red solid edges in $T_{f^{-1}(\Gamma)}$ are isotopic to $\alpha$, the unique solid blue edge is isotopic to $\beta$, while the four dashed gray edges are non-essential. Finally, the vertices of $T_{f^{-1}(\Gamma)}$ with thicker boundary circles correspond to the components of $\mathscr{S}_{f^{-1}(\Gamma)}$ that contain the postcritical points of $f$.
\end{example*}

We now turn to the proof of Theorem \ref{thm: crit-fix-decomposition}.

\begin{proof}[Proof of Theorem \ref{thm: crit-fix-decomposition}]
    Suppose $f$ and $\Gamma$ are as in the statement. Then $\Post(f)=\Crit(f)$. We will say that a vertex of $T_\Gamma$ or of $T^{\bullet}_{f^{-1}(\Gamma)}$ is \emph{critical} if it contains a critical point. 
 
 Let 
\begin{equation*}%\label{eq: path_T_prime}
P':=\big(S'_0, \delta'_1, S'_1,\dots, \delta_n', S'_n \big)
\end{equation*}
be a simple path in $T^{\bullet}_{f^{-1}(\Gamma)}$. Here, each $S_j'$ is a vertex of the tree $T_{f^{-1}(\Gamma)}^\bullet$, and each $\delta'_j \in  E(T^{\bullet}_{f^{-1}(\Gamma)})$ is a non null-homotopic curve in $f^{-1}(\Gamma)$ by Lemma~\ref{lem: post_tree_structure}\ref{item: post_tree_1}. Then 
\begin{equation*}%\label{eq: image_path}
f(P'):=\big(f(S'_0), f(\delta'_1), f(S'_1),\dots, f(\delta'_n), f(S'_n)\big)
\end{equation*}
is a walk in $T_\Gamma$. 

Now suppose that $P'$ is a maximal (i.e., non-extendable) simple path in $T^{\bullet}_{f^{-1}(\Gamma)}$ such that $f(P')$ is a simple path in $T_\Gamma$. Clearly, $P'$ has positive length. We will refer to $P'$ as a \emph{maximal injective path} in $T^{\bullet}_{f^{-1}(\Gamma)}$. 

\begin{claim1}
The start and end vertices of $P'$ are critical.
\end{claim1}

It is sufficient to show that $P'$ starts at a critical vertex. If $S'_0$ is a leaf of $T^{\bullet}_{f^{-1}(\Gamma)}$, then $S'_0$ immediately has to be critical, because $T^{\bullet}_{f^{-1}(\Gamma)}$ is the spanning subtree of $\mathscr{S}^{\bullet}_{f^{-1}(\Gamma)}$ in $T_{f^{-1}(\Gamma)}$. 

So, now we assume that $S'_0$ is not a leaf of $T^{\bullet}_{f^{-1}(\Gamma)}$. We argue by contradiction and suppose that $S'_0 \cap \Crit(f) = \emptyset$. Set $\gamma:=f(\delta'_1)$, and let $E(S'_0)$ be the set of all edges that are incident to $S'_0$ in $T^{\bullet}_{f^{-1}(\Gamma)}$. By maximality of $P'$, each edge $\delta'\in E(S'_0)$ satisfies $f(\delta')=\gamma$. Furthermore, every critical point of $f$ is separated from $S'_0$ by one of these edges $\delta'$. Let us denote by $\widetilde S'$ the unique component in $\mathscr{S}_{f^{-1}(\{\gamma\})}$ that contains $S'_0$. Then $f(\widetilde S')\in \mathscr{S}_{\{\gamma\}}$ is an open Jordan region. Note that each edge $\delta'\in E(S'_0)$ is a boundary curve of $\widetilde S'$. It follows that $\chi( \widetilde S') \leq 0$ and $\widetilde S' \cap \Crit(f) = \emptyset$. Hence $f|\widetilde S'\colon \widetilde S'\to  f(\widetilde S')$ is a covering map, which is impossible. This contradiction implies that $S'_0\cap \Crit(f) \neq \emptyset$, which completes the proof of the claim. 

\medskip

Claim 1 implies that $S'_0$ contains a critical point $c_0$ and $S'_n$ contains a critical point $c_n$ of~$f$. Let $S_0$ and $S_n$ be the vertices of $T_\Gamma$ that contain $c_0$ and $c_n$, respectively. By Lemma \ref{lem: post_tree_structure}\ref{item: post_tree_2}, either $S'_0=i(S_0)$ and $S'_0$ is a vertex of $T^{ess}_{f^{-1}(\Gamma)}$, or $S'_0\neq i(S_0)$ and $S'_0$ is a leaf of $T^{\bullet}_{f^{-1}(\Gamma)}$ that is connected to $i(S_0)$ by a simple path consisting of all $c_0$-peripheral curves in $f^{-1}(\Gamma)$. In either case, we obtain that $i(S_0)$ is the first vertex of $T^{ess}_{f^{-1}(\Gamma)}$ on the path $P'$. Similarly,  $i(S_n)$ is the last vertex of $T^{ess}_{f^{-1}(\Gamma)}$ on the path $P'$.

Since $f$ is critically fixed, $f(S'_0) = S_0$ and $f(S'_n)=S_n$. Thus, $f(P')$ is a simple path that connects $S_0$ and $S_n$ in $T_\Gamma$. At the same time, it follows from Lemma \ref{lem: ess_tree_structure} that the subpath of $P'$ that connects $i(S_0)$ and $i(S_n)$ is not shorter than $f(P')$. In fact, this subpath has to pass through all the vertices
$i(f(S'_0))=i(S_0), i(f(S'_1)),\dots, i(f(S'_{n-1})),i(f(S'_n))=i(S_n)$
and in this particular order. These facts together imply the following two claims.

\begin{claim2} 
For each $j=0,\dots, n$, we have $i(f(S'_j)) = S'_j$. In particular, all vertices of $P'$ are in $T^{ess}_{f^{-1}(\Gamma)}$.
\end{claim2}

\begin{claim3} 
For each $j=1,\dots, n$, the curve $\delta'_j$ is essential and isotopic to $f(\delta'_j)$ rel.\ $\Crit(f)$. Furthermore, $\delta'_j$ is the only curve in $f^{-1}(\Gamma)$ that is isotopic to $f(\delta'_j)$ rel.\ $\Crit(f)$.
\end{claim3}

\medskip

Let us now fix an arbitrary curve $\gamma\in \Gamma$. Since $\Gamma$ is completely invariant, there must be a component $\gamma'$ of $f^{-1}(\Gamma)$ that is isotopic to $\gamma$ rel.\ $\Crit(f)$. The curve $\gamma'$ corresponds to an an edge of $T^{\bullet}_{f^{-1}(\Gamma)}$, and thus it is contained in some maximal injective path in $T^{\bullet}_{f^{-1}(\Gamma)}$. It follows from Claim 3 that $\gamma'$ satisfies $f(\gamma')=\gamma$, that is, $\gamma'$ is a component of $f^{-1}(\gamma)$. Furthermore, $\gamma'$ is the only curve in $f^{-1}(\Gamma)$ that is isotopic to $\gamma$ rel.\ $\Crit(f)$. 

Now suppose that $\delta'$ is an arbitrary component of $f^{-1}(\gamma)$. If $\delta'$ is not null-homotopic, it is contained in some maximal injective path in $T^{\bullet}_{f^{-1}(\Gamma)}$. Claim 3 implies that $\delta'$ is isotopic to $f(\delta')=\gamma$ rel.\ $\Crit(f)$, and thus it must be the curve $\gamma'$. If $\delta'$ is null-homotopic, then there is a component $U$ of $\Sp\setminus \delta'$ such that $U\cap \Crit(f) = \emptyset$. 
It follows from the Riemann-Hurwitz formula that $f$ sends $U$ homeomorphically onto a component of $\Sp\setminus \gamma$, and hence $\deg(f|\delta')=1$. This proves part \ref{item: decomp_thm_1} of the theorem.  

To prove part \ref{item: deccomp_thm_2}, consider an arbitrary small sphere $\widehat{S}\in \widehat{\mathscr{S}}_\Gamma$. Let $S$ be the corresponding component in $\mathscr{S}_\Gamma$. Then $i(S)$ is a vertex of $T^{\bullet}_{f^{-1}(\Gamma)}$, and thus it is contained in some maximal injective path in $T^{\bullet}_{f^{-1}(\Gamma)}$. By Claim 2 we have $i(f(i(S))) = i(S)$. It follows that $f(i(S))=S$, which means that $\widehat{f}(\widehat{S})=\widehat{S}$, as required. Finally, since the marked set $Q(\widehat S)$ corresponds to the points in $S\cap \Crit(f)$ and the components of $\partial S$, every marked point is fixed under $\widehat f$. This completes the proof of the theorem.
\end{proof}

We record the following immediate corollary of Theorem \ref{thm: crit-fix-decomposition}.

\begin{corollary}
    Let $f$ be a critically fixed Thurston map and $\Gamma$ be a non-empty completely invariant multicurve. Then $T^{\bullet}_{f^{-1}(\Gamma)}= T^{ess}_{f^{-1}(\Gamma)}$ and $f$ provides an isomorphism between $T^{\bullet}_{f^{-1}(\Gamma)}$ and $T_{\Gamma}$.
\end{corollary}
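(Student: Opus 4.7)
My plan is to read off both conclusions from the two parts of Theorem \ref{thm: crit-fix-decomposition}, together with the structural Lemmas \ref{lem: ess_tree_structure} and \ref{lem: post_tree_structure}. The key observation to extract first is that under the critically fixed hypothesis, \emph{no} component of $f^{-1}(\Gamma)$ is peripheral: by Theorem \ref{thm: crit-fix-decomposition}\ref{item: decomp_thm_1}, for each $\gamma\in\Gamma$ the preimage $f^{-1}(\gamma)$ consists of exactly one essential component (isotopic to $\gamma$ rel.\ $\Crit(f)$) together with null-homotopic components only, and a peripheral curve is non-null-homotopic hence excluded.

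I would then use this to identify the two subtrees. On the edge side, the edges of $T^{ess}_{f^{-1}(\Gamma)}$ are precisely the essential curves in $f^{-1}(\Gamma)$, while the edges of $T^{\bullet}_{f^{-1}(\Gamma)}$ are the non-null-homotopic curves (by Lemma \ref{lem: post_tree_structure}\ref{item: post_tree_1}); in the absence of peripheral curves, these two edge sets coincide. On the vertex side, Lemma \ref{lem: post_tree_structure}\ref{item: post_tree_2} dichotomizes vertices of $T^{\bullet}_{f^{-1}(\Gamma)}$ into components of the form $i(S)$ and ``peripheral leaves'' connected to $i(S)$ by simple paths made entirely of $p$-peripheral curves; since no such peripheral curves exist, the second case is empty and $\mathscr{S}^{\bullet}_{f^{-1}(\Gamma)}=\{i(S):S\in\mathscr{S}_\Gamma\}=V(T^{ess}_{f^{-1}(\Gamma)})$. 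This yields the identification $T^{\bullet}_{f^{-1}(\Gamma)}=T^{ess}_{f^{-1}(\Gamma)}$.

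For the second claim, the map $f$ induces a natural candidate morphism sending the vertex $i(S)$ of $T^{ess}_{f^{-1}(\Gamma)}$ to $S\in V(T_\Gamma)$ and each essential curve $\gamma'\in f^{-1}(\Gamma)$ to $\gamma:=f(\gamma')\in\Gamma$. Theorem \ref{thm: crit-fix-decomposition}\ref{item: deccomp_thm_2} (each small sphere is fixed under $\widehat f$, i.e.\ $f(i(S))$ is homotopic to $S$ in $\Sp\setminus\Post(f)$) guarantees bijectivity on vertices, and Theorem \ref{thm: crit-fix-decomposition}\ref{item: decomp_thm_1} (exactly one essential preimage per curve of $\Gamma$) gives bijectivity on edges. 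It remains to check adjacency: by Lemma \ref{lem: ess_tree_structure}, the vertices $i(S_1),i(S_2)\in V(T^{ess}_{f^{-1}(\Gamma)})$ are connected in $T^{ess}_{f^{-1}(\Gamma)}$ by the simple path through all components of $f^{-1}(\Gamma)$ isotopic to the edge $\gamma\in\Gamma$ joining $S_1,S_2$ in $T_\Gamma$; but here this path degenerates to a single edge $\gamma'$ (the unique essential preimage of $\gamma$), and $f(\gamma')=\gamma$ joins $S_1$ and $S_2$ as required.

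The only subtlety I anticipate is the bookkeeping in the adjacency check — one must be careful that the identification $i\colon \mathscr{S}_\Gamma\to\mathscr{S}^{ess}_{f^{-1}(\Gamma)}$ is compatible both with the edge subdivision picture of Lemma \ref{lem: ess_tree_structure} and with the small-sphere identification $i^*$ underlying $\widehat f$, so that ``$f(i(S))=S$'' is read correctly at the level of the trees. Once that is laid out, both assertions follow immediately.
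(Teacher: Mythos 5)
Your proposal is correct and follows the same route the paper intends: the corollary is stated without proof as an immediate consequence of Theorem~\ref{thm: crit-fix-decomposition}, and your argument is precisely the unpacking of that claim via Lemmas~\ref{lem: ess_tree_structure} and~\ref{lem: post_tree_structure}. The central observation — that Theorem~\ref{thm: crit-fix-decomposition}\ref{item: decomp_thm_1} rules out peripheral curves in $f^{-1}(\Gamma)$, so the edge-subdivision of Lemma~\ref{lem: ess_tree_structure} is trivial and the peripheral-leaf paths of Lemma~\ref{lem: post_tree_structure}\ref{item: post_tree_2} are absent — is exactly right, and the bijectivity of $f$ on vertices (via $f(i(S))=S$ from part~\ref{item: deccomp_thm_2}) and on edges (via the uniqueness in part~\ref{item: decomp_thm_1}) together with the adjacency check finish it cleanly.
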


The next corollary states that it is sufficient to check the absence of Levy fixed curves to conclude that a given critically fixed Thurston map is realized; compare Proposition \ref{main: obstr-levy-fixed}. 

\begin{corollary}\label{cor: crit_fix_obstructions}
Let $f$ be a critically fixed Thurston map. Then every Thurston obstruction of $f$ contains a Levy fixed curve. In particular, $f$ is realized by a rational map if and only if $f$ does not have a Levy fixed curve.
\end{corollary}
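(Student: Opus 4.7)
The plan is to reduce any Thurston obstruction to a completely invariant one and then read off a Levy fixed curve directly from the resulting transition matrix. Given any Thurston obstruction $\Gamma$ of $f$, I would first extract a simple sub-obstruction $\Gamma_0 = \{\gamma_1, \dots, \gamma_n\} \subseteq \Gamma$, using the fact recorded in Section \ref{subsec: Thurston's characterization of rational maps} that every Thurston obstruction contains a simple one. Since simple obstructions are completely invariant (as also noted there), Theorem \ref{thm: crit-fix-decomposition}\ref{item: decomp_thm_1} applies to $\Gamma_0$: for each $\gamma_j \in \Gamma_0$ there is exactly one essential component $\gamma'_j$ of $f^{-1}(\gamma_j)$, and this $\gamma'_j$ is isotopic to $\gamma_j$ itself, while every other component of $f^{-1}(\gamma_j)$ is null-homotopic.

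The second step is the structural observation that this forces the matrix $M(f, \Gamma_0) = (m_{ij})$ to be diagonal. Indeed, the only essential preimage component of $\gamma_j$ is $\gamma'_j \sim \gamma_j$, and since the curves of $\Gamma_0$ are pairwise non-isotopic, no component of $f^{-1}(\gamma_j)$ can be isotopic to any other $\gamma_i \in \Gamma_0$. Hence $m_{ij} = 0$ for $i \neq j$ and $m_{jj} = 1/\deg(f|\gamma'_j)$. The spectral radius of this diagonal non-negative matrix is $\max_j 1/\deg(f|\gamma'_j)$, and the obstruction condition $\lambda(f, \Gamma_0) \geq 1$ then forces $\deg(f|\gamma'_j) = 1$ for some index $j$. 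By definition, such a $\gamma_j$ is a fixed Levy curve, and it lies in $\Gamma_0 \subseteq \Gamma$; this proves the first assertion.

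For the ``in particular'' part, one direction is immediate from the remark recalled right after the definition of Levy fixed curve: if $f$ has a fixed Levy curve $\gamma$, then iterated preimages produce an invariant multicurve containing $\{\gamma\}$ which is a Thurston obstruction, and so $f$ cannot be realized. For the converse, if $f$ is not realized then (assuming $f$ has a hyperbolic orbifold) Thurston's characterization, Theorem \ref{thm: Thurston_theorem}, supplies some Thurston obstruction of $f$, to which the first half of the corollary applies to produce a fixed Levy curve. The parabolic-orbifold case for critically fixed Thurston maps can be treated separately using the explicit classification of parabolic Thurston maps.

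The only delicate point is the diagonalization step: one must be careful that no essential component of $f^{-1}(\gamma_j)$ can be isotopic to a \emph{different} curve $\gamma_i \in \Gamma_0$. This is precisely what Theorem \ref{thm: crit-fix-decomposition}\ref{item: decomp_thm_1} rules out, since every essential preimage component is already accounted for by the unique $\gamma'_j \sim \gamma_j$, and all remaining preimage components are null-homotopic (hence contribute nothing to the matrix). Once that point is cleared, the spectral-radius argument is a one-line calculation and the fixed Levy curve falls out.
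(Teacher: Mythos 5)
Your argument is correct and is essentially the same as the paper's: the paper compresses the proof into a single sentence (``immediate from Theorem~\ref{thm: crit-fix-decomposition} since every Thurston obstruction contains a simple one''), and your write-up faithfully unpacks exactly this — extract a simple (hence completely invariant) sub-obstruction, apply Theorem~\ref{thm: crit-fix-decomposition}\ref{item: decomp_thm_1} to see the transition matrix is diagonal, and read off a degree-one fixed preimage curve from the spectral-radius condition. One small simplification you could have noted: for a critically fixed Thurston map every postcritical point has $\nu_f = \infty$, so $\chi(\mathcal{O}_f) = 2 - |\Crit(f)|$, and the parabolic case forces $|\Crit(f)| = 2$, where there are no essential curves at all and the ``in particular'' is vacuous — so no separate parabolic analysis is needed.
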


\begin{proof}
    The proof easily follows from Theorems \ref{thm: Thurston_theorem} and \ref{thm: crit-fix-decomposition}, since every Thurston obstruction contains a simple one. (Recall that if $|\Crit(f)|>2$,  then $f$ has a hyperbolic orbifold; otherwise, $|\Crit(f)|=2$ and $f$ is combinatorially equivalent to the power map $z\mapsto z^{\deg(f)}$.)
\end{proof}

\subsection{Canonical Thurston obstruction}\label{subsec: crit-fix-can-obstr} The following corollary of Theorem \ref{thm: crit-fix-decomposition} describes properties of canonical Thurston obstructions for critically fixed Thurston maps. In particular, it shows that every critically fixed Thurston map can be canonically decomposed into homeomorphisms and critically fixed rational maps.

\begin{corollary}\label{cor: can_crit_fix_decomposition}
    Let $f$ be a critically fixed Thurston map and $\Gamma:=\Gamma_{\operatorname{Th}}$ be the canonical Thurston obstruction of $f$. Then the following statements are true:
\begin{enumerate}[label=\normalfont{(\roman*)}]
    \item\label{item: can_levy_1} For every curve $\gamma\in \Gamma$ there is exactly one component $\gamma'$ of $f^{-1}(\gamma)$ that is isotopic to $\gamma$ and satisfies $\deg(f|\gamma')=1$. All other components $\delta'$ of $f^{-1}(\gamma)$ are null-homotopic and also satisfy $\deg(f|\delta')=1$. In particular, each curve $\gamma\in \Gamma$ is a Levy fixed curve.
    \item\label{item: can_levy_2} Every small sphere $\widehat{S}\in\widehat{\mathscr{S}}_\Gamma$ is fixed under $\widehat f$. Moreover, every point in $Q(\widehat S)$ is fixed under $\widehat f$.
\item\label{item: can_levy_3} If $S\in \mathscr{S}_\Gamma$ satisfies $S\cap \Crit(f)=\emptyset$, then 
$\widehat f\colon \big(\widehat{S}, Q(\widehat{S})\big)\to \big(\widehat{S}, Q(\widehat{S})\big)$ is a homeomorphism.
     \item\label{item: can_levy_4} If $S\in \mathscr{S}_\Gamma$ satisfies $S\cap \Crit(f)\neq\emptyset$, then 
     $\widehat f\colon \big(\widehat{S}, Q(\widehat{S})\big)\to \big(\widehat{S}, Q(\widehat{S})\big)$  is realized by a marked critically fixed rational map of degree $d(\widehat{S})=1+\frac{1}{2}\sum_{c\in S\cap \Crit(f)} (\deg(f,c)-1)$.
     \item\label{item: can_levy_5} If distinct components $S_1, S_2\in \mathscr{S}_\Gamma$ satisfy $\partial S_1\cap \partial S_2 \neq \emptyset$, then either $S_1\cap \Crit(f) \neq \emptyset$ or $S_2\cap \Crit(f) \neq \emptyset$.
\end{enumerate}    
\end{corollary}

\begin{proof} Suppose $f$ and $\Gamma$ are as in the statement.

   \ref{item: can_levy_1}-\ref{item: can_levy_4} The proof is immediate from Theorems \ref{thm: can_obstr} and \ref{thm: crit-fix-decomposition}, since the canonical Thurston obstruction $\Gamma$ is simple. The formula for $d(\widehat{S})$ in \ref{item: can_levy_4} follows from the Riemann-Hurwitz formula.
   
   \ref{item: can_levy_5} Suppose $\partial S_1\cap \partial S_2 \neq \emptyset$ for some distinct $S_1, S_2\in \mathscr{S}_\Gamma$. This means that $\gamma=\partial S_1\cap \partial S_2$ for some curve $\gamma\in \Gamma$. 
Set $\Gamma':=\Gamma\setminus\{\gamma\}$ and $S'_\gamma:=S_1\cup\gamma\cup S_2$. Then 
\[\mathscr{S}_{\Gamma'}= \big(\mathscr{S}_\Gamma\setminus \{S_1, S_2\}\big) \sqcup \{ S'_\gamma\}.\]

By \ref{item: can_levy_1}, the multicurve $\Gamma':=\Gamma\setminus\{\gamma\}$ is a completely invariant Thurston obstruction for~$f$. It follows that every $\widehat{S'}\in \widehat{\mathscr{S}}_{\Gamma'}$ is fixed under the induced map $\widehat f'\colon \widehat{\mathscr{S}}_{\Gamma'} \to \widehat{\mathscr{S}}_{\Gamma'}$ on the small spheres with respect to $\Gamma'$. Furthermore, when $\widehat{S'}\neq \widehat {S'_\gamma}$, the corresponding small sphere map 
is a homeomorphism or a marked Thurston map that is realized by a marked critically fixed rational map. Now if $S_1\cap \Crit(f) = S_2\cap \Crit(f) = \emptyset$, then the small sphere map $\widehat f'\colon \big(\widehat{S'_\gamma}, Q(\widehat{S'_\gamma})\big) \to \big(\widehat{S'_\gamma}, Q(\widehat{S'_\gamma})\big)$ is a homeomorphism.  Hence the multicurve $\Gamma'$ satisfies the conditions from Theorem \ref{thm: can_obstr}, which contradicts the minimality of $\Gamma$.
\end{proof}

    We will now describe how to determine the canonical Thurston obstruction for a critically fixed Thurston map obtained by blowing up an admissible pair. First, we introduce some terminology. 
    
    Let $G$ be a planar embedded  graph in $\Sp$. Suppose $H$ is a connected component of $G$ and $U$ is one of the faces of $H$. Since $U$ is simply connected, we may fix a homeomorphism $\eta\colon \D \to U$. Let $0<\varepsilon < 1$. We say that a Jordan curve $\gamma \subset U$ is an \emph{$\varepsilon$-boundary} of $G$ (with respect to $U$) if $\eta^{-1}(\gamma) \subset\{z\colon 1-\varepsilon < |z| < 1\}$ and $\eta^{-1}(\gamma)$ separates $0$ from $\partial \D$. Note that the isotopy class of $\gamma$ rel.\ $V(G)$ is fixed for all sufficiently small $\varepsilon$ and is independent of the choice of~$\eta$. In the following, whenever we talk about $\varepsilon$-boundaries, we always assume that $\varepsilon$ is sufficiently small.

\begin{theorem}\label{thm: can-obstr-crit-fix}
    Let $f\colon \Sp\to\Sp$ be a critically fixed Thurston map obtained by blowing an admissible pair $(G,\varphi)$. Set $\Gamma$ to be the multicurve obtained by taking all the essential $\varepsilon$-boundaries of $G$ and identifying the isotopic ones. Then $\Gamma$ is the canonical Thurston obstruction for $f$.
\end{theorem}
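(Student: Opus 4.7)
The plan is to verify that $\Gamma$ coincides with the canonical Thurston obstruction $\Gamma_{\operatorname{Th}}$ via the characterization in Theorem \ref{thm: can_obstr}. By Theorem \ref{thm: charge-graph}, one may assume without loss of generality that $(G,\varphi) = (\Charge(f), \varphi_f)$ is the canonical admissible pair, so $V(G) = \Crit(f) = \Post(f)$. Write $G = H_1 \sqcup \cdots \sqcup H_k$ as the disjoint union of connected components. A preliminary step verifies that $\Gamma$ is a multicurve: using nested regular neighborhoods of the $H_i$ and sufficiently small $\varepsilon$, the essential $\varepsilon$-boundaries can be chosen pairwise disjoint in $\Sp \setminus \Crit(f)$, and two of them are isotopic rel.\ $\Crit(f)$ iff they induce the same partition of $V(G)$; after identification one obtains a finite, pairwise disjoint, pairwise non-isotopic collection of essential Jordan curves. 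A direct combinatorial argument then shows that every small sphere of $\Gamma$ contains at most one connected component $H_i$ of $G$.

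Next I would establish complete invariance and the decomposition condition. For each $\gamma \in \Gamma$, I apply Proposition \ref{prop: blow-up-triples-properties} with $K=G$ to describe $f^{-1}(\gamma)$: since $\gamma$ sits inside a face $U$ of some $H_i$ and is disjoint from $G$, each face of the blow-up graph $G^\pm$ contributes exactly one preimage of $\gamma$ — one inside the non-blown face of $G^\pm$ projecting onto $U$ (via \ref{item: blow-up-mapping-iii}), and one inside each blow-up region $U_e$ (via \ref{item: blow-up-mapping-i}). The first preimage is again an essential $\varepsilon$-boundary of $G$ isotopic to $\gamma$ with $\deg(f|\gamma')=1$, while the remaining preimages sit inside the simply connected regions $U_e$ and are therefore null-homotopic in $(\Sp, \Crit(f))$. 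This shows that $\Gamma$ is completely invariant, that each $\gamma \in \Gamma$ is a fixed Levy curve, and that $\lambda(f, \Gamma) \geq 1$. By Theorem \ref{thm: crit-fix-decomposition}\ref{item: deccomp_thm_2}, every small sphere is $\widehat{f}$-fixed; on an empty small sphere the first-return is a homeomorphism, while on a small sphere $\widehat{S}$ containing a single component $H_i$ a direct analysis via Proposition \ref{prop: blow-up-triples-properties} shows the first-return is isotopic to the critically fixed marked Thurston map obtained by blowing up $(H_i, \mathrm{id}_{\widehat S})$ on the marked sphere $\widehat S$, which is realized as a marked rational map by Proposition \ref{prop: rational if charge connected} together with Remark \ref{rem: isotopy_rel_fixed_points}.

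To conclude via Theorem \ref{thm: can_obstr}, I verify minimality: removing any $\gamma \in \Gamma$ merges two small spheres into one whose first-return map, after a careful analysis of its marked-map structure, either contains a disconnected subgraph $H_i \sqcup H_j$ of $G$ (hence obstructed by Proposition \ref{prop: rational if charge connected}) or forces more fixed boundary marked points than the corresponding connected-$H_i$ realization can provide via Remark \ref{rem: isotopy_rel_fixed_points}. Hence no proper sub-multicurve satisfies the hypotheses of Theorem \ref{thm: can_obstr}, so $\Gamma = \Gamma_{\operatorname{Th}}$. The main technical obstacle will be the explicit preimage enumeration in the middle step, where one must carefully account for the faces of $G^\pm$ and the mapping behavior on the blow-up regions $U_e$ in order to identify essential versus null-homotopic lifts; the minimality step is also delicate because it requires tracking how Levy boundary markings on merged small spheres interact with the rational realization of the connected blow-up pieces.
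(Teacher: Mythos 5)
Your proposal takes essentially the same route as the paper: both verify the characterization in Theorem \ref{thm: can_obstr} by using Proposition \ref{prop: blow-up-triples-properties} to enumerate the preimage components of each $\varepsilon$-boundary $\gamma$ (one essential lift in the face of $G^\pm$ carried homeomorphically onto the corresponding face of $G$, the rest null-homotopic inside the blow-up regions $U_e$), concluding that each $\gamma\in\Gamma$ is a fixed Levy curve, that $\Gamma$ is completely invariant, and that each small-sphere first-return map is either a homeomorphism or a blow-up of a single connected component $H_i$ of $G$ and hence realized via Remark \ref{rem: isotopy_rel_fixed_points}. The one genuine divergence is in the minimality step: you propose a case analysis on the "merged" small sphere obtained after deleting a curve (either it contains a disconnected subgraph, or it carries too many boundary markings in one face), whereas the paper argues more directly that the removed curve $\gamma$ persists as an essential curve in a small sphere $\widehat{S'}$ of the smaller multicurve $\Gamma'$ and, by the Claim already established, is a fixed Levy curve of the first-return map on $\widehat{S'}$, so $\Gamma'$ fails the criterion of Theorem \ref{thm: can_obstr}. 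The paper's minimality argument is shorter and avoids your second case, which requires the additional observation that the empty adjacent small sphere must have at least three boundary circles and that these all land in a single face of $H_i$; your version is workable but carries this extra combinatorial burden. You correctly flag the isotopy $\gamma'\sim\gamma$ rel.\ $\Crit(f)$ as the main technical point; the paper handles it by viewing $\gamma'$ as an $\varepsilon$-boundary of the blow-up $H^\pm$ of $H=\partial U$ and adjusting $f$ by an isotopy so that $\gamma$ is an $\varepsilon$-boundary of the same face of $H^\pm$.
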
    

Note that the multicurve $\Gamma$ is empty if and only the graph $G$ is connected. It follows that the map $f$ is realized by a rational map if and only if $G$ is connected; compare Proposition~\ref{prop: rational if charge connected}.

\begin{proof}
    We only give an outline of the argument and leave some straightforward details to the reader.

    Let $f$ be a critically fixed Thurston map obtained by blowing an admissible pair $(G,\varphi)$. In particular, we fix a choice of $W_e$, $D_e$, $f_e$, and $h$ as in Section \ref{subsec: The blow-up operation}. Without loss of generality, we may assume that $\varphi(e) = e$ for every edge $e\in E(G)$; see Propositions \ref{prop: Graph isotopic criterion} and \ref{prop: Isotopy and blow-up}. By (the proof of) Proposition \ref{prop: blow-up-triples-properties}, the triples $(\partial D_e^+, \partial D_e^-, \inter(D_e))$, $e\in E(G)$, satisfy all the conditions \ref{item: blow-up-tripple-i}-\ref{item: blow-up-tripple-iii} and \ref{item: blow-up-mapping-i}-\ref{item: blow-up-mapping-iv}. Then  $G^\pm=\bigcup_{e\in E(G)} (\partial D_e^+\cup \partial D_e^-)$ is the corresponding blow-up of $G$ under the map $f$.

    \begin{claim}
    Let $\gamma$ be an essential  $\varepsilon$-boundary of $G$. Then there is exactly one component $\gamma'$ of $f^{-1}(\gamma)$ that is isotopic to $\gamma$ rel.\ $\Crit(f)$ and satisfies $\deg(f|\gamma')=1$. All other components~$\delta'$ of $f^{-1}(\gamma)$ are null-homotopic and satisfy $\deg(f|\delta')=1$. In particular, $\gamma$ is a Levy fixed curve of $f$.
    \end{claim}

    Suppose $\gamma$ is an essential $\varepsilon$-boundary of $G$ with respect to $U$. Then $\gamma \subset W \subset U$, where $W$ is a multiply connected face of $G$. Let $\widetilde W$ be the face of $G^\pm$ such that $f|\widetilde{W}\colon \widetilde{W} \to W$ is a homeomorphism (see Proposition~\ref{prop: blow-up-triples-properties}). Then $\widetilde W$ contains a unique component $\gamma'$ of $f^{-1}(\gamma)$. Clearly, $\deg(f|\gamma')=1$. By \ref{item: blow-up-mapping-i} and \ref{item: blow-up-mapping-iii}, any other component $\delta'$ of $f^{-1}(\gamma)$ belongs to $\inter(D_e)$ for some $e\in E(G)$. 
    Therefore, $\delta'$ is null-homotopic and satisfies $\deg(f|\delta')=1$.

    To prove the claim, it remains to show that $\gamma'$ is isotopic to $\gamma$ rel.\ $\Crit(f)$. Since $\varphi(G)=G$, the Jordan curve $\varphi^{-1}(\gamma)$ is also an $\varepsilon$-boundary of $G$ with respect to $U$. In particular, $\varphi^{-1}(\gamma)\sim \gamma$ rel.\ $\Crit(f)$. By \eqref{eq: blow-up_def}, $h_t|\gamma'$, $t\in \I$, provides a non-ambient isotopy rel.\ $\Crit(f)$ between $h_0(\gamma')=\gamma'$ and $h_1(\gamma')=\varphi^{-1}\circ (\varphi\circ h_1)(\gamma')=\varphi^{-1}(f(\gamma'))=\varphi^{-1}(\gamma)$. It follows that $\gamma'\sim \varphi^{-1}(\gamma)\sim \gamma$ rel.~$\Crit(f)$.

 \smallskip   

    Let $\Gamma$ be the multicurve as in the statement. If $\Gamma$ is empty, then $G$ must be connected, and thus $\varphi$ is isotopic to $\id_{\Sp}$ rel.\ $V(G)$ by Corollary~\ref{cor: Homeo rigidity}. Hence $f$ is realized by a rational map by Propositions~\ref{prop: Isotopy and blow-up} and \ref{prop: rational if charge connected}. The statement follows in this case.
    
    So, now we assume  that $\Gamma$ is non-empty. The claim above implies that the multicurve $\Gamma$ is completely invariant. Suppose $\widehat S\in \widehat{\mathscr{S}}_\Gamma$ is a small sphere with respect to $\Gamma$, and $S$ is the respective component in $\mathscr{S}_\Gamma$. Recall that $\widehat S$
    is marked by a finite set $Q(\widehat S)$ corresponding to the points in $S\cap \Crit(f)$ and the components of $\partial S$. By  Theorem~\ref{thm: crit-fix-decomposition}\ref{item: deccomp_thm_2}, the small sphere $\widehat S$ is fixed under $\widehat f$. 

    For short, let us denote the small sphere map $\widehat f\colon \big(\widehat{S}, Q(\widehat{S})\big)\to \big(\widehat{S}, Q(\widehat{S})\big)$ by $\widehat f|\widehat S$.  If $S\cap \Crit(f) = \emptyset$, then $\widehat f|\widehat S$ is a homeomorphism. Otherwise, $S$ contains a unique connected component $G_S$ of $G$, which we view as a subset of $\widehat{S} \setminus Q(\widehat{S})$. Note that 
    \[|E(G_S)|=\frac{1}{2}\sum_{c\in S\cap \Crit(f)} \deg_{G}(c) = \frac{1}{2}\sum_{c\in S\cap \Crit(f)} (\deg(f,c)-1) = \deg(\widehat f|\widehat S)-1,\]
    where the last equality follows from the Riemann-Hurwitz formula.
    
    By the definition of $\widehat f$, the triples $(\partial D_e^+, \partial D_e^-, \inter(D_e))$, $e\in E(G_S)$, induce triples that satisfy \ref{item: blow-up-tripple-i}-\ref{item: blow-up-tripple-iii} for the small sphere map $\widehat f|\widehat S$ and the graph $G_S\subset \widehat S$. Moreover, each face $U$ of $G_S$ contains at most one marked point in $Q(\widehat S)$. It now follows from Proposition \ref{prop: Blow-up triples} and Remark \ref{rem: isotopy_rel_fixed_points} that the marked small sphere map $(\widehat f|\widehat S, Q(\widehat S))$ is realized (by a marked critically fixed rational map). 
    
    We now check that the multicurve $\Gamma$ satisfies the criterion in Theorem~\ref{thm: can_obstr} (and thus $\Gamma$ is the canonical Thurston obstruction of~$f$). Indeed, the induced small sphere maps satisfy the necessary requirements by the discussion above. To check minimality, assume that $\Gamma'\subsetneq \Gamma$ and $\gamma \in \Gamma\setminus \Gamma'$. Let $S'_\gamma\in \mathscr S_{\Gamma'}$ be the component containing $\gamma$. Note that $\gamma$ is an essential curve in the respective marked small sphere $\widehat{S'_\gamma}$. Moreover, $S'_\gamma \cap \Crit(f) \neq \emptyset$. The claim now implies that $\gamma$ is a Levy fixed curve for the marked small sphere map associated with $\widehat{ S'_\gamma}$. Therefore, $\Gamma'$ does not meet the requirements from Theorem~\ref{thm: can_obstr}, and $\Gamma$ must be the canonical Thurston obstruction for $f$.
\end{proof}

\subsection{The charge graph and classification} \label{subsec: The charge graph and its applications}

The goal of this subsection is to extend the notion of the charge graph of a critically fixed rational map to the more general setting of critically fixed Thurston maps. Moreover, we will provide a classification of critically fixed Thurston maps in terms of admissible pairs.

Let $f\colon\Sp \to \Sp$ be a critically fixed Thurston map and $\Gamma:=\Gamma_{\operatorname{Th}}$ be the canonical Thurston obstruction for $f$. We may decompose  $\widehat{\mathscr{S}}_{\Gamma}$ as the disjoint union $\widehat{\mathscr{S}}_{\Gamma,\operatorname{Rat}}\sqcup \widehat{\mathscr{S}}_{\Gamma,\operatorname{Homeo}}$, where $\widehat{\mathscr{S}}_{\Gamma,\operatorname{Rat}}$ consists of all small spheres containing the critical points of $f$, and $\widehat{\mathscr{S}}_{\Gamma,\operatorname{Homeo}}$ consists of all small spheres without these critical points.

Suppose $\widehat S \in \widehat{\mathscr{S}}_{\Gamma,\operatorname{Rat}}$ is a small sphere, $Q(\widehat S)$ is the corresponding marked set, and $S$ is the respective component in $\mathscr{S}_\Gamma$. By Corollary \ref{cor: can_crit_fix_decomposition}, $\widehat S$ is fixed under~$\widehat f\colon \widehat{\mathscr{S}}_{\Gamma}\to\widehat{\mathscr{S}}_{\Gamma}$ and the small sphere map $\widehat f|\widehat S \colon \big(\widehat S, Q(\widehat S)\big) \to \big(\widehat S, Q(\widehat S)\big) $ is a (marked) critically fixed Thurston map. Moreover, $(\widehat f|\widehat S, Q(\widehat S))$ is combinatorially equivalent to a critically fixed rational map $(F_{\widehat S}, Q(F_{\widehat S}))$ with $\Crit(F_{\widehat S})\subset Q(F_{\widehat S})\subset \Fix(F_{\widehat S})$. By Proposition \ref{prop: comb_equiv_blowups} and Remarks \ref{rem: comb_equiv_and_adm_pairs}\ref{item: comb_equiv_and_adm_pairs_0} and~\ref{rem: isotopy_rel_fixed_points}, the map $\widehat f|\widehat S$ is obtained by blowing up an admissible pair $(G'_{\widehat S}, \varphi_{\widehat S})$, where $G'_{\widehat S}$ is a planar embedded graph in $\widehat S$ with $V(G'_{\widehat S})=Q(\widehat S)$ and $\varphi_{\widehat S}\in \Homeo^+_0(\widehat{S},Q(\widehat S))$. Furthermore, the graph $G'_{\widehat S}$ has the following properties:
\begin{itemize}
    \item $G'_{\widehat S}$ has a unique non-trivial connected component $G_{\widehat S}$;
    \item $G_{\widehat S}$ is  isomorphic to $\Charge(F_{\widehat S})$;
    \item $V(G_{\widehat S})=\Crit(\widehat f| \widehat S) = \Crit(f)\cap S$;
    \item $G'_{\widehat S}= G_{\widehat S} \sqcup \big(Q(\widehat S) \setminus V(G_{\widehat S})\big)$.
\end{itemize}

It follows that we may naturally view every graph $G_{\widehat S}$, $\widehat S\in  \widehat{\mathscr{S}}_{\Gamma,\operatorname{Rat}}$, as a subset of the respective component $S \subset \Sp$. Then the union
\begin{equation}\label{eq: charge_graph}
\Charge(f):=\bigsqcup_{\widehat S\in  \widehat{\mathscr{S}}_{\Gamma,\operatorname{Rat}}} G_{\widehat{S}}
\end{equation}
 is a planar embedded graph in $\Sp$ with the vertex set $\Crit(f)$. These observations lead us to the following definition. 
     
\begin{definition}\label{def:charge-graph}
Let $f$ be a critically fixed Thurston map. We define the \emph{charge graph} of~$f$ to be the planar embedded graph $\Charge(f)$ with the vertex set $\Crit(f)$ constructed as above.
\end{definition}

Note that the charge graph is defined only up to isotopy rel.\ $\Crit(f)$. In fact, its isotopy class is uniquely characterized by the following result (see Proposition \ref{prop: admis_equiv}\ref{item: case isotopy}).

\begin{proposition}\label{prop: charge-graph}
    Let $f\colon\Sp\to\Sp$ be a critically fixed Thurston map. Then $f$ is isotopic to a critically fixed Thurston map obtained by blowing up an admissible pair $(\Charge(f), \varphi_f)$ in $\Sp$, where $\varphi_f\in \Homeo^+(\Sp,\Crit(f))$. 
\end{proposition}

\begin{proof} Suppose $f\colon \Sp \to \Sp$ is a critically fixed Thurston map, $G:=\Charge(f)$ is the charge graph of~$f$, and $\Gamma:=\Gamma_{\operatorname{Th}}$ is the canonical Thurston obstruction of $f$.

Following the notation above, let  $\widehat S\in \widehat{\mathscr{S}}_{\Gamma,\operatorname{Rat}}$ be a small sphere and $S$ be the respective component in $\mathscr{S}_{\Gamma}$. By construction,  $S\subset \Sp$ contains exactly one (non-trivial) connected component $G_{\widehat S}$ of the charge graph $G$. 
Combining Proposition~\ref{prop: Blow-up properties}, Corollary \ref{cor: can_crit_fix_decomposition}\ref{item: can_levy_4}, and the Riemann-Hurwitz formula, we obtain
\begin{align*}
|E(G)|&=\sum_{\widehat S\in \widehat{\mathscr{S}}_{\Gamma,\operatorname{Rat}}} |E(G_{\widehat S})| = \sum_{\widehat S\in \widehat{\mathscr{S}}_{\Gamma,\operatorname{Rat}}} \big(\deg(\widehat f|\widehat S) -1\big) =\\
&=\sum_{\widehat S\in \widehat{\mathscr{S}}_{\Gamma,\operatorname{Rat}}} \frac{1}{2}\sum_{c\in S\cap \Crit(f)} \big(\deg(f,c)-1\big)=\frac{1}{2}\sum_{c\in \Crit(f)} \big(\deg(f,c)-1\big) = \deg(f)-1.
\end{align*}

We claim that each edge $\alpha$ of $G$ admits a triple $(\alpha^+,\alpha^-, U_\alpha$) that satisfies conditions \ref{item: blow-up-tripple-i}-\ref{item: blow-up-tripple-iii}. Indeed, if $\alpha \in E(G_{\widehat{S}})$, then the small sphere map $\widehat f|\widehat S \colon \big(\widehat S, Q(\widehat S)\big) \to \big(\widehat S, Q(\widehat S)\big) $  induces such a triple in $\Sp$ (apply the argument in the proof of Proposition~\ref{prop: blow-up-triples-properties} to $f|\widehat S$ and note that $\overline{U_\alpha} \subset S\subset \Sp$ by the construction). 
The proposition now follows from Proposition~\ref{prop: Blow-up triples}.
\end{proof}

We are finally ready to prove Main Theorem \ref{thm_intro: classification} from the introduction. Let $\mathrm{CritFixMaps}$ be the set of  combinatorial equivalence classes of critically fixed Thurston maps $f\colon \Sp \to \Sp$, and $\mathrm{AdmPairs}$ be the set of equivalence classes of admissible pairs $(G, \varphi)$, where $G$ is a planar embedded graph in $\Sp$ and $\varphi$ is a homeomorphism in $\Homeo^+(\Sp,V(G))$. Proposition \ref{prop: admis_equiv}\ref{item: case comb equiv} implies that the blow-up operation induces a well-defined injective map $$\mathrm{BlowUp}\colon \mathrm{AdmPairs} \to \mathrm{CritFixMaps}$$ given by 
    $$
        [(G,\varphi)]\mapsto \left[f_{(G,\varphi)}\right],
    $$
where $f_{(G,\varphi)}\colon \Sp \to \Sp$ denotes a critically fixed Thurston map obtained by blowing up an admissible pair $(G,\varphi)$ in $\Sp$, and $[\cdot]$ denotes the equivalence class of an element as usual. Moreover, Proposition \ref{prop: charge-graph} implies that this map is surjective. Hence, we get the following result, which establishes the first part of Main Theorem~\ref{thm_intro: classification}.

\begin{theorem}\label{thm: Classification}
    The map $\mathrm{BlowUp}\colon \mathrm{AdmPairs} \to \mathrm{CritFixMaps}$ induced by the blow-up operation is a bijection.    
\end{theorem}

Similarly to the above, Propositions \ref{prop: Blow-up properties}, \ref{prop: admis_equiv}\ref{item: case isotopy}, and \ref{prop: charge-graph} imply the second part of Main Theorem~\ref{thm_intro: classification}.

\begin{theorem}
\label{thm: class-isotopy} Fix a marked sphere $(\Sp, Z)$ with $|Z|\geq 2$ and an integer $d\geq 2$. Then the blow-up operation induces a canonical bijection between the isotopy classes of critically fixed Thurston maps $f\colon \Sp\to\Sp$ with $\Crit(f)=Z$ and $\deg(f)=d$ and the isotopy classes of admissible pairs $(G,\varphi)$ with $V(G)=Z$ and $|E(G)|=d-1$.
\end{theorem}

\section{The Lifting Algorithm}\label{sec: Lifting algorithm}

In this section, we develop an algorithm that for a given critically fixed Thurston map $f$ finds its charge graph $G_f:=\Charge(f)$. This also allows us to reconstruct an admissible pair $(G_f,\varphi_f)$ that is associated with $f$ by Theorem~\ref{thm: class-isotopy}. Namely, we provide a combinatorial description of the homeomorphism $\varphi_f$ using the charge graph $G_f$ and the original map~$f$. 
Knowing the graph $G_f$, we can decide whether $f$ is obstructed or not, depending on the connectivity of $G_f$. Furthermore, if $f$ is realized (i.e., if $G_f$ is connected), 
the charge graph~$G_f$ determines both the combinatorial equivalence class and the isotopy class of $f$ (see Remark~\ref{rem: adm-pair-for-connected}). Otherwise, we find the canonical obstruction of $f$ by taking the essential $\varepsilon$-boundaries of $G_f$ (Theorem~\ref{thm: can-obstr-crit-fix}). 

\subsection{The pullback relation on trees}\label{subsec: The pullback operation}

In the following, we suppose that $f\colon \Sp \to \Sp$ is a Thurston map. A planar embedded tree $T$ in $\Sp$ is called \emph{admissible} (for $f$) if it satisfies the following two properties:
\begin{itemize}%[label = \normalfont{(\roman*)}]
    \item $\Post(f) \subset V(T)$; 
    \item $\deg_{T}(v)\geq 3$ for all $v\in V(T)\setminus \Post(f)$.
\end{itemize}
In particular, every leaf of an admissible tree is a postcritical point of $f$.

\begin{lemma}\label{lem: Combinatorial complexity}
    Let $f\colon \Sp \to \Sp$ be a Thurston map and $T$ be an admissible planar embedded tree for $f$. 
    Then $|V(T)| \leq 2 |\Post(f)| - 2$.
\end{lemma}

\begin{proof}
    By definition, each non-postcritical vertex of $T$ has degree greater than $2$. Hence the following inequality holds:

    $$
        \sum\limits_{v \in V(T)} \deg_{T} (v) \geq |\Post(f)| + 3\big(|V(T)| - |\Post(f)|\big).
    $$
    At the same time, since $T$ is a tree, we have
    $$
        \sum\limits_{v \in V(T)} \deg_{T} (v) = 2 |E(T)| = 2|V(T)| - 2.
    $$
    It follows that $$2|V(T)| - 2 \geq |\Post(f)| + 3\big(|V(T)| - |\Post(f)|\big),$$ and thus $|V(T)| \leq 2|\Post(f)|-2$, as desired.
\end{proof}

We denote by $\AdmTrees(f)$ the set of all admissible planar embedded trees for $f$. The map $f$ induces a natural relation on $\AdmTrees(f)$ as follows; compare \cite{TimorinTrees, LifitingTrees, Pilgrim_Pullback}. 

Let $T\in \AdmTrees(f)$ be an admissible tree. By Lemma~\ref{lem:preimage-connected}, the complete preimage $f^{-1}(T)$ is a connected graph with $\Post(f) \subset V(f^{-1}(T))$. Hence, we can choose a spanning subtree of the postcritical set $\Post(f)$ in the graph $f^{-1}(T)$, that is, a minimal under inclusion subtree $\widetilde T$ of $f^{-1}(T)$ such that $\Post(f) \subset V(\widetilde T)$. Note that each non-postcritical vertex of $\widetilde T$ must then have degree at least $2$. We can now further ``simplify'' the tree $\widetilde T$ by forgetting all non-postcritical vertices of degree $2$ (if there are any). More formally, we consider the tree $T'$ embedded in $\Sp$ with the same realization as $\widetilde T$ but with the vertex set $V(T')$ given by
\[V(T')=\Post(f) \cup \{v\in V(\widetilde T)\colon \ \deg_{\widetilde T}(v) \geq 3\}.\]
By construction, $T'$ is an admissible tree for $f$. 

\begin{definition}\label{def:pullback-operation}
    The planar embedded tree $T'$ constructed as above is called  a \emph{pullback} of the tree~$T$ under the map $f$. We use the notation $\xleftarrow{f}$ for the induced \emph{pullback relation} on $\AdmTrees(f)$, i.e., we write $T\xleftarrow{f}T'$ if $T'$ is a pullback of the tree $T$. We also denote by $\Pi_f(T)$ the set of all such pullbacks.
\end{definition}

\begin{remark}
    The isotopy lifting property for Thurston maps (see Proposition \ref{prop: isotopy-lifting}) implies that the pullback relation $\xleftarrow{f}$ descends to the quotient of $\AdmTrees(f)$ by isotopies rel.\ $\Post(f)$.
\end{remark}
    
    We emphasize that a pullback of the tree $T\in \AdmTrees(f)$ is not uniquely determined, since the choice of the spanning subtree $\widetilde T$ may not be unique.   
    Nevertheless, we may iterate the pullback relation $\xleftarrow{f}$ starting with an arbitrary admissible tree $T_0$ and obtain an infinite sequence $\{T_n\}_{n \geq 0} \subset \AdmTrees(f)$ that satisfies 
    \[T_0\,\xleftarrow{f}\, T_1\, \xleftarrow{f}\, T_2\xleftarrow{f}\dotsb.\]
    In other words, we get a sequence $\{T_n\}_{n \geq 0}$ of admissible planar embedded trees, where $T_{n+1} \in \Pi_f(T_{n})$ for all $n\ge 0$. 
Lemma \ref{lem: Combinatorial complexity} implies that the ``combinatorial complexity'' of the trees $T_n$ is uniformly bounded. In the next subsection, we will show that the ``topological complexity'' of these trees eventually stabilizes in the case when $f$ is critically fixed.

    \begin{figure}[t]
        \centering          
        \begin{overpic}[width=12cm]{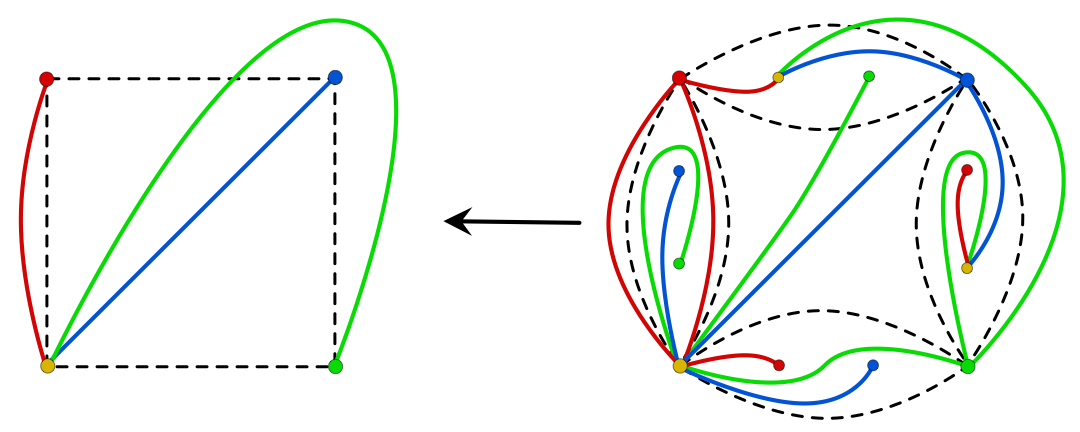}
        \put(47,22){$f_\square$}
        \put(-4,22){$T$}        
        \put(102,22){$f_\square^{-1}(T)$}
        \end{overpic}
        \caption{Taking the complete preimage of an admissible tree $T$ with $V(T)=\Crit(f_\square)$
        under the critically fixed Thurston map $f_\square$.}

        \label{fig: Tree and complete preimage}
    \end{figure}
    \begin{figure}[t]
        \centering           \includegraphics[width=15cm]{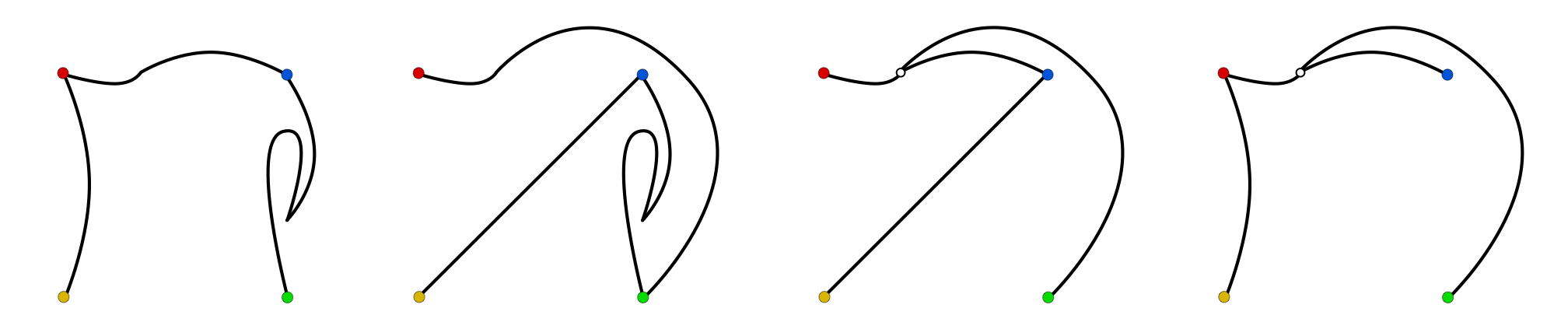}
    
        \caption{
        Examples of pullbacks of the admissible tree $T$ from Figure \ref{fig: Tree and complete preimage} under the map~$f_\square$.
        }

        \label{fig: Pullbacks}
    \end{figure}

\begin{example}\label{ex: Pullback example}

    Let us consider the critically fixed Thurston map $f_{\square}$ from Example \ref{ex: Topological square map} and an admissible planar embedded tree $T$ with $V(T)=\Post(f_\square)=\Crit(f_\square)$ shown on the left in Figure~\ref{fig: Tree and complete preimage}. The tree $T$ has three edges colored red, green, and blue. The complete preimage $f_\square^{-1}(T)$ is illustrated on the right in Figure \ref{fig: Tree and complete preimage}. The map $f_\square$ sends each edge and vertex of $f_\square^{-1}(T)$ to the edge and vertex of $T$ of the same color. The graphs in dashed lines indicate the charge graph of $f_\square$ (on the left) and its blow-up (on the right).

    Figure \ref{fig: Pullbacks} illustrates some of the possible pullbacks of the tree $T$ under the map $f_{\square}$. Note that the first two examples are trees with vertices only in the critical points of $f_{\square}$, while the last two examples are trees with an extra non-critical vertex of degree $3$ indicated in white.

\end{example}

\subsection{Topological contraction of the pullback relation}\label{subsec: Topological contraction of the pullback operation}

    To control the topological complexity of the (iterated) pullbacks of an admissible tree under a critically fixed Thurston map~$f$, we will use intersection numbers rel.\ $\Crit(f)$. 

\begin{definition}
     Let $f\colon \Sp \to \Sp$ be a critically fixed Thurston map and $G$ be a planar embedded graph in $\Sp$ with $\Crit(f) \subset V(G)$. We define the \emph{norm} $\|G\|_f$ of $G$ with respect to the map $f$ as
     $$
        \|G\|_f := \max_{\alpha \in E(\Charge(f))} i_{C(f)}(G,\alpha).
     $$
\end{definition}

    Our goal is to show the following result. 
    
\begin{theorem}\label{thm: Pullback contraction}
    Let $f\colon \Sp \to \Sp$ be a critically fixed Thurston map and $\{T_n\}_{n\geq 0} \subset \AdmTrees(f)$ be a sequence of admissible trees that satisfies\[T_0\,\xleftarrow{f}\, T_1\, \xleftarrow{f}\, T_2\xleftarrow{f}\dotsb.\] 
    Then $\|T_n\|_f = 0$ for $n \geq \|T_0\|_f$.    
\end{theorem}

In other words, after at most $\|T_0\|_f$ iterations of the pullback relation we obtain a tree that, up to isotopy rel.\ $\Crit(f)$, intersects the charge graph of $f$ only in critical points (to conclude this from the theorem, use 
Proposition \ref{prop: isotop_to_0_intersections}). This establishes Main Theorem \ref{thm_intro: contraction}.

\begin{remark}
    Let $f \colon \Sp \to \Sp$ be a critically fixed Thurston map and $k$ be a non-negative integer. It then follows from Lemma~\ref{lem: Combinatorial complexity} that the number of isotopy classes rel.\ $\Crit(f)$ of admissible planar embedded trees $T$ (for $f$) with $\|T\|_{f} \leq k$ is finite if and only if $\Charge(f)$ is connected, i.e., $f$ is realized by a rational map.
\end{remark}

\begin{example*}
    Let us consider the map $f_{\square}$ and the admissible tree $T$ from Example \ref{ex: Pullback example}. One can check that $\|T\|_f = 1$, but every pullback $T'$ of $T$ under $f$ satisfies $\|T'\|_f = 0$. In fact, we have $\| f^{-1}(T)\|_f=0$ (see  Figures~\ref{fig: Tree and complete preimage} and \ref{fig: Pullbacks}).
\end{example*}

Theorem \ref{thm: Pullback contraction} will easily follow from the next statement.

\begin{proposition}\label{prop: Intersections decreasing}
    Let $f\colon \Sp \to \Sp$ be a critically fixed Thurston map and $T$ be a planar embedded tree in $\Sp$. Then for each 
    edge $\alpha\in E(\Charge(f))$ we have:

    \begin{enumerate}[label = \normalfont{(\roman*)}]
    \item $i_{C(f)}(f^{-1}(T), \alpha) \leq i_{C(f)}(T, \alpha)$; \label{item: first_ineq}
    
    \item $i_{C(f)}(f^{-1}(T), \alpha) < i_{C(f)}(T, \alpha)$, whenever $i_{C(f)}(T, \alpha) > 0$. \label{item: second_ineq}
\end{enumerate}
     
\end{proposition}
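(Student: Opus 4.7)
The plan is to exploit the blow-up structure of critically fixed Thurston maps. By Theorem~\ref{thm: charge-graph}\ref{item: charge_defined}, I may replace $f$ with an isotopic map obtained by blowing up the admissible pair $(\Charge(f), \varphi_f)$, which yields, for each edge $\alpha \in E(\Charge(f))$, the triples $(\alpha^+, \alpha^-, U_\alpha)$ of Proposition~\ref{prop: blow-up-triples-properties}. After isotoping $T$ so that $T$ and $\alpha$ are in minimal position rel.\ $\Crit(f)$, I write $T \cap \alpha = \partial\alpha \cup \{q_1, \ldots, q_n\}$, where $\partial\alpha \subset \Crit(f)$ and $q_i \in \inter(\alpha)\setminus\Crit(f)$, with $n := i_f(T, \alpha)$.

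For (i), I will use that $f|\alpha^+ \colon \alpha^+ \to \alpha$ is a homeomorphism to produce a bijection between $f^{-1}(T) \cap \alpha^+$ and $T \cap \alpha$ by sending $q \mapsto (f|\alpha^+)^{-1}(q)$. Since $\alpha^+ \cap \Crit(f) = \partial\alpha = \alpha \cap \Crit(f)$, this bijection respects membership in $\Crit(f)$, so $|(f^{-1}(T) \cap \alpha^+) \setminus \Crit(f)| = n$. Because $\alpha^+$ is isotopic to $\alpha$ rel.\ $\Crit(f)$, the definition of the intersection number then yields $i_f(f^{-1}(T), \alpha) \leq n = i_f(T, \alpha)$.

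For (ii), assuming $n \geq 1$, the plan is to show that $\alpha^+$ and $f^{-1}(T)$ are not in minimal position, forcing strict inequality. Let $\mathcal F := f^{-1}(T) \cap \overline{U_\alpha}$. Since $f|U_\alpha$ is a homeomorphism onto $\Sp \setminus \alpha$ by Proposition~\ref{prop: blow-up-triples-properties}\ref{item: blow-up-mapping-i}, and $T \setminus \alpha$ is a forest (a tree minus finitely many points), $\mathcal F$ is a compact forest embedded in the closed disk $\overline{U_\alpha}$, with interior boundary points $p_i^{\pm} := (f|\alpha^{\pm})^{-1}(q_i)$ on $\partial U_\alpha = \alpha^+ \cup \alpha^-$. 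The crucial structural observation is that no component of $\mathcal F$ can be a single arc from $p_i^+$ to $p_i^-$: its $f$-image would be a nontrivial simple arc in $T$ from $q_i$ to $q_i$, impossible in a tree.

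The main obstacle is a case analysis producing an intersection-reducing isotopy. I would argue that one of the following holds: (a)~some component of $\mathcal F$ has two boundary points on the same arc $\alpha^+$ or $\alpha^-$, in which case an innermost pair, together with the intervening sub-arc of $\alpha^{\pm}$, bounds a bigon in $\overline{U_\alpha}\setminus\Crit(f)$ that can be removed to eliminate two intersections; (b)~some component has a unique interior boundary point $p_i^{\pm}$ with its remaining vertices in $U_\alpha$, so that $\alpha^{\pm}$ can be locally pushed into $U_\alpha$ around that component to eliminate one intersection; or (c)~some component contains $c_1$ or $c_2$ together with an interior boundary point, giving a bigon with a corner at a critical point that may be removed. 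The only configuration avoiding all three—each component having exactly one interior boundary on $\alpha^+$ and exactly one on $\alpha^-$—is excluded by the non-crossing property of a forest embedded in a disk, which forces such a top-to-bottom matching to be the nested matching $p_i^+ \leftrightarrow p_i^-$, in contradiction with the crucial observation above. In each case, I obtain an arc $\gamma \sim \alpha$ rel.\ $\Crit(f)$ with $|\gamma \cap f^{-1}(T) \setminus \Crit(f)| < n$, giving (ii).
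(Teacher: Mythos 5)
Your part (i) coincides with the paper's argument: push the count to $\alpha^+$ via the homeomorphism $f|\alpha^+$ and use $\alpha^+\sim\alpha$. Part (ii) reaches the correct conclusion, but by a considerably longer route than the paper does. Where you run a four-way case analysis culminating in a non-crossing-matching argument, the paper proves a single clean claim: some component $H$ of $f_\alpha^{-1}(T)$ satisfies $H\cap\inter(\alpha^+)=\emptyset$ or $H\cap\inter(\alpha^-)=\emptyset$. Its proof is three lines: take the components $H^+,H^-$ containing $p_1^+$ and $p_1^-$; they are distinct, since otherwise $f(H^+)\subset T$ would contain a cycle; and if both of $H^+,H^-$ met the opposite arc they would contain disjoint arcs from $p_1^+$ to some $p_j^-$ and from $p_1^-$ to some $p_k^+$ with $j,k\geq 2$, which would have to cross inside the disk $\overline{U_\alpha}$, a contradiction. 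The paper then pushes $\alpha^+$ (or $\alpha^-$) past that one component. Your cases (a), (b), (c) and the excluded perfect-matching case amount to a finer subdivision of the same disjunction; the exclusion via the unique non-crossing bijection $p_i^+\leftrightarrow p_i^-$ does essentially the work of the paper's two-arc crossing contradiction, but for all the components simultaneously and with several auxiliary bigon cases that must each be separately verified to admit an intersection-reducing isotopy. Both routes buy the same inequality, but the paper's is leaner and leaves less to check.

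There is one genuine imprecision worth fixing. Your ``crucial structural observation'' is stated for a \emph{single arc} from $p_i^+$ to $p_i^-$, but the configuration you actually rule out at the end only guarantees a component $H$ of $\mathcal{F}$ whose boundary attachments are exactly $\{p_i^+,p_i^-\}$; such an $H$ need not be an embedded arc, as it may be a tree branching inside $U_\alpha$. The exclusion still works, because $f|U_\alpha$ is injective, so $f|H$ identifies only $p_i^+$ with $p_i^-$, and the $f$-image of the path in $H$ joining them is then a nontrivial loop in $T$, impossible in a tree. You should state the observation for every connected subtree of $\mathcal{F}$ whose only attachments to $\partial U_\alpha$ are $\{p_i^+,p_i^-\}$; otherwise there is a gap between what the observation says and where it is applied.
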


\begin{proof} Let $f$, $T$, and $\alpha$ be as in the statement. Since the charge graph of $f$ is defined up to isotopy, we may assume without loss of generality that the tree $T$ and the edge $\alpha\in E(\Charge(f))$ are in minimal position rel.\ $\Crit(f)$. By Proposition \ref{prop: blow-up-triples-properties}, there are distinct lifts $\alpha^+, \alpha^-$ of $\alpha$ under $f$ and a component $U_\alpha$ of $\Sp\setminus (\alpha^+ \cup \alpha^-)$ such that $\alpha^+,\alpha^-$ are isotopic to $\alpha$ rel.\ $\Crit(f)$ and $f|U_\alpha\colon U_\alpha \to \Sp \setminus \alpha$ is a homeomorphism.

We first show that inequality \ref{item: first_ineq} holds.  Since $f|\alpha^+\colon \alpha^+ \to \alpha$ is a homeomorphism, we have 
\[
    |(f^{-1}(T) \cap \alpha^+) \setminus \Crit(f)| = |(T \cap \alpha) \setminus \Crit(f)| =
    i_{C(f)}(T, \alpha).
\]    
At the same time, since $\alpha^+ \sim \alpha$ rel.\ $\Crit(f)$, we have
\[    i_{C(f)}(f^{-1}(T), \alpha) =     i_{C(f)}(f^{-1}(T), \alpha^+) \leq
|(f^{-1}(T) \cap \alpha^+) \setminus \Crit(f)|,\]
which implies the desired inequality.

Now suppose that $i_{C(f)}(T,\alpha)>0$. By the argument above, to prove \ref{item: second_ineq} it is sufficient to show that 
\begin{equation}\label{eq: e+_decrease}
  i_{C(f)}(f^{-1}(T), \alpha^+) <
  |(f^{-1}(T) \cap \alpha^+) \setminus \Crit(f)|
\end{equation}
or
\begin{equation}\label{eq: e-_decrease}
   i_{C(f)}(f^{-1}(T), \alpha^-) <
|(f^{-1}(T) \cap \alpha^-) \setminus \Crit(f)|.
\end{equation}
In other words, our goal is to show that some intersection between $f^{-1}(T)$ and either $\alpha^+$ or $\alpha^-$ can be removed by an isotopy rel.\ $\Crit(f)$.

\begin{figure}[t]
    \centering      
     \begin{overpic}[width=14cm]{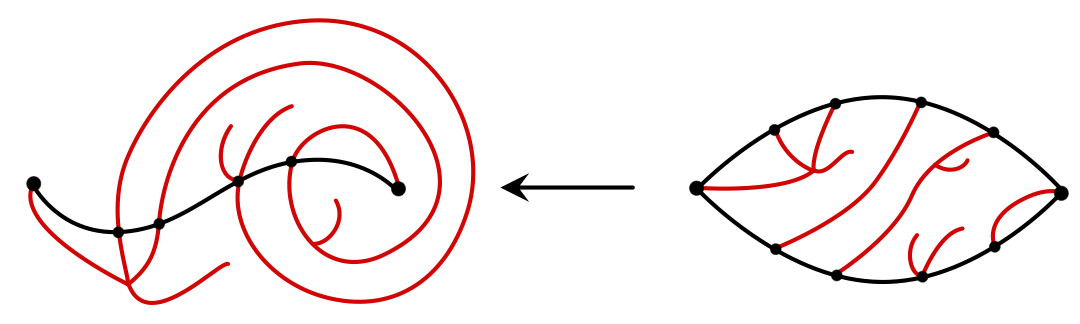}
        \put(52,14.5){$f_\alpha$}
    %    \put(-4,22){$T$}  
        \put(8,10){$p_1$}             
       \put(15.8,7.5){$p_2$}  
%old        \put(19,14){$p_3$}  
        \put(18,13.1){$p_3$}  

        \put(28,13){$p_4$}  
        \put(68.5,19){$p_1^+$}             
        \put(74.5,22){$p_2^+$}  
        \put(86,22){$p_3^+$}  
        \put(93,19){$p_4^+$}  
        \put(67.7,4.5){$p_1^-$}             
        \put(73.5,2){$p_2^-$}  
        \put(86,2){$p_3^-$}  
        \put(93,5){$p_4^-$}          
    %    \put(102,22){$f_\square^{-1}(T)$}
        \end{overpic}
    
    \caption{Left: a planar embedded tree $T$ (in red) intersecting an edge $\alpha\in E(\Charge(f))$ (in black). Right: the preimage $f_\alpha^{-1}(T)$ (in red) and $\partial U_\alpha= \alpha^+\cup \alpha^-$ (in black).}
        
    \label{fig: Graph inside region}
        
\end{figure}

\begin{figure}[t]
    \centering   
     \begin{overpic}[width=14cm]{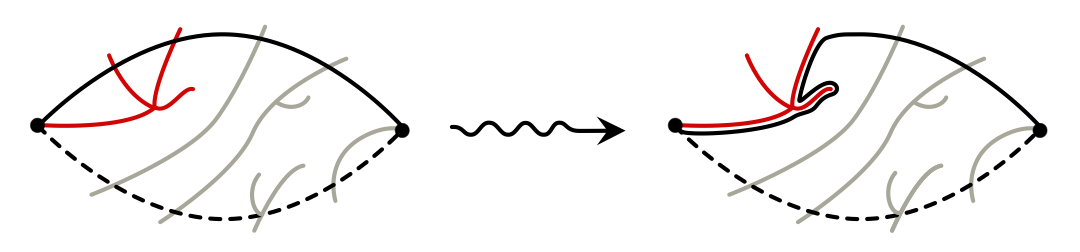}
%        \put(44.5,14){isotopy}
%        \put(42,7){rel.\ $\Crit(f)$}    
        \put(35,16){$\alpha^+$}   
        \put(94,16){$\widetilde{\alpha}$}
        \put(7,19){\textcolor{red}{$H$}}           
        \put(66,19){\textcolor{red}{$H$}}   
    \end{overpic}
    
    \caption{Modifying the Jordan arc $\alpha^+$ into a Jordan arc $\widetilde{\alpha}$ by an isotopy rel.\ $\Crit(f)$ to reduce intersections with $f^{-1}(T)$.}
        
    \label{fig: Removing intersections}
        
\end{figure}

Let $P = \{p_1, p_2, \dots, p_m\}$, $m\geq 1$, be the set of intersection points between $\inter(\alpha)$ and $T$ listed from one endpoint of $\alpha$ to another. 
If we set $P^+:=\inter(\alpha^+)\cap f^{-1}(T)$ and $P^-:=\inter(\alpha^-)\cap f^{-1}(T)$, we can write $P^+ = \{p^+_1, p^+_2, \dots, p^+_m\}$ and $P^- = \{p^-_1, p^-_2, \dots, p^-_m\}$, where $f(p_j^+) = f(p_j^-) = p_j$ for each $j = 1, \dots, m$.

 To simplify the notation, set $f_\alpha := f|\overline{U_\alpha}\colon \overline{U_\alpha} \to \Sp$. We may view the preimage $f_\alpha^{-1}(T)$ as a planar embedded graph with the vertex set $f_\alpha^{-1}(V(T)) \cup P^+ \cup P^-$. Then every connected component of $f_\alpha^{-1}(T)$ is a planar embedded tree with at least one vertex on $\partial U_\alpha=\alpha^+\cup \alpha^-$; see Figure \ref{fig: Graph inside region} for an illustration.

\begin{claim}
There exists a connected component $H$ of $f_\alpha^{-1}(T)$ such that $H\cap \inter(\alpha^+) = \emptyset$ or $H\cap \inter(\alpha^-) = \emptyset$.
\end{claim}

Indeed, let $H^+$ and $H^-$ be the components of $f_\alpha^{-1}(T)$ such that $p_1^+ \in H^+$ and $p_1^- \in H^-$. Note that $H^+\cap H^- = \emptyset$, because otherwise $H^+=H^-$ and $f_\alpha(H^+) \subset T$ contains a cycle, which leads to a contradiction. Now if either $H^+ \cap \inter(\alpha^-) = \emptyset$ or $H^- \cap \inter(\alpha^+) = \emptyset$, then we are done. Hence we may assume that $p_j^-\in H^+$ and $p_s^+\in H^-$ for some $j,s\in \{2,\dots,m\}$. Since $H^+\supset\{p_1^+,p_j^-\}$ is connected and $\partial U_\alpha \supset P^+\cup P^-$ is a Jordan curve, it follows that $p_1^-$ and $p_s^+$ belong to distinct connected components of $\overline{U_\alpha} \setminus H^+$. At the same time, $H^-$ connects $p_1^-$ and $p_s^+$, and thus $H^+\cap H^- \neq \emptyset$, which is a contradiction. The claim follows.

\medskip

Without loss of generality we may assume that there is a connected component $H$ of $f_\alpha^{-1}(T)$ with $H\cap \inter(\alpha^-) = \emptyset$. But since $\overline{U_\alpha}\cap \Crit(f) = \partial \alpha$, the Jordan arc $\alpha^+$ can be isotoped rel.\ $\Crit(f)$ into a Jordan arc $\widetilde{\alpha}\subset \overline{U_\alpha}$ so that  
\[ 
  |(f^{-1}(T) \cap \widetilde{\alpha}) \setminus \Crit(f)| <   |(f^{-1}(T) \cap \alpha^+) \setminus \Crit(f)|;
\]
see Figure \ref{fig: Removing intersections} for an illustration. Thus \eqref{eq: e+_decrease} holds, which completes the proof of part \ref{item: second_ineq}.
\end{proof}

    The first part of Proposition \ref{prop: Intersections decreasing} remains true if $T$ is an arbitrary planar embedded graph in $\Sp$ (in fact, the same proof applies). However, as the next example shows, the assumption that $T$ is a tree is crucial for the second part of the statement. 
     The example also explains why extracting a subtree out of the complete preimage instead of taking the complete preimage itself (see Definition~\ref{def:pullback-operation}) is essential for controlling the topological complexity of the (iterated) pullbacks. 

\begin{example*}
    
    Consider the left part of Figure \ref{fig: Counterexample}. Let $G$ be the planar embedded graph shown in black dashed lines and $H$ be the planar embedded graph shown in colored  solid lines. Suppose $f$ is a critically fixed Thurston map obtained by blowing up the pair $(G, \id_{\Sp})$. The colored lines on the right in Figure \ref{fig: Counterexample} indicate a subgraph $\widetilde H$ of $f^{-1}(H)$. Let $H'$ be the planar embedded graph in $\Sp$ with the same realization as $\widetilde H$ but with vertex set $V(H')=V(H)=\Crit(f)$. Note that $H'$ is isotopic to $H$ rel.\ $\Crit(f)$, and thus $i_{C(f)}(f^{-1}(H), \alpha)= i_{C(f)}(H, \alpha)$ for each edge $\alpha\in E(G)$. This obviously implies that $\|f^{-n}(H)\|_f = \|H\|_f = 2$ for all $n \geq 0$.

    \begin{figure}[t]
        \centering           
        \includegraphics[width=14cm]{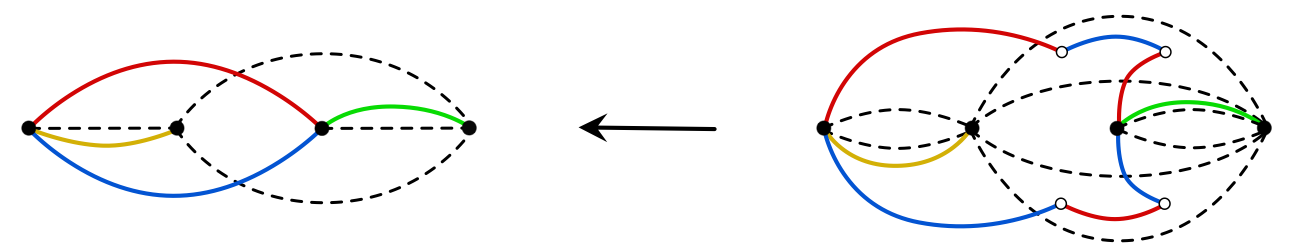}
    
        \caption{Left: a graph $G$ in black dashed  lines and a graph $H$ in colored solid lines. Right: the blow-up of $G$ in black dashed  lines and a subgraph $\widetilde H$ of $f^{-1}(H)$ in colored solid lines. The colors indicate onto which edges of $H$ the edges of $\widetilde H$ are mapped by $f$. 
        }
        
        \label{fig: Counterexample}
        
    \end{figure}
    
\end{example*}

\begin{proof}[Proof of Theorem \ref{thm: Pullback contraction}]
    Let $f$ and $\{T_n\}_{n\geq 0}$ be as in the statement. Proposition \ref{prop: Intersections decreasing} implies that for all $n \geq 0$ we have the inequality $\|f^{-1}(T_n)\|_f \leq \|T_n\|_f$, which is strict unless $\|T_n\|_f = 0$. 
    At the same time, since $T_{n+1}$ is a subset of $f^{-1}(T_n)$, we have  $\|T_{n + 1}\|_f \leq \|f^{-1}(T_n)\|_f$ for each $n \geq 0$. Thus the sequence $\|T_0\|_f, \|T_1\|_f, \|T_2\|_f,\dots$ strictly decreases until we reach $0$. This finishes the proof of the theorem. 
\end{proof}

\subsection{The Lifting Algorithm}\label{subsec: Lifting algorithm} We are finally ready to describe an algorithm that finds the charge graph of a given critically fixed Thurston map $f \colon \Sp \to \Sp$.

Let $T_0$ be an admissible planar embedded tree for $f$, 
and suppose that $T_{n+1} \in \Pi_f(T_{n})$ is a pullback of $T_{n}$ for every $n\geq 0$. By construction, each $T_n$ is a planar embedded tree in $\Sp$ with $V(T_n)\supset \Crit(f)$. 

Let $\beta$ be a simple path in $T_n$ with endpoints in $\Crit(f)$, which we view as a Jordan arc in $\Sp$ joining the corresponding endpoints. Note that $\beta$ may have critical points in its interior. A lift $\widetilde \beta$ of $\beta$ under $f$ is called \emph{critical} if $\widetilde \beta$ is a Jordan arc in $(\Sp,\Crit(f))$, that is, $\partial\widetilde\beta \subset \Crit(f)$ and $\inter(\widetilde\beta) \cap \Crit(f) = \emptyset$. 

By Lemma~\ref{lem: Same blow-up degrees}, there are exactly $\max(0, \deg(f, \widetilde\beta) - 1))$ edges in $\Charge(f)$ that are isotopic to a critical lift $\widetilde\beta$. At the same time, if $i_{C(f)}(\beta,\alpha)=0$ for some edge $\alpha\in E(\Charge(f))$ with $\partial \alpha = \partial \beta$, then there is a critical lift $\widetilde \beta$ of $\beta$ under $f$ such that $\widetilde \beta \sim \alpha$ rel.\ $\Crit(f)$ (see Lemma~\ref{lem: Absence of intersections}). Combining these facts with Theorem \ref{thm: Pullback contraction}, we propose an algorithm for the reconstruction of the charge graph of $f$; see Algorithm~\ref{alg: Lifting algorithm}.

\begin{algorithm}[b]
    \caption{Lifting Algorithm} \label{alg: Lifting algorithm}
%   \SetKwInOut{KwIn}{Input}
%    \SetKwInOut{KwOut}{Output}
\begin{flushleft}
  % \hspace*{\algorithmicindent}  
  \textbf{Input}: a critically fixed Thurston map $f \colon \Sp \to \Sp$\\
 %   \KwIn{critically fixed Thurston map $f \colon \Sp \to \Sp$.}
%    \KwOut{charge graph $\Charge(f)$ of the map $f$.}
 %   \hspace*{\algorithmicindent}    
 \textbf{Output}: the charge graph $\Charge(f)=(\Crit(f),\mathcal{E})$ of the map $f$\\
 \end{flushleft}
    \begin{algorithmic}[1]
        \State set $\mathcal{E}:= \emptyset$ and $n:=0$
        \State choose any planar embedded tree $T_0\in \AdmTrees(f)$ 
        \While{$|\mathcal{E}| < \deg(f) - 1$} \label{Step: While-start}
            \For {every simple path $\beta$ in $T_n$ with endpoints in $\Crit(f)$}
                \For {every critical lift $\widetilde{\beta}$ of $\beta$ under $f$ with $\deg(f,\widetilde{\beta}) >1$} 
                    \If {there is no arc in $\mathcal{E}$ isotopic to $\widetilde{\beta}$ rel.\ $\Crit(f)$} 
                    \State add $\deg(f, \widetilde{\beta}) - 1$ Jordan arcs isotopic to $\widetilde{\beta}$ rel.\ $\Crit(f)$ to $\mathcal{E}$ 
                    \State (so that all arcs in $\mathcal{E}$ have pairwise disjoint interiors)
                    \EndIf
                \EndFor
            \EndFor
            \State\label{Step: Pullback} take $T_{n+1} \in \Pi_f(T_n)$
            \State $n \gets n+1$
        \EndWhile\label{Step: While-end}
        \State \Return $(\Crit(f), \mathcal{E})$
    \end{algorithmic} 
\end{algorithm}

\begin{theorem} \label{thm: speed of alg}
    Let $f\colon \Sp \to \Sp$ be a critically fixed Thurston map and $T_0$ be an initial admissible tree for $f$. 
    Then Algorithm~\ref{alg: Lifting algorithm} stops after taking at most $\|T_0\|_f+1$ iterations of the while cycle (Lines \ref{Step: While-start}--\ref{Step: While-end}) and returns the charge graph of $f$. 
\end{theorem}

\begin{proof}
Theorem~\ref{thm: Pullback contraction} implies that after taking iteratively at most $\|T_0\|_f$ pullbacks starting with the given admissible tree $T_0$, we obtain a planar embedded tree $T'$ with $\Crit(f) \subset V(T')$ and $\|T'\|_f = 0$. Note that for every edge $\alpha \in E(\Charge(f))$, there exists a simple path $\beta$ in $T'$ such that $i_{C(f)}(\beta, \alpha) = 0$ and $\partial \beta = \partial \alpha$. This observation and the discussion before this theorem imply that Algorithm~\ref{alg: Lifting algorithm} terminates and returns the charge graph of $f$ after at most $\|T_0\|_{f} + 1$ iterations.
\end{proof}

        \begin{figure}[t]
            \centering    
            \begin{overpic}[width=\textwidth]%, trim= 0 -30 0 -30]
            {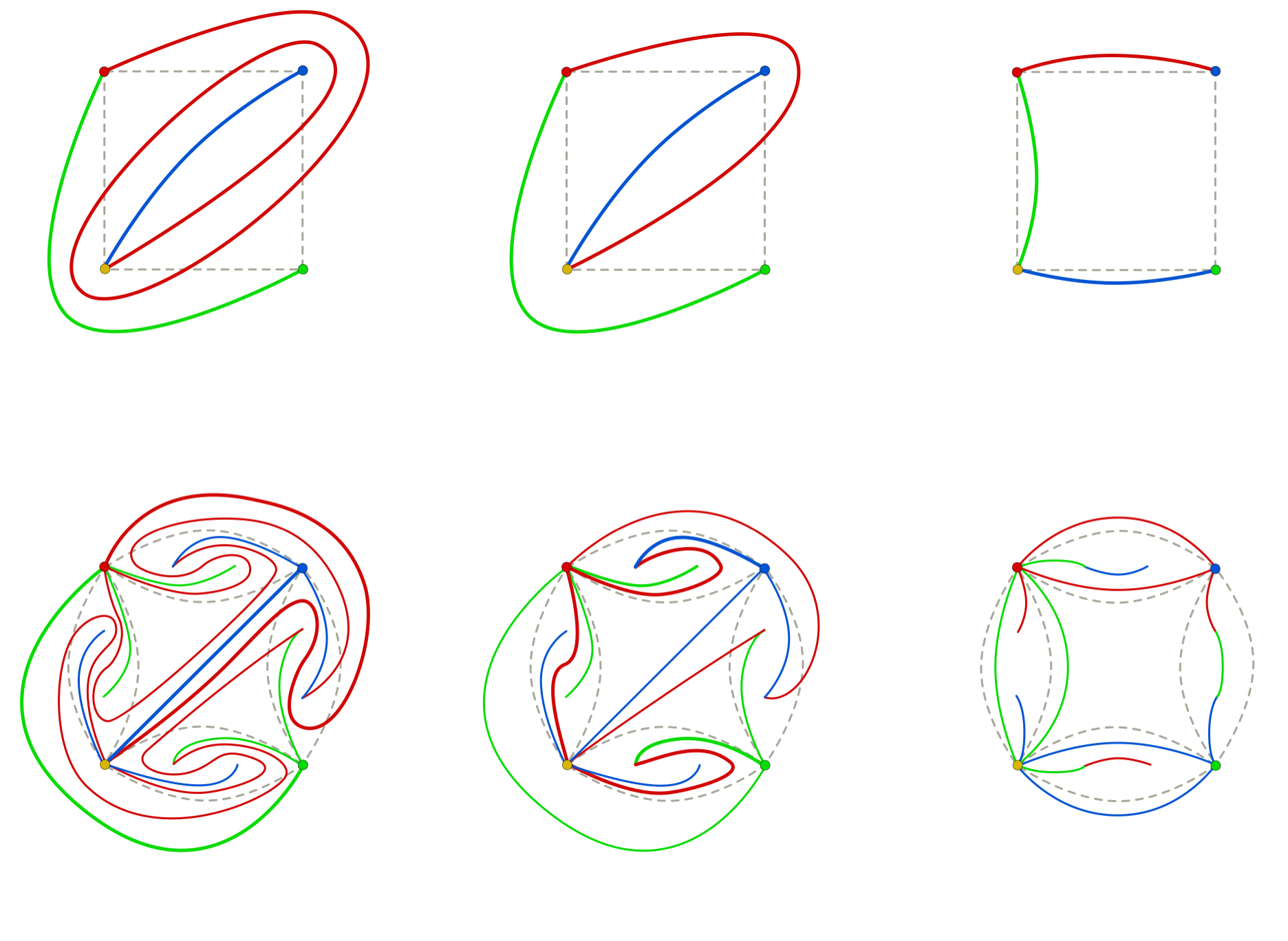}
         \put(5.5,68){$A$}
         \put(24,68){$B$} 
         \put(24,50){$C$} 
         \put(5.5,50){$D$} 
         \put(41.5,68){$A$}
         \put(60,68){$B$} 
         \put(60,50){$C$} 
         \put(41.5,50){$D$} 
         \put(76.5,68){$A$}
         \put(95,68){$B$} 
         \put(95,50){$C$} 
         \put(76.5,50){$D$} 
         \put(15,43){$T_0$}       
         \put(50,43){$T_1$}           
         \put(85,43){$T_2$}   
         \put(13,3){$f_\square^{-1}(T_0)$}       
         \put(48,3){$f_\square^{-1}(T_1)$}           
         \put(83,3){$f_\square^{-1}(T_2)$}  
         \put(5.5,29.5){$A$}
         \put(24,29.5){$B$} 
         \put(24,11.5){$C$} 
         \put(5.5,11.5){$D$} 
         \put(41.5,29.5){$A$}
         \put(60,29.5){$B$} 
         \put(60,11.5){$C$} 
         \put(41.5,11.5){$D$} 
         \put(76.5,29.5){$A$}
         \put(95,29.5){$B$} 
         \put(95,11.5){$C$} 
         \put(76.5,11.5){$D$}  
            \end{overpic}
        
            \caption{Iterations of the Lifting Algorithm applied to the critically fixed Thurston map $f_\square$. 
            }
            
            \label{fig: algo_example}
            
        \end{figure}  

\begin{remark}
    Note that the execution of Algorithm~\ref{alg: Lifting algorithm} significantly depends on the choice of the initial tree $T_0$ and the choices of the pullbacks $T_{n+1}\in \Pi_f(T_n)$ for $n\geq 0$. Regardless of this, the trees $T_n$ have uniformly bounded combinatorial complexity. More precisely, Lemma~\ref{lem: Combinatorial complexity} implies that $|V(T_n)| \leq 2 |\Crit(f)| - 2$, so $|E(T_n)| \leq 2|\Crit(f)| - 3$ for all $n \geq 0$. 
\end{remark}

\begin{example} 
    To illustrate Algorithm~\ref{alg: Lifting algorithm}, we apply it to the critically fixed Thurston map $f_{\square}$ with the charge graph $G_\square$ from Example~\ref{ex: Topological square map}. 

    We start with the admissible tree $T_0$ shown in color on the top left in Figure~\ref{fig: algo_example}. The two other colored trees in the top part of the figure illustrate (up to isotopy rel.\ $\Crit(f_\square)$) the specific choices of the pullbacks $T_1\in \Pi_f(T_0)$ and $T_2\in \Pi_f(T_1)$ we made while executing Line~\ref{Step: Pullback} of Algorithm~\ref{alg: Lifting algorithm}. The colored graphs in the bottom part of the figure show the corresponding complete preimages. Here, as usual, the map $f_\square$ sends each edge of $f_\square^{-1}(T_n)$ to the edge of $T_n$ of the same color. Slightly thicker colored edges in $f_\square^{-1}(T_0)$ and $f_\square^{-1}(T_1)$ indicate the choices of the pullbacks $T_1$ and $T_2$, respectively. The graphs in dashed lines illustrate the charge graph $G_\square$ of $f_\square$ (on the top pictures) and its blow-up $G^\pm_\square$ (on the bottom pictures).

    We now describe how the edge set $\mathcal{E}$ changes during each iteration of the algorithm; see Figure~\ref{fig: algo_example} as a reference. Here, we labeled the points in $C(f_\square) = V(G_\square)=V(G^\pm_\square)$ by $A$, $B$, $C$, and $D$, so that $E(G_\square)=\{AB,BC,CD,DA\}$. 
        \begin{enumerate}
        \item 
        On the first iteration ($n = 0$), we discover the edges $AB$ and $CD$ of the charge graph $G_{\square}$. Namely, the simple path $(A, D, B)$, respectively $(C, A, D)$, in the tree $T_0$ has a critical lift with the blow-up degree $2$ that is isotopic rel.\ $C(f_\square)$ to the edge $AB$, respectively $CD$, of $G_\square$. In other words, after the first iteration, the edge set $\mathcal{E}$ consists of two Jordan arcs that are isotopic rel.\ $C(f_{\square})$ to $AB$ and $CD$, respectively.

        \item On the second iteration ($n = 1$), we discover the edge $DA$ of $G_\square$ using the simple path $(D, A)$ in the tree $T_1$. Note that the simple paths $(A, D, B)$ and $(C, A, D)$ in $T_1$ also have critical lifts that blow up under $f_{\square}$, but they provide us only with the edges $AB$ and $CD$ of $G_\square$ obtained on the previous iteration.

        \item On the third iteration ($n = 2$), we discover the remaining edge $BC$ of $G_{\square}$ using the simple path $(B, A, D, C)$ in the tree $T_2$. After this, we have $|\mathcal{E}| = 4 = \deg(f_\square) - 1$, and the algorithm terminates.

    \end{enumerate}
Note that $\|T_0\|_f = 2$, thus the bound in Theorem \ref{thm: speed of alg} is sharp.

\end{example}

\subsection{Recognizing the combinatorial model}\label{subsec: recover-homeo} In the following, let $f\colon \Sp \to \Sp$ be a critically fixed Thurston map. 
By Theorem~\ref{thm: class-isotopy}, there is a canonical admissible pair $(G, \varphi)$ that corresponds to the map $f$. More precisely, there is a unique (up to isotopy) admissible pair $(G, \varphi)$ such that $f$ is isotopic to a critically fixed Thurston map obtained by blowing up the pair $(G,\varphi)$. The graph $G$ is the charge graph of $f$, which may be recovered using the Lifting Algorithm.  We close this section by discussing how we can combinatorially encode the homeomorphism $\varphi$ (up to isotopy rel.\ $\Crit(f)$) using the graph $G$ and the original map $f$. 

By Proposition \ref{prop: blow-up-triples-properties}, we may find triples $(\alpha^+, \alpha^-, U_\alpha)$, $\alpha\in E(G)$, that satisfy conditions \ref{item: blow-up-tripple-i}-\ref{item: blow-up-tripple-iii} and \ref{item: blow-up-mapping-i}-\ref{item: blow-up-mapping-iv} 
with respect to $f$. (Recall that these triples are uniquely determined by the map $f$ and and the graph $G$ whenever $|\Crit(f)| > 2$.) Fix a connected planar embedded graph $H$ in $\Sp$ with $V(H)= V(G) = \Crit(f)$ and $H\cap \overline{U_\alpha} \in \{\alpha^+,\alpha^-\}$ for each $\alpha\in E(G)$. Conditions \ref{item: blow-up-tripple-i} and \ref{item: blow-up-mapping-iii} imply that $f$ sends $H$ homeomorphically onto its image. The map $f|H\colon H\to f(H)$ may be continuously extended to a homeomorphism $\varphi_f\in \Homeo^+(\Sp, V(H))$. This easily follows from \cite[Proposition~3.4.3]{H_Thesis} since $f$ preserves the cyclic order of edges incident to every vertex of $H$. Alternatively, since $f|H\colon H\to f(H)$ is a graph map, we may use the criterion \cite[Proposition 6.4]{BFH_Class} to check that $f|H$ admits a regular extension (see Section \ref{subsec: covers and graphs}), which then must be a homeomorphism. Note also that the homeomorphism $\varphi_f$ is unique up to isotopy rel.\ $V(H)$ (see Proposition \ref{prop: Graph rigidity}).

The following statement implies that the pair $(G, \varphi_f)$ corresponds to the map $f$. In other words, we may recover the canonical combinatorial model for $f$. 

\begin{proposition}\label{prop: homeo from graph}
    The pair $(G, \varphi_f)$ is admissible. Moreover, the homeomorphisms $\varphi_f$ and $\varphi$ are isotopic rel.\ $V(G)$.
\end{proposition}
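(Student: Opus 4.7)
The plan is to apply Corollary \ref{cor: Homeo rigidity} to the connected graph $H$ in order to establish the isotopy $\varphi_f \sim \varphi_g$ rel.\ $V(G)$, and then to deduce the admissibility of $(G, \varphi_f)$ as an immediate consequence.

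For each edge $\alpha \in E(G)$, let $e^*_\alpha \in \{\alpha^+, \alpha^-\}$ denote the unique boundary arc of $\partial U_\alpha$ that appears as an edge of $H$. By condition \ref{item: blow-up-tripple-i}, $e^*_\alpha$ is a lift of $\alpha$ under $f$ that is isotopic to $\alpha$ rel.\ $V(G)$. Since $\varphi_f$ extends $f|H$ (and any two such extensions agree on $H$ by Proposition \ref{prop: Graph rigidity}), we have $\varphi_f(e^*_\alpha) = f(e^*_\alpha) = \alpha$ as subsets of $\Sp$.

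Next, I would compute $\varphi_g(e^*_\alpha)$ up to isotopy rel.\ $V(G)$. As $e^*_\alpha \sim \alpha$ rel.\ $V(G)$ and $\varphi_g$ fixes $V(G)$ pointwise, conjugating any such isotopy by $\varphi_g$ yields $\varphi_g(e^*_\alpha) \sim \varphi_g(\alpha)$ rel.\ $V(G)$. Combined with $\varphi_g(\alpha) \sim \alpha$ rel.\ $V(G)$ from the admissibility of $(G, \varphi_g)$, this gives $\varphi_g(e^*_\alpha) \sim \alpha$ rel.\ $V(G)$. Therefore $\varphi_f(e^*_\alpha) = \alpha \sim \varphi_g(e^*_\alpha)$ rel.\ $V(H) = V(G)$ for every edge $e^*_\alpha$ of $H$. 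Since $H$ is connected, Corollary \ref{cor: Homeo rigidity} applied to $H$ then yields $\varphi_f \sim \varphi_g$ rel.\ $V(G)$, proving the second assertion of the proposition.

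Finally, for the admissibility of $(G, \varphi_f)$, it suffices to verify that $\varphi_f(\alpha) \sim \alpha$ rel.\ $V(G)$ for each $\alpha \in E(G)$. This follows directly from the isotopy just established combined with the admissibility of $(G, \varphi_g)$: $\varphi_f(\alpha) \sim \varphi_g(\alpha) \sim \alpha$ rel.\ $V(G)$. I anticipate no significant obstacle; the only delicate point is confirming that, although the extension $\varphi_f$ is only determined up to isotopy rel.\ $V(H)$ by Proposition \ref{prop: Graph rigidity}, its action as a set-map on each $e^*_\alpha$ coincides with $f$ exactly, so the equality $\varphi_f(e^*_\alpha) = \alpha$ is independent of the chosen extension.
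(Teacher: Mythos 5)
Your argument correctly handles the edges $e^*_\alpha$ of $H$, and the computations there ($\varphi_f(e^*_\alpha) = f(e^*_\alpha) = \alpha$ and $\varphi_g(e^*_\alpha) \sim \alpha$ via admissibility of $(G,\varphi_g)$) agree with the paper's Case 1. However, there is a gap: you implicitly assume that $E(H) = \{e^*_\alpha : \alpha \in E(G)\}$, writing ``for every edge $e^*_\alpha$ of $H$'' and then invoking Corollary \ref{cor: Homeo rigidity} as if the hypothesis were verified on all of $E(H)$. This identification of $E(H)$ with $\{e^*_\alpha\}$ only holds when $G = \Charge(f)$ is connected (i.e.\ $f$ is realized). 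For an obstructed critically fixed map, $\Charge(f)$ is disconnected, and since the subgraph $\bigcup_\alpha e^*_\alpha$ has the same connectivity type as $G$, the connected graph $H$ must contain additional edges lying in $\Sp \setminus \bigcup_{\alpha} \overline{U_\alpha}$. Your argument says nothing about those edges, so Corollary \ref{cor: Homeo rigidity} cannot yet be applied.

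The paper closes this gap with a second case: for an edge $e \in E(H)$ that is not one of the $\alpha^\pm$, one writes $f = g \circ \psi$ with $\psi \in \Homeo_0^+(\Sp,\Crit(f))$, observes that $\inter(\psi(e)) \subset \Sp \setminus \bigcup_\alpha \psi(\overline{U_\alpha})$, and uses property \ref{item: blow-up-mapping-iii} (for $g$) together with $\psi \sim \id_{\Sp}$ rel.\ $\Crit(f)$ to conclude $\varphi_f(e) = f(e) = (g\circ\psi)(e) \sim \varphi_g(\psi(e)) \sim \varphi_g(e)$ rel.\ $V(G)$. You need an argument of this kind for the ``connector'' edges of $H$ before Corollary \ref{cor: Homeo rigidity} can legitimately be invoked. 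Once both cases are in place, the rest of your write-up, including the deduction of admissibility of $(G,\varphi_f)$, is fine.
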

\begin{proof}
    It is sufficient to show that $\varphi_f$ and $\varphi$ are isotopic rel.\ $V(G)$. Let $g\colon \Sp \to \Sp$ be a critically fixed Thurston map obtained by blowing up the admissible pair $(G, \varphi)$. Then $f=g\circ \psi$ for some $\psi \in \Homeo_0^+(\Sp, \Crit(f))$. 
    It follows that the triples $(\psi(\alpha^+),\psi(\alpha^-),\psi(U_\alpha))$, $\alpha\in E(G)$, satisfy all the conditions \ref{item: blow-up-tripple-i}-\ref{item: blow-up-tripple-iii} and \ref{item: blow-up-mapping-i}-\ref{item: blow-up-mapping-iv} for the map $g$ and the graph~$G$. In particular, $G^\pm_g = \psi(G^\pm_{f})$, where $G^\pm_f = \bigcup_{\alpha\in E(G)} (\alpha^+\cup \alpha^-)$ and $G^\pm_{g}$ are the blow-ups of $G$ under $f$ and~$g$, respectively.
    
    We claim that $\varphi_f(e)$ and $\varphi(e)$ are isotopic rel.\ $V(G)$ for all $e\in E(H)$. 
    
    \begin{case1}
    Suppose $e\in \{\alpha^+,\alpha^-\}$ for some $\alpha\in E(G)$. Then $\varphi_f(e)=f(e)=\alpha$. At the same time,  $\varphi(e)\sim\varphi(\alpha)$ rel.\ $V(G)$, because $e\sim \alpha$ rel.\ $V(G)$ by \ref{item: blow-up-tripple-i}, and $\varphi(\alpha)\sim\alpha$ rel.\ $V(G)$, because the pair $(G, \varphi)$ is admissible. Thus $\varphi_f(e)\sim \varphi(e)$ rel.\ $V(G)$.
    \end{case1}

    \begin{case2}
    Now suppose $e\notin \{\alpha^+,\alpha^-\}$ for every $\alpha\in E(G)$. By construction, $\inter(\psi(e))\subset \Sp \setminus\bigcup_{\alpha\in E(G)} \psi(\overline{U_\alpha})$.
    Thus we get 
    $(g\circ\psi)(e)=g(\psi(e))\sim \varphi(\psi(e)) \sim \varphi(e)$ rel.\ $V(G)$, where the first isotopy equivalence follows from \ref{item: blow-up-mapping-iv} and the second one from $\psi\in \Homeo_0^+(\Sp, \Crit(f))$. At the same time, $(g\circ\psi) (e)=f(e)= \varphi_f(e)$. Therefore, $\varphi_f(e)\sim \varphi(e)$ rel.\ $V(G)$ as claimed. 
    \end{case2}
    
The proposition now follows from Corollary \ref{cor: Homeo rigidity}.
\end{proof}

\section{The Twisting Problem}\label{sec: Twisting problems}

Let $f\colon \Sp \to \Sp$ be a Thurston map and $\varphi \in \Homeo^+(\Sp, \Post(f))$ be a homeomorphism. Consider the map $g := \varphi \circ f\colon \Sp\to \Sp$, called the \emph{twist of $f$ by $\varphi$} (or simply a \emph{twisted map}). Note that $g$ is a branched covering map on $\Sp$ with $\deg(g) = \deg(f)$ and $\Crit(g)=\Crit(f)$. Moreover, $f$ and $g$ have the same dynamics on the critical set, so $g$ is a Thurston map. In particular, if $f$ is critically fixed, then the twisted map $g$ is critically fixed as well. By the \emph{Twisting Problem} we mean the problem of determining the combinatorial equivalence class of the twisted map $g=\varphi\circ f$, knowing the maps $f$ and $\varphi$. (Here, we implicitly assume that there is a combinatorial classification available for the family of twisted maps.)

Since we are interested in $g$ only up to combinatorial equivalence, we may consider $f$ and $\varphi$ up to isotopy rel.\ $\Post(f)$. In particular, we may treat $\varphi$ as an element of $\PMCG(\Sp, \Post(f))$. It is known that $\PMCG(\Sp, \Post(f))$ is generated by finitely many Dehn twists; see, for example,  \cite[Theorems 4.9 and 4.11]{FarbMargalit}. Thus, understanding the case when $\varphi = T_\gamma^n$, where $n \in \Z$ and $T_\gamma$ is the Dehn twist about an essential Jordan curve $\gamma$ on $(\Sp, \Post(f))$, has been of particular interest. 

The Twisting Problem for polynomial maps has been  sufficiently well studied in the last decade \cite{BarNekr_Twist, KelseyLodge, LifitingTrees, HighDegreeRabbitTwisting, CubicTwisting}. However, in the non-polynomial case, the problem has been previously considered only for rational maps of low degree with four postcritical points \cite{Lodge_Boundary, KelseyLodge}. 

In this section, we address the Twisting Problem for the family of critically fixed Thurston maps. We start though by briefly discussing the principal previous work in the polynomial case; see \cite{LifitingTrees} for a more thorough overview.

The first instance of the Twisting Problem was posed by John Hubbard in the 1980s. Namely, let us consider the quadratic polynomials of the form $z^2+c$ for which the critical point $0$ is $3$-periodic. There are exactly three such polynomials, called the \emph{rabbit polynomial} $p_R$ (with $c\approx -0.1225 + 0.7448i$), the \emph{co-rabbit polynomial} $p_C$ (with $c\approx -0.1225 - 0.7448i$),  and the \emph{airplane polynomial} $p_A$ (with $c\approx -1.7548$). Take the rabbit polynomial $p_R$ and postcompose it with (an iterate of) the Dehn twist $T_\gamma$ about a Jordan curve $\gamma$ surrounding the postcritical points $p_R(0)$ and $p^2_R(0)$ and staying in the upper half-plane in $\C$. The Levy--Bernstein theorem \cite[Theorem~10.3.9]{HubbardBook2} implies that the twisted map $T_\gamma^n \circ p_R$ is realized by a rational map for every $n\in \Z$, and thus it must be combinatorially equivalent to precisely one of the polynomials $p_R$, $p_C$, and $p_A$. The problem of finding the combinatorial equivalence class of $T_\gamma^n \circ p_R$ (as a function of $n\in \Z$) is called \emph{Hubbard's twisted rabbit problem}.

Laurent Bartholdi and Volodymyr Nekrashevych solved the twisted rabbit problem in \cite{BarNekr_Twist} using a novel algebraic machinery provided by \emph{iterated monodromy groups}. Quite unexpectedly, the answer depends on the $4$-adic expansion of the power $n$ of the twist.
A similar algebraic approach can be applied to the Twisting Problem for some other polynomial and non-polynomial rational maps. For example, twisting questions for 
$z^2+i$ and $\displaystyle \frac{3z^2}{2z^3+1}$ were considered in \cite{BarNekr_Twist} and  \cite{Lodge_Boundary}, respectively. We remark that in these cases the twisted maps may be obstructed. 

Recently, an alternative approach to the Twisting Problem for postcritically-finite polynomials was proposed by  James Belk, Justin Lanier, Dan Margalit, and Rebecca Winarski \cite{LifitingTrees}. Similarly to the work of Bartholdi and Nekrashevych, the solution is algorithmic, but it is based on ideas from combinatorial topology to a greater extent; see \cite{LifitingTrees} for a comparison of the two methods.

\medskip

The combinatorial classification and the Lifting Algorithm we developed for critically fixed Thurston maps in the preceding sections naturally suggest an algorithmic solution to the Twisting Problem in this setting.

\begin{figure}[t]    
\centering    
\begin{overpic}[width=\textwidth]%, trim= 0 -30 0 -30]
            {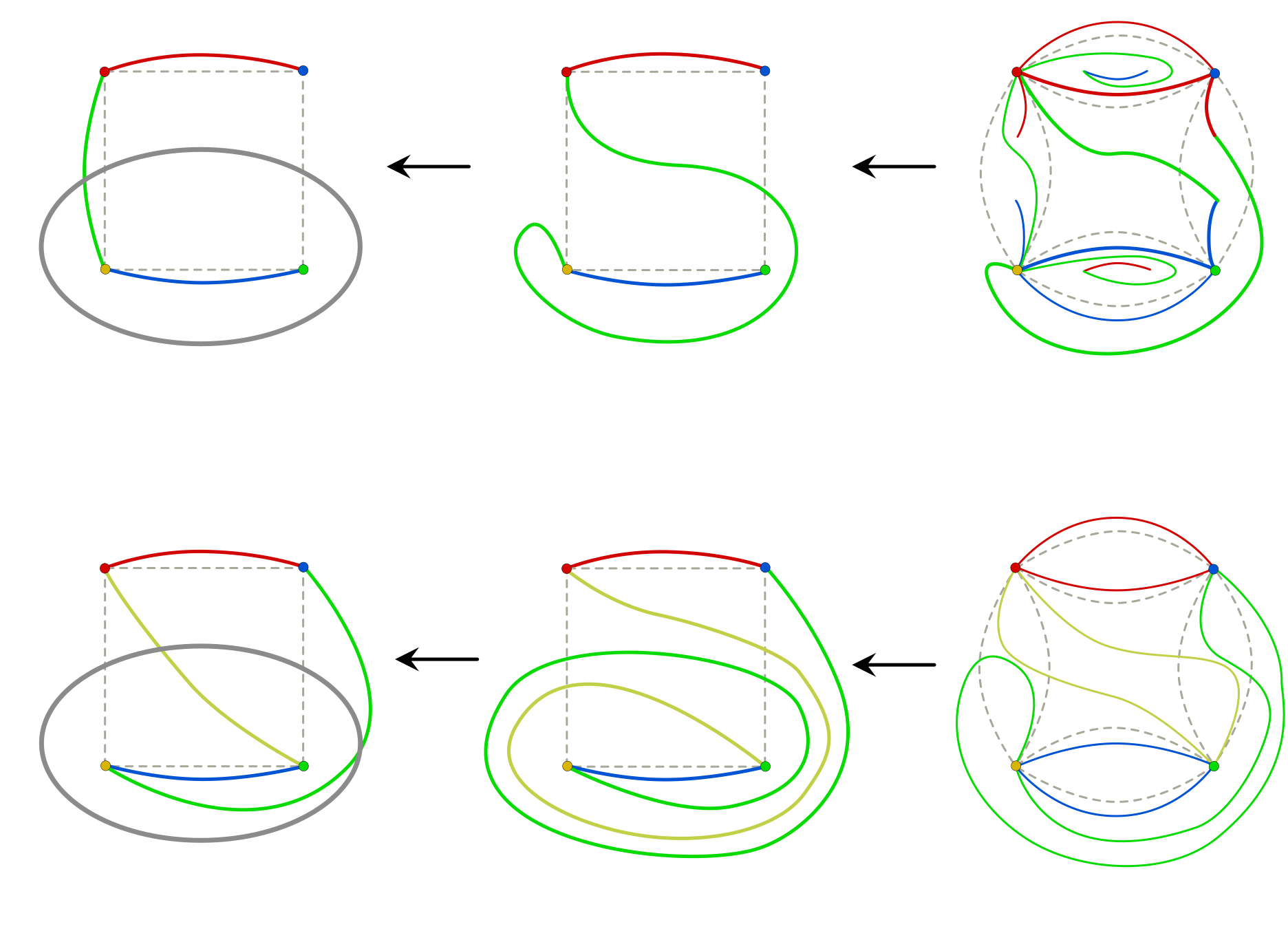}
       \put(5.5,68){$A$}
         \put(24,68){$B$} 
         \put(24,50){$C$} 
         \put(5.5,50){$D$} 
         \put(41.5,68){$A$}
         \put(60,68){$B$} 
         \put(60,50){$C$} 
         \put(41.5,50){$D$} 
         \put(76.5,68){$A$}
         \put(94.5,68){$B$} 
         \put(94.5,50){$C$} 
         \put(76.5,50){$D$}

        \put(32.5,62){$T^{-1}_\gamma$} 
        \put(69,62){$f_\square$} 

        \put(2,57.5){$\gamma$} 
        \put(2,19){$\gamma$}

        \put(32.5,23.5){$T^{-1}_\gamma$} 
        \put(69,23.5){$f_\square$} 
        
         \put(15,43){$T_0$}       
         \put(48,43){$T_\gamma(T_0)$}           
     %    \put(79,43){$(f_\square^{-1}\circ T_\gamma)(T_0)$}
            \put(83.5,43){$g^{-1}(T_0)$} 
        
         \put(15,3){$G$}       
         \put(49,3){$T_\gamma(G)$}           
         \put(86,3){$G^\pm$}         
         
%         \put(13,3){$f_\square^{-1}(T_0)$}       
%         \put(48,3){$f_\square^{-1}(T_1)$}           
%         \put(83,3){$f_\square^{-1}(T_2)$}  
         \put(5.5,29.5){$A$}
         \put(24,29.5){$B$} 
         \put(23.5,11.5){$C$} 
         \put(5.5,11.5){$D$} 
         \put(41.5,29.5){$A$}
         \put(60,29.5){$B$} 
         \put(60,11.5){$C$} 
         \put(41.5,11.5){$D$} 
         \put(76.5,29.5){$A$}
         \put(94.5,29.5){$B$} 
         \put(94.5,11.5){$C$} 
         \put(76.5,11.5){$D$}              
\end{overpic}       \captionsetup{width=.9\linewidth}
    \caption{Finding the combinatorial equivalence class of the map $g=T^{-1}_\gamma\circ f_\square$.}
    \label{fig: twsisting example}
\end{figure}

\begin{example}\label{ex: Twisting by lifting}
Let us show how using Algorithm~\ref{alg: Lifting algorithm} one can find the combinatorial equivalence class of the twisted map $g := T_{\gamma}^{-1} \circ f_{\square}$, where $f_\square$ is the critically fixed Thurston map from Example \ref{ex: Topological square map} and $\gamma$ is the gray Jordan curve 
on the top left picture in Figure \ref{fig: twsisting example}. Here, the graph in dashed lines represents the charge graph $G_{\square}$ of $f_\square$. Furthermore, the points in $C(f_\square) = V(G_\square)$ are labeled by $A$, $B$, $C$, and $D$. 

We run Algorithm~\ref{alg: Lifting algorithm} for the map~$g$ starting with the admissible tree $T_0$ shown in color on the top left picture in Figure \ref{fig: twsisting example}. During the first iteration of the algorithm, it will find out that there are critical lifts of the simple paths $(A,B)$, $(C,D)$, $(A,D,C)$, and $(B,A,D)$ in the tree $T_0$ providing four different edges of the charge graph of the map $g$. Examples of such lifts are shown in slightly thicker colored lines on the top right picture in Figure \ref{fig: twsisting example}, which illustrates the full preimage $g^{-1}(T_0)$. Here, the graph in dashed lines represents the blow-up of $G_{\square}$ under $f_\square$. Furthermore, $f_\square$ sends each edge of $g^{-1}(T_0)$ to the edge of the same color in the tree $T_\gamma(T_0)$ on the top middle picture in Figure \ref{fig: twsisting example}. Similarly, $T_\gamma^{-1}$ sends each edge of $T_\gamma(T_0)$ to the edge of the same color in the tree $T_0$ on the top left picture. 

The bottom part of Figure \ref{fig: twsisting example} verifies that the found critical lifts indeed blow up under the twisted map~$g$. Namely, the picture on the left shows a graph $G$ composed of these lifts (up to isotopy rel.\ $\Crit(f_\square)$); the middle picture shows $T_\gamma(G)$; and the picture on the right illustrates the blow-up $G^\pm$ of $G$ under the map $g$.  

It follows that Algorithm~\ref{alg: Lifting algorithm} stops after the very first iteration. Note that the charge graph~$G_\square$ of $f$ and the charge graph $G$ of $g=T^{-1}_\gamma \circ f$ are connected and isomorphic to each other. Therefore, the maps $f_{\square}$ and $T_{\alpha}^{-1}\circ f_{\square}$ are combinatorially equivalent, even though they are not isotopic (see Remark~\ref{rem: adm-pair-for-connected}).

\end{example}

The main goal of this final section is to address some special instances of the Twisting Problem for the family of critically fixed Thurston maps $f$ obtained by blowing up admissible pairs $(G, \id_{\Sp})$. Note that up to isotopy this family includes all critically fixed rational maps (see Proposition~\ref{prop: charge graph rational case}). We will show that for some special Jordan curves $\gamma$ in $(\Sp, \Crit(f))$ the combinatorial equivalence class of the twisted map $T^n_\gamma\circ f$, $n\in\Z$, can be determined by applying a simple combinatorial operation to the charge graph $G$ of the initial map $f$.

\subsection{Simple transversals and their properties}\label{subsec: special curves}
Let $G$ be a planar embedded graph in~$\Sp$. We say that an essential Jordan curve $\gamma$ in $(\Sp, V(G))$ is a \emph{simple transversal with respect to $G$} if it satisfies the following two conditions:
\begin{enumerate}[label = \normalfont{(\roman*)}]
\item $i_{V(G)}(G, \gamma)= |G\cap \gamma|$, that is, $G$ and $\gamma$ are in minimal position rel.\ $V(G)$;
\item $|e\cap \gamma| \leq 1$ for each edge $e\in E(G)$.
\end{enumerate}
Note that if $G$ is connected, then the set of isotopy classes $[\gamma]$ rel.\ $V(G)$ for the simple transversals $\gamma$ with respect to $G$ is finite (essentially, $[\gamma]$ is determined by the order in which $\gamma$ crosses the edges of $G$). In fact, the converse is also true when $|V(G)|\geq 4$. (If $|V(G)|< 4$, then there are no simple transversals, as they are assumed to be essential.)

\begin{lemma}\label{prop: Preimage lemma}
    Let $f\colon \Sp \to \Sp$ be a critically fixed Thurston map obtained by blowing up a pair $(G, \id_{\Sp})$,  
    and let $\gamma$ be a simple transversal with respect to $G$. Then the following statements are true:
        
    \begin{enumerate}[label = \normalfont{(\roman*)}, leftmargin=*]
        \item\label{item: preimage-lemma-i} There exists a unique component $\gamma'$ of $f^{-1}(\gamma)$ that is isotopic to $\gamma$ rel.\ $V(G)$. 
        Moreover, $\deg(f|\gamma')=|G\cap \gamma|+1$.
       
         \item\label{item: preimage-lemma-ii} All other components $\delta'$ of $f^{-1}(\gamma)$ are null-homotopic in $(\Sp,V(G))$ and satisfy \linebreak $\deg(f|\delta')=1$.
    \end{enumerate}
\end{lemma}

\begin{proof}
    The lemma easily follows from the definition of the blow-up operation; see Definition~\ref{def: Blow up}. Indeed, let $f$ and $\gamma$ be as in the statement. In particular, we fix a choice of $W_e$, $D_e$, $f_e$, and $h$ as in Section \ref{subsec: The blow-up operation}. 
    
    We assume below that $m:=|G\cap \gamma|\geq 2$; the proof can be easily adapted for the remaining two cases.  Let $\beta_1,\beta_2,\dots, \beta_m$ be all the edges of $G$ that $\gamma$ intersects. We label these edges in the order they are met by $\gamma$ (for some chosen basepoint and orientation on~$\gamma$). Then the Jordan curve $\gamma$ can be broken into $m$ consecutive Jordan arcs $\gamma_1, \gamma_2, \dots, \gamma_m$ having endpoints $x_j \in \beta_j$ and $x_{j + 1} \in \beta_{j + 1}$ for each $j = 1, 2, \dots, m$. Here and further all indices are understood modulo $m$.
    
    By \eqref{eq: blow-up_def}, $f^{-1}(\gamma)\cap D_{\beta_j}$ is a Jordan arc $\gamma^\pm_j$ connecting two preimages $x_j^-\in \partial D_{\beta_j}^-$ and $x_j^+\in \partial D_{\beta_j}^+$ of $x_j$ under $f$. Moreover, up to relabeling, we may assume that 
    $x_j^+$ and $x^-_{j+1}$ are connected by a lift $\gamma'_j \subset \Sp \setminus \bigcup_{j = 1}^m \inter(D_j)$ of $\gamma_j$ under $f$. The concatenation of the arcs $\gamma_1^\pm,\,\gamma_1',\, \gamma_2^\pm,\, \gamma_2',\,\dots,\, \gamma_m^\pm,\, \gamma'_m$ is a Jordan curve $\gamma'$. Moreover, $f|\gamma'\colon \gamma' \to \gamma$ is a covering map of degree $m+1$. 
    
    We modify $h$ within $\bigcup_{j = 1}^m \inter(D_j)$ so that $h_1(\gamma^\pm_j)=x_j$ for all $j=1,\dots,m$. (Since this does not change the isotopy class of $(f,V(G))$, it does not affect the desired statement by Proposition~\ref{prop: isotopy-lifting}.) Then by \eqref{eq: blow-up_def} we have
    \[h_1(\gamma')=h_1\left(\bigcup_{j=1}^m \gamma'_j\right)=f\left(\bigcup_{j=1}^m \gamma'_j\right)=\bigcup_{j=1}^m \gamma_j=\gamma.\]
    It follows that $h_t|\gamma'$, $t\in \I$, provides a non-ambient isotopy rel.\ $V(G)$ between $\gamma'=h_0(\gamma')$ and $\gamma=h_1(\gamma')$. Hence $\gamma'$ and $\gamma$ are isotopic rel.\ $V(G)$.

    Finally, if $\delta'$ is a component of $f^{-1}(\gamma)$ that is different from $\gamma'$, then $\delta' \subset \inter(D_e)$ for some $e\in E(G)\setminus\{\beta_1,\beta_2,\dots,\beta_m\}$. Hence, $\delta'$ is null-homotopic in $(\Sp,V(G))$ and $\deg(f|\delta')=1$. This completes the proof of the lemma.
\end{proof}

\begin{remark}
We note that simple transversals with respect to the charge graph of a critically fixed rational map $f\colon \widehat{\C}\to\widehat{\C}$ correspond exactly to the essential curves in the \emph{global curve attractor} of~$f$; see \cite[Proposition 10]{H_Tischler} and Lemma \ref{prop: Preimage lemma}. In particular, for every Jordan curve $\gamma$ in $(\widehat{\C}, \Crit(f))$ there exists $n\in \N$ such that each component of $f^{-n}(\gamma)$ is either non-essential or  isotopic rel.\ $\Crit(f)$ to a simple transversal with respect to $\Charge(f)$. 
In fact, the analogous statement is true for critically fixed Thurston maps $f\colon \Sp \to \Sp$ obtained by blowing up a pair $(G, \id_{\Sp})$ and Jordan curves in $(\Sp, V(G))$. It follows that the global curve attractor of such a critically fixed Thurston map $f$ is finite if and only if $G$ is connected.
\end{remark}

\subsection{Graph rotation}\label{subsec: Combinatorial operation}
Our goal now is to introduce a simple combinatorial operation on planar embedded graphs that will allow us to describe the action of (special) Dehn twists on admissible pairs $(G, \id_{\Sp})$.

Consider the map $T_{n/m}\colon \partial \D \times \I \to \partial \D \times \I$, where $n \in \Z$ and  $m \in \N$, defined as
    $$
        T_{n/m}\left(e^{2\pi i \theta}, t\right) = \left(e^{2\pi i\left(\theta + t \frac{n}{m}\right)}, t\right).
    $$ 
The map $T_{n/m}$ is called the \emph{$n/m$-twist} on the cylinder $\partial \D \times \I$. Note that $T_{n/m}$ fixes the boundary circle $\partial \D \times \{0\}$ pointwise and ``rotates'' the boundary circle $\partial \D \times \{1\}$ counterclockwise by angle $2\pi\frac{n}{m}$. Furthermore, $T_{n/m}=(T_{1/m})^n$ for all $n\in\Z$ and $m\in \N$. Figure \ref{fig: Fractional twists} illustrates the action of the $1/4$-twist on the radial arcs $\{e^{2\pi i \theta}\} \times  \I$ in the cylinder $\partial \D \times \I$. Here, the cylinder $\partial \D \times \I$ is viewed as an annulus in $\C$ under the embedding $(e^{2\pi i\theta},t)\mapsto e^{2\pi i\theta}(t+1)$.

    \begin{figure}[t]
     \centering   
     \begin{overpic}[width=0.9\textwidth]{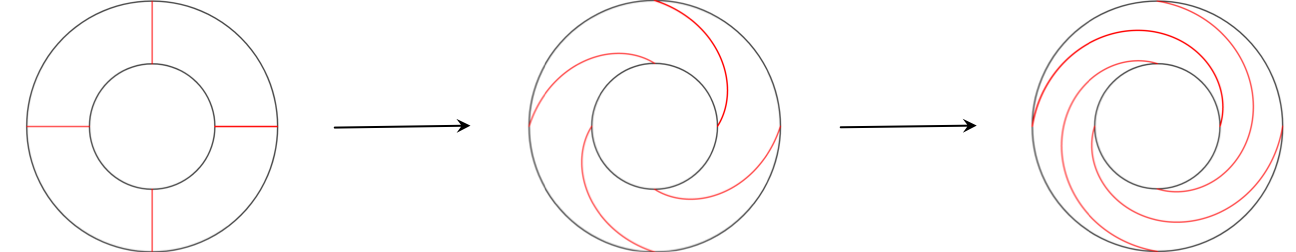}
        \put(28,11){$T_{1/4}$}
        \put(67,11){$T_{1/4}$}
     \end{overpic}
    \caption{The action of the ${1/4}$-twist on radial arcs.}
    \label{fig: Fractional twists}
    \end{figure}

Let $G$ be a planar embedded graph in~$\Sp$, and suppose that $\gamma$ is a simple transversal with respect to $G$ with $m:=|G\cap \gamma|\geq 1$. We denote by $\beta_1,\dots, \beta_m$ all the edges of $G$ that meet $\gamma$. Note that each of these edges intersects $\gamma$ transversely. Finally, we assume that the edges $\beta_1,\dots, \beta_m$ are labeled in the order they are met by $\gamma$ (for some chosen basepoint and orientation on $\gamma$).

Let us thicken the curve $\gamma$ to a (small) closed annulus $A\subset \Sp \setminus V(G)$ so that $A\setminus G$ has exactly $m$ connected components. Choose an orientation-preserving homeomorphism $\phi\colon \partial \D \times \I \to A$ so that $\phi(r_{j, m})= \beta_j \cap A$ for every $j=1,\dots, m$, where 
$r_{j,m}:=\displaystyle\{e^{2\pi i \frac{j-1}{m}}\}\times \I$ are radial arcs in the cylinder $\partial \D \times \I$ subdividing it into $m$ congruent pieces. 

Now consider the map $T_{n/m, \phi}\colon \Sp\to \Sp$ defined as
$$
    T_{n/m, \phi}(p)=\begin{cases}
        (\phi \circ T_{n/m} \circ \phi^{-1})(p) & \text{ if } p \in A\\
        p & \text{ if } p \in \Sp \setminus A.
    \end{cases}
$$    
We call $T_{n/m,\phi}$ the \emph{$n/m$-twist of $A$ with respect to $\phi$}. 

Note that $T_{n/m,\phi}$ is a homeomorphism of $\Sp$ if and only if $n/m \in \Z$. In fact, the map $T_{n/m,\phi}$ should be thought of as a ``fractional'' Dehn twist: when $q:=n/m \in \Z$, $T_{n/m,\phi}$ coincides (up to isotopy rel.\ $V(G)$) with the $q$-th iterate $T^{q}_\gamma$ of the Dehn twist $T_\gamma$ about the curve $\gamma$. For $n/m\notin \Z$, the map $T_{n/m,\phi}$ fixes one of the boundary curves of $A$ pointwise and ``rotates'' the other one to the left when viewed from the inside of $A$.  

Consider the image $G':= T_{n/m,\phi}(G)$ of the graph $G$ under the $n/m$-twist of $A$ with respect to $\phi$. It is easy to see that $G'$ may be viewed as a planar embedded graph in $\Sp$ with the vertex set $V(G)$. Note that then $T_{n/m,\phi}$ modifies only the edges $\beta_1,\dots, \beta_m$ of $G$ and keeps all other edges fixed. One can check that, up to isotopy rel.\ $V(G)$, the graph $G'$ is uniquely defined independently of the choice of $A$ and $\phi$. 

\begin{definition}\label{def: Graph rotation}
The planar embedded graph $G'$ constructed as above is called the \emph{$n$-rotation} of the graph $G$ about the curve $\gamma$.
\end{definition}

\begin{example}
\label{ex: Rotation example} 
    Figure \ref{fig: Rotation} illustrates the $1$-rotations of the square graph $G_{\square}$ about 
    two simple transversals $\gamma_1$ and $\gamma_2$ with respect to $G_\square$. Here, the pictures in the left column indicate the chosen annuli $A_{\gamma_1}$ and $A_{\gamma_2}$ around $\gamma_1$ and $\gamma_2$, respectively. The red arcs correspond to the intersections of the annuli with the graph. In the middle column, we see the images of $G_{\square}$ under the $1/2$-twist of $A_{\gamma_1}$ (top) and the $1/4$-twist of $A_{\gamma_2}$ (bottom), that is, the $1$-rotations of $G_\square$ about $\gamma_1$ and $\gamma_2$, respectively. The red arcs indicate the modifications of the edges. Finally, the right column shows the same graphs after simplification by isotopy rel.\ $V(G_\square)$.
    
    \begin{figure}[t]
     \centering   
     \begin{overpic}[width=0.9\textwidth]{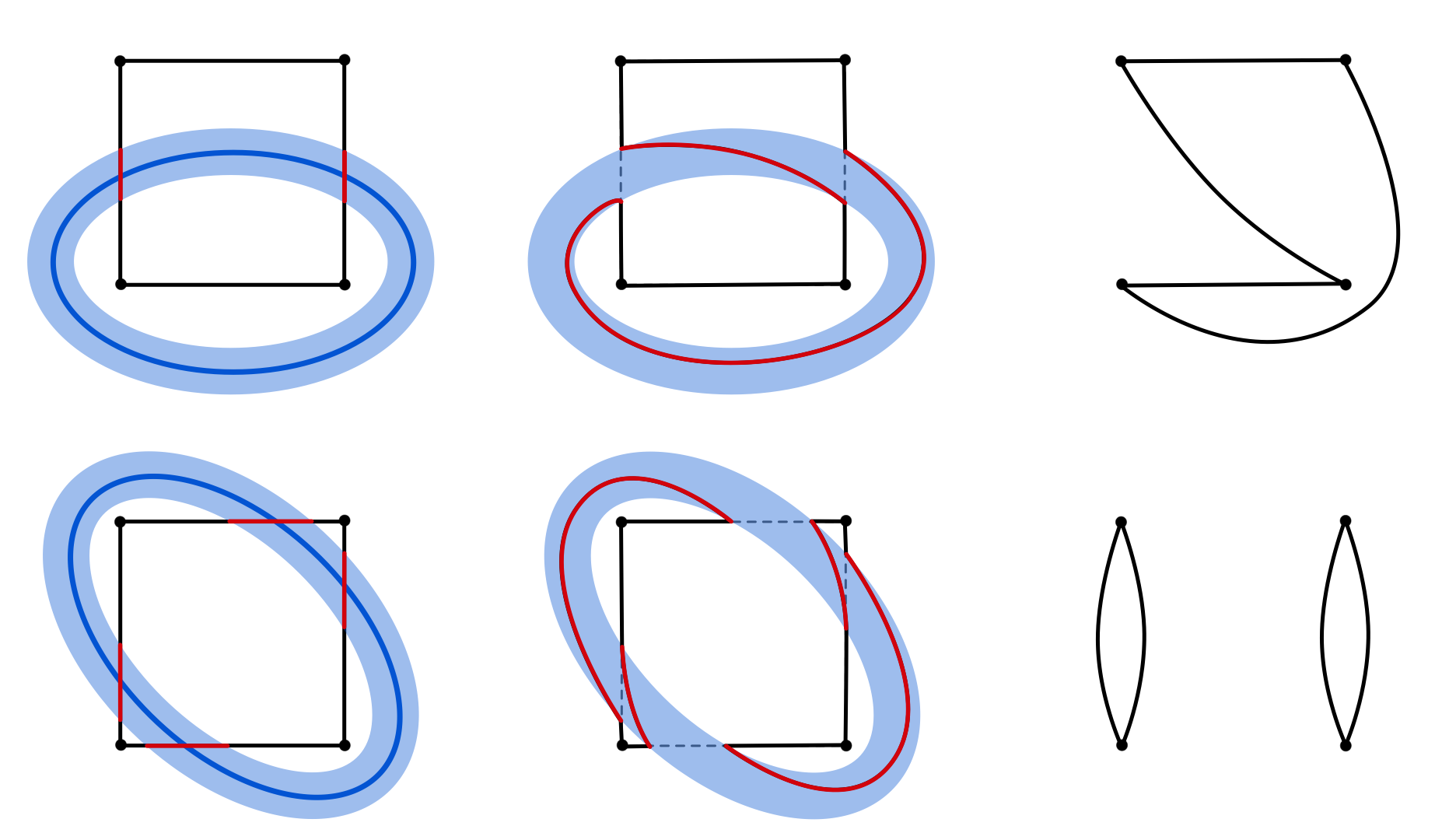}
        \put(-6,34){$\gamma_1\subset A_{\gamma_1}$}
        \put(-5,11){$\gamma_2 \subset A_{\gamma_2}$}
        \put(66,44){\huge$\sim$}
        \put(66,13){\huge$\sim$}
    \end{overpic}    
        \caption{The 1-rotations of the square graph $G_{\square}$ about the simple transversals 
        $\gamma_1$ (top) and $\gamma_2$ (bottom).}
        \label{fig: Rotation}
    \end{figure}

\end{example}

In analogy with the usual Dehn twists, we have the following statement. 

\begin{proposition}\label{prop: rotations up to isotopy}
    Let $G$ and $G'$ be two planar embedded graphs in $\Sp$ with a common vertex set~$V$, and let $\gamma$ and $\gamma'$ be simple transversals with respect to $G$ and $G'$, respectively, with $|G\cap \gamma|\geq 1$ and $|G'\cap \gamma'|\geq 1$.
    
  If $G$ is isotopic to $G'$ and $\gamma$ is isotopic to $\gamma'$ rel.\ $V$, then the $n$-rotations of $G$ about $\gamma$ and of $G'$ about $\gamma'$ are isotopic rel.\ $V$ for all $n \in \Z$.
\end{proposition}

\subsection{Twists about simple transversals}\label{subsec: Main theorem C}
We now look at a special instance of the Twisting Problem. Namely, we consider a critically fixed Thurston map $f$ obtained by blowing up an admissible pair $(G, \id_{\Sp})$ and (iterates of) the Dehn twist $T_\gamma$ about a simple transversal $\gamma$ with respect to~$G$. We are going to describe the combinatorial equivalence classes of the twisted maps $T^n_\gamma \circ f$, $n\in \Z$, using the graph rotation operation introduced in Section \ref{subsec: Combinatorial operation}.

\begin{proposition}\label{prop: Key twisting proposition}
    Let $f\colon \Sp \to \Sp$ be a critically fixed Thurston map obtained by blowing up an admissible pair $(G, \id_{\Sp})$ and $\gamma$ be a simple transversal  with respect to $G$ with $|G\cap \gamma| \geq 1$. 
    Then the twisted map $T_\gamma^{-1} \circ f$ is isotopic to a critically fixed Thurston map obtained by blowing up the admissible pair $(H, \id_{\Sp})$, where $H$ is the $1$-rotation of $G$ about the curve $\gamma$. In particular, $H$ is the charge graph of $T_\gamma^{-1} \circ f$. 
\end{proposition}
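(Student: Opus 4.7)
The strategy is to recognize $g := T_\gamma^{-1} \circ f$ as a blow-up via the criterion of Proposition \ref{prop: Blow-up triples}, using $H$ as the candidate charge graph. Set $m := |G \cap \gamma| \geq 1$, let $\beta_1, \ldots, \beta_m$ be the edges of $G$ crossing $\gamma$ (labelled cyclically), and let $\alpha_j := T_{1/m, \phi}(\beta_j)$ be the corresponding edges of $H$. Since $\Crit(g) = \Crit(f) = V(G) = V(H)$ and $|E(H)| = |E(G)| = \deg(f) - 1 = \deg(g) - 1$, the combinatorial hypotheses of the blow-up criterion are met, and the task reduces to producing triples $(\alpha^+, \alpha^-, U_\alpha)$ for every $\alpha \in E(H)$ satisfying conditions \ref{item: blow-up-tripple-i}--\ref{item: blow-up-tripple-iii}.

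The construction is made in two cases. For an $\alpha \in E(H)$ disjoint from $\gamma$ (such edges coincide with edges of $G$), I would take the triple from Proposition \ref{prop: blow-up-triples-properties} applied to the blow-up structure of $f$: this triple lies outside a suitably chosen annular neighborhood $A$ of $\gamma$ on which $T_\gamma^{-1}$ acts as the identity, so it remains a valid $g$-triple. For a rotated edge $\alpha_j$, I would exploit Lemma \ref{prop: Preimage lemma}: the essential $f$-preimage $\gamma'$ of $\gamma$ has degree $m+1$, carrying $m+1$ evenly distributed lifts of each point of $\gamma$, while all other components of $f^{-1}(\gamma)$ are null-homotopic and have degree one. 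Using this, I would analyze the $f$-preimages of $T_\gamma(\alpha_j)$ (which are exactly the $g$-preimages of $\alpha_j$) and show that precisely two of them are isotopic to $\alpha_j$ rel.\ $V(G)$, namely lifts that cross $\gamma'$ ``one step differently'' than the lifts $\beta_j^\pm$ provided by the $(G, \id_{\Sp})$-blow-up. These become $\alpha_j^\pm$, and $U_{\alpha_j}$ is the component of $\Sp \setminus (\alpha_j^+ \cup \alpha_j^-)$ inherited, after the one-step cyclic shift induced by $T_\gamma^{-1}$ on the $m$ crosscuts $\beta_j \cap A$, from the region $U_{\beta_j}$ attached to $\beta_j$ in the $f$-structure.

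Once the triples are produced, Proposition \ref{prop: Blow-up triples} yields that $g$ is isotopic to the blow-up of an admissible pair $(K, \varphi')$ with $K$ isotopic to $H$ rel.\ $V(G)$, establishing $H$ as the charge graph of $g$ up to isotopy. To finish, one must show that $\varphi'$ is isotopic to $\id_\Sp$ rel.\ $V(G)$. By Corollary \ref{cor: Homeo rigidity}, it suffices to check $\varphi'(\alpha) \sim \alpha$ rel.\ $V(G)$ for every $\alpha \in E(K)$. For edges away from $\gamma$, this is immediate because the triples agree with those of $f$, which arose from the pair $(G, \id_\Sp)$. For the rotated edges $\alpha_j$, unwinding the construction of the previous paragraph, the map $\varphi'$ sends $\alpha_j$ to an arc that, by design, is isotopic to $T_\gamma^{-1}(T_\gamma(\alpha_j)) = \alpha_j$ rel.\ $V(G)$; the choice of $1/m$ (rather than some other fractional twist) is precisely what ensures that the ``one cyclic step'' reassembly in the source gives arcs isotopic to $\alpha_j$ themselves rather than to some other rotation. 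Applying Proposition \ref{prop: Isotopy and blow-up} then yields $g$ is isotopic to the blow-up of $(H, \id_\Sp)$.

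The main obstacle lies in the rotated-edge analysis of the second paragraph, particularly verifying disjointness condition \ref{item: blow-up-tripple-iii} among the regions $U_{\alpha_j}$ and with the unchanged $U_e$. This requires careful bookkeeping of the cyclic structure of $f^{-1}(\gamma)$: the $m+1$ sheets on $\gamma'$ together with the null-homotopic components inside each $U_{\beta_j}$ must be tracked alongside the way $T_\gamma^{-1}$ re-identifies them. The numerical match between the degree $m+1$ on $\gamma'$ and the number $m$ of rotated edges is exactly what makes the $1/m$-rotation (rather than any other fractional twist) the correct graph to produce $g$, and the key technical step will be an isotopy argument inside $f^{-1}(A)$ showing that the chosen lifts $\alpha_j^\pm$ and regions $U_{\alpha_j}$ assemble consistently and disjointly.
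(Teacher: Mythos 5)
Your overall strategy matches the paper's: invoke the blow-up criterion of Proposition \ref{prop: Blow-up triples} with $H$ as the candidate charge graph, produce triples $(\alpha^+,\alpha^-,U_\alpha)$ separately for the unchanged and the rotated edges, and exploit the degree-$(m+1)$ self-cover structure of $f$ on an annular neighborhood $A$ of $\gamma$ to build the rotated-edge triples. The difficulty you flag in verifying the disjointness condition \ref{item: blow-up-tripple-iii} is real, and the paper carries out essentially the same cyclic bookkeeping on the $2m$ arcs $\beta_j^\pm$ and the subregions of $A$ that you sketch.

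However, there is a genuine gap in your final step, where you want to conclude $\varphi'\sim\id_{\Sp}$. You claim that by Corollary \ref{cor: Homeo rigidity} it suffices to check $\varphi'(\alpha)\sim\alpha$ rel.\ $V(G)$ for every $\alpha\in E(K)$, and then verify this for each edge. But $\varphi'(\alpha)\sim\alpha$ for all $\alpha\in E(K)$ is exactly the definition of admissibility of the pair $(K,\varphi')$, which Proposition \ref{prop: Blow-up triples} already gives you outright --- so the verification is circular and carries no new information. More to the point, Corollary \ref{cor: Homeo rigidity} requires the graph to be \emph{connected}, while $K$ (isotopic to $H=\Charge(g)$) is disconnected precisely when $g=T_\gamma^{-1}\circ f$ is obstructed, which can happen even for connected $G$; cf.\ the curve $\gamma_2$ in Figure \ref{fig: Rotation}, where the $1$-rotation disconnects $G_\square$ and $T_{\gamma_2}^{-1}\circ f_\square$ is obstructed. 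For disconnected $K$ there are nontrivial $\psi\in\Homeo^+(\Sp,V(K))$ with $\psi(\alpha)\sim\alpha$ for every $\alpha\in E(K)$ --- for instance a Dehn twist about an essential $\varepsilon$-boundary of one component of $K$ --- so admissibility alone cannot force $\varphi'\sim\id_{\Sp}$. The correct argument, carried out in the paper, introduces a separate \emph{connected} graph $H'$ transverse to the blow-up structure of $g$ (with $V(H')=\Crit(f)$ and $H'\cap \overline{U_\alpha}\in\{\alpha^+,\alpha^-\}$ for every $\alpha\in E(H)$), checks that the action of $g$ on the edges of $H'$ is the identity up to isotopy, and only then invokes Proposition \ref{prop: homeo from graph} together with Corollary \ref{cor: Homeo rigidity} applied to $H'$ rather than to $K$.
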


    \begin{proof}
    Suppose $f$ and $\gamma$ are as in the statement. In particular, we fix a choice of $W_e$, $D_e$, $f_e$, and $h$ as in Section \ref{subsec: The blow-up operation}. Then $V(G)=\Crit(f)$ and $\deg(f)=|E(G)|+1$. Set $m := |G\cap \gamma| = i_{V(G)}(G,\gamma)$, and let $A$ be a (small) closed annulus in $\Sp\setminus V(G)$ obtained by thickening the curve $\gamma$ so that $A\setminus G$ has exactly $m$ components.
    
    Let us denote by $H$ the $1$-rotation of $G$ about the curve $\gamma$ realised by the $1/m$-twist $T_{1/m,\phi}$ of $A$ with respect to some orientation-preserving homeomorphism $\phi\colon \partial \D \times \I \to A$ as in Section~\ref{subsec: Combinatorial operation}. Without loss of generality, we may also assume that the Dehn twist $T_\gamma$ is defined with respect to the same homeomorphism $\phi$, so that $T_\gamma$ is the identity on $\Sp \setminus \inter(A)$. We are going to show that each edge $\alpha \in E(H)$ of $H$ has a triple $(\alpha^+, \alpha^-, U_\alpha)$ satisfying conditions \ref{item: blow-up-tripple-i}-\ref{item: blow-up-tripple-iii} with respect to the map $g:=T_\gamma^{-1} \circ f$. It would then follow from Proposition \ref{prop: Blow-up triples} and Proposition \ref{prop: admis_equiv}\ref{item: case isotopy} that $H$ is the charge graph of $g$. 
    
    By adjusting $D_e$, $f_e$, and $h$ (which does not change the isotopy class of $f$), we may assume the following:
\begin{itemize}
    \item for every $e\in E(G)$, we have $D_e\cap A =\emptyset$, whenever $e\cap A = \emptyset$ (i.e., when $e\cap \gamma=\emptyset)$;
    \item  $A$ is a component of $f^{-1}(A)$, so that $f|A\colon A \to A$ is a self-covering of degree $m + 1$ (compare Lemma~\ref{prop: Preimage lemma}).
\end{itemize}

    We are going to set up some notation now. Suppose $X$ and $Y$ are the two components of $\Sp\setminus\inter(A)$, where $X$ is the component with $\partial X = \phi(\partial\D \times \{0\})$ and $Y$ is the component with $\partial Y = \phi(\partial\D \times \{1\})$. Then $T_{1/m,\phi}$ fixes $\partial X$ pointwise, and it rotates $\partial Y$ to the left when viewed from the inside of $A$. 
    
    Let $E^A(G):=\{\beta_1,\dots, \beta_m\}$ be the set of all edges of $G$ that intersect $A$. These edges subdivide the annulus $A$ into $m$ closed components $A_1,\dots,A_m$. We label these edges and components so that they are met in the order 
    \[\beta_1,\,A_1,\,\beta_2,\,A_2,\,\dots ,\,\beta_m,\, A_m\]
    when we walk around the Jordan domain $X$ in the counter-clockwise direction. For a reference, see the top middle picture in Figure \ref{fig: twisting_proof}. Here, the graph $G$ and the annulus $A$ are shown in black and blue colors, respectively.

    \begin{figure}[t]
     \centering   
     \begin{overpic}[width=\textwidth]{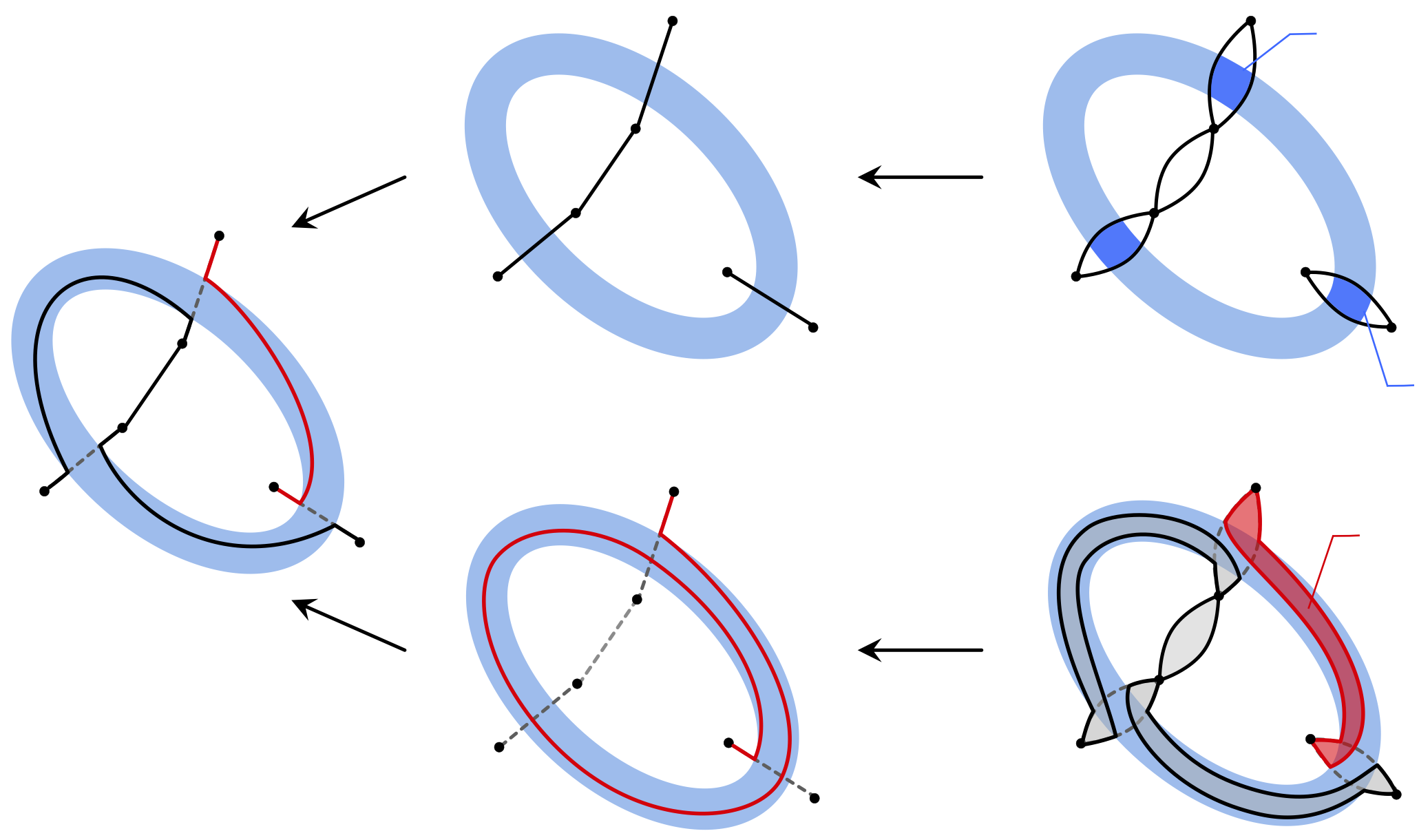}
        \put(51,47){\textcolor{blue}{$A_{j}$}}
  %      \put(48,38){$\beta_{j,X}$}
  %      \put(53,40){$\beta_{j,A}$}
  %      \put(55,34){$\beta_{j,Y}$}        
%        \put(53.5,39){$\beta_{j}$}     
%        \put(41,54){$\beta_{j+1}$}  
%old    \put(56,33.5){$\beta_{j}$}          
        \put(52,36){$\beta_{j}$}  \put(41,54){$\beta_{j+1}$}
        
%old    \put(48,57){$\beta_{j+1}$}  

      \put(91.8,46.5){\textcolor{blue}{$A'_{j}$}}
        
%old        \put(89,55.5){$\beta^-_{j+1}$}  
%old        \put(82.5,57.5){$\beta^+_{j+1}$}  
%old        \put(95,34){$\beta^-_{j}$}  
%old        \put(98,38){$\beta^+_{j}$}
        
%old        \put(85.1,53){\textcolor{blue}{$A^\pm_{j+1}$}}  

%old        \put(93.5,37.5){\textcolor{blue}{$A^\pm_{j}$}}     

         \put(88,51.5){$\beta^-_{j+1}$}  
        \put(80.5,54.2){$\beta^+_{j+1}$}  
        \put(92,35){$\beta^-_{j}$}  
        \put(94.5,40.5){$\beta^+_{j}$}

        \put(91.2,57.2){\textcolor{blue}{$A^\pm_{j+1}$}}  

        \put(98,32.5){\textcolor{blue}{$A^\pm_{j}$}}

        \put(21,33){\textcolor{red}{$\alpha_{j}$}}     
        \put(52,17){\textcolor{red}{$T_\gamma(\alpha_{j})$}}    
  %      \put(94,13){\textcolor{red}{$U_{\alpha_j}$}}   
        \put(94.3,22){\textcolor{red}{$U_{\alpha_j}$}}          \put(95,14){\textcolor{red}{$\alpha_j^-$}}   
        \put(89,13){\textcolor{red}{$\alpha_j^+$}}   
        
 %       \put(89.5,22){\textcolor{red}{$\alpha^-_{j}$}}  
 %       \put(84,24){\textcolor{red}{$\alpha^+_{j}$}}

        \put(37,48){$X$}
        \put(37,35){$Y$}        
        \put(65,48){$f$}
        \put(65,11){$f$}
        \put(20,47){$T_{1/m,\phi}$}
        \put(22.5,12){$T^{-1}_\gamma$}     
    \end{overpic}    
        \caption{Proof of Proposition \ref{prop: Key twisting proposition}.}
        \label{fig: twisting_proof}
    \end{figure}    
    
    The arcs $\beta^+_j:=\partial D^+_{\beta_j}$ and $\beta^-_j:=\partial D^-_{\beta_j}$, $j=1,\dots, m$, 
subdivide the annulus $A$ into $2m$ closed components $A^\pm_1, A'_1, \dots, A^\pm_m, A'_m$ so that $A^\pm_j=A\cap D_{\beta_j}$ and $A_j'\subset A_j$ for each $j$. 
Up to relabeling, we may assume that these arcs and components are met in the order  
    \[\beta^-_1,\, A^\pm_1, \,\beta^+_1, \,A'_1,\, \beta^-_2,\, A^\pm_2,\,\beta^+_2,\, A'_2,\, \dots,\, \beta^-_m, \,A^\pm_m,\,\beta^+_m, \,A'_m\]
    when we walk around $X$ in the counter-clockwise direction; see the top right picture in Figure~\ref{fig: twisting_proof} for a reference.

    Let us decompose every edge $\beta_j$, $j=1, \dots, m$, as the union \[\beta_j=\beta_{j,X} \cup \beta_{j,A} \cup \beta_{j,Y},\]
    where $\beta_{j,X} := \beta_j \cap X$, $\beta_{j,A} := \beta_j \cap A$, and $\beta_{j,Y} := \beta_j \cap Y$. We decompose the edges $\beta^+_j$ and $\beta_j^-$ in the same manner.
    
    Set \[\alpha_j:=\beta_{j,X} \cup T_{1/m,\phi}(\beta_{j,A})\cup \beta_{j+1,Y}.\] Here and in the following, all indices are understood modulo $m$. By the definition of the graph rotation, $\alpha_1,\dots, \alpha_m$ are all the edges of $H$ that intersect $A$. Therefore, 
    \[E(H)=\{\alpha_1,\dots, \alpha_m\}\cup \big(E(G)\setminus E^A(G)\big);\]
    see the left picture in Figure \ref{fig: twisting_proof}, where $H$ is shown in solid black and red lines.  
    We set $\alpha_{j,A}:=T_{1/m,\phi}(\beta_{j,A})= \alpha_j\cap A$. Note that $\alpha_{j,A} \subset A_j$ for all $j=1,\dots, m$.

    Let $\alpha \in E(H)$ be arbitrary. We will now define triples $(\alpha^+, \alpha^-, U_\alpha)$ that satisfy conditions \ref{item: blow-up-tripple-i}-\ref{item: blow-up-tripple-iii} with respect to the map $g=T_\gamma^{-1} \circ f$. 
    
    First, if $\alpha\in E(G)\setminus E^A(G)$, then we set $\alpha^+:=\partial D^+_\alpha$, $\alpha^-:= \partial D^-_\alpha$, and $U_\alpha:=\inter(D_\alpha)$.
    
    Otherwise, $\alpha=\alpha_j$ for some $j=1,\dots, m$.     Recall that the Dehn twist $T_\gamma$ about $\gamma$ is defined with respect to the same homeomorphism $\phi\colon \partial \D \times \I \to A$ as the $1/m$-twist $T_{1/m,\phi}$ of $A$. Thus
    \[T_\gamma(\alpha_j)= \beta_{j,X} \cup T_\gamma(\alpha_{j,A})\cup \beta_{j+1,Y},\]
    where $\inter(T_\gamma(\alpha_{j,A}))$ intersects (transversely) each of the arcs $\beta_{j,A}$ and $\beta_{j+1,A}$ exactly once; see the red arc in the bottom middle picture in Figure \ref{fig: twisting_proof}. By construction, each of the maps $f|\inter(A^\pm_j)\colon \inter(A^\pm_j) \to \inter(A)\setminus \beta_j$, $f|A'_j\colon A'_j\to A_j$,  and $f|\inter(A^\pm_{j+1})\colon \inter(A^\pm_{j+1}) \to \inter(A)\setminus \beta_{j+1}$ is a homeomorphism. It follows that there are two lifts $\alpha^+_j$ and $\alpha^-_j$ of $\alpha_j$ under $g=T_\gamma^{-1} \circ f$ such that 
    \[\alpha^-_j= \beta^-_{j,X}\cup \alpha^-_{j,A} \cup \beta^-_{j+1,Y}\]
    and
    \[\alpha^+_j= \beta^+_{j,X}\cup \alpha^+_{j,A} \cup \beta^+_{j+1,Y},\]
    where $\alpha^-_{j,A}$ and $\alpha^+_{j,A}$ are the lifts of $T_\gamma(\alpha_{j,A})$ under $f$ that satisfy 
    \[\alpha^-_{j,A} \subset A^\pm _j \cup A_j' \text{\quad and \quad} \alpha^+_{j,A} \subset A_j'\cup A^\pm_{j+1}.\]
    Finally, let us set $U_{\alpha_j}$ to be the connected component of $S^2\setminus (\alpha_j^+\cup \alpha_j^-)$ that contains $\inter(D_{\beta_j})\cap X$ (and $\inter(D_{\beta_{j+1}})\cap Y$); see the bottom right picture in Figure \ref{fig: twisting_proof} for a reference. 
    
    \begin{claim}
    The triples $(\alpha^+, \alpha^-, U_\alpha)$, $\alpha\in E(H)$, constructed above satisfy conditions \ref{item: blow-up-tripple-i}-\ref{item: blow-up-tripple-iii} for $g=T_\gamma^{-1} \circ f$.
    \end{claim}
    
    This follows easily from the construction. Indeed, condition \ref{item: blow-up-tripple-ii} is immediate, as well as condition \ref{item: blow-up-tripple-i} for $\alpha\in E(G)\setminus E^A(G)$. Now, if $\alpha = \alpha_j$ for some $j=1,\dots, m$, then the Jordan arcs $\alpha_j$, $\alpha_j^+$, and $\alpha_j^-$ satisfy $\partial \alpha_j = \partial \alpha_j^+ = \partial \alpha_j^-$. Moreover, these arcs are inside the closed Jordan region \[W_j':= (D_{\beta_j}\cap X) \cup A_j^\pm \cap A_j' \cup A_{j+1}^\pm \cup (D_{\beta_{j+1}}\cap Y). 
    \]
    Lemma \ref{lem: Buser_isotopy_arcs} now implies that $\alpha_j$, $\alpha_j^+$, and $\alpha_j^-$ are all isotopic rel.\ $V(H)=V(G)$ (take $W_j$ to be an open Jordan region with $W_j'\subset W_j$ and $W_j\cap V(H) = W_j' \cap V(H) = \partial \alpha_j$). Finally, condition \ref{item: blow-up-tripple-iii} for the triples $(\alpha^+, \alpha^-, U_\alpha)$, $\alpha\in E(H)$, follows from the fact that the triples $(\partial D^+_e, \partial D^-_e, \inter(D_e))$, $e\in E(G)$, satisfy it by Proposition \ref{prop: blow-up-triples-properties}. We leave this straightforward verification to the reader.

 \medskip
 
 The claim above and Proposition \ref{prop: Blow-up triples} imply that $g=T^{-1}_\gamma\circ f$ is isotopic to a critically fixed Thurston map obtained by blowing up an admissible pair $(H, \varphi)$ for some $\varphi\in \Homeo^+(\Sp, V(H))$. 
 To complete the proof of Proposition \ref{prop: Key twisting proposition}, we need to check that $\varphi$ is isotopic to $\id_{\Sp}$ rel.\ $V(H)=\Crit(f)$. Let $H'$ be a connected planar embedded graph with $V(H')=\Crit(f)$ and $$H'\cap \left(A \cup  \bigcup_{\alpha\in E(H)} \overline U_\alpha\right) = \bigcup_{\alpha\in E(H)} \alpha^+.$$ 
 Note that $g(\alpha') \sim \alpha'$ rel.\ $V(H)$ for every edge $\alpha'\in E(H')$. Indeed, if $\alpha'=\alpha^+$ for some $\alpha\in E(H)$, then $g(\alpha')=\alpha \sim \alpha'$ rel.\ $V(H)$ by \ref{item: blow-up-tripple-iii}. Otherwise, $g(\alpha')=T_\gamma^{-1}(f(\alpha'))=f(\alpha') \sim \alpha'$ rel.\ $V(H)$ by \ref{item: blow-up-mapping-iv}. It now follows from Proposition \ref{prop: homeo from graph} and Corollary~\ref{cor: Homeo rigidity} that 
 $\varphi\in \Homeo_0^+(\Sp, V(H))$. 
 This finishes the proof of the proposition. 
\end{proof}

The following result establishes Main Theorem \ref{thm_intro: twisting}.

\begin{theorem}\label{thm: Main twisting theorem}
    Let $f \colon \Sp \to \Sp$ be a critically fixed Thurston map obtained by blowing up an admissible pair $(G, \id_{\Sp})$. Suppose $\gamma$ is a simple transversal with respect to $G$, and let $n\in \Z$ be arbitrary.

\begin{enumerate}[label=\normalfont(\roman*)]
\item\label{item: twisting_part_i} If $|G\cap \gamma|\geq 1$,   then the twisted map $T_\gamma^n \circ f$ is isotopic to a critically fixed Thurston map obtained by blowing up the admissible pair $(H, \id_{\Sp})$, where $H$ is the $(-n)$-rotation of $G$ about the curve $\gamma$.
\item\label{item: twisting_part_ii}  If $|G\cap \gamma|= 0$, then the twisted map $T_\gamma^n \circ f$ is isotopic to a critically fixed Thurston map obtained by blowing up the admissible pair $(G, T_\gamma^n)$.
\end{enumerate}
    
\end{theorem}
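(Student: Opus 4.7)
The plan is to handle the two parts separately: part (ii) follows almost directly from Proposition \ref{prop: from_id_to_any_homeo} once admissibility of $(G, T_\gamma^n)$ is verified, while part (i) will come from iterating Proposition \ref{prop: Key twisting proposition}. For part (ii), when $i_f(G,\gamma) = 0$, I would first use Lemma \ref{lem: intersection_graph_arc} to isotope $G$ rel.\ $\Crit(f)$ into minimal position with $\gamma$; since $\gamma$ is already disjoint from $V(G) = \Crit(f)$, this arranges $G \cap \gamma = \emptyset$. I can then realize the Dehn twist $T_\gamma$ using an annulus $A \subset \Sp \setminus G$ around $\gamma$, so that $T_\gamma^n$ restricts to the identity on $G$ for every $n \in \Z$. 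In particular $T_\gamma^n(e) = e$ for each $e \in E(G)$, so the pair $(G, T_\gamma^n)$ is admissible, and Proposition \ref{prop: from_id_to_any_homeo} identifies its blow-up with $T_\gamma^n \circ f$ up to isotopy.

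For part (i), I would proceed by induction on $|n|$, with Proposition \ref{prop: Key twisting proposition} as the single-step mechanism. Let $H_k$ denote the $(-k)$-rotation of $G$ about $\gamma$, so that $H_0 = G$ handles the base case $n = 0$. The inductive engine rests on the following compatibility statement, which I would establish first: whenever $H$ is the $a$-rotation of $G$ about $\gamma$, the curve $\gamma$ remains a simple transversal with respect to $H$ with $|H \cap \gamma| = |G \cap \gamma| = m \geq 1$, and the $b$-rotation of $H$ about $\gamma$ is isotopic rel.\ $V(G)$ to the $(a+b)$-rotation of $G$ about $\gamma$. This should follow from the definition in Section \ref{subsec: Combinatorial operation}, because realizing both rotations in a common closed annulus $A$ around $\gamma$ with a common framing $\phi$ gives the identity $T_{b/m,\phi} \circ T_{a/m,\phi} = T_{(a+b)/m,\phi}$, and Proposition \ref{prop: rotations up to isotopy} absorbs the remaining dependence on choices.

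Granted this compatibility, the inductive step runs cleanly. Assuming $T_\gamma^n \circ f$ is isotopic to the blow-up of $(H_n, \id_{\Sp})$, I apply Proposition \ref{prop: Key twisting proposition} with $H_n$ in place of $G$ (which is legal since $\gamma$ is still a simple transversal with respect to $H_n$): this shows that $T_\gamma^{n-1} \circ f = T_\gamma^{-1} \circ (T_\gamma^n \circ f)$ is isotopic to the blow-up of the $1$-rotation of $H_n$, which by compatibility equals $H_{n-1}$. For the step $n \mapsto n+1$, I apply Proposition \ref{prop: Key twisting proposition} instead with $H_{n+1}$ in place of $G$: this identifies $T_\gamma^{-1}$ applied to the blow-up of $(H_{n+1}, \id_{\Sp})$ with the blow-up of $(H_n, \id_{\Sp})$ (using compatibility to recognize the $1$-rotation of $H_{n+1}$ as $H_n$); postcomposing both sides with $T_\gamma$ and invoking the inductive hypothesis $T_\gamma^n \circ f \sim$ blow-up of $(H_n, \id_{\Sp})$ then yields $T_\gamma^{n+1} \circ f \sim $ blow-up of $(H_{n+1}, \id_{\Sp})$, as desired.

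The hardest part I anticipate is giving a clean proof of the compatibility statement. While intuitively evident, it requires careful bookkeeping: after performing an $a$-rotation, one must check that the resulting graph $H$ still meets a chosen annulus around $\gamma$ as $m$ parallel crosscuts in the correct cyclic order, so that a subsequent $b$-rotation can be realized using the same annulus and a compatible framing. Once this combinatorial picture is nailed down, the rest of the argument is essentially a formal combination of Propositions \ref{prop: Key twisting proposition}, \ref{prop: rotations up to isotopy}, and \ref{prop: from_id_to_any_homeo}.
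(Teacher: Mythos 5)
Your proposal is correct and follows essentially the same route as the paper: part (ii) via Proposition~\ref{prop: from_id_to_any_homeo} once admissibility of $(G,T_\gamma^n)$ is checked, and part (i) by induction with Proposition~\ref{prop: Key twisting proposition} furnishing the single-step base and Proposition~\ref{prop: rotations up to isotopy} together with the additivity of rotations handling the inductive bookkeeping. The one cosmetic difference is that the paper treats $n=+1$ as a second base case obtained ``by a similar argument,'' whereas you derive the step $n\mapsto n+1$ from the $n=-1$ case applied to $H_{n+1}$; both versions ultimately rely on the same compatibility of iterated rotations that you isolate explicitly.
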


\begin{proof}
  
    \ref{item: twisting_part_i} Proposition \ref{prop: Key twisting proposition} implies the statement for $n=-1$. By a similar argument, we can also show the statement for $n=1$. 
    The rest of the statement follows from these two cases and Proposition \ref{prop: rotations up to isotopy} by induction on $n$. 

    \ref{item: twisting_part_ii} The statement immediately follows from Proposition \ref{prop: from_id_to_any_homeo}, 
    because the pair $(G, T_\gamma^n)$ is admissible.
\end{proof}

We can now easily deduce Corollary \ref{cor_intro: periodic} from the introduction.

\begin{corollary}\label{cor: Periodicity}
    Let $f \colon \Sp \to \Sp$ be a critically fixed Thurston map obtained by blowing up an admissible pair $(G, \id_{\Sp})$
    and $\gamma$ be a simple transversal with respect to $G$ with $|G\cap \gamma|\geq 1$.
    
    Then the sequence of the combinatorial equivalence classes of $\{T_\gamma^n \circ f\}_{n\in\Z}$ is strictly periodic with the period dividing $|G\cap \gamma|$. In other words, if  $n_1 \equiv n_2 \,\,\, \mod\, |G\cap \gamma|$, then the twisted maps $T_\gamma^{n_1} \circ f$ and $T_\gamma^{n_2} \circ f$ are combinatorially equivalent.
\end{corollary}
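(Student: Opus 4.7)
The plan is to combine Theorem~\ref{thm: Main twisting theorem}\ref{item: twisting_part_i} with the classification of critically fixed Thurston maps given in Theorem~\ref{thm: Classification}. Set $m := i_f(G, \gamma) = |G \cap \gamma|$ and, for each $n \in \Z$, let $H_n$ denote the $(-n)$-rotation of $G$ about $\gamma$. By Theorem~\ref{thm: Main twisting theorem}\ref{item: twisting_part_i}, the twisted map $T_\gamma^n \circ f$ is isotopic to the critically fixed Thurston map obtained by blowing up the admissible pair $(H_n, \id_\Sp)$. Hence, by Theorem~\ref{thm: Classification}, it suffices to prove that for every pair $n_1, n_2 \in \Z$ with $n_1 \equiv n_2 \pmod{m}$ the admissible pairs $(H_{n_1}, \id_\Sp)$ and $(H_{n_2}, \id_\Sp)$ are equivalent in the sense of Definition~\ref{def: admissibility_and_equivalence}.

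To carry this out, I would realize both rotations using one common closed annulus $A$ around $\gamma$ and one common orientation-preserving parametrization $\phi\colon \partial \D \times \I \to A$, and arrange for the Dehn twist $T_\gamma$ to be defined using this same data (so that $T_\gamma$ acts as $T_{1, \phi}$ on $A$ and as the identity outside). Writing $T_{a, \phi}$ for the $a$-twist as in Section~\ref{subsec: Combinatorial operation}, Definition~\ref{def: Graph rotation} gives
\[
H_n = T_{-n/m, \phi}(G)
\]
up to isotopy rel.\ $V(G)$, with the specific choices of $A$ and $\phi$ irrelevant by Proposition~\ref{prop: rotations up to isotopy}. Setting $k := (n_2 - n_1)/m \in \Z$ and using the additivity $T_{a, \phi} \circ T_{b, \phi} = T_{a+b, \phi}$ on $\Sp$, I obtain
\[
H_{n_1} = T_{-n_1/m, \phi}(G) = T_{k, \phi}\bigl(T_{-n_2/m, \phi}(G)\bigr) = T_\gamma^{k}(H_{n_2}),
\]
where the last equality uses that for every integer $k$ the map $T_{k, \phi}$ is exactly $T_\gamma^k$.

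Finally, $\psi := T_\gamma^{k}$ is an orientation-preserving self-homeomorphism of $\Sp$ that fixes $V(G) = \Crit(f)$ pointwise (as $\gamma \cap V(G) = \emptyset$) and sends $H_{n_2}$ onto $H_{n_1}$ as a planar embedded graph, while $\psi \circ \id_\Sp \circ \psi^{-1} = \id_\Sp$. Thus $\psi$ witnesses the equivalence of the admissible pairs $(H_{n_2}, \id_\Sp)$ and $(H_{n_1}, \id_\Sp)$, and Theorem~\ref{thm: Classification} delivers the desired combinatorial equivalence $T_\gamma^{n_1} \circ f \sim T_\gamma^{n_2} \circ f$. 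The only mildly delicate point is the compatibility bookkeeping in the second paragraph: one must organize the $n$-rotations and the Dehn twist $T_\gamma$ with respect to the same annulus and parametrization, so that the fractional twist parameters $-n/m$ genuinely add, and an integer shift of this parameter corresponds to an actual homeomorphism of $\Sp$.
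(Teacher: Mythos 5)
Your proposal is correct and follows essentially the same route as the paper: reduce $n$ modulo $m = i_f(G,\gamma)$, use Theorem~\ref{thm: Main twisting theorem}\ref{item: twisting_part_i} together with the additivity of the fractional twist $T_{a,\phi}$ (so that an integer difference in $n/m$ becomes an honest power of $T_\gamma$), recognize $T_\gamma^k$ as a homeomorphism witnessing equivalence of the two admissible pairs, and conclude via Theorem~\ref{thm: Classification}. The only cosmetic difference is that you compare $n_1$ and $n_2$ directly while the paper reduces each $n$ to its representative $r$ with $0 \leq r < m$; the underlying computation is the same.
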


\begin{proof}
    Let $n\in \Z$ be arbitrary, and suppose that $n = mk + r$, where $m := |G\cap \gamma|$, $k \in \Z$, and $0 \leq r < m$.
    By Theorem \ref{thm: Main twisting theorem}\ref{item: twisting_part_i}, the twisted map $T_\gamma^n \circ f$ is isotopic to a critically fixed Thurston map obtained by blowing up the admissible pair  
    $(G_n, \id_{\Sp})$, where 
    $G_n$ is the $(-n)$-rotation of $G$ about the curve $\gamma$.

    By the definition of the graph rotation, the graph $G_n$ is (isotopic to) the $(-mk)$-rotation of the graph $G_{r}$ (rel.\ $V(G)$). It follows that $G_n$ is isotopic to $T_\gamma^{-k}(G_{r})$ rel.\ $V(G)$, where we view $T_\gamma^{-k}(G_{r})$ as a planar embedded graph with the vertex set $V(G)$. Since the graphs $G_r$ and $T_\gamma^{-k}(G_{r})$ are isomorphic, it follows that the admissible pairs  $(T_\gamma^{-k}(G_{r}), \id_{\Sp})$ and $(G_r, \id_{\Sp})$ are equivalent. Proposition \ref{prop: admis_equiv} now implies that the twisted maps $T_\gamma^n \circ f$ and $T_\gamma^r \circ f$ are combinatorially equivalent. This finishes the proof of the corollary.
\end{proof}

\begin{example} 
    Consider the critically fixed Thurston map $f_\square$ obtained by blowing up the admissible pair $(G_\square, \id_{\Sp})$; see Example \ref{ex: Topological square map}. Let $\gamma_1$ and $\gamma_2$ be the simple transversals with respect to $G_\square$ as in Figure \ref{fig: Rotation}. By Proposition \ref{prop: Key twisting proposition},  the charge graphs of $T^{-1}_{\gamma_1}\circ f_\square$ and $T^{-1}_{\gamma_2}\circ f_\square$ are as shown on the top right and bottom right in the same figure, respectively. In particular, we see that the map $T^{-1}_{\gamma_1}\circ f_\square$ is combinatorially equivalent to $f_{\square}$ and the map $T^{-1}_{\gamma_2}\circ f_\square$ is obstructed. (Note that the former agrees with what we obtained in Example~\ref{ex: Twisting by lifting}.) 
Corollary~\ref{cor: Periodicity} now implies that $T_{\gamma_1}^n \circ f_{\square}$ is combinatorially equivalent to $f_{\square}$ for all $n \in \Z$. Similarly, using Theorem \ref{thm: Main twisting theorem}\ref{item: twisting_part_i}, one can verify that the twisted map $T_{\gamma_2}^n \circ f_{\square}$ is combinatorially equivalent to $T_{\gamma_2} \circ f_{\square}$, and thus obstructed, for odd $n\in\Z$; and it is combinatorially equivalent to~$f_{\square}$, and thus realized, for even $n\in\Z$.
\end{example}

\begin{rem} The results in this section remain valid for a critically fixed Thurston map $f$ obtained by blowing up a (not necessarily admissible) pair $(G, \id_{\Sp})$, when $f$ is considered as a marked Thurston map $(f, V(G))$. 
\end{rem}

% \nocite{*}
% \bibliographystyle{alpha}
% \bibliography{main.bib}

\newcommand{\etalchar}[1]{$^{#1}$}

\end{document}